\crefname{enumi}{part}{parts}
\setlist[enumerate]{font=\normalfont}
\newcommand{\customitem}[2]{%
  \expandafter\let\expandafter\CI@old@label\csname label\@listctr\endcsname
  \expandafter\let\expandafter\CI@old@ref\csname the\@listctr\endcsname
  \expandafter\def\csname label\@listctr\endcsname{#1}%
  \expandafter\def\csname the\@listctr\endcsname{#2}%
  \item
  \expandafter\let\csname label\@listctr\endcsname\CI@old@label
  \expandafter\let\csname the\@listctr\endcsname\CI@old@ref
}%
\newcommand{\myitem}[1]{\customitem{(#1)}{\normalfont (#1)}}
\theoremstyle{plain}
  \newtheorem{theorem}{Theorem}[section]
  \newtheorem{corollary}[theorem]{Corollary}
  \newtheorem{lemma}[theorem]{Lemma}
  \newtheorem{proposition}[theorem]{Proposition}
\theoremstyle{definition}
  \newtheorem{definition}[theorem]{Definition}
  \newtheorem{example}[theorem]{Example}
  \newtheorem{setup}[theorem]{Setup}
\theoremstyle{remark}
  \newtheorem{remark}[theorem]{Remark}
\numberwithin{equation}{section}
\date{Mar 28, 2025}
\begin{document}

\title{The \'etale local structure of algebraic stacks}
\author[J.~Alper]{Jarod Alper}
\address{Department of Mathematics, University of Washington, Box 354350, Seattle, WA
98195-4350, USA}
\email{jarod@uw.edu}
\author[J.~Hall]{Jack Hall}
\address{School of Mathematics \& Statistics, The University of Melbourne, Parkville, VIC, 3010, Australia}
\email{jack.hall@unimelb.edu.au}
\author[D.~Rydh]{David Rydh}
\address{Department of Mathematics, KTH Royal Institute of Technology, SE-100 44 Stockholm, Sweden}
\email{dary@math.kth.se}
\subjclass[2010]{Primary 14D23; Secondary 14L15, 14L24, 14L30}
\begin{abstract}
We prove that an algebraic stack with affine stabilizers over an arbitrary base is \'etale-locally a quotient stack around
any point with a linearly reductive stabilizer. This generalizes earlier work
by the authors of this article (stacks over algebraically closed fields) and by Abramovich,
Olsson and Vistoli (stacks with finite inertia). In addition, we prove a number
of foundational results, which are new even over a field. 
These include various coherent completeness and effectivity results for
adic sequences of algebraic stacks. Finally, we give several applications of
our results and methods, such as structure theorems for linearly reductive
group schemes and generalizations to the relative setting of Sumihiro's theorem
on torus actions and Luna's \'etale slice theorem.
\end{abstract}
\maketitle
\setcounter{tocdepth}{1}
\tableofcontents

\section{Introduction} \label{S:intro}

This paper offers a broad generalization and extension of our previous work \cite{luna-field}, which provided a local structure theorem for algebraic stacks of finite type over an algebraically closed field.  In addition to establishing a local structure theorem for algebraic stacks defined over an arbitrary base (\Cref{T:base}), we prove a number of  foundational results that are new even over an algebraically closed field. %
This includes a general coherent completeness result for algebraic stacks (\Cref{T:complete}), which becomes particularly powerful when coupled with Tannaka duality (see \S \ref{SS:tannaka}).  We also prove an effectivity theorem for adic sequences of noetherian algebraic stacks (\Cref{T:effectivity}), analogous to Grothendieck's result on algebraization of formal schemes \cite[III.5.4.5]{EGA}. While of independent interest, this is one of the key ingredients for the other main theorems in this paper---including the local structure theorem.

We prove several other foundational results and provide numerous applications to equivariant geometry and moduli theory. To highlight a few, we prove that adequate moduli spaces are universal for maps to algebraic spaces (\Cref{T:universal}); this implies that GIT quotients in positive characteristic are categorical quotients in algebraic spaces, which was formerly unknown---even over an algebraically closed field.  We establish that various properties (e.g., the resolution property\footnote{That is, every quasi-coherent sheaf is a quotient of a direct sum of vector bundles. If the algebraic stack is quasi-compact and quasi-separated with affine stabilizers, then this is equivalent to being expressible as $[U/\GL_n]$, where $U$ is a quasi-affine scheme \cite{MR2108211,gross-resolution}.}) and objects (e.g., morphisms, finite \'etale covers, vector bundles), defined on a closed substack $\cX_0$ of $\cX$, extend to an \'etale neighborhood of $\cX_0$ if there is an affine good moduli space $\cX \to X$ (see \Cref{P:extension-resprop,P:extension-morphisms,P:extension-nicefund,P:extension-linfund}).
These extension results are technical but extremely useful in local-to-global arguments.
We also prove a generalization of Sumihiro's theorem on torus actions (\Cref{C:sumihiro-torus}): an algebraic space $X$ 
over a base
$S$
with an action of a torus $T \to S$ has $T$-equivariant affine \'etale neighborhoods.

\subsection{A  local structure theorem}
We prove that if $\cX$ is an algebraic stack satisfying some mild assumptions, then a point $x \in |\cX|$ with linearly reductive stabilizer has an \'etale neighborhood $([\Spec A / \GL_n], w) \to (\cX, x)$ inducing an isomorphism on residual gerbes.  This is the conclusion of the following theorem in the special case that $\cW_0 = \cG_x$.

\begin{theorem}[Local structure]\label{T:base}  
  Let $S$ be a quasi-separated algebraic space and 
  $\cX$ be an algebraic stack, locally of finite presentation, and quasi-separated
  over $S$, with affine stabilizers.  
  Let $x\in |\cX|$ be a point with
  residual gerbe $\cG_x$ and image $s\in |S|$ such that the residue field extension $\kappa(x)/\kappa(s)$ is finite.  
  Let $h_0 \colon \stk{W}_0 \to \cG_x$ be a smooth (resp.\ \'etale) morphism where $\stk{W}_0$ is a gerbe over the spectrum of a field and has linearly reductive stabilizer.  
  Then there exist an algebraic stack $\cW = [\Spec A / \GL_n]$ and a point $w \in |\cW|$ with an identification $\cG_w = \cW_0$ together with a cartesian diagram 
\[
\xymatrix{
\mathllap{\cG_w = \;}\cW_0 \ar[r]^{\smash{h_0}} \ar[d]  & \cG_x \ar[d] \\
\mathllap{[\Spec A/\GL_n] = \;}\cW \ar[r]^h   & \cX,
}
\]
where $h\colon (\stk{W},w) \to (\cX,x)$ is a 
smooth (resp.\ \'etale) pointed morphism and $w$ is closed in its fiber over $s$.
   Moreover, if $\cX$ has separated (resp.\ affine) diagonal and $h_0$ is representable, then $h$
  can be arranged to be representable (resp.\ affine). 
\end{theorem}

The flexibility of allowing $\cW_0 \to \cG_x$ to be a smooth or \'etale morphism (rather than an isomorphism) is useful when the stabilizer $G_x$ of $x$ is not linearly reductive, which is a particularly restrictive condition in positive characteristic.  Assuming that the residual gerbe $\cG_x = B G_x$ is trivial, one can choose any linearly reductive subgroup $H \subset G_x$ such that $G_x/H$ is smooth (resp. \'etale) and apply the theorem to the smooth (resp. \'etale) morphism $\cW_0 = BH \to BG_x$. 

\begin{remark}[Known results] 
  In the case that $\cX$ has finite inertia and $h_0$ is an isomorphism, this theorem had been established in \cite[Thm.~3.2]{aov}. When $S=\Spec k$ and $k$ is algebraically closed, this theorem follows from \cite[Thm.~1.1]{luna-field}, which established the stronger result that $\cW$ can be written as $[\Spec A/H]$.
\end{remark}

\begin{remark}[Further generalizations]
  In \Cref{T:etale-local-gms-connected} and \Cref{T:refinement}, 
  we provide a more refined description of the stack
  $\stk{W}$ as a quotient stack $[\Spec A/G]$ for a specific group scheme $G$
  in terms of properties of the gerbe $\stk{W}_0$.  For instance, if the stabilizer group of $\stk{W}_0$ is connected, 
  we can arrange that $G$ is split reductive, and if $\cW_0 = BG_0$ is neutral, we can arrange that $G$ is a 
  deformation of $G_0$ which is linearly reductive under mild 
  characteristic assumptions.

  In work with Halpern-Leistner \cite{non-local}, this theorem is generalized to allow the case where $\cW_0$ is an arbitrary linearly fundamental stack (rather than a gerbe over a field), where $x \in |\cX|$ is an arbitrary point (rather than the finiteness of $\kappa(x)/\kappa(s)$), and where $\cW_0 \to \cG_x$ is syntomic (rather than smooth or \'etale). 
\end{remark}

The proof of \Cref{T:base} is given in \Cref{S:local-structure} and
follows the same general strategy as the proof of \cite[Thm.~1.1]{luna-field}:
\begin{enumerate}
\item We begin by constructing smooth infinitesimal deformations $h_n\colon
  \stk{W}_n\to \cX_n$ where $\cX_n$ is the $n$th infinitesimal
  neighborhood of $\cG_x$ in $\cX$. 
\item We show that the system $\stk{W}_n$ effectivizes to a \emph{coherently
  complete} stack $\widehat{\stk{W}}$. This is \Cref{T:effectivity}.
\item Tannaka duality~\cite{hr-tannaka} (see also \S \ref{SS:tannaka})
 then gives us a unique formally smooth
  morphism $\widehat{h}\colon \widehat{\stk{W}}\to \cX$.
\item Finally we apply equivariant Artin
  algebraization~\cite[App.~A]{luna-field} to approximate $\widehat{h}$ with a
  smooth morphism $h\colon \stk{W}\to \cX$.
\end{enumerate}
Step (1) follows by standard infinitesimal deformation theory.
Step (2) is the main technical result of this paper. \Cref{T:effectivity} is far more general
than the related results in \cite{luna-field}---even over an algebraically closed
field.  Step (3) was handled in~\cite{hr-tannaka}. 
Steps (1)--(3) are summarized in \Cref{T:microlocalization}.
The equivariant Artin algebraization results established in
\cite[App.~A]{luna-field} are sufficient for step (4).

\begin{remark}[Existence theorem]
\Cref{T:base} and its refinements
are fundamental ingredients in the recent article of the first author 
with Halpern-Leistner and Heinloth on
establishing necessary and sufficient conditions for an algebraic stack to admit a good 
moduli space \cite{ahlh}.
\end{remark}

\subsection{Coherent completeness} The notion of an algebraic stack that is coherently complete with respect to a closed substack plays an essential role in this paper.  It is not only used in Step (2) above but it is featured in many of our other results and techniques.  The definition first appeared in \cite[Defn.~2.1]{luna-field}.

\begin{definition} \label{D:coherently-complete}
Let $\cZ \subseteq \cX$ be a closed immersion of noetherian algebraic stacks. We say that the pair $(\cX,\cZ)$ is \emph{coherently complete} (or $\cX$ is {\it coherently complete along $\cZ$}) if the natural functor
\begin{equation*}
  \COH(\cX) \to \varprojlim_n \COH(\thck{\cX}{\cZ}{n}),
\end{equation*}
from the abelian category of coherent sheaves on $\cX$ to the category of
projective systems of coherent sheaves on the $n$th nilpotent
thickenings $\cX^{[n]}_{\cZ}$ of $\cZ \subset \cX$, is an equivalence
of categories. 
\end{definition}

A noetherian affine scheme $\Spec A$ is coherently complete along $\Spec(A/I)$ if and only if $A$ is $I$-adically complete
(\Cref{EX:adic-ring}).
The following statement was an essential ingredient in all of the main results of \cite{luna-field}: if $A$ is a noetherian
$k$-algebra, where $k$ is a field, and $G$ is a
linearly reductive affine group scheme over $k$ acting on $\spec A$ such that there is a $k$-point fixed by $G$ and
the ring of invariants $A^G$ is a complete local ring, then the
quotient stack $[\spec A /G]$ is coherently complete along the residual gerbe of its unique closed point \cite[Thm.~1.3]{luna-field}.

In this article, coherent completeness also features prominently. In fact, we
establish the following generalization of \cite[Thm.~1.3]{luna-field}, where we
do not assume a
priori that $\cX$ has the resolution property, only that the closed substack
$\cZ$ has it.

\begin{theorem}[Coherent completeness]\label{T:complete}
  Let $\cX$ be a noetherian algebraic stack with affine diagonal
  and good moduli space $\pi \colon \cX \to X =\Spec A$. Let
  $\cZ \subseteq \cX$ be a closed substack defined by a
  coherent ideal $\cI$.  Let $I=\Gamma(\cX,\cI)$.  If $\cZ$ has the
  resolution property, then $\cX$ is coherently complete along
  $\cZ$ if and only if $A$ is $I$-adically complete. If this is the case,
  then $\cX$ has the resolution property.
\end{theorem}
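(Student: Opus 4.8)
The plan is to reduce the statement, via the good moduli space morphism $\pi\colon\cX\to X$, to a completion question about modules over $A$, and to leverage the resolution property on $\cZ$ together with the exactness of $\pi_*$ (for good moduli spaces with affine diagonal) to lift resolutions off $\cZ$. First I would recall the basic formal properties: since $\pi$ is a good moduli space, $\pi_*$ is exact on quasi-coherent sheaves and takes coherent sheaves to coherent sheaves (using that $X$ is noetherian), and $\pi_* \oh_\cX = \oh_X$; moreover formation of $\pi_*$ commutes with flat base change on $X$, in particular with the completion $A\to\hat A$ along $I$. The nilpotent thickenings $\thck{\cX}{\cZ}{n}$ have good moduli spaces $X_n = \Spec(A/J_n)$ where $J_n$ is the image of $\cI^{n+1}$ (or a cofinal system thereof), and $\hat A = \varprojlim_n A/J_n$ is the $I$-adic completion precisely because $I=\Gamma(\cX,\cI)$ and $\pi_*$ is exact so $I^{n+1}$ and $J_n$ define the same topology.

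The forward implication (coherent completeness $\Rightarrow$ $A$ is $I$-adically complete) is the easier half: applying the equivalence $\COH(\cX)\xrightarrow{\ \sim\ }\varprojlim_n \COH(\thck{\cX}{\cZ}{n})$ to the structure sheaf and pushing forward, one identifies $A=\Gamma(\cX,\oh_\cX)$ with $\varprojlim_n \Gamma(\thck{\cX}{\cZ}{n},\oh) = \varprojlim_n A/J_n = \hat A$. For the converse, assume $A$ is $I$-adically complete. The key input is the resolution property of $\cZ$: I would first bootstrap it to all the thickenings $\thck{\cX}{\cZ}{n}$ (they have the same underlying topological space and a vector bundle on $\cZ$ deforms — obstruction and deformation groups for bundles on a stack with affine good moduli space over an affine are controlled by coherent cohomology, which vanishes in positive degree after pushforward, so bundles on $\cZ_{n}$ extend to $\cZ_{n+1}$), and then, using completeness of $A$, promote a compatible system of vector bundles to an actual vector bundle on $\cX$ — this is where one shows $\cX$ itself has the resolution property, by an Artin–Rees / Grothendieck existence style argument after pushing to $X=\Spec A$.

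With the resolution property on $\cX$ in hand, the equivalence of categories is proved by the standard two-step device. Fully faithfulness: for $\cF,\cG\in\COH(\cX)$ one must show $\Hom_\cX(\cF,\cG)\to\varprojlim\Hom_{\cX_n}(\cF|_{\cX_n},\cG|_{\cX_n})$ is bijective; writing $\cF$ as a cokernel of a map of vector bundles and using the projection formula, this reduces to the statement that $M \to \varprojlim_n M\otimes_A A/J_n = \hat M$ is an isomorphism for finitely generated $A$-modules $M=\Gamma(\cX,\SHOM(\cF,\cG))$, which holds since $A$ is noetherian and $I$-adically complete. Essential surjectivity: given a compatible system $(\cF_n)$, resolve $\cF_0$ (hence compatibly each $\cF_n$, by lifting along the thickenings and using vanishing of the relevant $\Ext^1$'s after pushforward) by vector bundles, take the inverse limit of the resolutions — the limit of the bundles exists on $\cX$ by the resolution property plus completeness of $A$ — and define $\cF$ as the cokernel; then check $\cF|_{\cX_n}\cong\cF_n$.

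The main obstacle I expect is \emph{producing vector bundles on $\cX$ out of compatible systems on the $\cX_n$} — i.e.\ proving that $\cX$ has the resolution property from the hypothesis that only $\cZ$ does. The deformation-theoretic step (lifting a bundle from $\cZ_n$ to $\cZ_{n+1}$) is clean because the obstruction lives in $\mathrm{H}^2(\cZ_0, \SHOM(E,E)\otimes \cI^{n+1}/\cI^{n+2})$ — wait, rather in the appropriate cohomology group that is killed by $\pi_*$-exactness — but passing from the formal system $(E_n)$ to a genuine coherent (indeed locally free) sheaf $E$ on $\cX$ requires an effectivity/algebraization statement: one pushes the system $\pi_*\SHOM(E,E_n)$ down to $X$, uses that $A$ is complete to see these $A$-modules assemble, and then must descend the resulting formal bundle back up through $\pi$. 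Handling this carefully — ensuring the limiting sheaf is coherent, locally free, and restricts correctly — is the technical heart; everything else is formal manipulation with the projection formula and Nakayama.
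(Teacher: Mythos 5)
There is a genuine gap at the step you yourself flag as ``the technical heart'': passing from a compatible system of vector bundles on the thickenings $\thck{\cX}{\cZ}{n}$ to an actual vector bundle on $\cX$, i.e.\ establishing the resolution property of $\cX$. The mechanism you sketch --- push $\pi_*\SHom(E,E_n)$ down to $X=\Spec A$, use completeness of $A$ to assemble these $A$-modules, then ``descend the resulting formal bundle back up through $\pi$'' --- does not work. The substack $\cZ$ is in general much smaller than $\pi^{-1}(\pi(\cZ))$ (think of the origin $B\Gm$ inside $[\AA^2/\Gm]$ over $\Spec k\llbracket xy\rrbracket$), so a Grothendieck-existence argument on $X$ only controls sheaves on the completion of $\cX$ along $\pi^{-1}(V(I))$, not along $\cZ$; and an inverse limit of $A$-modules carries no recipe for producing a quasi-coherent sheaf on $\cX$ restricting correctly to the $\cZ_n$. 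In effect, algebraizing the formal bundle is equivalent to the coherent completeness you are trying to prove, so the argument as sketched is circular. This is precisely why the paper's proof does \emph{not} work sheaf-by-sheaf: it first builds an auxiliary \emph{algebraic stack} $\hat{\cX}$ realizing the formal completion --- the Effectivity Theorem (\Cref{T:effectivity}), whose proof embeds the adic system $\{\thck{\cX}{\cZ}{n}\}$ into a single linearly fundamental stack $\cH$ affine over $B\GL_{N,X}$ (\Cref{L:adic_embedding}), and in positive/mixed characteristic needs the deformation theory of nice group schemes and \Cref{T:adequate+lin-red=>good} to see that $\cH$ is cohomologically affine --- and only then applies the resolution-property case (\Cref{P:complete1}) to $\cH$. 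Finally $\hat{\cX}\cong\cX$ is deduced from Tannaka duality plus Luna's fundamental lemma (\Cref{L:fundamental-lemma}), and the resolution property of $\cX$ comes out as a \emph{consequence}, not an intermediate input.

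The parts of your proposal that do work are: the forward implication (full faithfulness applied to $\oh_{\cX}$ identifies $A$ with $\varprojlim_n A/J_n$, and the Rees-algebra finiteness argument shows the $(J_n)$- and $I$-adic topologies agree; the paper's \Cref{P:gms-cohocomp-necc} instead uses Tannaka duality, but your route is also fine); the deformation of vector bundles across the thickenings $\cZ_n$ (the obstruction lies in an $\mathrm{H}^2$ on the cohomologically affine stack $\cZ$, hence vanishes), so each $\cZ_n$ does have the resolution property; and the final two-step argument (full faithfulness via formal functions, essential surjectivity via a two-term resolution by vector bundles), which is exactly the paper's \Cref{P:complete1} --- but that proposition explicitly \emph{assumes} $\cX$ has the resolution property, which is the very thing your middle step fails to deliver.
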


This theorem is proved in \Cref{S:completeness-and-effectivity}.
The difference between the statement above and formal GAGA for good moduli space morphisms  is that the statement above asserts that $\cX$ is coherently complete along $\cZ$ and not
merely along $\pi^{-1}(\pi(\cZ))$.  Indeed, as a consequence of this theorem, we can easily deduce the following version of formal GAGA (\Cref{C:formal-gaga}), which had been established in  \cite[Thm.~1.1]{geraschenko-zb_fGAGA} with the additional hypotheses that $\cX$ has the resolution property and $I$ is maximal, and in \cite[Cor.~4.14]{luna-field} in the case that $\cX$ is defined over a field and $I$ is maximal.  

\begin{corollary}[Formal GAGA]\label{C:formal-gaga}
Let $\cX$ be a noetherian algebraic stack with affine diagonal. Suppose there exists a good moduli space $\pi \colon \cX \to \Spec A$, where $A$ is noetherian and $I$-adically complete.  Suppose that either
\begin{enumerate}
	\item $I \subset A$ is a maximal ideal; or
	\item $\cX \times_{\Spec A} \Spec(A/I)$ has the resolution property.
\end{enumerate}
Then $\cX$ has the resolution property and is coherently complete along $\cX \times_{\Spec A} \Spec(A/I)$.
\end{corollary}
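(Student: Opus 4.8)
The plan is to derive \Cref{C:formal-gaga} from \Cref{T:complete} by reducing the hypotheses of the corollary to those of the theorem. Given the data of the corollary, set $A = R$, $X = \Spec R$, and let $\cZ = \cX \times_{\Spec R} \Spec(R/I)$ be the closed substack defined by the coherent ideal $\cI = I\cdot \oh_\cX$. One first checks that $I = \Gamma(\cX, \cI)$: since $\pi$ is a good moduli space morphism it is cohomologically affine and $\pi_* \oh_\cX = \oh_X$, so $\pi_*(I \oh_\cX) = I$ because $\pi_*$ is exact on quasi-coherent sheaves and $\pi^* I \to \oh_\cX$ has image the ideal sheaf whose pushforward, again by exactness of $\pi_*$ and the projection formula, is $I \cdot \pi_*\oh_\cX = I$. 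Thus the ideal $I$ of the corollary agrees with the ideal $I = \Gamma(\cX,\cI)$ of \Cref{T:complete}, and the nilpotent thickenings $\thck{\cX}{\cZ}{n}$ coincide with $\cX \times_{\Spec R}\Spec(R/I^{n+1})$.

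**The two cases.** In case (2) the hypothesis of \Cref{T:complete} — that $\cZ$ has the resolution property — holds by assumption, and $R = A$ is $I$-adically complete by hypothesis, so \Cref{T:complete} immediately gives that $\cX$ is coherently complete along $\cZ$ and has the resolution property; unwinding \Cref{D:coherently-complete} yields exactly the asserted equivalence $\Coh(\cX) \xrightarrow{\sim} \ilim_n \Coh(\cX\times_{\Spec R}\Spec(R/I^{n+1}))$. Case (1), where $I$ is maximal, must be reduced to case (2): here $\cZ = \cX \times_{\Spec R}\Spec(\kappa)$ for the residue field $\kappa = R/I$, a noetherian algebraic stack with affine diagonal over a field, and one needs that such a stack automatically has the resolution property. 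The key point is that $\cZ$ itself has a good moduli space, namely $\Spec\kappa$ (the base change of $\pi$), and a noetherian algebraic stack with affine diagonal and a good moduli space that is the spectrum of a \emph{field} has the resolution property — its closed points have linearly reductive stabilizers and one invokes the structure theory of such stacks (or directly \cite[Thm.~4.12(x) and its consequences]{luna-field}, i.e.\ the resolution property for $[\Spec A / G]$ with $G$ linearly reductive over a field). Once $\cZ$ is known to have the resolution property, case (1) becomes an instance of case (2).

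**The main obstacle** I expect is case (1): verifying that $\cZ = \cX\times_{\Spec R}\Spec\kappa$ has the resolution property purely from "affine diagonal, noetherian, good moduli space $\Spec\kappa$" without circularity — one wants to avoid secretly using \Cref{T:base} or results whose proof depends on this corollary. The clean route is to note that $\cZ$ has a unique closed point (since its good moduli space is a point), that point has linearly reductive stabilizer (a standard property of good moduli spaces: closed points have linearly reductive stabilizers), and then apply the local structure theorem over a field, \Cref{T:field}, together with Tannaka/\'etale descent to conclude $\cZ$ has the resolution property. In practice the paper will cite the appropriate statement from \cite{luna-field} for stacks with good moduli space a field; modulo that citation, the deduction of \Cref{C:formal-gaga} from \Cref{T:complete} is formal. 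The remaining bookkeeping — that the transition functors in the inverse limit are the obvious restrictions and that $\Coh$ of a noetherian stack coincides with finitely presented quasi-coherent sheaves — is routine.
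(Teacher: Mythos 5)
Your proposal is correct and follows essentially the same route as the paper: case (2) is a direct application of \Cref{T:complete} with $\cZ = \cX\times_{\Spec R}\Spec(R/I)$, and case (1) reduces to case (2) because the central fiber over a field automatically has the resolution property, which the paper obtains by citing \cite[Cor.~4.14]{luna-field} (rather than rederiving it via \Cref{T:field} as you tentatively suggest). Your preliminary check that $I = \Gamma(\cX, I\oh_\cX)$ is a harmless bit of bookkeeping the paper omits.
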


\subsection{Effectivity}\label{S:def-of-adic-sequence}
The key method to prove many of the results in this paper
is an effectivity result
for algebraic stacks. This is similar in spirit to Grothendieck's
result on algebraization of formal schemes \cite[III.5.4.5]{EGA}. 

\begin{definition} \label{D:adic}
An \emph{adic sequence} is a sequence of closed immersions
\[
\cX_0\hookrightarrow\cX_1\hookrightarrow\dots
\]
of noetherian algebraic stacks such that if $\cI_{(j)}$ denotes the coherent
sheaf of ideals defining $u_{0j}\colon \cX_0 \hookrightarrow \cX_j$, then
$\cI_{(j)}^{i+1}$ defines $u_{ij} \colon \cX_i \hookrightarrow \cX_j$ for
every $i\leq j$.
\end{definition}

The sequence of infinitesimal thickenings of a closed substack of a noetherian algebraic stack is adic. 

\begin{definition} \label{D:coherent-completion}
Let $\{\cX_n\}_{n\geq 0}$ be an adic sequence of algebraic stacks. An algebraic stack $\hat{\cX}$ is a \emph{completion} of $\{\cX_n\}$ if
\begin{enumerate}
\item there are compatible closed immersions $\cX_n\hookrightarrow
  \hat{\cX}$ for all $n$ such that $\cX_n$ is the $n$th infinitesimal neighborhood of $\cX_0$ in $\hat{\cX}$;
\item $\hat{\cX}$ is noetherian with affine diagonal; and
\item $\hat{\cX}$ is coherently complete along $\cX_0$.
\end{enumerate}
\end{definition}

By Tannaka duality (see \S \ref{SS:tannaka}), the completion is unique if it exists. Moreover,
Tannaka duality  implies that
if the completion exists, then it is the colimit of $\{\cX_n\}_{n\geq 0}$ in
the category of noetherian stacks with quasi-affine diagonal (and in the
category of algebraic stacks with affine stabilizers if $\cX_0$ is
excellent). 

The following result %
is our main effectivity theorem.  An algebraic stack $\cX$ is {\it linearly fundamental} if $\cX$ is cohomologically affine and isomorphic to $[U/\GL_{n,\ZZ}]$ for an affine scheme $U$.  An important example of a linearly fundamental stack is the quotient stack of an affine scheme by a linearly reductive group scheme.  

\begin{theorem}[Effectivity]\label{T:effectivity}
  Let $\{\cX_n\}_{n\geq 0}$ be an adic sequence of noetherian algebraic
  stacks. If $\cX_0$ is linearly fundamental, then the 
  completion $\hat{\cX}$ exists and is 
  linearly fundamental.
\end{theorem}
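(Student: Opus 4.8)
The plan is to reduce the general statement to the key case already treated in the paper---namely, the coherent completeness theorem (\Cref{T:complete}) for a noetherian stack with affine diagonal and good moduli space---by exploiting the structure of linearly fundamental stacks. Recall that $\cX_0$ linearly fundamental means (see \Cref{D:fundamental}) that $\cX_0 \cong [\Spec A_0 / \GL_{n,0}]$ (or rather is cohomologically affine with the resolution property and linearly reductive stabilizers), so that $\cX_0$ has a good moduli space $X_0 = \Spec A_0^{\GL_n}$ and the resolution property. The idea is that each $\cX_n$ should inherit the same structure, the good moduli spaces $X_n = \Spec A_n$ should assemble into an adic system of affine schemes, and the completion $\hat\cX$ should be built over the $I$-adic completion $\hat A = \varprojlim A_n$.

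First I would show that the resolution property and linear reductivity of stabilizers propagate up the adic sequence: since $\cX_0 \hookrightarrow \cX_n$ is a nilpotent closed immersion with the same underlying topological space and $\cX_0$ is linearly fundamental, each $\cX_n$ is cohomologically affine (pushforward along $\cX_n \to X_n$ is exact, checked via the filtration by powers of $\cI_{(n)}$ and the vanishing of higher cohomology on $\cX_0$) and has the resolution property (a vector bundle resolution on $\cX_0$ lifts step by step, since the obstructions to lifting live in coherent cohomology groups that vanish by cohomological affineness---this is standard infinitesimal deformation theory for the resolution property, and linear reductivity of stabilizers is a condition on the reduction so is automatic). Hence each $\cX_n$ is linearly fundamental, has a good moduli space $\pi_n \colon \cX_n \to X_n = \Spec A_n$ with $A_n = \Gamma(\cX_n, \oh_{\cX_n})$, and the closed immersions $\cX_i \hookrightarrow \cX_j$ induce closed immersions $X_i \hookrightarrow X_j$ because good moduli spaces are functorial and taking invariants is exact in the linearly reductive setting. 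One checks the system $\{X_n\}$ is adic with ideal $I = \ker(A \to A_0)$ where $A = \varprojlim A_n$; indeed, applying the exact functor $\Gamma(\cX_n, -)$ to $0 \to \cI_{(n)}^{i+1} \to \oh_{\cX_n} \to \oh_{\cX_i} \to 0$ identifies the ideal of $X_i$ in $X_n$ with $\Gamma(\cX_n, \cI_{(n)}^{i+1})$, and a further cohomological-affineness argument shows $\Gamma(\cX_n,\cI_{(n)}^{i+1}) = (\ker(A_n \to A_0))^{i+1}$ up to the necessary bookkeeping.

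Next, let $\hat A = \varprojlim_n A_n$, a noetherian ring (it is $I$-adically complete, and one must verify noetherianness---this follows since $A_0$ is noetherian and $\hat A$ is $I$-adically complete with $\hat A / I \cong A_0$, using that $\hat A$ is the $I$-adic completion of $A_1$, say, which is of finite type issues aside; more carefully one invokes that a complete ring with noetherian residue ring and finitely generated $I$ is noetherian). I would then construct $\hat\cX$ by descent/approximation: choose a presentation $\cX_0 = [\Spec B_0/\GL_N]$, lift the $\GL_N$-action and the affine scheme $\Spec B_0$ compatibly through the tower (again infinitesimal deformation theory: affine morphisms deform uniquely, and $\GL_N$-linearizations deform since $\GL_N$ is smooth, with obstructions in vanishing cohomology), obtaining $\cX_n = [\Spec B_n / \GL_N]$ with $B_n$ a $\GL_N$-equivariant adic sequence of rings; set $\hat B = \varprojlim B_n$ and $\hat\cX := [\Spec \hat B / \GL_N]$. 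Then $\hat\cX$ is noetherian with affine diagonal, comes with compatible closed immersions $\cX_n \hookrightarrow \hat\cX$, its good moduli space is $\Spec \hat B^{\GL_N} = \Spec \hat A$, and $\cX_0$ is the closed substack cut out by $I \hat B$. Finally, coherent completeness of $\hat\cX$ along $\cX_0$ is \emph{exactly} \Cref{T:complete} applied to $\hat\cX$: it has affine diagonal, good moduli space $\Spec \hat A$ with $\hat A$ being $I$-adically complete, and $\cX_0$ has the resolution property, so $\hat\cX$ is coherently complete along $\cX_0$ and itself has the resolution property---in particular $\hat\cX$ is linearly fundamental (its stabilizers, being those of the reduction, are linearly reductive).

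The main obstacle I anticipate is not the deformation-theoretic lifting (steps 1 and 3 are routine, modulo care), but rather establishing that the inverse limit ring $\hat A$ (equivalently $\hat B^{\GL_N}$) is noetherian and genuinely $I$-adically complete with $\hat A/I^{n+1} \cong A_n$, so that \Cref{T:complete} applies on the nose; the subtlety is that $\varprojlim$ and taking $\GL_N$-invariants must be interchanged and that completeness of $\hat B$ as a ring does not formally give completeness of the invariant subring without using linear reductivity (exactness of invariants) and the adic hypothesis. A secondary subtlety is checking that the constructed $[\Spec \hat B/\GL_N]$ does not depend on the chosen presentation and really is the (unique, by Tannaka duality) completion---but uniqueness is granted by the remark preceding the theorem, so it suffices to produce one completion with the stated properties.
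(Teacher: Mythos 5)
Your steps (1) and (2) are sound and essentially reproduce the paper's preliminary reductions (each $\cX_n$ is linearly fundamental, and $A=\varprojlim_n\Gamma(\cX_n,\oh_{\cX_n})$ is noetherian and $I$-adically complete; cf.\ \Cref{L:adic_noetherian_gms}). The fatal gap is in step (3): the inverse limit $\hat B=\varprojlim_n B_n$ of the coordinate rings of the $\GL_N$-torsors does \emph{not} carry an algebraic $\GL_N$-action, so $[\Spec\hat B/\GL_N]$ is not a well-defined algebraic stack. The coactions $B_n\to B_n\otimes\Gamma(\GL_N,\oh_{\GL_N})$ assemble only into a map $\hat B\to\varprojlim_n\bigl(B_n\otimes\Gamma(\GL_N,\oh_{\GL_N})\bigr)$, and this target strictly contains $\hat B\otimes\Gamma(\GL_N,\oh_{\GL_N})$; equivalently, an element of $\hat B$ need not be locally finite for the formal action. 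Concretely, let $\GG_m$ act on $\Spec k[x,y]$ with weights $1,-1$ and take $\cX_n=[\Spec(k[x,y]/(xy)^{n+1})/\GG_m]$, an adic sequence with $\cX_0$ linearly fundamental. The element $\sum_{k\geq 0}x^{2k}y^{k}$ lies in $\varprojlim_n k[x,y]/(xy)^{n+1}$ but has nonzero components in infinitely many $\GG_m$-weights, so no algebraic $\GG_m$-action on the limit ring is compatible with the given actions on the quotients; the true completion is $[\Spec C/\GG_m]$ with $C=k[x,y]\otimes_{k[xy]}k\llbracket xy\rrbracket$, a proper subring of the naive limit. This is exactly the difficulty the effectivity theorem must overcome, and the paper's proof takes a different route: rather than algebraizing the torsor, it embeds the whole adic system as closed substacks of a single ambient algebraic stack $\cH$ affine over $B\GL_{N,X}$ with $X=\Spec A$ (\Cref{L:adic_embedding}, built by deforming a surjection from a vector bundle using the resolution property of $B\GL_{N,X}$), and then algebraizes the system of ideal sheaves using coherent completeness of $\cH$ along $\cX_0$ (\Cref{prop:effectivize_adic_gms}).

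A second, independent gap: even granting an algebraic model $[\Spec\hat B/\GL_N]$, you need its adequate moduli space $\Spec\hat B^{\GL_N}$ to be a \emph{good} moduli space before \Cref{P:complete1} applies (you cannot invoke the full \Cref{T:complete}, whose proof uses the present theorem). In positive or mixed characteristic $\GL_N$ is only geometrically reductive, and the implication ``fundamental with linearly reductive stabilizers at closed points $\Rightarrow$ linearly fundamental'' is \Cref{C:adequate+lin-red=>good:fundamental}, resting on \Cref{T:adequate+lin-red=>good}, whose proof in turn uses the local case of effectivity via deformations of nice group schemes (\Cref{S:nice,S:effectivity-2}). In characteristic zero your outline is close in spirit to the paper's \Cref{cor:effectivize_char_0}; outside characteristic zero, these two missing ingredients are where the real content of the theorem lies.
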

We prove \Cref{T:effectivity} in three stages of increasing
generality in \Cref{S:completeness-and-effectivity}. The case of characteristic
zero (\Cref{T:effectivity:char-0}) is reasonably
straightforward whereas the case
of positive and mixed characteristic requires a short detour through group
schemes (\Cref{P:nice-deformations}).
In \Cref{S:local-structure}, we then use \Cref{T:effectivity}
to establish the existence of formally smooth neighborhoods and completions, such as the following results.

\begin{theorem}[Formal neighborhoods]\label{T:microlocalization}
  Let $\cX$ be noetherian algebraic stack with affine stabilizers
  and $\cX_0 \subset \cX$ be a locally closed substack. Let
  $h_0 \colon \cW_0 \to \cX_0$ be a syntomic (e.g., smooth) morphism. Assume that $\cW_0$ is linearly fundamental and that its good moduli space is quasi-excellent.  Then there is a cartesian diagram
  \[
    \xymatrix{
      \cW_0 \ar[d]^{h_0} \ar@{-->}[r]			& \hat{\cW} \ar@{-->}[d]^h \\
      \cX_0 \ar[r] & \cX, }
\]
where $h \colon \hat{\cW} \to \cX$ is flat and
  $\hat{\cW}$ is noetherian, linearly fundamental and coherently complete along
  $\cW_0$. 
\end{theorem}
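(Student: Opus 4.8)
The plan is to produce the deformations $\cW_n$ of $\cW_0$ along the thickenings $\cX_n := \thck{\cX}{\cX_0}{n}$ of $\cX_0$ in $\cX$, assemble them into an adic sequence, effectivize by \Cref{T:effectivity}, and finally descend the compatible family of morphisms $\cW_n \to \cX_n$ to a single morphism $\hat\cW \to \cX$ by Tannaka duality. First I would reduce to the case where $\cX_0 \subset \cX$ is a genuine closed immersion: since the statement is local on $\cX$ (both the conclusion and the hypotheses), and $\cX_0 \subset \cX$ is locally closed, I may replace $\cX$ by an open substack in which $\cX_0$ is closed. Then the infinitesimal neighborhoods $\cX_n$ form an adic sequence with $(\cX_n)_{\mathrm{red}} = (\cX_0)_{\mathrm{red}}$.

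Next, the deformation-theoretic step: starting from the syntomic (in particular, flat and finitely presented) morphism $h_0 \colon \cW_0 \to \cX_0$, I construct successive flat deformations $h_n \colon \cW_n \to \cX_n$ fitting into cartesian squares $\cW_{n-1} = \cW_n \times_{\cX_n} \cX_{n-1}$. Because $\cX_{n-1} \hookrightarrow \cX_n$ is a square-zero (more precisely, nilpotent) extension with ideal a coherent sheaf on $(\cX_0)_{\mathrm{red}}$, and $h_0$ is syntomic, the obstruction to deforming lives in a suitable $\Ext^2$ of the cotangent complex $L_{\cW_0/\cX_0}$ with coefficients in the pullback of this ideal; but for syntomic (lci and flat) morphisms $L_{\cW_0/\cX_0}$ is a vector bundle placed in degree $0$ (even degree $1$ after dualizing considerations are irrelevant here since it's perfect of Tor-amplitude $[0,0]$... more precisely $L_{\cW_0/\cX_0}$ is locally free of rank the relative dimension), so the obstruction group vanishes and a deformation exists; it is unique up to (non-canonical) isomorphism because the relevant $\Ext^1$ classifies deformations and flatness is automatic. (Smoothness of $h_0$, if assumed, makes this even more standard.) The resulting $\cW_n$ are noetherian algebraic stacks, and by construction $\{\cW_n\}_{n\ge 0}$ is an adic sequence with $\cW_0$ its reduction-zero-th term; the defining ideal of $\cW_0 \subset \cW_n$ is the pullback of $\cI_{(n)}$, with the required $(i+1)$-st power condition inherited from $\cX$.

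Now apply \Cref{T:effectivity}: since $\cW_0$ is linearly fundamental by hypothesis, the completion $\hat\cW$ of the adic sequence $\{\cW_n\}$ exists, is linearly fundamental, and is coherently complete along $\cW_0$; in particular it is noetherian with affine diagonal, and receives compatible closed immersions $\cW_n \hookrightarrow \hat\cW$ which identify $\cW_n$ with $\thck{\hat\cW}{\cW_0}{n}$. It remains to produce $h \colon \hat\cW \to \cX$. The compatible morphisms $h_n \colon \cW_n \to \cX_n \hookrightarrow \cX$ form a compatible family $\hat\cW \supset \cW_n \to \cX$; by Tannaka duality for maps from a coherently complete noetherian stack with affine (hence quasi-affine) diagonal into an arbitrary algebraic stack (\S\ref{SS:tannaka}, \cite{hr-tannaka}), such a family extends uniquely to $h \colon \hat\cW \to \cX$, and the square in the statement is cartesian because it is so modulo every power of the ideal and both $\hat\cW$ and $\cX$ are determined by their thickenings along $\cW_0$, resp.\ $\cX_0$, in the relevant sense — more directly, $\hat\cW \times_\cX \cX_0$ is a closed substack of $\hat\cW$ whose $n$-th thickening along $\cW_0$ agrees with $\cW_n \times_{\cX_n}\cX_0$; for $n=0$ this gives $\cW_0$, and coherent completeness forces $\hat\cW \times_\cX \cX_0 = \cW_0$. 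Finally, flatness of $h$: $h$ is flat along $\cW_0$ since each $h_n$ is flat and flatness can be checked on the adic completion (local criterion of flatness, using that $\hat\cW$ is coherently complete along $\cW_0$ so that $\oh_{\hat\cW}$ is $\cI_{\cW_0}$-adically separated), and flatness is an open condition, but $\cW_0$ meets every point of $\hat\cW$ since $(\cW_0)_{\mathrm{red}} = (\hat\cW)_{\mathrm{red}}$, so $h$ is flat everywhere.

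The main obstacle is the input \Cref{T:effectivity} itself, which is the paper's central technical theorem; granting it, the only genuinely delicate point is verifying that Tannaka duality applies in the form needed — i.e., that a compatible system of maps $\cW_n \to \cX$ with $\cX$ only assumed algebraic with affine stabilizers (not affine diagonal) genuinely glues, which is exactly where coherent completeness of $\hat\cW$ along $\cW_0$ and the quasi-excellence hypothesis on the good moduli space of $\cW_0$ enter. The quasi-excellence hypothesis is what guarantees, via \Cref{T:effectivity}, that $\hat\cW$ lands in a class of stacks (excellent, linearly fundamental) for which the Tannakian recognition of \cite{hr-tannaka} is available against stacks with quasi-affine diagonal, and then the affine-stabilizers hypothesis on $\cX$ upgrades this to the generality stated.
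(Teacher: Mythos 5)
Your overall strategy---deform $\cW_0$ along the infinitesimal thickenings of $\cX_0$, assemble the deformations into an adic sequence, effectivize via \Cref{T:effectivity}, and descend to $\cX$ by Tannaka duality, with quasi-excellence entering through the Gabber-type ascent needed to apply the Tannakian statement against a target with only affine stabilizers---is exactly the paper's. The reduction to the closed case, the use of the local criterion of flatness, and the identification of $\hat{\cW}\times_\cX\cX_0$ with $\cW_0$ via coherent completeness all match.

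However, there is a genuine gap in the obstruction-vanishing step. First, your claim that for a syntomic morphism $L_{\cW_0/\cX_0}$ is a vector bundle placed in degree $0$ (perfect of Tor-amplitude $[0,0]$) is false: a syntomic morphism is lci but need not be smooth, and its cotangent complex is only perfect of amplitude $[-1,0]$, with the conormal contribution sitting in degree $-1$. Second, and more seriously, even if $L_{\cW_0/\cX_0}$ were a vector bundle $\sE$ in degree $0$, the obstruction group $\Ext^2_{\Orb_{\cW_0}}(\sE, g_0^*(\cI^n/\cI^{n+1}))$ would compute to $H^2(\cW_0, \sE^\vee\otimes g_0^*(\cI^n/\cI^{n+1}))$, which does \emph{not} vanish on a general algebraic stack. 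The vanishing requires that $\cW_0$ is cohomologically affine---this is precisely where the hypothesis that $\cW_0$ is linearly fundamental is used in the deformation step, and your argument never invokes it. The correct statement is: $L_{\cW_0/\cX_0}$ is perfect of amplitude $[-1,0]$, so $\SRHom(L_{\cW_0/\cX_0},-)$ applied to a quasi-coherent sheaf has cohomology only in degrees $0$ and $1$, and cohomological affineness of $\cW_0$ then kills the global $\Ext^2$. A further (more minor) point you do not address is that the deformation theory of \cite{olsson-deformation} is stated for representable morphisms, whereas $g_0$ need not be representable; the paper circumvents this by choosing an affine morphism $\cW_0\to B\GL_N$ and replacing $\cX$ by $\cX\times B\GL_N$, so that the resulting morphism becomes representable while remaining syntomic.
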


Applying the theorem above to the case when $\cW_0 = \cX_0$ is the residual gerbe of a point with linearly reductive stabilizer gives the existence of completions.

\begin{corollary}[Existence of completions]\label{C:existence-completions}
Let $\cX$ be a noetherian algebraic stack with affine stabilizers. For any point $x \in |\cX|$ with linearly reductive stabilizer, the completion of $\cX$ at $x$ exists and is linearly fundamental.
\end{corollary}

In fact, the last two results are proven more generally for pro-immersions (\Cref{T:microlocalization:general} and  \Cref{C:existence-completions:general}). 

\subsection{Further results and applications}
In the course of establishing the results above, we prove several foundational results of independent interest.  

\subsubsection{Local structure of good moduli spaces}
	We provide the following refinement of \Cref{T:base}: if $\cX$ admits a good moduli space $X$, then  \'etale-locally on $X$, $\cX$ is of the form $[\Spec A / \GL_n]$  (\Cref{T:etale-local-gms}). Also see \Cref{T:etale-local-gms-connected}.
  We also prove that a good moduli space $\cX \to X$ necessarily has affine diagonal as long as $\cX$ has separated diagonal and affine stabilizers (\Cref{T:etale-local-gms}); this result is new even over an algebraically closed field.

\subsubsection{Structure of linearly reductive affine group schemes}
We prove that every linearly reductive group scheme $G\to S$ is \'etale-locally
embeddable (\Cref{C:lin-red-groups}) and canonically an extension
of a finite flat tame group scheme by a smooth linearly reductive group scheme
with connected fibers $G^0_\sm$ (\Cref{T:char-of-lin-reductive-groups}). If $S$
is of equal characteristic, then $G$ is canonically an extension of a finite
\'etale tame group scheme by a linearly reductive group scheme $G^0$ with
connected fibers. In equal positive characteristic, $G^0$ is of multiplicative
type and we say that $G$ is nice.
We also prove that if $(S,s)$ is henselian and $G_s\to \Spec \kappa(s)$ is
linearly reductive, then there exists an embeddable linearly reductive group
scheme $G\to S$ extending $G_s$ (\Cref{P:deformation-linearly-reductive}).

\subsubsection{Representability of local quotient presentations}
We have resolved the issue (see \cite[Question 1.10]{luna-field}) of representability of the local quotient presentation in the presence of a separated diagonal (\Cref{P:refinement}\itemref{P:refinement:separated_diag}).

\subsubsection{Results on adequate moduli spaces}
We prove that adequate moduli spaces are universal for maps to algebraic spaces (\Cref{T:universal}) and establish Luna's fundamental lemma for adequate moduli spaces (\Cref{L:fundamental-lemma}).
We also prove that an adequate moduli space $\cX \to X$, where the closed points of $\cX$ have linearly reductive stabilizers, is necessarily a good moduli space (\Cref{T:adequate+lin-red=>good} and \Cref{C:adequate+lin-red=>good2}).  These foundational results are even new over an algebraically closed field.

\subsubsection{Compact generation and algebraicity}  
We prove compact generation of the derived category of an algebraic stack admitting a good moduli space (\Cref{P:compact-generation}). 
We also prove algebraicity results for stacks parameterizing coherent sheaves (\Cref{T:coh}), Quot schemes (\Cref{C:quot}), and Hom stacks (\Cref{T:hom}).

\subsubsection{Deforming objects}
We prove that various objects and properties, defined on a closed substack $\cX_0$ of $\cX$, extend to an \'etale neighborhood of $\cX_0$ if there is an affine good moduli space $\cX \to X$ (see \Cref{P:extension-gerbes,P:extension-groups,P:extension-resprop,P:extension-morphisms,P:extension-nicefund,P:extension-linfund}). These are applications of deformation results for henselian pairs of algebraic stacks (see \Cref{P:deformation-resprop,P:deformation-sections,P:deformation-morphisms,P:deformation-linearly-fundamental,P:deformation-linearly-reductive}). These results are new even over an algebraically closed field.

\subsubsection{Equivariant geometry}
  We provide generalizations of Sumihiro's theorem on torus actions (\Cref{T:sumihiro} and \Cref{C:sumihiro-torus}) and establish a relative version of Luna's \'etale slice theorem (\Cref{T:luna}).

\subsubsection{Completions and henselizations}
  In addition to establishing the existence of completions (\Cref{C:existence-completions}), we prove the existence of henselizations of algebraic stacks at points with linearly reductive stabilizer (\Cref{T:henselizations-general}).
	We prove that two algebraic stacks are \'etale-locally isomorphic near points with linearly reductive stabilizers if and only if they have isomorphic henselizations or completions (\Cref{T:etale-local-equivalences}).

\subsection{Overview} %
\Cref{S:definitions,S:pairs} consist of the basic setup with some applications to adequate moduli spaces.  We provide definitions and properties of reductive group schemes, fundamental stacks and local, henselian and coherently complete pairs.  %

\Cref{S:completeness-and-effectivity,S:local-structure} contain the proofs of the central theorems of this paper:  formal functions for good moduli spaces (\Cref{C:formal-fns}), coherent completeness for good moduli spaces (\Cref{T:complete})%
, effectivity of adic sequences (\Cref{T:effectivity})%
, 
the existence of formal neighborhoods (\Cref{T:microlocalization}),
the existence of completions (\Cref{C:existence-completions}),
and the local structure theorem (\Cref{T:base}).

\Cref{S:applications} contains our applications to algebraic stacks with good moduli spaces. %
\Cref{S:approximation-deformation} contains technical results on approximation of linearly fundamental stacks%
, approximation of good moduli spaces%
, and deformation of objects over henselian pairs. \Cref{S:refinements} uses these results to give refinements of %
the local structure theorem.
\Cref{S:structure} contains our structure results for linearly reductive group schemes.  \Cref{S:further-applications} contains our applications to equivariant geometry, henselizations and algebraicity. %

\subsection{Conventions on good moduli spaces}
Throughout this paper, we use the concepts of cohomologically affine morphisms and adequately affine morphisms:
\begin{definition}
  A quasi-compact and quasi-separated morphism $f \co \cX \to \cY$ of algebraic stacks is {\it cohomologically affine} (resp.\ {\it adequately affine}) if
  \begin{enumerate}
  \item $f_*$ is exact on the category of quasi-coherent $\oh_{\cX}$-modules (resp.\ if for every surjection $\cA \to \cB$ of quasi-coherent $\oh_{\cX}$-algebras, then every section $s$ of $f_*(\cB)$ over a smooth morphism $\Spec A \to \cY$ has a positive power that lifts to a section of $f_*(\cA)$); and
  \item property (1) holds after arbitrary base change $\cY'\to \cY$.
  \end{enumerate}
\end{definition}
In the original definitions, \cite[Defn.\ 3.1]{alper-good}  and \cite[Defn.\ 4.1.1]{alper-adequate}, condition (2) was not required. If $\cY$ has quasi-affine diagonal (e.g., $\cY$ is a quasi-separated algebraic space), then (2) holds automatically (\cite[Prop.\ 3.10(vii)]{alper-good} and \cite[Prop.\ 4.2.1(6)]{alper-adequate}).

We also use throughout the concepts of good moduli spaces \cite[Defn.\ 4.1]{alper-good} and adequate moduli spaces \cite[Defn.\ 5.1.1]{alper-adequate}:
\begin{definition}
  A quasi-compact and quasi-separated morphism $\pi \co \cX \to X$ of algebraic stacks, where $X$ is an algebraic space, is a {\it good moduli space} (resp.\ an {\it adequate moduli space}) if $\pi$ is cohomologically affine (resp.\ adequately affine) and $\oh_X \to \pi_* \oh_{\cX}$ is an isomorphism.
\end{definition}

\begin{definition}\label{D:lin/geom-reductive}
  A group algebraic space $G\to S$ is \fndefn{linearly reductive} (resp.\ \fndefn{geometrically reductive}) if
  \begin{enumerate}
  \item $G\to S$ is flat and of finite presentation;
  \item $G\to S$ is affine; and
  \item $BG\to S$ is a good (resp.\ adequate) moduli space.
  \end{enumerate}
\end{definition}
In \cite[Defn.~12.1]{alper-good} and \cite[Defn.~9.1.1]{alper-adequate},
condition (2) was not required. This is, however, often automatic, see
\Cref{R:linearly-reductive-affine}.

\begin{definition}
  An algebraic stack $\cX$ is said to have the {\it resolution property} if every quasi-coherent $\oh_{\cX}$-module of finite type is a quotient of a locally free sheaf of
  finite rank.
\end{definition}
By the main theorems of \cite{MR2108211} and \cite{gross-resolution}, a quasi-compact and quasi-separated algebraic stack $\cX$ is isomorphic to $[U/\GL_N]$, where $U$ is a quasi-affine scheme and $N$ is a positive integer, if and only if the closed points of $\cX$ have affine stabilizers and $\cX$ has the resolution property. Note that when this is the case, $\cX$ has affine diagonal.

\subsection{Recollection of Tannaka duality}\label{SS:tannaka}  
  If $\cX$ is a locally noetherian algebraic stack and $\cZ \subseteq
  \cX$ is a closed substack, we denote by $\thck{\cX}{\cZ}{n}$ the $n$th order thickening of 
  $\cZ$ in $\cX$, that is, if $\cZ$ is defined by a sheaf of ideals $\shv{I}$, then $\thck{\cX}{\cZ}{n}$ is 
  defined by $\shv{I}^{n+1}$.
  If $i\colon \cZ \to \cX$ denotes the closed 
  immersion, then we write $\thckmap{i}{n}\colon \thck{\cX}{\cZ}{n} \to 
  \cX$ for the $n$th order thickening of~$i$.

We freely use the following form of Tannaka duality, which was established in \cite{hr-tannaka}.  Let $\cX$ be a noetherian algebraic stack
and let $\cZ \subseteq \cX$ be a closed substack such that $\cX$ is coherently complete along $\cZ$. 
Let $\cY$ be a noetherian algebraic stack with affine stabilizers.  Suppose that either 
\begin{enumerate}[label=(\alph*), ref=\alph*]
	\item \label{SS:tannaka:a} $\cX$ is locally the spectrum of a G-ring (e.g., quasi-excellent); or 
	\item  \label{SS:tannaka:b}  $\cY$ has quasi-affine diagonal.  
\end{enumerate}
Then the natural functor
\[
  \Hom(\cX, \cY) \to \ilim_n \Hom\bigl(\cX^{[n]}_{\cZ}, \cY\bigr)
\]
is an equivalence of categories.  This statement follows directly from \cite[Thms.~1.1 and~8.4]{hr-tannaka}; cf.\ the proof of  \cite[Cor.~2.8]{luna-field}.

\subsection{Acknowledgements}
We thank the referee for their careful reading and numerous suggestions that
have improved the text.  We
would also like to thank Johan de Jong, Dan Edidin, Elden Elmanto, Daniel
Halpern-Leistner, Jochen Heinloth, Marc Hoyois, Adeel Khan,
Amalendu Krishna, Svetlana Makarova,
Maria Yakerson and many others for useful conversations.

During the preparation of this paper, the first author was partially supported by the Australian Research Council (DE140101519), the National Science Foundation (DMS-1801976 and DMS-2100088), a Humboldt Fellowship and a Simons Fellowship. The second author was partially supported by the Australian Research Council (DE150101799, DP210103397 and FT210100405). The third author was partially supported by the Swedish Research Council (2011-5599 and 2015-05554) and the G\"oran Gustafsson Foundation for Research in Natural Sciences and Medicine. This collaboration was also supported by the G\"oran Gustafsson Foundation.

\section{Reductive group schemes and fundamental stacks}   \label{S:definitions}

In this section, we recall various notions of reductivity for group schemes and introduce certain classes of algebraic stacks that we will refer to as fundamental, linearly fundamental, and nicely fundamental. We also recall various relations between these notions.  Besides some approximation results at the end, this section is largely expository. 

\subsection{Reductive group schemes}  \label{S:reductive-definition}
Recall that an affine, flat, and finitely presented group algebraic space $G\to S$ is linearly (resp.\ geometrically) reductive if
$BG\to S$ is a good (resp.\ adequate) moduli space
(\Cref{D:lin/geom-reductive}). We also introduce the following notions.
\begin{definition}  \label{D:reductive/nice}
Let $G$ be a group algebraic space which is affine, flat, and of finite presentation over an algebraic space $S$.  We say that $G \to S$ is 
  \begin{enumerate}
  \item \fndefn{embeddable} if $G$ is a closed subgroup of $\GL(\cE)$ for a vector bundle $\cE$ on $S$;
  \item \fndefn{reductive} if $G \to S$ is smooth with reductive and connected geometric fibers \cite[Exp. XIX, Defn.~2.7]{MR0274460}; and  
  \item \fndefn{nice} if there is
    an open and closed normal subgroup $G^0 \subseteq G$ that is of multiplicative type over $S$ such that the \'etale group scheme $H=G/G^0$
    is finite over $S$ and $|H|$ is invertible on $S$. 
  \end{enumerate}
\end{definition}

Linearly reductive group schemes are the focus of this paper as their
representation theory is semi-simple. Geometrically reductive group schemes
appear since in positive characteristic $\GL_n$ is not linearly reductive but
merely geometrically reductive. They also appear as global deformations of
linearly reductive group schemes in mixed characteristic (\Cref{R:char0}).
Nice group schemes is a special class of linearly reductive group schemes
that deform well also in mixed characteristic.

\begin{remark}[Relations between the notions]
For group schemes, we have the implications:
\[
  \mbox{nice} \implies \mbox{linearly reductive} \implies \mbox{geometrically reductive} \Longleftarrow \mbox{reductive}.
\]
The first implication follows since a nice group algebraic space $G$ is an extension of the linearly reductive groups $G^0$ and $H$, and is thus linearly reductive \cite[Prop.~2.17]{alper-good}.
The second implication is immediate from the definitions, and is reversible in characteristic $0$ \cite[Rem.~9.1.3]{alper-adequate}.  The third implication is Seshadri's generalization \cite{seshadri} of Haboush's theorem, and is reversible if $G \to S$ is smooth with geometrically connected fibers \cite[Thm.~9.7.5]{alper-adequate}.  If $k$ is a field of characteristic $p$,  then $\GL_n$ is reductive over $k$ but not linearly reductive, and a finite non-reduced group scheme (e.g., $\alpha_p$) is geometrically reductive but not reductive.
\end{remark}

\begin{remark}[Positive characteristic]\label{R:nice_grp_pos_char} The notion of niceness is particularly useful in positive characteristic and was introduced in 
\cite[Defn.~1.1]{hallj_dary_alg_groups_classifying} for affine group schemes over a field $k$.   If $k$ is a field of characteristic $p$, an affine group scheme $G$ of finite type over $k$ is nice if and only if the connected component of the identity $G^0$ is of multiplicative 
type and $p$ does not divide the number of geometric components of $G$.  In this case, 
by Nagata's theorem \cite{MR0142667} and its generalization to the non-smooth case (cf.\ \cite[Thm.~1.2]{hallj_dary_alg_groups_classifying}), $G$ is nice if and only if it is linearly reductive; moreover, this is also true  over a base of equal characteristic $p$ (\Cref{T:char-of-lin-reductive-groups}). 
\end{remark}

\begin{remark}[Mixed characteristic] \label{R:char0}
 Consider a scheme $S$, a point $s \in S$ and a linearly reductive group scheme
 $G_0$ over $\kappa(s)$. If $G_0$ is nice (e.g., if $s$ has positive
 characteristic), then it deforms to a nice group scheme $G' \to
 S'$ over an \'etale neighborhood $S' \to S$ of $s$
 (\Cref{P:nice-deformations}).

  Deformations of linearly reductive group schemes are more subtle.  
  It is possible for a schemes $S$ to have a closed point $s \in S$ of characteristic 0 which has no open neighborhood of characteristic $0$.  For instance, let $R$ be the localization $\Sigma^{-1} \ZZ[x]$ where $\Sigma$ is
 the multiplicative submonoid generated by the elements $p+x$ as $p$ ranges over
 all primes. Then $S = \Spec R$ is a noetherian and excellent integral
 scheme, and $s = (x) \in S$ is a closed point with residue field $\QQ$ which
 has no characteristic $0$ neighborhood. Also see
 \Cref{SS:noetherian-counterexample}.  In such examples, a linearly reductive group scheme $G_0$ need not deform to a linearly reductive group scheme $G \to S'$
 over an \'etale neighborhood $S' \to S$ of $s$. For example, take
 $G_0=\GL_{2,\kappa(s)}$. However, $G_0$ does deform to a geometrically reductive embeddable group scheme over an \'etale neighborhood of $s$ (\Cref{P:extension-groups}).
\end{remark}

\begin{remark}[Embeddability and geometric reductivity]\label{R:resolution}
Any affine group scheme of finite type over a field is embeddable.  It is not known to which extent general affine group schemes are embeddable---even over the dual numbers \cite{mathoverflow_groups-over-dual-numbers}.   Thomason proved that certain reductive group schemes are embeddable \cite[Cor.~3.2]{thomason}; in particular, if $S$ is a normal, quasi-projective scheme, then every reductive group scheme $G \to S$ is embeddable. 
There is an example \cite[Exp.\ X,\S1.6]{MR0274459} of a 2-dimensional torus over the nodal cubic curve that is not locally isotrivial and hence not Zariski-locally embeddable.   We will eventually show that every linearly
reductive group scheme $G\to S$ is embeddable if $S$ is a normal quasi-projective scheme (\Cref{C:lin-red-embeddable-over-normal}) and always \'etale-locally embeddable (\Cref{C:lin-red-groups}\itemref{C:lin-red:etale-loc-emb}).

If $G$ is a closed subgroup of $\GL(\cE)$ for a vector bundle $\cE$ on an algebraic space $S$, then a generalization of Matsushima's theorem asserts that $G \to S$ is geometrically reductive if and only if the quotient $\GL(\cE)/G$ is affine \cite[Thm.~9.4.1]{alper-adequate}. 

If $S$ is affine and $G \to S$ is embeddable and geometrically reductive, then any quotient stack 
$\cX = [\spec A  / G]$ has the resolution property.  
Indeed, if $G$ is a closed subgroup of  $\GL(\cE)$ for some 
vector bundle $\cE$ of rank $n$ on $S$, then the $(\GL(\cE), \GL_{n,S})$-bitorsor $\Isom_{\oh_S}(\cE, \oh_S^n)$ induces an isomorphism $B_SGL(\cE) \cong B_S\GL_{n}$, and the composition $\cX=[\spec A/G] \to B_SG \to B_S \GL(\cE) \cong B_S\GL_{n}$ is affine, that is $\cX \cong [\spec B/\GL_{n,S}]$. By \cite[Thm.~1.1]{gross-resolution}, $\cX$ has the resolution property.
\end{remark}

\begin{remark}[Affineness]\label{R:linearly-reductive-affine}
In contrast to~\cite{alper-good}, we have only defined linear reductivity for
affine group schemes $G\to S$. We will however prove that
if $G \to S$ is a separated, flat group scheme of finite presentation with
affine fibers such that $BG\to S$ is cohomologically affine,
then $G\to S$ is affine (\Cref{C:lin-red-groups}\itemref{C:lin-red:sep+af=>affine}).
\end{remark}

\subsection{Fundamental stacks} \label{S:fundamental}
In \cite{luna-field}, we dealt with stacks of the form $[\Spec A/G]$ where
$G$ is a linearly reductive group scheme over a field $k$.  In this paper,  we are working over an arbitrary base and
it will be convenient to 
introduce the following classes of quotient stacks.

\begin{definition} \label{D:fundamental}
  Let $\cX$ be an algebraic stack.
  We say that $\cX$ is:
  \begin{enumerate}
  \item \fndefn{fundamental} if $\cX \cong [U/\GL_{n,\ZZ}]$ for an affine scheme $U$ and some integer $n$, i.e., $\cX$ admits an affine morphism to $B\GL_{n,\ZZ}$;
  \item \fndefn{linearly fundamental} if $\cX$ is fundamental and cohomologically affine; and
  \item \fndefn{nicely fundamental} if $\cX \cong [U/Q]$ for an affine scheme $U$ and a
    nice and embeddable group scheme $Q$, both over some common affine scheme $S$, i.e., $\cX$ admits an affine morphism to $B_SQ$.
 \end{enumerate}
\end{definition}

\begin{remark}[Relations between the notions]
For algebraic stacks, we have the obvious implications:
\[
\mbox{nicely fundamental} \implies \mbox{linearly fundamental} \implies \mbox{fundamental}
\]
If $\cX$ is fundamental (resp.\ linearly fundamental), then
$\cX$ admits an adequate (resp.\ good) moduli space: $\spec \Gamma(\cX,\Orb_{\cX})$.

In characteristic $0$, an algebraic stack is linearly fundamental if and only if it is
fundamental. We will show that in positive equicharacteristic, a linearly fundamental stack is nicely fundamental \'etale-locally over its good moduli space (\Cref{P:nice-neighborhood}).

The additional condition of a fundamental stack to be linearly fundamental is that $\cX \cong [\Spec B/\GL_N]$ is 
cohomologically affine, which means that the adequate
moduli space $\cX \to \Spec B^{\GL_n}$ is a good moduli space. We will show
that this happens
precisely when the stabilizer of every closed point is linearly reductive 
(\Cref{C:adequate+lin-red=>good:fundamental}).
\end{remark}

\begin{remark}[Equivalences I] \label{R:BG-fundamental}
If $G$ is a group scheme which is affine, flat and of finite presentation over an affine scheme $S$, then:
\begin{align*}
\mbox{$BG$ fundamental} &\iff \mbox{$G$ geometrically reductive and embeddable}\\
\mbox{$BG$ linearly fundamental} &\iff \mbox{$G$ linearly reductive and embeddable}\\
\mbox{$BG$ nicely fundamental} &\iff \mbox{$G$ nice and embeddable.}
\end{align*}
This follows from \Cref{R:resolution} and the definitions of geometrically reductive and linearly reductive, using that flat closed subgroups of diagonalizable groups are diagonalizable for the nice case.
If $\stX$ is an algebraic stack, then it also follows from the definitions that:
\begin{align*}
\mbox{$\stX$ fundamental} &\iff \mbox{$\stX=[\Spec A/G]$ for $G$ geom.\ red.\ and emb.}\\
\mbox{$\stX$ lin.\ fundamental} &\mathrlap{{}\impliedby}\phantom{{}\iff{}} \mbox{$\stX=[\Spec A/G]$ for $G$ lin.\ red.\ and emb.}\\
\mbox{$\stX$ nicely fundamental} &\iff \mbox{$\stX=[\Spec A/G]$ for $G$ nice and emb.}
\end{align*}
The second implication is not an equivalence, see \Cref{SS:noetherian-counterexample}. We will, however, show that under mild mixed characteristic hypotheses it is an equivalence \emph{\'etale-locally} over the good moduli space (\Cref{C:linearly-fundamental:lin-red-quot}).
\end{remark}

\begin{remark}[Equivalences II] An algebraic stack $\cX$ is a {\it global quotient stack} if $\cX \cong [U/\GL_n]$, where $U$ is an algebraic space.
Since adequately affine and representable morphisms are necessarily affine (\cite[Thm.~4.3.1]{alper-adequate}), we have the following equivalences for a quasi-compact and quasi-separated algebraic stack $\cX$:
\begin{align*}
\mbox{fundamental}& \iff \mbox{adequately affine and a global quotient}\\
\mbox{linearly fundamental}&\iff \mbox{cohomologically affine and a global quotient.}
\end{align*}
\end{remark}

\begin{proposition}\label{P:lr-gerbe-field}
  Let $\cG$ be a gerbe over a field $k$.
  \begin{enumerate}
  \item \label{PI:lr-gerbe-field:nice} If the stabilizer group of the unique
    point of $\cG$ is nice, then $\cG$ is nicely fundamental.
  \item \label{PI:lr-gerbe-field:char-p} If the characteristic of $k$
    is $p>0$ and $\cG$ has linearly reductive stabilizer group, then
    $\cG$ is nicely fundamental.
  \end{enumerate}
\end{proposition}
\begin{proof}
  Claim \eqref{PI:lr-gerbe-field:char-p} follows from \eqref{PI:lr-gerbe-field:nice} and \Cref{R:nice_grp_pos_char}. For claim \eqref{PI:lr-gerbe-field:nice}: since $\cG \to \spec k$ is smooth, there
  is a finite separable extension $k \subseteq k'$ that neutralizes
  the gerbe. Hence, $\cG_{k'} \cong BQ'$, for some nice
  group scheme $Q'$ over $k'$. Let $Q$ be the Weil restriction of $Q'$
  along $\spec k' \to \spec k$; then $Q$ is nice and there is an
  induced affine morphism $\cG \to BQ$.
\end{proof}

\subsection{Approximation of fundamental and nicely fundamental stacks}\label{SS:approximation-fund}
Here we establish that standard limit arguments, allowing to reduce arguments
to schemes of finite type, admit variants for fundamental and nicely fundamental stacks.
These results will be used to reduce from the situation of a complete local ring to an
excellent henselian local ring (via Artin approximation), from a henselian
local ring to an \'etale neighborhood, and from (non-)noetherian rings to
excellent rings.  The linearly fundamental case is more subtle---see
\Cref{A:mixed-char-counterexamples} for some counterexamples---but will eventually be
established in equal characteristic and in certain mixed characteristics
(\Cref{T:approximation-of-lin-fund:relative}).

We begin with the following standard limit result, in the style of \cite[IV.8]{EGA}, for the
property of an embeddable group scheme being geometrically reductive or nice. 

\begin{lemma}\label{L:approximation-reductivity}
  Let $\{S_\lambda\}_{\lambda \in \Lambda}$ be an inverse system of
  quasi-compact and quasi-separated algebraic spaces with affine
  transition maps and limit $S$. Let $\alpha \in \Lambda$ and let
  $G_{\alpha} \to S_{\alpha}$ be a flat group algebraic space of
  finite presentation. For
  $\lambda \geq \alpha$, let $G_{\lambda}$ be the pullback of
  $G_{\alpha}$ along $S_\lambda \to S_{\alpha}$ and let $G$ be
  the pullback of $G_{\alpha}$ along $S \to S_{\alpha}$.  If $G$
  is geometrically reductive (resp.\ nice) and embeddable over $S$, then $G_\lambda$ is
  geometrically reductive (resp.\ nice) and embeddable over $S_\lambda$ for all $\lambda
  \gg \alpha$.
\end{lemma}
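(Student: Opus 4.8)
The plan is to use standard limit methods (\cite[\S8--11]{EGA} or \cite[Tags 01ZM, 07R_]{stacks-project}) to spread out the relevant structures, and then to characterize geometric reductivity (resp.\ niceness) together with embeddability in terms of a property that is itself constructible or open in the limit. The key observation is that ``$G$ is embeddable and geometrically reductive over $S$'' is, by \Cref{R:resolution}, equivalent to the existence of a vector bundle $\cE$ on $S$, a closed immersion of group schemes $G \hookrightarrow \GL(\cE)$, and the \emph{affineness} of the quotient $\GL(\cE)/G$; and affineness of a morphism of finite presentation is a constructible (indeed, in the presence of the rest of the data, open-and-closed on the base) condition that descends through limits. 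For niceness the analogous reformulation is more direct: $G$ is nice over $S$ iff it contains an open and closed normal subgroup $G^0$ of multiplicative type with $G/G^0$ finite locally constant of order invertible on $S$, and each of these is a finite-presentation condition.

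First I would treat embeddability. Since $G$ is embeddable over $S = \varprojlim S_\lambda$, we have a vector bundle $\cE$ on $S$ and a closed immersion $G \hookrightarrow \GL(\cE)$ over $S$. A vector bundle over $S$ descends to a vector bundle $\cE_{\lambda_1}$ over some $S_{\lambda_1}$ (as $S$ is a limit with affine transition maps, and $\cE$ is of finite presentation); enlarging $\lambda_1$, the closed immersion $G_S \hookrightarrow \GL(\cE)_S$ also descends to a closed immersion $G_{\lambda_1} \hookrightarrow \GL(\cE_{\lambda_1})$, using that $G_{\lambda_0}$ (hence $G_{\lambda_1}$) and $\GL(\cE_{\lambda_1})$ are of finite presentation and that closed immersions spread out. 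Thus $G_\lambda$ is embeddable for all $\lambda \geq \lambda_1$, with a compatible choice of embedding. Next, for geometric reductivity: by \Cref{R:resolution} (the generalized Matsushima theorem, \cite[Thm.~9.4.1]{alper-adequate}), $G_S \hookrightarrow \GL(\cE)_S$ being geometrically reductive is equivalent to $\bigl(\GL(\cE)/G\bigr)_S$ being affine. The quotient $\GL(\cE_{\lambda_1})/G_{\lambda_1}$ is an algebraic space of finite presentation over $S_{\lambda_1}$ whose base change to $S$ is affine; since affineness of a finitely presented morphism can be checked after passing to a limit and is constructible on the target (\cite[Tags 081F, 0828]{stacks-project}), there is $\lambda_2 \geq \lambda_1$ such that $\GL(\cE_{\lambda_2})/G_{\lambda_2} \to S_{\lambda_2}$ is affine, hence $G_\lambda$ is geometrically reductive and embeddable for all $\lambda \geq \lambda_2$.

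For the ``nice'' case, I would argue similarly but more directly. Suppose $G_S$ is nice with open-and-closed normal subgroup $G^0_S \subseteq G_S$ of multiplicative type, $H_S := G_S/G^0_S$ finite locally constant with $|H_S|$ invertible on $S$. An open-and-closed subgroup scheme of the finite-presentation group scheme $G_{\lambda_0}$ descends to an open-and-closed subgroup scheme of $G_\lambda$ for $\lambda$ large, and one checks that it is normal and that the quotient is finite locally constant for $\lambda$ large (finite presentation of all the players, plus \cite[Tag 07SK]{stacks-project}); the condition that a group scheme of finite presentation is of multiplicative type is, after fixing the character group (a finite locally constant sheaf, which spreads out), an isomorphism condition that descends. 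Invertibility of the order $|H|$ on the base spreads out since it is an open condition. Thus $G_\lambda$ is nice and embeddable for $\lambda$ large; embeddability is handled exactly as above (or, for nice $G^0$ of multiplicative type, directly).

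The main obstacle I anticipate is \emph{bookkeeping with the quotient algebraic space} $\GL(\cE)/G$ in the geometrically reductive case: one must know this quotient exists as an algebraic space of finite presentation over the base at finite level (so that affineness can be spread out), which requires that $G_\lambda \subseteq \GL(\cE_\lambda)$ acts freely with quotient an algebraic space --- standard since $G_\lambda$ is flat and affine over $S_\lambda$, so $\GL(\cE_\lambda) \to \GL(\cE_\lambda)/G_\lambda$ is an fppf $G_\lambda$-torsor --- and compatibility of formation of this quotient with base change. Granting that, the rest is a routine application of the limit formalism; I would not expect any genuinely new difficulty beyond carefully choosing a single cofinal index $\lambda$ that simultaneously witnesses all the finitely many descended data and the one open/constructible affineness (resp.\ multiplicative-type) condition.
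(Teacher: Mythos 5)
Your proposal is correct and follows essentially the same route as the paper's proof: descend the vector bundle and the closed embedding $G \hookrightarrow \GL(\cE)$ by standard limit methods, characterize geometric reductivity via affineness of $\GL(\cE)/G$ (Matsushima, \Cref{R:resolution}) and spread out that affineness, and for niceness descend the open-and-closed normal subgroup of multiplicative type together with the conditions on the quotient. The paper's proof is just a terser version of exactly this argument.
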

\begin{proof} 
  Let $\sE$ be a vector bundle on $S$ and let $G\inj \GL(\sE)$ be a closed
  embedding. By standard limit methods, there exists a vector bundle
  $\sE_\lambda$ on $S_\lambda$ and a closed embedding $G_\lambda\inj
  \GL(\sE_\lambda)$ for all sufficiently large $\lambda$. If $G$ is
  geometrically reductive, then $\GL(\sE)/G\to S$ is affine and so is
  $\GL(\sE_\lambda)/G_\lambda\to S_\lambda$ for all sufficiently large $\lambda$; hence
  $G_\lambda$ is geometrically reductive by Matsushima's theorem (\Cref{R:resolution}).

  If $G^0\subseteq G$ is an open and closed normal subgroup as in the
  definition of a nice group scheme, then by standard limit methods, we can find an open and
  closed normal subgroup $G_\lambda^0\subseteq G_\lambda$ for all sufficiently
  large $\lambda$ satisfying the conditions in the definition of nice group
  schemes.
\end{proof}

\begin{lemma}\label{L:nicely-fund-fp-Q}
  An algebraic stack $\cX$ is nicely fundamental if and only if there exists an
  affine scheme $S$ of finite presentation over $\Spec \ZZ$, a nice and
  embeddable group scheme $Q\to S$ and an affine morphism $\cX\to B_SQ$.
\end{lemma}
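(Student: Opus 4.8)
The plan is to deduce the nontrivial (``only if'') implication from \Cref{L:approximation-reductivity} by a standard noetherian approximation argument; the ``if'' implication is immediate from the definition of a nicely fundamental stack.

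So suppose $\cX$ is nicely fundamental, and fix an affine scheme $S_0$, a nice and embeddable group scheme $Q_0 \to S_0$, and an affine morphism $f_0 \colon \cX \to B_{S_0}Q_0$. Writing $S_0 = \Spec R_0$, I would express $R_0 = \varinjlim_\lambda R_\lambda$ as the filtered union of its finitely generated $\ZZ$-subalgebras and set $S_\lambda = \Spec R_\lambda$; then $S_0 = \varprojlim_\lambda S_\lambda$ is an inverse limit of affine schemes of finite type over $\Spec \ZZ$ (equivalently of finite presentation, since $\ZZ$ is noetherian) with affine transition maps, and each projection $S_0 \to S_\lambda$ is affine. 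Since $Q_0 \to S_0$ is affine, flat and of finite presentation, standard limit methods descend it: for some $\lambda_0$ there is a group scheme $Q_{\lambda_0} \to S_{\lambda_0}$ of the same type pulling back to $Q_0$, and for $\lambda \geq \lambda_0$ I write $Q_\lambda \to S_\lambda$ for its pullback, so that $Q_0 \cong Q_\lambda \times_{S_\lambda} S_0$.

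Now I would apply \Cref{L:approximation-reductivity} to the system $\{S_\lambda\}_{\lambda \geq \lambda_0}$: because $Q_0$ is nice and embeddable over $S_0$, there is some $\lambda \geq \lambda_0$ for which $Q_\lambda$ is nice and embeddable over $S_\lambda$. Fix such a $\lambda$ and put $S = S_\lambda$, $Q = Q_\lambda$. The morphism of pairs $(S_0, Q_0) \to (S, Q)$ induces a morphism $B_{S_0}Q_0 \to B_S Q$, which is the base change of the affine projection $S_0 \to S$ along $B_S Q \to S$ and hence affine. Therefore the composite $\cX \xrightarrow{f_0} B_{S_0}Q_0 \to B_S Q$ is affine, and $S$ is an affine scheme of finite presentation over $\Spec \ZZ$ with $Q \to S$ nice and embeddable, which is the asserted presentation.

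There is no serious obstacle here; the only points needing a word of care are that the projections $S_0 \to S_\lambda$ in the limit are affine (so that $B_{S_0}Q_0 \to B_S Q$ is affine) and the identification $Q_0 \cong Q \times_S S_0$ coming from the descent step. The substantive content—that niceness and embeddability descend to a finite level—is entirely absorbed into \Cref{L:approximation-reductivity}.
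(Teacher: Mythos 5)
Your proof is correct and follows essentially the same route as the paper, whose entire proof is ``the condition is sufficient by definition and the necessity is \Cref{L:approximation-reductivity}''. You have simply made explicit the standard limit argument (writing $S_0$ as an inverse limit of affine schemes of finite type over $\Spec\ZZ$, descending $Q_0$, and noting that $B_{S_0}Q_0\to B_SQ$ is affine as a base change of $S_0\to S$) that the paper leaves implicit.
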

\begin{proof}
The condition is sufficient by definition.  For necessity, by definition, we have an affine map $\cX \to B_S Q$, where $Q$ is a nice and embeddable group scheme over an affine scheme $S$.   
We can write $S = \varprojlim_\lambda S_{\lambda}$ as a limit of affine schemes of finite type over $\ZZ$,
and find a flat group scheme $Q_{\alpha} \to S_{\alpha}$ of finite type such that
$Q=Q_{\alpha}\times_{S_{\alpha}} S$. Let $Q_{\lambda}=Q_{\alpha}\times_{S_{\alpha}} S_\lambda$
so that $\cX\to B_S Q\to B_{S_\lambda} Q_\lambda$ is affine for all $\lambda\geq \alpha$.
Then \Cref{L:approximation-reductivity} implies that $Q_\lambda$ is nice for $\lambda \gg \alpha$.
\end{proof}

\begin{lemma}\label{L:excellent-approx-fundamental}
  Let $\cX$ be a fundamental (resp.\ a nicely fundamental) stack.
  Then there exists an inverse system of fundamental (resp.\ nicely fundamental)
  stacks $\cX_\lambda$ of finite type over $\Spec \ZZ$ with affine transition
  maps such that $\cX=\varprojlim_\lambda \cX_\lambda$.
\end{lemma}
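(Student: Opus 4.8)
The plan is to realize $\cX$ as an affine morphism over a fixed algebraic stack that is already of finite type over $\Spec\ZZ$, and then to approximate the corresponding quasi-coherent sheaf of algebras by its finitely generated quasi-coherent subalgebras.

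First suppose $\cX$ is fundamental, so that by \Cref{D:fundamental} there is an affine morphism $\cX \to \cB$ with $\cB := B\GL_{n,\ZZ}$; note that $\cB$ is a Noetherian algebraic stack of finite type over $\Spec\ZZ$. Write $\cX = \uSpec_{\cB}\cA$ for a quasi-coherent $\oh_{\cB}$-algebra $\cA$. By standard limit methods --- on a Noetherian algebraic stack every quasi-coherent sheaf is the filtered colimit of its coherent subsheaves, and hence every quasi-coherent algebra is the filtered colimit of its finitely generated quasi-coherent subalgebras --- I would write $\cA = \varinjlim_{\lambda}\cA_\lambda$, where $\{\cA_\lambda\}$ is a filtered system of quasi-coherent $\oh_{\cB}$-subalgebras, each finitely generated as an $\oh_{\cB}$-algebra. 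Then I set $\cX_\lambda := \uSpec_{\cB}\cA_\lambda$: each $\cX_\lambda \to \cB$ is affine and of finite type, so $\cX_\lambda$ is fundamental and of finite type over $\Spec\ZZ$; the inclusions $\cA_\lambda \hookrightarrow \cA_\mu$ for $\lambda \leq \mu$ induce affine transition morphisms $\cX_\mu \to \cX_\lambda$; and since $\uSpec_{\cB}(-)$ carries the colimit $\varinjlim_\lambda \cA_\lambda = \cA$ to a limit of $\cB$-affine stacks, $\varprojlim_\lambda \cX_\lambda \cong \cX$.

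Now suppose $\cX$ is nicely fundamental. By \Cref{L:nicely-fund-fp-Q}, there is an affine scheme $S$ of finite presentation over $\Spec\ZZ$, a nice and embeddable group scheme $Q \to S$, and an affine morphism $\cX \to B_SQ$. Since $S$ is Noetherian and $Q \to S$ is flat, affine and of finite presentation, $\cB := B_SQ$ is a Noetherian algebraic stack of finite type over $\Spec\ZZ$. I would then run the same argument as above with this $\cB$, and observe that each $\cX_\lambda := \uSpec_{\cB}\cA_\lambda$ again admits an affine morphism to $B_SQ$ and is therefore nicely fundamental. This yields an inverse system $\{\cX_\lambda\}$ of nicely fundamental stacks of finite type over $\Spec\ZZ$ with affine transition maps and $\varprojlim_\lambda \cX_\lambda \cong \cX$, as desired.

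There is no serious obstacle here: the only real inputs are the approximation of the quasi-coherent algebra $\cA$ by finitely generated quasi-coherent subalgebras over a Noetherian base stack and the compatibility of $\uSpec$ with filtered colimits of algebras, both of which are standard limit methods. In the nicely fundamental case the one extra ingredient is \Cref{L:nicely-fund-fp-Q}, which is precisely what allows the base $B_SQ$ itself to be taken of finite type over $\Spec\ZZ$ rather than merely over some possibly non-Noetherian affine scheme.
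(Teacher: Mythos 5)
Your proof is correct and follows essentially the same route as the paper: realize $\cX$ as $\uSpec_{\cB}\cA$ for $\cB = B\GL_{n,\ZZ}$ (resp.\ $\cB = B_SQ$ from \Cref{L:nicely-fund-fp-Q}), write $\cA$ as the filtered union of its finitely generated quasi-coherent subalgebras over the noetherian base stack, and take relative Spec. The paper's proof is just a more compressed version of this same argument.
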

\begin{proof}
  If $\cX$ is fundamental, then we have an affine morphism $\cX\to
  B\GL_{n,\ZZ}$. Since every quasi-coherent sheaf on the noetherian stack $B\GL_{n,\ZZ}$ is a union of its
  finitely generated subsheaves~\cite[Prop.~15.4]{lmb}, we can write $\cX=\varprojlim_\lambda \cX_\lambda$, where the
  $\cX_\lambda\to B\GL_{n,\ZZ}$ are affine and of finite type.  If $\cX$ is nicely
  fundamental, we argue analogously with $B_SQ$ of \Cref{L:nicely-fund-fp-Q}
  instead of $B\GL_{n,\ZZ}$.
\end{proof}
The following proposition shows that the property of being (nicely) fundamental
descends under limits and also describes how adequate moduli spaces commute
with inverse limits.
\begin{proposition}\label{P:approximation-fundamental}
  Let $\cX=\varprojlim_\lambda \cX_\lambda$ be an inverse limit of quasi-compact
  and quasi-separated algebraic stacks with affine transition maps.
  \begin{enumerate}
    \item\label{PI:approx:fundamental}
      If $\cX$ is fundamental (resp.\ nicely fundamental), then so is
      $\cX_\lambda$ for all sufficiently large $\lambda$.
    \item\label{PI:approx:nice-gerbes}
      Let $x\in |\cX|$ be a point with image $x_\lambda\in |\cX_\lambda|$.
      If $\cG_x$ (resp.\ $\overline{\{x\}}$) is nicely fundamental,
      then so is $\cG_{x_\lambda}$ (resp.\ $\overline{\{x_\lambda\}}$)
      for all sufficiently large $\lambda$.
    \item\label{PI:approx:adequate}
      If $\cX\to X$ and $\cX_\lambda\to X_\lambda$ are
      adequate moduli spaces, then $X=\varprojlim_\lambda X_\lambda$.
  \end{enumerate}
\end{proposition}
\begin{proof}
  For the first statement, let $\cY=B\GL_{n,\ZZ}$ (resp.\ $\cY=B_S Q$ for $Q$ as in
  \Cref{L:nicely-fund-fp-Q}). Then there is an affine morphism $\cX\to \cY$
  and hence an affine morphism $\cX_\lambda\to \cY$ for all sufficiently large
  $\lambda$~\cite[Prop.~B.1, Thm.~C]{rydh-2009}.
  The second statement follows from the first by noting that
  $\cG_x = \varprojlim_\lambda \cG_{x_\lambda}$ and
  $\overline{\cG_x} = \varprojlim_\lambda \overline{\cG_{x_\lambda}}$.

  The third statement follows directly from the following two facts (a)
  push-forward of quasi-coherent sheaves along $\pi_\lambda\colon
  \cX_\lambda\to X_\lambda$ preserves filtered colimits and (b) if $\cA$ is a
  quasi-coherent sheaf of algebras, then the adequate moduli space of
  $\Spec_{\cX_\lambda} \cA$ is $\Spec_{X_\lambda} (\pi_\lambda)_*\cA$.
\end{proof}

\begin{remark} \label{R:not-limit-preserving}
The analogous statements of \Cref{L:approximation-reductivity}
(resp.\ \Cref{P:approximation-fundamental}\itemref{PI:approx:fundamental}) for linearly reductive and
embeddable group schemes (resp.\ linearly fundamental stacks) are false in mixed
characteristic. Indeed, $\GL_{2,\QQ} = \varprojlim_m \GL_{2,\ZZ[\frac{1}{m}]}$
and $\GL_{2,\QQ}$ is linearly reductive but $\GL_{2,\ZZ[\frac{1}{m}]}$ is never
linearly reductive.  Likewise, $B\GL_{2,\QQ}$ is linearly fundamental but $B
\GL_{2, \ZZ[\frac{1}{m}]}$ is never linearly fundamental.

It is not true in general that the induced maps $\cX \to \cX_{\lambda} \times_{X_{\lambda}} X$ are isomorphisms for sufficiently large $\lambda$. Take, for example,
$\cX_n = [\Spec A_n / \Gm]$ where $A_n = k[x_1, \ldots, x_n]$ with the standard scaling action.
\end{remark}

\section{Luna's fundamental lemma and universality of adequate moduli spaces} \label{S:pairs}

We begin by defining local, henselian, and coherently complete
pairs, and stating a general version of Artin approximation
(\Cref{T:artin-approximation}).  We then prove that a pair of stacks is henselian if and only if their adequate moduli spaces are henselian (\Cref{T:henselian-pair-gms}), which provides a henselian analogue of our main result on coherent completeness (\Cref{T:complete}).  We apply this theorem to give quick proofs of the universality of
adequate moduli spaces (\Cref{T:universal}) and Luna's fundamental
lemma (\Cref{L:fundamental-lemma}).

\subsection{Henselian pairs}
Recall that we have defined the notion of coherently complete pairs in
\Cref{D:coherently-complete}. We now introduce the following weaker notions:
\begin{definition} \label{D:pairs} Fix a closed immersion of algebraic stacks $\cZ \subseteq \cX$. The pair $(\cX, \cZ)$ is said to be
\begin{enumerate}
	\item {\it local} if every non-empty closed subset of $|\cX|$ intersects $|\cZ|$ non-trivially; and
	\item {\it henselian} if for every finite morphism $\cX'\to \cX$, the restriction map
\begin{equation} \label{E:henselian-pair}
\ClOpen(\cX')\to \ClOpen(\cZ\times_{\cX} \cX'),
\end{equation}
is bijective, where $\ClOpen(\cX)$ denotes the set of closed and open substacks of $\cX$ \cite[IV.18.5.5]{EGA}.
\end{enumerate}
In addition, we call a pair $(\cX, \cZ)$ {\it affine}  if $\cX$ is affine and an affine pair $(\cX, \cZ)$ {\it (quasi-)excellent} if $\cX$ is (quasi-)excellent. The completion of an affine (quasi-)excellent pair is (quasi-)excellent~\cite{kurano-shimomoto}. Occasionally, we will also say $\cX$ is local, henselian, or coherently complete along $\cZ$ if the pair $(\cX,\cZ)$ has the corresponding property. 
\end{definition}

We list some examples of henselian and coherently complete pairs. 
\begin{example}[Adic rings]\label{EX:adic-ring}
  Let $A$ be a noetherian ring and let $I \subseteq A$ be an
  ideal. Then $(\spec A,\spec A/I)$ is a coherently complete pair if
  and only if $A$ is $I$-adically complete. The sufficiency is
  trivial. For the necessity, we note that $\varprojlim_n
  \COH(A/I^{n+1}) \simeq \COH(\hat{A})$, where $\hat{A}$ denotes the
  completion of $A$ with respect to the $I$-adic topology. Hence,
  the natural functor $\COH(A) \to \COH(\hat{A})$ is an
  equivalence of abelian tensor categories. It follows from 
  Tannaka duality (see \S \ref{SS:tannaka}) that the
  natural map $A \to \hat{A}$ is an isomorphism.
\end{example}

\begin{example}[Proper maps]
  Let $A$ be a ring and let $I \subseteq A$ be an ideal.  
  Let $f\colon \cX \to \spec A$
  be a proper morphism of algebraic stacks. Let $\cZ = f^{-1}(\spec A/I)$. 
  \begin{enumerate}
  \item If $A$ is $I$-adically complete, then $(\cX,\cZ)$ is coherently
    complete. This is just the usual
    Grothendieck existence theorem, see \cite[III.5.1.4]{EGA} for the
    case of schemes and \cite[Thm.~1.4]{MR2183251} for algebraic
    stacks.
  \item If $A$ is henselian along $I$, then $(\cX,\cZ)$ is
    henselian. This is part of the proper base change theorem in
    \'etale cohomology; the case where $I$ is maximal is well-known,
    see \cite[Rem.~B.6]{MR3148551} for further discussion.
\end{enumerate}
\end{example}

Let $A$ be a noetherian ring and let $I \subseteq J \subseteq A$ be
ideals. Assume that $A$ is $J$-adically complete. Recall that $A/I$ is
then $J$-adically complete and $A$ is also $I$-adically complete. This
is analogous to parts \itemref{L:coho-complete-stable:finite} and
\itemref{L:coho-complete-stable:bigger}, respectively, of the
following result.
We omit the proof.
\begin{lemma}\label{L:coho-complete-stable}
  Let $\cZ \subseteq \cX$ be a closed immersion of algebraic stacks.
  Assume that the pair $(\cX,\cZ)$ is henselian or coherently complete. 
  \begin{enumerate}
  \item \label{L:coho-complete-stable:finite} Let
    $f\colon \cX' \to \cX$ be a finite morphism and let
    $\cZ' \subseteq \cX'$ be the pullback of $\cZ$. Then
    $(\cX',\cZ')$ is henselian or coherently complete, respectively.
  \item \label{L:coho-complete-stable:bigger} Let
    $\stk{W} \subseteq \cX$ be a closed substack. If
    $|\cZ| \subseteq |\stk{W}|$, then $(\cX,\cW)$ is
    henselian or coherently complete, respectively.
  \end{enumerate}
\end{lemma}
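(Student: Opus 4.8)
The plan is to reduce both parts, for both properties (henselian and coherently complete), to the already-established facts about adic rings recalled just before the statement, by passing through a smooth presentation, together with the formal properties of pushforward along finite and closed morphisms. The crucial point that makes this routine is that the two conditions — bijectivity of \eqref{E:henselian-pair} for finite morphisms, and the equivalence $\COH(\cX)\to\varprojlim_n\COH(\thck{\cX}{\cZ}{n})$ — are both stated in terms of categories or sets that behave well under finite/closed base change and under enlarging $\cZ$ within a fixed $|\cZ|$-support.

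For part~\itemref{L:coho-complete-stable:finite}: in the henselian case, given a finite morphism $g\colon \cX''\to\cX'$, the composite $f\circ g\colon \cX''\to\cX$ is finite, so $\ClOpen(\cX'')\to\ClOpen(\cZ\times_\cX\cX'')$ is bijective by hypothesis; but $\cZ\times_\cX\cX'' = \cZ'\times_{\cX'}\cX''$, and since every finite $\cX''\to\cX'$ arises this way, the pair $(\cX',\cZ')$ is henselian. In the coherently complete case, $f$ is in particular affine, so $f_*$ is exact and $\COH(\cX')\simeq\COH(f_*\oh_{\cX'}\text{-modules in }\COH(\cX))$; since $\thck{(\cX')}{\cZ'}{n}$ is the pullback of $\thck{\cX}{\cZ}{n}$ along $f$ (finiteness, hence affineness, ensures $f^{-1}(\thck{\cX}{\cZ}{n})$ is cut out by $f^*\cI^{n+1}$, which has the same support), one matches the two limit categories term by term using that $\varprojlim$ commutes with the forgetful-to-modules description. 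Equivalently one can note $A\to A'$ finite with $A$ $I$-adically complete forces $A'$ to be $IA'$-adically complete (a standard fact) and run the argument on a presentation, but the categorical argument is cleaner since it avoids choosing a presentation compatible with $f$.

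For part~\itemref{L:coho-complete-stable:bigger}: here the observation is that $|\cZ|\subseteq|\cW|$ means $\cW$ and $\cZ$ have the same underlying closed set up to the thickening order, so for every $n$ there is $m(n)$ with $\thck{\cX}{\cZ}{m}\supseteq\thck{\cX}{\cW}{n}$ and conversely, whence the two pro-systems $\{\thck{\cX}{\cZ}{n}\}$ and $\{\thck{\cX}{\cW}{n}\}$ are cofinal in one another; therefore $\varprojlim_n\COH(\thck{\cX}{\cZ}{n})\simeq\varprojlim_n\COH(\thck{\cX}{\cW}{n})$ and coherent completeness along $\cZ$ transfers to coherent completeness along $\cW$. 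For the henselian case, $\ClOpen(\cX)\to\ClOpen(\cZ)$ bijective factors as $\ClOpen(\cX)\to\ClOpen(\cW)\to\ClOpen(\cZ)$, and the second map is injective (a clopen substack of $\cW$ is determined by its intersection with $\cZ$, as $(\cW,\cZ\cap\cW)$ is local — indeed henselian — being a closed substack of a henselian pair), forcing the first map to be bijective; one then upgrades from $\ClOpen$ to the full finite-morphism condition by applying the same reasoning after base change along any finite $\cX'\to\cX$, using that $|\cZ\times_\cX\cX'|\subseteq|\cW\times_\cX\cX'|$ is preserved.

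The main obstacle I anticipate is purely bookkeeping: making the identification of thickenings under finite pullback precise (that $f^*$ of the defining ideal sheaf, raised to powers, defines the pullback thickenings, with matching support) and checking that the equivalence $\COH(\cX')\simeq$ (modules over $f_*\oh_{\cX'}$) is compatible with the projective-limit structure in $n$. None of this is deep — it is exactly the stack-theoretic analogue of the elementary ring statements recalled before the lemma — which is presumably why the authors write ``We omit the proof.''
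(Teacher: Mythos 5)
The paper gives no proof of this lemma (``We omit the proof''), so there is nothing to compare against line by line. Your arguments for part~\itemref{L:coho-complete-stable:finite} (both properties) and for the henselian case of part~\itemref{L:coho-complete-stable:bigger} are correct and are the expected routine ones: composing finite morphisms for the henselian case, identifying $\COH(\cX')$ with coherent $f_*\Orb_{\cX'}$-modules and matching thickenings via $(\cI\Orb_{\cX'})^{n+1}=\cI^{n+1}\Orb_{\cX'}$ for the complete case, and the factor-through-$\ClOpen(\cW\times_\cX\cX')$ argument for part~\itemref{L:coho-complete-stable:bigger}. (The opening claim that everything is done ``by passing through a smooth presentation'' is a misdirection---neither property of a pair is smooth-local, as $\Spec A\to[\Spec A/G]$ shows---but your executed arguments do not actually use a presentation, so no harm is done there.)

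The coherently complete case of part~\itemref{L:coho-complete-stable:bigger}, however, rests on a false claim. You assert that the pro-systems $\{\thck{\cX}{\cZ}{n}\}$ and $\{\thck{\cX}{\cW}{n}\}$ are cofinal in one another. Only one containment holds: if $\cI$ and $\cJ$ are the ideals of $\cZ$ and $\cW$, then $|\cZ|\subseteq|\cW|$ means $\cJ\subseteq\sqrt{\cI}$, so by noetherianity $\cJ^N\subseteq\cI$ for some $N$ and hence $\thck{\cX}{\cZ}{n}\subseteq\thck{\cX}{\cW}{N(n+1)-1}$. The reverse containment $\thck{\cX}{\cW}{n}\subseteq\thck{\cX}{\cZ}{m}$ would require $\cI^{m+1}\subseteq\cJ^{n+1}$, which forces $|\cW|\subseteq|\cZ|$; it already fails for $\cX=\Spec k\llbracket x,y\rrbracket$, $\cZ=V(x,y)$, $\cW=V(x)$, where no infinitesimal neighbourhood of the closed point contains $\cW=\thck{\cX}{\cW}{0}$. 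So there is no equivalence of the two limit categories by cofinality; indeed the entire content of this part is that completing along the smaller substack already controls completion along the larger one, exactly as in the quoted ring fact, where the filtrations $\{I^n\}$ and $\{J^n\}$ with $I\subseteq J$ are likewise not mutually cofinal and one instead uses $\bigcap_m(J^m+I^n)=I^n$. A correct route uses your part~\itemref{L:coho-complete-stable:finite}: each $\thck{\cX}{\cW}{n}$ is a closed (hence finite) substack of $\cX$, so the pair $(\thck{\cX}{\cW}{n},\cZ\times_\cX\thck{\cX}{\cW}{n})$ is coherently complete; this shows that the one-sided restriction functor $\varprojlim_n\COH(\thck{\cX}{\cW}{n})\to\varprojlim_k\COH(\thck{\cX}{\cZ}{k})$ is fully faithful (compute $\Hom$ as a double limit and use that $\thck{\cX}{\cZ}{k}\cap\thck{\cX}{\cW}{n}=\thck{\cX}{\cZ}{k}$ for $n\gg k$), and since its composite with $\COH(\cX)\to\varprojlim_n\COH(\thck{\cX}{\cW}{n})$ is the given equivalence, both functors are equivalences.
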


\begin{remark}\label{R:pair-implications}
  For a pair $(\cX,\cZ)$, we have the following sequence of implications:
  \[
    \mbox{coherently complete} \implies \mbox{henselian} \implies \mbox{local}.
  \]
  The second implication is trivial: if $\cW\subseteq \cX$ is a closed
  substack, then $\ClOpen(\cW) \to \ClOpen(\cZ \cap \cW)$ is
  bijective. For the first implication, note that we have bijections:
  \[
    \ClOpen(\cX) \simeq \varprojlim_n \ClOpen(\thck{\cX}{\cZ}{n}) \simeq \ClOpen(\cZ)
  \]
  whenever $(\cX,\cZ)$ is coherently complete. The implication now follows from the elementary \Cref{L:coho-complete-stable}\itemref{L:coho-complete-stable:finite}.
\end{remark}

\begin{remark}
  It follows from the main result of \cite{rydh-2014} that 
  if $\cX$ is quasi-compact and
  quasi-separated, then $(\cX, \cZ)$ is a henselian pair if and only if \eqref{E:henselian-pair} is bijective for every
  integral morphism $\cX'\to \cX$.
\end{remark}

\begin{remark}[Nakayama's lemma for stacks] \label{R:nakayama}
As seen by descent from a smooth presentation, the following variants of Nakayama's lemma hold for local pairs $(\cX, \cZ)$:  (1) if $\cF$ is a quasi-coherent $\oh_{\cX}$-module of finite type and $\cF|_{\cZ} = 0$, then $\cF = 0$; and (2) if $\varphi \co \cF \to \cG$ is a morphism of quasi-coherent $\oh_{\cX}$-modules with $\cG$ of finite type and $\varphi|_{\cZ}$ is surjective, then $\varphi$ is surjective. 
\end{remark}

We will frequently use Artin approximation over henselian pairs to pass from
completions to henselizations, especially in
\Cref{S:approximation-deformation}. This version of Artin approximation is due
to Popescu~\cite[Thm.~1.3]{MR868439} and follows from his desingularization
theorem as we will also explain below. Artin's original approximation
theorem~\cite[Cor.~2.2]{MR0268188} is valid for henselian pairs $(S,s)$ where
$S$ is the spectrum of the henselization of a local ring essentially of finite
type over either a field or an excellent Dedekind domain.

\begin{theorem}[Artin approximation over henselian pairs]\label{T:artin-approximation}
Let $A$ be a G-ring, e.g., quasi-excellent, let
$(S,S_0)=(\Spec A,\Spec A/I)$ be an affine henselian pair and let
$\widehat{S}=\Spec \hat{A}$ be its $I$-adic completion. Let $F\colon
(\SCH{S})^{\opp} \to \SETS$ be a limit preserving functor.
Given an element
$\overline{\xi}\in F(\hat{S})$ and an integer $n\geq 0$, there exists an
element $\xi\in F(S)$ such that $\xi$ and $\overline{\xi}$ have equal images
in $F(S_n)$ where $S_n=\Spec A/I^{n+1}$.
\end{theorem}
\begin{proof}
The completion map $\widehat{S}\to S$ is regular. Hence, by N\'eron--Popescu
desingularization \cite[Thm.~1.8]{MR868439}, there exists a smooth morphism $S'\to S$ and a section
$\xi'\in F(S')$ such that $\xi'|_{\widehat{S}}=\overline{\xi}$. By
Elkik~\cite[Thm.,
  p.~568]{MR0345966}, there is an element $\xi\in F(S)$ as requested.
\end{proof}

\subsection{Adequate moduli spaces and henselian/complete pairs}
The following proposition gives a generalization of one direction for
\Cref{T:complete}: coherent completeness passes to adequate moduli spaces. The other direction, which we defer until later, is
much more involved.
\begin{proposition}\label{P:gms-cohocomp-necc}
  Let $\cZ \subseteq \cX$ be a closed immersion of noetherian
  algebraic stacks. Let $\pi \colon \cX \to X$ be an adequate moduli
  space, where $X$ is affine and noetherian.  If the pair $(\cX,\cZ)$
  is coherently complete, then the pair $(X,\pi(\cZ))$ is coherently
  complete.
\end{proposition}
\begin{proof}
  Let $I\subseteq A$ be the ideal defining $Z=\pi(\cZ)$, let $A\to \widehat{A}$
  be the $I$-adic completion and let $\widehat{X}=\Spec
  \widehat{A}$. The composition $\cX^{[n]}_{\cZ}\to \cX\to X$ factors
  through $X^{[n]}_Z$, hence lifts uniquely to $\widehat{X}$. By
  Tannaka duality, we obtain a unique lift $\cX\to \widehat{X}$. But,
  by definition of an adequate moduli space, $\Gamma(\cX,\Orb_\cX)=A$, so we obtain a retraction $\widehat{A} \to A$.  It follows that $A$ is $I$-adically complete.
\end{proof}

As we show below, for henselian pairs, the analog of \Cref{T:complete} (coherent completeness)
and \Cref{P:gms-cohocomp-necc} is straightforward. 

\begin{theorem}\label{T:henselian-pair-gms}
Let $\cX$ be a quasi-compact and quasi-separated algebraic stack with
adequate moduli space $\pi\colon \cX\to X$. Let $\cZ \subseteq \cX$
be a closed substack with $Z=\pi(\cZ)$. The pair $(\cX,\cZ)$ is 
henselian if and only if the pair $(X,Z)$ is henselian.
\end{theorem}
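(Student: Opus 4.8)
The plan is to prove both implications by reducing to the known fact that for a morphism $\cX' \to \cX$, finiteness and the structure of the adequate moduli space behave well under the functors $\pi_*$ and $\pi^*$. The key input is Luna's fundamental lemma for adequate moduli spaces (\Cref{L:fundamental-lemma}) together with the characterization of henselian pairs via clopen substacks, and the fact that for an adequate moduli space $\pi\colon \cX \to X$ the pushforward $\pi_*$ and the formation of relative $\mathrm{Spec}$ give a correspondence between finite morphisms to $\cX$ and finite morphisms to $X$.

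First I would treat the implication ``$(X,Z)$ henselian $\implies$ $(\cX,\cZ)$ henselian.'' Let $f\colon \cX' \to \cX$ be a finite morphism; I want to show $\ClOpen(\cX') \to \ClOpen(\cZ \times_\cX \cX')$ is bijective. Since $\pi$ is an adequate moduli space and $f$ is finite (hence affine and adequately affine), the composite $\pi \circ f\colon \cX' \to X$ factors through its adequate moduli space $X' := \mathrm{Spec}_X (\pi f)_* \oh_{\cX'}$, and by Luna's fundamental lemma the induced square
\[
\xymatrix{
\cX' \ar[r] \ar[d] & X' \ar[d] \\
\cX \ar[r]^{\pi} & X
}
\]
is cartesian with $X' \to X$ finite. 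Now $\ClOpen(\cX') = \ClOpen(X')$ because the adequate moduli space morphism $\cX' \to X'$ induces a bijection on connected components (an adequate moduli space is universal for maps to algebraic spaces, \Cref{T:universal}, and in particular surjective with geometrically connected fibers in the relevant sense), and similarly $\ClOpen(\cZ \times_\cX \cX') = \ClOpen(Z \times_X X')$. Since $(X,Z)$ is henselian and $X' \to X$ is finite, $\ClOpen(X') \to \ClOpen(Z \times_X X')$ is bijective, and the claim follows.

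For the converse ``$(\cX,\cZ)$ henselian $\implies$ $(X,Z)$ henselian,'' let $X' \to X$ be finite; set $\cX' = \cX \times_X X'$, which is finite over $\cX$, and note $\cX' \to X'$ is again an adequate moduli space by base change of adequate moduli spaces along $X' \to X$ (this uses that adequately affine morphisms to a base with quasi-affine diagonal are stable under base change, as in the conventions section). Then $\ClOpen(X') = \ClOpen(\cX')$ and $\ClOpen(Z \times_X X') = \ClOpen(\cZ \times_\cX \cX')$ as above, and since $(\cX,\cZ)$ is henselian the right-hand map is bijective, giving bijectivity of $\ClOpen(X') \to \ClOpen(Z \times_X X')$.

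\textbf{Main obstacle.} The delicate point is the identification $\ClOpen(\cX') \cong \ClOpen(X')$ for an adequate moduli space $\pi'\colon \cX' \to X'$, i.e.\ that $\pi'$ induces a bijection on clopen substacks. One inclusion is clear (pull back clopen subschemes of $X'$), but for the other direction one must show every clopen substack of $\cX'$ is $\pi'$-saturated and descends; this rests on $\pi'$ being surjective with connected fibers in a suitably strong sense and on $\oh_{X'} \xrightarrow{\sim} \pi'_* \oh_{\cX'}$ forcing idempotents of $\Gamma(\cX', \oh_{\cX'})$ to come from $\Gamma(X', \oh_{X'})$. The cleanest route is to invoke \Cref{T:universal} (universality of adequate moduli spaces for maps to algebraic spaces), applied to maps to the two-point discrete space, which shows precisely that $\ClOpen(\cX') = \ClOpen(X')$ functorially; I would expect the bulk of the write-up to be a careful citation of that result plus a short verification that all the stacks in sight are quasi-compact and quasi-separated so that $\pi'$ and the finite base changes behave as needed.
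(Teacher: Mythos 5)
Your overall strategy is the same as the paper's: convert integral/finite covers of $\cX$ into integral covers of $X$ (and vice versa) via adequate moduli spaces, and then identify $\ClOpen$ of a stack with $\ClOpen$ of its adequate moduli space using the fact that such morphisms are surjective and closed with connected fibers. However, there are three concrete defects. First, Luna's fundamental lemma (\Cref{L:fundamental-lemma}) does not apply to a finite morphism $f\colon\cX'\to\cX$ (it is a statement about \'etale representable morphisms), and the square you draw is genuinely not cartesian: for $\cX'=\Spec k\to BG=\cX$ with $G$ finite, both adequate moduli spaces are $\Spec k$, yet $\cX\times_X X'=BG\neq\cX'$. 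Fortunately cartesianness is never needed; what you actually use is only that $X'\to X$ is integral and that the $\ClOpen$'s match, so this claim should simply be deleted. Second, and more seriously, your proposed ``cleanest route'' for the identification $\ClOpen(\cX')=\ClOpen(X')$ via \Cref{T:universal} is circular within the paper: the proof of \Cref{T:universal} invokes \Cref{T:henselian-pair-gms}. You must instead cite directly that an adequate moduli space morphism is surjective and closed with connected fibers (\cite[Thm.~5.3.1]{alper-adequate}), which is exactly what the paper does.

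Third, some care is needed with finiteness versus integrality and with the identification on the closed substacks. In the stated (non-noetherian) generality, $X'=\Spec_X(\pi f)_*\Orb_{\cX'}$ is only \emph{integral} over $X$, not finite, so to conclude from henselianness of $(X,Z)$ you need the characterization of henselian pairs by integral morphisms (the remark after \Cref{D:pairs}, citing \cite{rydh-2014}); symmetrically, the reduction from finite covers of $\cX$ to integral ones uses the same fact. Moreover, $Z\times_X X'$ is \emph{not} the adequate moduli space of $\cZ\times_\cX\cX'$ in general; the identification $\ClOpen(\cZ\times_\cX\cX')=\ClOpen(Z\times_X X')$ requires the factorization of such base changes as an adequate moduli space followed by an adequate homeomorphism (\cite[Lem.~5.2.11, Prop.~5.2.9(3)]{alper-adequate}), adequate homeomorphisms being universal homeomorphisms and hence inducing bijections on $\ClOpen$. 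With these repairs your argument coincides with the paper's proof.
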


\begin{proof} The induced morphism $\cZ \to Z$ factors as the composition of an adequate moduli space
$\cZ \to \tilde{Z}$ and an adequate homeomorphism $\tilde{Z} \to Z$ \cite[Lem.\ 5.2.11]{alper-adequate}. 
If $\cX'\to \cX$ is integral, then $\cX'$ admits an adequate moduli space $X'$ and
$X' \to X$ is integral. Conversely, if $X' \to X$ is
integral, then $\cX\times_{X} X'\to X'$ factors as the composition of an adequate moduli space $\cX\times_{X} X'\to \tilde{X}'$ and an adequate homeomorphism $\tilde{X}' \to X'$ \cite[Prop.\ 5.2.9(3)]{alper-adequate}. 
It is thus enough to show that
\[
\ClOpen(\cX)\to \ClOpen(\stZ)
\]
is bijective if and only if
\[
\ClOpen(X)\to \ClOpen(Z)
\]
is bijective. But $\cX\to X$ and $\stZ\to Z$ are surjective
and closed with connected fibers \cite[Thm.\ 5.3.1]{alper-adequate}. Thus we have identifications
$\ClOpen(\cX)=\ClOpen(X)$ and $\ClOpen(\stZ)=\ClOpen(Z)$ that
are compatible with the restriction maps. The result follows.
\end{proof}

\subsection{Characterization of henselian pairs}

A quasi-compact and quasi-separated pair of schemes $(X,X_0)$ is henselian if and
only if for every \'etale morphism $g\colon X'\to X$, every section of
$g_0\colon X'\times_X X_0\to X_0$ extends to a section of $g$ (for
$g$ separated see \cite[IV.18.5.4]{EGA} and in general see \cite[Exp.~XII,Prop.~6.5]{MR0354654}). This is also true for stacks:

\begin{proposition}\label{P:henselian-sections-of-etale-repr}
Let $(\cX,\cX_0)$ be a pair of quasi-compact and quasi-separated algebraic
stacks. Then the following are equivalent
\begin{enumerate}
\item $(\cX,\cX_0)$ is henselian.
\item For every \emph{representable} \'etale morphism $g\colon \cX'\to \cX$,
  the induced map
  \[
  \Gamma(\cX'/\cX)\to \Gamma(\cX'\times_{\cX}\cX_0/\cX_0)
  \]
  is bijective.
\end{enumerate}
\end{proposition}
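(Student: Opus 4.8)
The plan is to reduce the statement about stacks to the known characterization of henselian pairs of schemes, by passing to a smooth presentation and being careful about representability. The implication (2) $\Rightarrow$ (1) should be the easier direction: given a finite morphism $\cX' \to \cX$, I want to show $\ClOpen(\cX') \to \ClOpen(\cX' \times_\cX \cX_0)$ is bijective. A clopen substack of $\cX'$ is the same thing as an idempotent in $\Gamma(\cX', \oh_{\cX'})$, equivalently a decomposition $\cX' = \cX'_1 \sqcup \cX'_2$, equivalently (since a finite morphism is representable and affine) a section of the \'etale morphism $\cX' \sqcup \cX' \to \cX'$ lying over the two projections... more directly, a clopen of $\cX'$ is a section over $\cX'$ of the representable \'etale morphism $g' \colon \cX'' \to \cX'$ where $\cX'' = \cX' \times \{0,1\}$, or better: clopen subsets of $\cX'$ correspond to sections of the representable finite \'etale map classifying "membership". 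The cleanest route is: the functor $\cY \mapsto \ClOpen(\cY)$ on the category of stacks finite over $\cX$ is representable-\'etale-local in the appropriate sense, so bijectivity of $\ClOpen(\cX') \to \ClOpen(\cX'_0)$ follows by applying hypothesis (2) to the representable \'etale (in fact finite \'etale) morphism $\cX' \times_{\mathbb{Z}} \{0,1\} \to \cX'$, whose sections over $\cX'$ are exactly elements of $\ClOpen(\cX')$. Here one uses that $\cX' \to \cX$ finite implies $\cX' \times_\cX \cX_0$ is the pullback, and that finite morphisms are representable so hypothesis (2) genuinely applies.

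For the harder direction (1) $\Rightarrow$ (2): assume $(\cX,\cX_0)$ is henselian and let $g \colon \cX' \to \cX$ be representable \'etale. I must show every section of $g_0 \colon \cX' \times_\cX \cX_0 \to \cX_0$ lifts uniquely to a section of $g$. First I would handle uniqueness: two sections $\sigma, \tau \colon \cX \to \cX'$ agreeing over $\cX_0$ determine a closed-open locus of agreement (since $g$ is representable \'etale, hence unramified, the equalizer of $\sigma,\tau$ is an open substack of $\cX$, and since $g$ is \'etale it is also closed — this is the standard argument that the locus where two sections of an \'etale map agree is clopen); this clopen contains $|\cX_0|$, so by the local property (weaker than henselian, cf.\ the implications recalled in the excerpt) it is all of $\cX$. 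For existence: given a section $s_0$ of $g_0$, its image is a clopen substack of $\cX' \times_\cX \cX_0$ that maps isomorphically to $\cX_0$. I want to produce a clopen substack of $\cX'$ restricting to it. The subtlety is that $g$ is only representable \'etale, not finite, so $\ClOpen$ of $\cX'$ need not see this directly; the fix is to replace $\cX'$ by a suitable finite piece. Concretely, after choosing a smooth presentation $U \to \cX$ with $U$ an affine scheme, pull $g$ back to get a representable \'etale $U' \to U$ and a section $s_{0,U_0}$ over $U_0 = U \times_\cX \cX_0$; by the scheme case (\cite[Exp.~XII, Prop.~6.5]{MR0354654}, using that $(U,U_0)$ is a henselian pair since $(\cX,\cX_0)$ is — this itself needs justification via descent of henselianness along smooth morphisms, or one reduces to it) one gets a section $s_U \colon U \to U'$; then I must check the two pullbacks of $s_U$ to $U \times_\cX U$ agree, which follows from the uniqueness already established applied over the (quasi-compact quasi-separated, hence one may further present) stack $U \times_\cX U$ together with the local-pair property, so $s_U$ descends to the desired section $s \colon \cX \to \cX'$.

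The main obstacle I anticipate is the descent step — propagating henselianness of the pair from $\cX$ down to an affine smooth presentation $U$, and then descending the resulting section back up. Henselianness is not obviously smooth-local on the base in an elementary way, so rather than descend the henselian property I would instead descend the \emph{conclusion}: work directly with clopen substacks on $\cX$ and its finite modifications, using the characterization of henselian pairs via integral (equivalently finite) morphisms recalled in the Remark after \Cref{D:pairs}, and the elementary fact that for a representable \'etale $g \colon \cX' \to \cX$ a section over $\cX_0$ has image a clopen of $\cX' \times_\cX \cX_0$ of a controlled "finite type over $\cX$" nature, allowing one to find a finite-over-$\cX$ substack $\cX'' \subseteq \cX'$ through which everything factors and then apply the definition of henselian pair to $\cX'' \to \cX$ directly. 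Getting this finiteness reduction right — separating the \'etale section into its image (quasi-finite, separated, hence quasi-affine over $\cX$, then after the standard trick genuinely finite once one works over the henselization / uses Zariski's main theorem) — is where the real work lies; everything else is formal manipulation of $\ClOpen$ and the local-pair Nakayama-type arguments from \Cref{R:nakayama}.
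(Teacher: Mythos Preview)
The paper's own proof is simply a citation to \cite[Prop.~5.4]{mayer-vietoris}, so there is no internal argument to compare against. Your sketch has the right overall shape but contains genuine gaps in both directions.

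For (2)$\Rightarrow$(1): you propose applying hypothesis (2) to the finite \'etale morphism $\cX' \times \{0,1\} \to \cX'$, but (2) concerns representable \'etale morphisms with target $\cX$, not $\cX'$; the composition $\cX'\times\{0,1\}\to\cX'\to\cX$ is not \'etale, and (2) does not automatically transfer to the pair $(\cX',\cX'_0)$. A correct route is to apply (2) to the \'etale $\cX$-space representing the functor $T\mapsto\ClOpen(\cX'\times_\cX T)$ (equivalently, idempotents of the finite $\oh_\cX$-algebra $\pi_*\oh_{\cX'}$), but one must first establish that this functor is indeed representable by a representable \'etale morphism to $\cX$.

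For (1)$\Rightarrow$(2): your uniqueness argument asserts the equalizer of two sections is closed, which would require $g$ separated; fortunately openness alone suffices via the local-pair property, so this step is salvageable. The serious problem is existence. The approach via a smooth presentation $U\to\cX$ fails because henselianness is \emph{not} smooth-local: for instance $(k[[t]],(t))$ is henselian (say $\mathrm{char}\,k\neq 2$), but $(k[[t]][x],(t))$ is not, as the finite algebra $k[[t]][x][y]/(y^2-y-tx)$ splits modulo $t$ yet has no nontrivial idempotent. You correctly flag this obstacle and pivot to Zariski's Main Theorem, factoring $g$ through a finite morphism; this is the right idea and works when $g$ is separated, but the proposition makes no separatedness assumption on $g$. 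Handling the non-separated case (as SGA4 Exp.~XII, Prop.~6.5 does for schemes) requires additional care that your sketch does not supply.
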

\begin{proof}
This is the equivalence between (1) and (3) of
\cite[Prop.\ 5.4]{mayer-vietoris}.
\end{proof}

We will later prove that (2) holds for
non-representable \'etale morphisms when $\cX$ is a stack with a good moduli
space and affine diagonal (\Cref{P:deformation-sections}).  A henselian pair does not always satisfy (2) for general
non-representable morphisms though, see \Cref{E:non-sep-counter-example}.

\subsection{Application: Universality of adequate moduli spaces}

For noetherian algebraic stacks, good moduli spaces were shown in \cite[Thm.~6.6]{alper-good} to be universal for maps to quasi-separated algebraic spaces and adequate moduli spaces were shown in \cite[Thm.~7.2.1]{alper-adequate}  to be universal for maps to algebraic spaces which are either locally separated or Zariski-locally have affine diagonal. We now establish this result unconditionally for adequate (and hence good) moduli spaces---see \Cref{T:universal-gms-variant} for a generalization to good moduli space morphisms.

\begin{theorem} \label{T:universal}
Let $\cX$ be an algebraic stack.  An adequate moduli space $\pi \co \cX \to X$ is universal for maps to algebraic spaces.
\end{theorem}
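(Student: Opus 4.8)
The plan is to reduce to the two partial universality results already in the literature by exploiting the local structure of the target. Suppose $f\colon \cX \to Y$ is a morphism to an algebraic space $Y$. We must produce a unique factorization $\cX \to X \to Y$. First I would reduce to the case where $\cX$ is quasi-compact: an adequate moduli space morphism is an adequate homeomorphism up to the underlying topological picture, and $X$ is covered by the images of quasi-compact opens of $\cX$; since any two factorizations agree on a cover of $X$ and the target $Y$ is an algebraic space (so its diagonal is separated enough to glue morphisms), the general case follows from the quasi-compact case by descent/gluing. Having reduced to $\cX$ quasi-compact, $X$ is a quasi-compact algebraic space, and I would next reduce to $X$ (equivalently $Y$, after replacing $Y$ by a quasi-compact open containing the image) being affine-by-pieces: cover $Y$ by affine (or at least quasi-affine, or Zariski-locally-affine-diagonal) opens $V_i$, pull back to $\cX$, and use that adequate moduli spaces commute with flat base change on $X$ together with the fact that the preimages $\pi^{-1}(\pi(f^{-1}(V_i)))$ form an open cover of $\cX$ over which $\pi$ restricts to an adequate moduli space onto an open of $X$.

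The crux of the argument is then the following: for $\cX$ quasi-compact with adequate moduli space $\pi\colon \cX \to X$ and a morphism $f\colon \cX \to Y$, I want to show $f$ factors uniquely through $\pi$. By \cite[Thm.~7.2.1]{alper-adequate}, this holds whenever $Y$ is locally separated or Zariski-locally has affine diagonal, so the only missing case is a target $Y$ whose diagonal is badly behaved. The key observation is that the image of $f$ is constrained: since $\pi$ is universally closed with geometrically connected fibers \cite[Thm.~5.3.1]{alper-adequate}, any two points of $\cX$ in the same fiber of $\pi$ map to points of $Y$ that cannot be separated, so after composing with $\pi$-quotient behavior, $f$ contracts $\pi$-fibers to points. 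Thus $f$ factors through $\pi$ on the level of topological spaces; the content is to promote this to a scheme-theoretic (stack-theoretic) factorization. For this I would use Luna's fundamental lemma for adequate moduli spaces (\Cref{L:fundamental-lemma}, available earlier in the paper) to show that étale-locally on $X$ the situation is of the form $[\Spec B/\GL_n] \to \Spec B^{\GL_n}$, reducing to the affine case where $\Gamma(\cX,\oh_\cX) = A$ and morphisms to any algebraic space factor through $\Spec A$ by the universal property of global sections combined with the fact that $Y$ is, locally on its source-image, covered by affines. Finally, uniqueness of the factorization is automatic once existence is known, since $\pi_*\oh_\cX = \oh_X$ forces any two factorizations to agree on functions and hence (the target being an algebraic space) to coincide.

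The main obstacle I expect is precisely the passage from the topological factorization to the genuine factorization when $Y$ has non-separated diagonal: one cannot simply glue the local factorizations over the $V_i$ without knowing the gluing data is consistent, and the non-separatedness of $Y$ means the agreement locus of two morphisms to $Y$ need not be closed. I would handle this by working with an étale (hence flat) cover of $X$ by affines $\Spec A_j$ provided by the local structure theory, on each of which the factorization is forced and unique by the affine universal property of $\Gamma(\cX,\oh_\cX)$; the étale descent datum for these local factorizations is then automatically effective because the factoring morphisms $\Spec A_j \to Y$ are determined on the overlaps (again using $\pi_*\oh = \oh_X$ to pin them down on the affine pieces), so they descend to a morphism $X \to Y$ regardless of the diagonal of $Y$. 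The only residual care needed is to check that the chosen étale cover of $X$ is compatible with the adequate moduli space formation, which is where the flat base change property of adequate moduli spaces and \Cref{L:fundamental-lemma} do the real work.
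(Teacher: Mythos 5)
There is a genuine gap in the main step. Your plan for the hard case (a target $Y$ with badly behaved diagonal) is to invoke ``the local structure theory'' to present $\cX$ \'etale-locally on $X$ as $[\Spec B/\GL_n]\to \Spec B^{\GL_n}$, attributing this to \Cref{L:fundamental-lemma}. That is not what Luna's fundamental lemma says: it compares two stacks that \emph{already} have adequate moduli spaces and produces a strongly \'etale restriction of a given representable \'etale morphism; it does not produce quotient presentations. The statement you actually need is the local structure theorem (\Cref{T:base}/\Cref{T:etale-local-ams}), which is proved much later in the paper, requires hypotheses absent from \Cref{T:universal} (affine stabilizers, linear reductivity of the closed point's stabilizer, finite presentation over a base), and sits downstream of \Cref{L:fundamental-lemma}, whose proof in turn uses exactly the section-lifting technique that the proof of \Cref{T:universal} is built on. So your reduction to the affine case does not go through in the stated generality. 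The idea the paper uses instead, and which is missing from your proposal, is to pull back an affine \'etale presentation $Y'\to Y$ along $g\colon\cX\to Y$ to get a representable, \'etale, surjective, stabilizer-preserving morphism $\cX'\to\cX$, observe that $(\cX\times_X\Spec\oh_{X,x}^h,\cG_q)$ is a henselian pair (\Cref{T:henselian-pair-gms}), and use the characterization of henselian pairs by lifting of sections along representable \'etale morphisms (\Cref{P:henselian-sections-of-etale-repr}) to produce, \'etale-locally on $X$, a section $s\colon\cX\to\cX'$; then $g$ factors through the affine scheme $Y'$ and the identity $\Gamma(X,\oh_X)=\Gamma(\cX,\oh_\cX)$ finishes the existence.

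Your uniqueness argument is also insufficient as stated: two morphisms from $X$ to an algebraic space that ``agree on functions'' need not coincide, since a map to a general algebraic space is not determined by its effect on global sections. The paper proves injectivity of $\Map(X,Y)\to\Map(\cX,Y)$ by forming the equalizer $E\to X$ (the pullback of $\Delta_Y$), noting it is a monomorphism locally of finite type through which the universally closed surjection $\pi$ factors, hence a closed immersion, and then using that $\pi$ is schematically dominant to conclude $E=X$. This injectivity is moreover what licenses the \'etale-local-on-$X$ reduction for existence (it supplies the gluing condition in descent), so it needs to be established first rather than deduced afterwards. Your topological observation that $f$ contracts fibers of $\pi$ is correct but, as you note yourself, does not by itself yield the scheme-theoretic factorization; the henselian section-lifting is the mechanism that does.
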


\begin{proof} We need to show that if $Y$ is an algebraic space, then the natural map
\begin{equation} \label{E:universal}
	\Map(X,Y) \to \Map(\cX, Y)
\end{equation}
is bijective.  To see the injectivity of \eqref{E:universal}, suppose that $h_1, h_2 \co X \to Y$ are maps such that $h_1 \circ \pi = h_2 \circ \pi$.  Let $E \to X$ be the equalizer of $h_1$ and $h_2$, that is, the pullback of the diagonal $Y \to Y \times Y$ along $(h_1, h_2) \co X \to Y \times Y$. The equalizer is a monomorphism and locally of finite type. By assumption $\pi\colon \cX \to X$ factors through $E$, and since $\cX \to X$ is universally closed, so is $E \to X$.  It follows that $E \to X$ is a closed immersion~\cite[Tag~\spref{04XV}]{stacks-project}.  Since $\cX \to X$ is schematically dominant, so is $E \to X$, hence $E = X$.
  
The surjectivity of \eqref{E:universal} is an \'etale-local property on $X$ since the injectivity of \eqref{E:universal} implies the gluing condition in \'etale descent. Thus, we may assume that $X$ is affine.  In particular, $\cX$ is quasi-compact and since any map $\cX \to Y$ factors through a quasi-compact open of $Y$, we may assume that $Y$ is also quasi-compact. 
 
Let $g \co \cX \to Y$ be a morphism and let $p \co Y' \to Y$ be an \'etale presentation where $Y'$ is an affine scheme.  To show that $g$ factors through $\pi \co \cX \to X$, we claim that after replacing $X$ with an \'etale cover, the base change $f \co \cX' \to \cX$ of $p$ along $g$ admits a section $s \co \cX \to \cX'$.  If this claim is established, then the map $g \co \cX \to Y$ factors as $\cX \xrightarrow{s} \cX' \xrightarrow{g'} Y' \xrightarrow{p} Y$.  Since $X$ and $Y'$ are affine, the equality $\Gamma(X, \oh_{X}) = \Gamma(\cX, \oh_{\cX})$ 
implies that the map $\cX \xrightarrow{s} \cX' \xrightarrow{g'} Y'$ factors through $\pi \co \cX \to X$.

To show the claim, observe that $f \co \cX' \to \cX$ is representable, \'etale, surjective and induces an isomorphism of stabilizer group schemes at all points.  
Let $x \in |X|$ be a point, $q \in |\cX|$ be the unique closed point over $x$ and $q' \in |\cX'|$ any point over $q$. Note that $\kappa(q)/\kappa(x)$ is a purely inseparable extension. After 
replacing $X$ with an \'etale neighborhood of $x$ (with a residue field extension), we may thus assume that $\kappa(q') = \kappa(q)$.  Since $f$ induces an isomorphism of stabilizer groups, the induced map $\cG_{q'} \to \cG_{q}$ on residual gerbes is an isomorphism.   \Cref{T:henselian-pair-gms} implies that $(\cX \times_X \Spec \oh_{X,x}^h, \cG_q)$ is a henselian pair so
\Cref{P:henselian-sections-of-etale-repr} gives a section of $\cX'\times_X \Spec \oh_{X,x}^h\to \cX\times_X \Spec \oh_{X,x}^h$.
Since $f$ is locally of finite presentation, we obtain a section $s \co \cX \to \cX'$ of $f \co \cX' \to \cX$ after replacing $X$ with an \'etale neighborhood of $x$.
\end{proof}
\subsection{Application: Luna's fundamental lemma}
\begin{definition} \label{D:strongly-etale}
If $\cX$ and $\cY$ are algebraic stacks admitting adequate moduli spaces $\cX \to X$ and $\cY \to Y$, we say that a morphism $f \co \cX \to \cY$ is {\it strongly \'etale} if the induced morphism $X \to Y$ is \'etale and $\cX \cong X \times_Y \cY$.
\end{definition}

The following result generalizes \cite[Thm.~6.10]{alper-quotient} from good
moduli spaces to adequate moduli spaces and also removes noetherian and
separatedness assumptions. 

\begin{theorem}[Luna's fundamental lemma] \label{L:fundamental-lemma}
Let $f \co \cX \to \cY$ be a morphism of algebraic stacks with adequate moduli
spaces $\pi_{\cX} \co \cX \to X$ and $\pi_{\cY} \co \cY \to Y$. Let $x \in
|\cX|$ be a point, closed in its fiber $\pi_{\cX}^{-1}(\pi_{\cX}(x))$, such that
\begin{enumerate}
\item\label{LI:fundamental-lemma:et+rep}
  $f$ is \'etale and representable in a neighborhood of $x$;
\item $y:=f(x) \in |\cY|$ is closed in its fiber $\pi_{\cY}^{-1}(\pi_{\cY}(y))$; and
\item $f$ induces an isomorphism of stabilizer groups at $x$.
\end{enumerate}
Then there exists an open neighborhood $\cU \subseteq \cX$ of $x$ such that
$\pi_{\cX}^{-1} (\pi_{\cX}(\cU)) = \cU$ and $f|_{\cU} \co \cU \to \cY$ is
strongly \'etale.  In particular, $X \to Y$ is \'etale at $\pi_{\cX}(x)$.
\end{theorem}

\begin{remark}
If $G$ a smooth algebraic group over an algebraically closed field $k$ such that $G^0$ is reductive and $\varphi \co U \to V$ is a $G$-equivariant morphism of irreducible normal affine varieties over $k$, then \cite[Thm.~4.1]{br-luna} (see also \cite[pg.~198]{git3}) established the result above for $f \co [U/G] \to [V/G]$.  
\end{remark}

\begin{proof}[Proof of~\Cref{L:fundamental-lemma}]
An open subset $\cU \subseteq \cX$ such that $\pi_{\cX}^{-1} (\pi_{\cX}(\cU)) =
\cU$ is called \emph{saturated} and it has adequate moduli space
$\pi_{\cX}(\cU)$.  Given any open neighborhood $\cU\subseteq \cX$ of $x$, the
smaller open neighborhood $|\cX|\smallsetminus \pi_{\cX}^{-1}(
\pi_{\cX}(|\cX|\smallsetminus \cU))$ is saturated.

By \itemref{LI:fundamental-lemma:et+rep},
we may replace $\cX$ with a saturated open neighborhood of $x$ such that $f$
becomes \'etale and representable. As adequate moduli spaces commute with flat base change, the question is \'etale-local
on $Y$.  We may therefore assume that $Y$ is affine in which case $\cY$ is quasi-compact and
quasi-separated.

If $Y$ is strictly henselian with closed point $\pi_{\cY}(y)$, then $(\cY,y)$ is a
henselian pair (\Cref{T:henselian-pair-gms}) and $\cG_x\to
\cG_y$ is an isomorphism.  Since $f$ is representable, we may apply \Cref{P:henselian-sections-of-etale-repr} to construct a section $s$ of $f$ such that $s(y)=x$. For general $Y$, since $f$ is
locally of finite presentation, we obtain a section $s$ of $f$ such that
$s(y)=x$ after replacing $Y$ with an \'etale neighborhood $(Y',y')\to (Y,\pi_{\cY}(y))$.
The image of $s$ is an open substack $\cU\subseteq \cX$ and $f|_\cU$ is an
isomorphism. After replacing $\cX$ with a saturated open neighborhood of $x$
contained in $\cU$, we can thus assume that $f$ is an open immersion.
After repeating the argument we obtain a section $s$ which is open and closed.
Then $\cU\subseteq \cX$ is automatically saturated and we are done.
\end{proof}

The result is not true in general if $f$ is not representable in a neighborhood of $x$ as the following example shows. However, if $\cY$ has separated diagonal, then $f$ is always representable in a neighborhood of $x$; see \Cref{P:refinement}\eqref{P:refinement:separated_diag}.

\begin{example}\label{E:non-sep-counter-example}
Let $S=\Spec k\llbracket t\rrbracket$ where $k$ is an algebraically closed
field. Let $G=(\ZZ/2\ZZ)_S$ and let $G'=G/H$ where $H\subset G$ is the
open subgroup that is the complement of the non-trivial element over the
origin. Let $\cX=BG$ and $\cY=BG'$ which both have good moduli space $S$
(adequate if $\kar k=2$) but $\cY$ does not have separated diagonal.
The induced morphism $f\colon \cX\to \cY$ is \'etale, but not representable,
and induces an isomorphism of the residual gerbes $B\ZZ/2\ZZ$ of the unique
closed points. But $f$ is not strongly \'etale and does not admit a section.
\end{example}

\section{Coherent completeness and effectivity}\label{S:completeness-and-effectivity}
In this section we prove \Cref{T:complete} (coherent completeness)  and \Cref{T:effectivity} (effectivity), which are both essential
ingredients in our proof of \Cref{T:base} (local structure).

The proofs of coherent completeness and effectivity are intertwined. First, we prove
an important special case of coherent completeness, namely when the resolution
property holds for $\cX$ (\Cref{T:complete:res-prop}). Then we use this special case to
prove effectivity in characteristic zero (\Cref{T:effectivity:char-0})
and in local characteristic $p$ (\Cref{C:effectivity-local}).  Then we prove
that adequate moduli spaces with linearly reductive stabilizers are good moduli
spaces (\Cref{T:adequate+lin-red=>good}). Then we can prove
effectivity in general and finally use it to prove coherent completeness
in general.

\subsection{Theorem on formal functions} \label{SS:formal-functions}
The following theorem on formal functions for good moduli spaces is a key component in the proof of the coherent completeness result (\Cref{T:complete}). This theorem is close in spirit to
\cite[III.4.1.5]{EGA} and is a generalization of \cite[Thm.~1.1]{alper-local}.  Surprisingly, we obtain, and will use, a version that also holds for adequate moduli spaces.

\begin{theorem}[Formal functions, adequate version]\label{T:almost-formal-fns:adequate}
  Let $\cX$ be an algebraic stack admitting an adequate moduli space $\pi \co \cX \to \Spec A$.  
  Let $\cZ \subseteq \cX$ be a closed substack defined by a  
  sheaf of ideals $\shv{I}$. Let
  $I=\Gamma(\cX,\shv{I})$ be the corresponding ideal of $A=\Gamma(\cX,\Orb_{\cX})$. If $A$ is noetherian and $I$-adically complete, and
  $\pi$ is of finite type, then for every
  $\shv{F} \in \COH(\cX)$ the natural map
  \begin{equation} \label{E:formal-functions:adequate}
  \Gamma(\cX,\shv{F}) \to \operatorname{\smash{\varprojlim_{n}}}
  \Gamma(\cX,\shv{F})/\Gamma(\cX,\shv{I}^{n}\shv{F})
 \end{equation}
 is an isomorphism.
\end{theorem}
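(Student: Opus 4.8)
The plan is to reduce the theorem to the standard commutative-algebra fact that a finitely generated module over a noetherian $I$-adically complete ring is $I$-adically complete, by comparing the filtration $\{\Gamma(\cX,\shv I^n\shv F)\}_n$ of $M:=\Gamma(\cX,\shv F)$ with the $I$-adic filtration $\{I^nM\}_n$ and showing that the two are mutually cofinal.

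First I would note that $\cX\to\Spec A$, where $A=\Gamma(\cX,\Orb_\cX)$, is an adequate moduli space: it is adequately affine by hypothesis and $A\to\pi_*\Orb_\cX$ is the identity. Moreover $\cX$ is noetherian, being of finite type over the noetherian ring $A$, so by the coherence theorem for adequate moduli spaces \cite{alper-adequate} the module $M=\Gamma(\cX,\shv F)$ is finitely generated over $A$. Since $I=\Gamma(\cX,\shv I)$, a product of $n$ global sections of $\shv I$ is a global section of $\shv I^n$, so $I^n\subseteq\Gamma(\cX,\shv I^n)$ and hence $I^nM\subseteq\Gamma(\cX,\shv I^n\shv F)$. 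Thus the natural map \eqref{E:formal-functions:adequate} factors through the $I$-adic completion map $M\to\varprojlim_n M/I^nM$, which is an isomorphism because $A$ is noetherian and $I$-adically complete and $M$ is finitely generated. It therefore remains only to prove the reverse comparison: for every $m\ge 0$ there is an $n$ with $\Gamma(\cX,\shv I^n\shv F)\subseteq I^mM$.

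For this I would pass to Rees constructions on $\cX$. Put $\mathcal R:=\bigoplus_{n\ge0}\shv I^n$, a quasi-coherent sheaf of graded $\Orb_\cX$-algebras generated in degree $1$, and $\mathcal G:=\bigoplus_{n\ge0}\shv I^n\shv F$, a quasi-coherent graded $\mathcal R$-module generated in degree $0$ by $\shv F$. Then $\cY:=\uSpec_{\cX}\mathcal R\to\cX$ is affine and of finite type, so $\cY$ is noetherian and is adequately affine and of finite type over $\Spec A$, and $\mathcal G$ is coherent on $\cY$. By the finiteness results for adequate moduli spaces \cite{alper-adequate}, the adequate moduli space of $\cY$ is $\Spec\tilde R$ with $\tilde R:=\Gamma(\cY,\Orb_\cY)=\Gamma(\cX,\mathcal R)=\bigoplus_n\Gamma(\cX,\shv I^n)$ a finitely generated — hence noetherian — graded $A$-algebra, and $N:=\Gamma(\cY,\mathcal G)=\Gamma(\cX,\mathcal G)=\bigoplus_n\Gamma(\cX,\shv I^n\shv F)$ is a finitely generated graded $\tilde R$-module. (I use here that $\Gamma(\cX,-)$ commutes with these direct sums, as $\cX$ is quasi-compact and quasi-separated.) Note that $\tilde R_0=A$, $\tilde R_1=\Gamma(\cX,\shv I)=I$ and $N_0=M$.

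Finally I would extract the cofinality from the grading, the crucial point being that $\shv I^{n+1}\subseteq\shv I^n$ inside $\Orb_\cX$, so that $\tilde R_n\subseteq\tilde R_1=I$ for all $n\ge1$. Choose finitely many homogeneous algebra generators of $\tilde R$ over $A$, all of degree $\le d$, and finitely many homogeneous $\tilde R$-module generators $h_j\in N_{e_j}\subseteq M$ of $N$, all of degree $\le e$. Each positive-degree algebra generator lies in $I$, so a monomial in them of weighted degree $n$ is a product of at least $n/d$ elements of $I$, whence $\tilde R_n\subseteq I^{\lceil n/d\rceil}$ for $n\ge1$; since $N_n=\sum_j\tilde R_{n-e_j}h_j$ this gives $\Gamma(\cX,\shv I^n\shv F)=N_n\subseteq I^{\lceil(n-e)/d\rceil}M$. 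As $\lceil(n-e)/d\rceil\to\infty$, combining this with $I^nM\subseteq\Gamma(\cX,\shv I^n\shv F)$ shows the two filtrations of $M$ are mutually cofinal, so $\varprojlim_n M/\Gamma(\cX,\shv I^n\shv F)\cong\varprojlim_n M/I^nM=M$ compatibly with the natural maps, and \eqref{E:formal-functions:adequate} is an isomorphism. The main obstacle is the one external input — the finiteness of adequate moduli spaces applied to the Rees stack $\cY$, giving that $\tilde R$ is noetherian and that $N$ is a finite $\tilde R$-module; once that is in hand, everything else is formal.
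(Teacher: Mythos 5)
Your proposal is correct and follows essentially the same route as the paper: both pass to the Rees algebra $\bigoplus_n\shv{I}^n$ and module $\bigoplus_n\shv{I}^n\shv{F}$, identify the adequate moduli space of its relative $\Spec$, invoke the finiteness theorem for adequate moduli spaces to get that $\bigoplus_n\Gamma(\cX,\shv{I}^n)$ is a finitely generated graded $A$-algebra and $\bigoplus_n\Gamma(\cX,\shv{I}^n\shv{F})$ a finite module over it, and then compare the resulting filtration of $\Gamma(\cX,\shv{F})$ with the $I$-adic one. The only (cosmetic) difference is the last step, where the paper normalizes by replacing $\shv{I}$ with $\shv{I}^N$ and quotes the stable-filtration lemma, while you run the degree-counting on generators directly to get explicit cofinality.
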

\begin{proof}
  The $A$-module $F:=\Gamma(\cX,\cF)$ is finitely generated
  since $A$ is noetherian and $\pi$ is of finite type~\cite[Thm.~6.3.3]{alper-adequate}. Let
  $I_n=\Gamma(\cX,\shv{I}^n)$ and $F_n=\Gamma(\cX,\shv{I}^n\shv{F})$.
  Note that $\shv{I}^*:=\bigoplus \shv{I}^n$ is a finitely generated
  $\Orb_{\cX}$-algebra and $\shv{I}^*\shv{F}:=\bigoplus \shv{I}^n\shv{F}$
  is a finitely generated $\shv{I}^*$-module. %
  If we let $I_*=\bigoplus I_n=\Gamma(\cX,\shv{I}^*)$, then $\Spec_{\cX}
  \shv{\shv{I}^*}\to \Spec I_*$ is an adequate moduli
  space~\cite[Lem.~5.2.11]{alper-adequate}. Since $A$ is noetherian and
  $\Spec_{\cX}  \shv{\shv{I}^*}\to \Spec A$ is of finite type,
  it follows that $I_*$ is a finitely
  generated $A$-algebra and that $F_*:=\bigoplus F_n=\Gamma(\cX,\shv{I}^*\shv{F})$
  is a finitely generated $I_*$-module~\cite[Thm.~6.3.3]{alper-adequate}.

  By \cite[II.2.1.6(v)]{EGA}, there is an %
  integer $N\geq 1$ 
  such that
  $I_{kN}=(I_N)^k$ for all $k\geq 1$. That is, the topology induced by the
  non-adic system $I_n$ is equivalent to the $I_N$-adic topology.
  Without loss of generality, we can replace $\shv{I}$ with $\shv{I}^N$
  so that $I_*=I^*=\bigoplus_{k\geq 0} I^k$.

  Similarly, for
  sufficiently large $n$ (e.g., larger than all degrees of a set of homogeneous
  generators of $F_*$), $F_{n+1}=IF_n$ \cite[Lem.~10.8]{MR0242802}; 
  that is, $(F_n)$ is an $I$-stable filtration on $F$. It follows that $(F_n)$
  induces the same topology on $F$ as $(I^nF)$ \cite[Lem.~10.6]{MR0242802}. But
  $F$ is a finite $A$-module, hence $I$-adically complete, hence complete
  with respect to $(F_n)$.
\end{proof}

\begin{corollary}[Formal functions, good version]\label{C:formal-fns}
  Let $\cX$ be a noetherian algebraic stack admitting a good moduli space $\pi \co \cX \to \Spec A$.
  Let $\cZ \subseteq \cX$ be a closed substack defined by a  
  sheaf of ideals $\shv{I}$. Let
  $I=\Gamma(\cX,\shv{I})$ be the corresponding ideal of $A=\Gamma(\cX,\Orb_{\cX})$. If $A$ is $I$-adically complete, then for every
  $\shv{F} \in \COH(\cX)$ the natural map
  \begin{equation} \label{E:formal-functions}
  \Gamma(\cX,\shv{F}) \to \operatorname{\smash{\varprojlim_n}}
  \Gamma(\cX,\shv{F}/\shv{I}^{n}\shv{F})
 \end{equation}
 is an isomorphism.
\end{corollary}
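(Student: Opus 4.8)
The plan is to deduce \Cref{C:formal-fns} from \Cref{T:almost-formal-fns:adequate} by reconciling the two slightly different target systems appearing in \eqref{E:formal-functions:adequate} and \eqref{E:formal-functions}. First I would observe that a noetherian cohomologically affine stack is in particular adequately affine and noetherian, and that $\cX \to \Spec A$ is automatically of finite type: indeed, $A = \Gamma(\cX,\Orb_{\cX})$ is noetherian (since $\cX$ is noetherian and $\Gamma(\cX,-)$ preserves coherence by \cite[Thm.~4.16(x)]{alper-good}, or simply because $\Orb_X \to \pi_*\Orb_{\cX}$ is an isomorphism for the good moduli space), and $\cX$ being noetherian means it is of finite type over $\Spec \ZZ$, hence over $\Spec A$. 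Thus all the hypotheses of \Cref{T:almost-formal-fns:adequate} are met, and that theorem gives that the natural map
\[
  \Gamma(\cX,\shv{F}) \to \varprojlim_n \Gamma(\cX,\shv{F})/\Gamma(\cX,\shv{I}^n\shv{F})
\]
is an isomorphism.

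Next, the key point is to compare the two projective systems
\[
  \bigl\{\Gamma(\cX,\shv{F})/\Gamma(\cX,\shv{I}^n\shv{F})\bigr\}_n
  \quad\text{and}\quad
  \bigl\{\Gamma(\cX,\shv{F}/\shv{I}^n\shv{F})\bigr\}_n.
\]
There is a natural map from the first to the second, induced by the short exact sequence $0 \to \shv{I}^n\shv{F} \to \shv{F} \to \shv{F}/\shv{I}^n\shv{F} \to 0$; applying the left-exact functor $\Gamma(\cX,-)$ shows that $\Gamma(\cX,\shv{F})/\Gamma(\cX,\shv{I}^n\shv{F}) \hookrightarrow \Gamma(\cX,\shv{F}/\shv{I}^n\shv{F})$, with cokernel a submodule of $R^1\Gamma(\cX,\shv{I}^n\shv{F})$. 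But $\cX$ is cohomologically affine, so $R^1\Gamma(\cX,-) = 0$ on quasi-coherent sheaves, and hence each of these maps is an isomorphism. Therefore the two projective systems are isomorphic (compatibly, as $n$ varies), so their inverse limits agree, and \eqref{E:formal-functions} is an isomorphism, namely the composite of the isomorphism from \Cref{T:almost-formal-fns:adequate} with this identification of limits.

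I expect the only genuine subtlety to be bookkeeping: checking that the cohomological affineness is correctly invoked (it gives exactness of $\Gamma(\cX,-)$ on all quasi-coherent modules, which is exactly what kills the $R^1$ term), and confirming that $\cX\to\Spec A$ is of finite type so that \Cref{T:almost-formal-fns:adequate} literally applies — both of which are routine. No new ideas beyond those already in \Cref{T:almost-formal-fns:adequate} are needed; the corollary is essentially a translation of the statement into the cleaner form available under the stronger hypothesis of cohomological (rather than merely adequate) affineness, where the graded-ring manipulations of the previous proof are invisible because $\Gamma(\cX,-)$ is already exact.
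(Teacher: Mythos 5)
Your overall strategy is the paper's: feed the situation into \Cref{T:almost-formal-fns:adequate} and then identify the two projective systems using exactness of $\Gamma(\cX,-)$ on quasi-coherent sheaves (which for a good moduli space makes $\Gamma(\cX,\shv{F})/\Gamma(\cX,\shv{I}^n\shv{F})\to\Gamma(\cX,\shv{F}/\shv{I}^n\shv{F})$ an isomorphism). That second half of your argument is correct and is exactly what the paper does.

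The gap is in your verification of the hypothesis that $\cX\to\Spec A$ is of finite type. You assert that ``$\cX$ being noetherian means it is of finite type over $\Spec\ZZ$,'' which is false: $\Spec\QQ$, $\Spec k\llbracket t\rrbracket$ and $B\GL_{n,\QQ}$ are all noetherian but not of finite type over $\Spec\ZZ$; noetherianness is quasi-compactness, quasi-separatedness and local noetherianness, and carries no finiteness over the integers. This hypothesis is not bookkeeping --- it is the one genuinely non-routine input to the corollary. The paper invokes \cite[Thm.~A.1]{luna-field}, a substantive theorem asserting that the good moduli space morphism of a noetherian algebraic stack is automatically of finite type; without it (or some substitute) your appeal to \Cref{T:almost-formal-fns:adequate} does not go through. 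A smaller quibble: your parenthetical alternative justification that $A$ is noetherian ``because $\Orb_X\to\pi_*\Orb_{\cX}$ is an isomorphism'' is not an argument (that is just the definition of a good moduli space); the correct reference is \cite[Thm.~4.16(x)]{alper-good}, which you do also cite, so that step is fine.
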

\begin{proof}
  By \cite[Thm.~4.16(x)]{alper-good}, the ring $A$ is noetherian and
  by~\cite[Thm.~A.1]{luna-field}, $\cX\to\Spec A$ is of finite type so
  \Cref{T:almost-formal-fns:adequate} applies. For good moduli spaces,
  the natural map
  $\Gamma(\cX,\shv{F})/\Gamma(\cX,\shv{I}^{n}\shv{F})\to
  \Gamma(\cX,\shv{F}/\shv{I}^{n}\shv{F})$ is an isomorphism by definition.
\end{proof}

\begin{remark}
The formal functions theorem generalizes the isomorphism of
\cite[Eqn.\ (2.1)]{luna-field} from the case of $\cX=[\Spec B/G]$ for $G$
linearly reductive and $A=B^G$ complete local, all defined over a field $k$, to
$\cX=[\Spec B/\GL_n]$ and $A=B^{\GL_n}$ complete but not
necessarily local. This also includes $[\Spec B/G]$ for $G$ geometrically
reductive and embeddable since $\GL_n/G$ is affine. %
\end{remark}

\begin{remark}
In the setting of the adequate version, \Cref{T:almost-formal-fns:adequate},
suppose that $\cX = [\Spec(B)/G]$ where $G\to S$ is a reductive group scheme.
Then van der Kallen has shown that $H^i(\cX,-)$ preserves coherence for all $i$
\cite[Cor.~1.2]{MR3525842}. Using an argument similar to \cite[III.4.1.5]{EGA}
one can then show that the map \eqref{E:formal-functions} is an isomorphism,
see~\cite[Cor.~4.9]{alper-hall-lim_arXiv}. We will not use this result.
\end{remark}

\subsection{Coherent completeness I: with resolution property}\label{SS:coherently-complete:res-prop}
We can now prove the first version of \Cref{T:complete}.
\begin{theorem}[Coherent completeness assuming resolution property]\label{T:complete:res-prop}
  Let $\cX$ be a noetherian algebraic stack with affine diagonal
  and good moduli space $\pi \colon \cX \to X =\Spec A$. Let
  $\cZ \subseteq \cX$ be a closed substack defined by a
  coherent sheaf of ideals $\cI \subseteq \Orb_{\cX}$ and let
  $I=\Gamma(\cX,\cI)$.  Assume that
  $\cX$ has the resolution property. If $A$ is $I$-adically
  complete, then $\cX$ is coherently complete along $\cZ$.
\end{theorem}
Note that in this theorem, $\cX$ is assumed to have the resolution property, whereas in \Cref{T:complete} it is only assumed that $\cZ$ has the resolution property.
The following full faithfulness result does not require any resolution property hypothesis and follows from arguments similar to those of \cite[III.5.1.3]{EGA} and 
  \cite[Thm.~1.1(i)]{geraschenko-zb_fGAGA}.
\begin{lemma}\label{L:f_faithful}
 Let $\cX$ be a noetherian algebraic stack that is cohomologically affine.   
 Let $\cZ \subseteq \cX$ be a closed substack defined by a  
  sheaf of ideals $\shv{I}$. Let
  $I=\Gamma(\cX,\shv{I})$ be the corresponding ideal of $A=\Gamma(\cX,\Orb_{\cX})$. If  $A$ is $I$-adically complete, then the functor
  \[
      \COH(\cX) \to \operatorname{\smash{\varprojlim_n}} \COH(\thck{\cX}{\cZ}{n}).
    \]
    is fully faithful.
\end{lemma}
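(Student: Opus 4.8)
The plan is to establish full faithfulness by analyzing $\Hom$ and $\Ext^0$ separately, reducing everything to the formal functions theorem (\Cref{C:formal-fns}) already established.

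First I would reduce to a statement about sections of coherent sheaves. Given $\shv{F}, \shv{G} \in \COH(\cX)$, the sheaf $\SHom_{\Orb_\cX}(\shv{F}, \shv{G})$ is coherent since $\cX$ is noetherian, and a morphism $\shv{F}\to\shv{G}$ is the same as a global section of $\SHom(\shv{F}, \shv{G})$. The restriction $\SHom_{\Orb_\cX}(\shv{F}, \shv{G})|_{\thck{\cX}{\cZ}{n}}$ is not quite $\SHom_{\Orb_{\thck{\cX}{\cZ}{n}}}(\shv{F}|_{\thck{\cX}{\cZ}{n}}, \shv{G}|_{\thck{\cX}{\cZ}{n}})$, but there is a natural map from the former to the latter, and I would compare both to $\SHom(\shv{F},\shv{G})/\shv{I}^{n}\SHom(\shv{F},\shv{G})$. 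The key point is that
\[
\Hom_{\thck{\cX}{\cZ}{n}}\bigl(\shv{F}|_{\thck{\cX}{\cZ}{n}}, \shv{G}|_{\thck{\cX}{\cZ}{n}}\bigr) = \Gamma\bigl(\cX, \SHom_{\Orb_\cX}(\shv{F},\shv{G})/\shv{I}^{n}\SHom_{\Orb_\cX}(\shv{F},\shv{G})\bigr)
\]
does not hold on the nose, so instead I would take the inverse limit over $n$ and show both sides compute $\Gamma(\cX, \SHom(\shv{F},\shv{G}))$. For the inverse limit of the left-hand side, one shows that the two towers $\{\Hom_{\thck{\cX}{\cZ}{n}}(\shv{F}|_n, \shv{G}|_n)\}$ and $\{\Gamma(\cX,\SHom(\shv{F},\shv{G})/\shv{I}^n\SHom(\shv{F},\shv{G}))\}$ are pro-isomorphic; this is the Artin–Rees type argument from \cite[III.5.1.3]{EGA}: the kernel and cokernel of $\SHom(\shv{F},\shv{G})/\shv{I}^n\SHom(\shv{F},\shv{G}) \to \SHom(\shv{F}|_n,\shv{G}|_n)$ are annihilated by a fixed power of $\shv{I}$ independent of $n$ (after possibly refining the index), essentially because $\shv{F}$ admits, locally on a smooth presentation, a finite free presentation, and Artin–Rees controls the intersection $\shv{I}^m\shv{G}\cap(\text{relations})$. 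Taking $\varprojlim_n$ then identifies $\varprojlim_n \Hom_{\thck{\cX}{\cZ}{n}}(\shv{F}|_n,\shv{G}|_n)$ with $\varprojlim_n \Gamma(\cX, \SHom(\shv{F},\shv{G})/\shv{I}^n\SHom(\shv{F},\shv{G}))$.

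Then I would apply \Cref{C:formal-fns} (formal functions, good version) with $\shv{F}$ there replaced by the coherent sheaf $\SHom_{\Orb_\cX}(\shv{F},\shv{G})$ and with the same ideal sheaf $\shv{I}$: since $A$ is $I$-adically complete, the natural map $\Gamma(\cX, \SHom(\shv{F},\shv{G})) \to \varprojlim_n \Gamma(\cX, \SHom(\shv{F},\shv{G})/\shv{I}^n\SHom(\shv{F},\shv{G}))$ is an isomorphism. Combining with the previous paragraph, $\Hom_{\cX}(\shv{F},\shv{G}) = \Gamma(\cX,\SHom(\shv{F},\shv{G})) \xrightarrow{\ \sim\ } \varprojlim_n \Hom_{\thck{\cX}{\cZ}{n}}(\shv{F}|_n,\shv{G}|_n)$, which is precisely full faithfulness of the functor in question (a morphism in the target category $\varprojlim_n \COH(\thck{\cX}{\cZ}{n})$ is by definition a compatible system of morphisms $\shv{F}|_n \to \shv{G}|_n$).

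The main obstacle I expect is the pro-isomorphism of towers in the second paragraph: controlling, uniformly in $n$, the discrepancy between restricting an internal-$\SHom$ sheaf to the thickening and forming $\SHom$ on the thickening. This requires a careful Artin–Rees argument on a smooth presentation of $\cX$, using that $\shv{F}$ is coherent hence has finite local free presentations, together with noetherianity to get the Artin–Rees constant; one must also check these local comparisons glue, which is automatic since everything in sight is a map of coherent sheaves on $\cX$ itself. Once that pro-isomorphism is in hand, the rest is a formal consequence of \Cref{C:formal-fns}.
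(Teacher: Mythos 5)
Your proposal is correct, and its core strategy coincides with the paper's: reduce $\Hom_{\cX}(\shv{F},\shv{G})$ to $\Gamma(\cX,\SHom_{\Orb_\cX}(\shv{F},\shv{G}))$ and conclude by applying the formal functions theorem (\Cref{C:formal-fns}) to the coherent sheaf $\SHom_{\Orb_\cX}(\shv{F},\shv{G})$. Where you differ is in how the intermediate identification
$\varprojlim_n \Hom_{\thck{\cX}{\cZ}{n}}(\shv{F}|_n,\shv{G}|_n) \cong \varprojlim_n \Gamma\bigl(\cX,\SHom(\shv{F},\shv{G})/\shv{I}^n\SHom(\shv{F},\shv{G})\bigr)$
is obtained. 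The paper routes through Conrad's formal completion of the lisse-\'etale topos: it cites the equivalence $\COH(\widehat{\cX})\simeq\varprojlim_n\COH(\thck{\cX}{\cZ}{n})$, uses flatness of $c\colon\widehat{\cX}\to\cX$ and coherence of $\shv{F}$ to commute $\SHom$ with $c^*$, and then identifies $\Gamma(\widehat{\cX},c^*\shv{Q})$ with $\varprojlim_n\Gamma(\cX,\shv{Q}/\shv{I}^{n+1}\shv{Q})$. You instead compare the two towers directly by an Artin--Rees pro-isomorphism \`a la \cite[III.5.1.3]{EGA}, which is valid: working on a smooth affine presentation with a finite free presentation $\Orb^a\to\Orb^b\to\shv{F}\to 0$, both the kernel (via $\Hom(\shv{F},\shv{I}^n\shv{G})=\SHom(\shv{F},\shv{G})\cap\shv{I}^n\shv{G}^b\subseteq\shv{I}^{n-c}\SHom(\shv{F},\shv{G})$) and the cokernel (via $\shv{I}^n\shv{G}^a\cap\im(u^*)\subseteq u^*(\shv{I}^{n-c}\shv{G}^b)$) of the comparison map are controlled by a uniform Artin--Rees constant, obtained from a finite affine cover by noetherianity. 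Your route buys independence from the formal-GAGA machinery of Conrad and Geraschenko--Zureick-Brown at the cost of carrying out that (standard but fiddly) tower comparison by hand; the paper's route outsources exactly that step to the cited results. Both are legitimate; just make sure, as you note, that the pro-isomorphism is stated for the towers (with index shifts) rather than termwise, since the individual comparison maps are not isomorphisms.
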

\begin{proof}
 Following \cite[\S1]{conradfmlgaga}, let 
  $\Orb_{\stk{\widehat{X}}}$ denote the sheaf of rings on the lisse-\'etale site of $\cX$ 
  that assigns to each smooth morphism $p\colon \spec B \to \cX$ the ring 
  $\varprojlim_n B/\shv{I}^nB$. The sheaf of rings 
  $\Orb_{\stk{\widehat{X}}}$ is coherent and the natural functor
  \[
  \COH(\stk{\widehat{X}}) \to \varprojlim_n \COH(\thck{\cX}{\cZ}{n})
  \]
  is an equivalence of categories \cite[Thm.~2.3]{conradfmlgaga}. Let $c\colon 
  \stk{\widehat{X}} \to \cX$ 
  denote the induced morphism of ringed topoi and let $\shv{F}, \shv{G} \in 
  \COH(\cX)$; then it remains to prove that the map
  \[
    \Hom_{\Orb_{\cX}}(\shv{F}, \shv{G}) \to    
    \Hom_{\Orb_{\stk{\widehat{X}}}}(c^*\shv{F}, c^*\shv{G})
  \]
  is bijective. Now we have the following commutative square, whose vertical arrows are 
  isomorphisms:
  \[
  \xymatrix{\Hom_{\Orb_{\cX}}(\shv{F}, \shv{G}) \ar[r] \ar[d] & \ar[d]
  \Hom_{\Orb_{\stk{\widehat{X}}}}(c^*\shv{F}, c^*\shv{G})\\
  \Gamma(\cX,\SHom_{\Orb_{\cX}}(\shv{F}, \shv{G})) \ar[r] & 
  \Gamma(\stk{\widehat{X}}, 
  \SHom_{\Orb_{\stk{\widehat{X}}}}(c^*\shv{F}, c^*\shv{G})).}
  \]
  Since $c$ is flat and $\shv{F}$ is coherent the natural morphism 
  \[
  c^*\SHom_{\Orb_{\cX}}(\shv{F},\shv{G}) \to 
  \SHom_{\Orb_{\stk{\widehat{X}}}}(c^*\shv{F},c^*\shv{G})
  \]
  is an isomorphism \cite[Lem.~3.2]{geraschenko-zb_fGAGA}. Thus, it 
  remains to prove that the map
  \[
  \Gamma(\cX,\shv{Q}) \to \Gamma(\stk{\widehat{X}},c^*\shv{Q})
  \]
  is an isomorphism whenever $\shv{Q} \in \COH(\cX)$. But there are natural 
  isomorphisms:
  \[
  \Gamma(\stk{\widehat{X}},c^*\shv{Q}) \cong \varprojlim_n 
  \Gamma(\stk{\widehat{X}},\shv{Q}/\shv{I}^{n+1}\shv{Q}) \cong \varprojlim_n 
  \Gamma(\thck{\cX}{\cZ}{n},\shv{Q}/\shv{I}^{n+1}\shv{Q}) \cong \varprojlim_n 
  \Gamma(\cX,\shv{Q}/\shv{I}^{n+1}\shv{Q}).
  \]
  The result now follows from  \Cref{C:formal-fns}. 
  \end{proof}
  \begin{proof}[Proof of  \Cref{T:complete:res-prop}]
    By  \Cref{L:f_faithful} it remains to show that if $\{\shv{F}_n\} \in  \varprojlim_n \COH(\thck{\cX}{\cZ}{n})$, then there exists a coherent sheaf $\shv{F}$ on $\cX$ with $(i^{[n]})^*\shv{F} \simeq \shv{F}_n$ for all $n$. 
  The following argument is similar to the proof of essential surjectivity of \cite[Thm.~1.3]{luna-field}.
  Since $\cX$ has the
  resolution property, there is a vector bundle $\shv{E}$ on $\cX$
  together with a surjection $\phi_0\colon \shv{E} \to \shv{F}_0$. We
  claim that $\phi_0$ lifts to a compatible system of morphisms
  $\phi_n \colon \shv{E} \to \shv{F}_n$ for every $n>0$.  Indeed, since
  $\shv{E}^{\vee}\otimes\shv{F}_{n+1}\to \shv{E}^{\vee}\otimes\shv{F}_n$
  is surjective and $\Gamma(\cX,-)$ is exact, it follows that
  the natural map
  $\Hom_{\Orb_{\cX}}(\shv{E}, \shv{F}_{n+1}) \to 
  \Hom_{\Orb_{\cX}}(\shv{E}, \shv{F}_n)$ is surjective.
  By Nakayama's
  lemma (see \Cref{R:nakayama}), 
  each $\phi_n$ is surjective.
  
  It follows that we obtain an induced morphism of systems
  $\{\phi_n\} \colon \{\shv{E}_n\} \to
  \{\shv{F}_n\}$, which is surjective. Applying this
  procedure to the kernel of $\{\phi_n\}$, there is another
  vector bundle $\shv{H}$ and a morphism of systems 
  $\{\psi_n\} \colon \{\shv{H}_n\} \to \{\shv{E}_n\}$
  such that $\coker \{\psi_n\} \cong \{\shv{F}_n\}$. By the full 
  faithfulness (\Cref{L:f_faithful}), the morphism $\{\psi_n\}$
  arises from a unique morphism $\psi \colon \shv{H} \to \shv{E}$.
  Let $\widetilde{\shv{F}} = \coker \psi$; then the universal
  property of cokernels proves that there is an isomorphism of systems
  $\{\widetilde{\shv{F}}_n\} \cong \{\shv{F}_n\}$ and
  the result follows.
\end{proof}
\Cref{T:complete:res-prop} in particular applies to complete affine quotient stacks.

\begin{example}
Let $S= \Spec B$ where $B$ is a noetherian ring. Let $G\subseteq \GL_{n,S}$ be a linearly reductive closed subgroup scheme acting on a noetherian affine scheme $X=\spec A$.   Then $[\Spec A/G]$ satisfies the resolution property; see \Cref{R:resolution}.
If $(A^G, \fm)$ is an $\fm$-adically complete local ring, then it follows from \Cref{T:complete:res-prop} that $[\spec A / G]$ is coherently complete along the unique closed point. When $S$ is the spectrum of a field and the unique closed $G$-orbit is a fixed point, this is \cite[Thm.~1.3]{luna-field}.
\end{example}

\subsection{Effectivity: general setup}\label{SS:effectivity-0}
We will now introduce a general setup for the proof of the effectivity theorem.
\begin{setup} \label{S:effectivityI-setup}
 Let $\{\cX_n\}_{n \ge 0}$ be an adic sequence of noetherian algebraic stacks, i.e., $\cX_0 \hookrightarrow \cX_1 \hookrightarrow \cdots$ is a sequence of closed immersions such that if $\cI_{(j)}$ denotes the coherent sheaf of ideals
 defining the closed immersion $u_{0j} \co \cX_0 \hookrightarrow \cX_j$,   then
 $(\cI_{(j)})^{i+1}$ defines $u_{ij} \colon \cX_i \hookrightarrow \cX_j$ for
 every $i\leq j$.
 Let $A_n = \Gamma(\cX_n,\Orb_{\cX_n})$, $X_n = \spec A_n$, $A=\varprojlim_n A_n$,
 $I_n=\ker(A \to A_{n-1})$, and $X=\spec A$.
\end{setup}

A classical result states that if each $\cX_i$ is affine, then $A=\varprojlim_n
\Gamma(\cX_n,\Orb_{\cX_n})$ is a noetherian ring and $\cX_i$ is the $i$th
infinitesimal neighborhood of $\cX_0$ in $\spec A$
\cite[0${}_{\mathrm{I}}$.7.2.8]{EGA}. The main effectivity theorem
(\Cref{T:effectivity}) is an analogous result when $\cX_0$ is linearly
fundamental.

A key observation here is that even if $\cX_0$ is linearly fundamental,
the sequence of closed immersions of affine schemes:
\begin{equation} \label{E:not-adic}
  X_0 \hookrightarrow X_1 \hookrightarrow \cdots .
\end{equation}
is not adic (this is just as in the proof of \Cref{T:almost-formal-fns:adequate}). 
The following lemma shows that the sequence \eqref{E:not-adic} is equivalent to an adic one, however. 
\begin{lemma}\label{L:adic_noetherian_gms}
Let $\{\cX_n\}_{n \ge 0}$ be an adic sequence as in \Cref{S:effectivityI-setup}.
If $\cX_0$ is cohomologically affine, then $A$ is a noetherian
  $I_1$-adically complete ring.
\end{lemma}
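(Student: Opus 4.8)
The statement to prove is \Cref{L:adic_noetherian_gms}: assuming \Cref{S:effectivityI-setup} and that $\cX_0$ is cohomologically affine, the ring $A = \varprojlim_n A_n$ is noetherian and $I_1$-adically complete. The plan is to bootstrap from the case where $\cX_0$ is affine, which is exactly Grothendieck's classical result $[0_{\mathrm{I}}.7.2.8]$ in \cite{EGA}, by replacing the non-adic system \eqref{E:not-adic} with a genuinely adic one built from the pushforward of the ideal sheaves. The key point, already signalled in the remark right after \eqref{E:not-adic} and entirely parallel to the argument in the proof of \Cref{T:almost-formal-fns:adequate}, is that although $X_0 \hookrightarrow X_1 \hookrightarrow \cdots$ fails to be adic, the topology it defines on $A$ is equivalent to an adic one.

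First I would set up the graded objects. Let $\cI_{(j)} \subseteq \Orb_{\cX_j}$ be the ideal of $u_{0j}$ (as in \Cref{S:effectivityI-setup}), so $\cI_{(j)}^{i+1}$ defines $u_{ij}$. Form the Rees-type algebra $\bigoplus_{i} \cI_{(j)}^{i}$ on $\cX_j$; for $j$ large this is compatible along the tower, and passing to global sections and then to the inverse limit produces a graded ring $A_* = \bigoplus_{i \ge 0} J_i$ where $J_i \subseteq A$ is the image of $\Gamma$ of the $i$th power of the ideal sheaf. Because $\cX_0$ is cohomologically affine, $\Gamma(\cX_0, -)$ is exact and preserves coherence (using \cite[Thm.~4.16(x)]{alper-good} and \cite[Thm.~6.3.3]{alper-adequate} applied as in the proof of \Cref{T:almost-formal-fns:adequate}, together with \cite[Thm.~A.1]{luna-field} for finite type), so $A_0 = \Gamma(\cX_0,\Orb)$ is noetherian and $A_*$ is a finitely generated $A_0$-algebra. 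Hence for a sufficiently divisible $N$ one has $J_{Nk} = (J_N)^k$ for all $k$, i.e. the filtration $\{J_i\}$ is cofinal with the $J_N$-adic filtration, and consequently the filtration $\{I_n\}$ with $I_n = \ker(A \to A_{n-1})$ is topologically equivalent to the $I_1$-adic filtration (this is where the relation between $I_n$ and the $J_i$ gets used; essentially $I_n \supseteq J_{n}$ and $I_{n}^{?} \subseteq J_{?}$ up to a shift governed by $N$).

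With this identification in hand, I would invoke the classical affine statement: the inverse system of affine schemes $X_0 \hookrightarrow X_1 \hookrightarrow \cdots$, once re-indexed by the adic system $\Spec(A/J_{N(k+1)})$, is an adic system of affine noetherian schemes, and by $[0_{\mathrm{I}}.7.2.8]$ its limit $A$ is noetherian and complete for the $J_N$-adic (equivalently $I_1$-adic) topology. Two small points need care: (a) one must know $A_0$ is noetherian before quoting the classical result — this is supplied by \cite[Thm.~4.16(x)]{alper-good}; and (b) one must check the higher-order thickenings $\cX_n$ really do contribute the right graded pieces, i.e. that $\Gamma(\cX_n, \Orb) = A/I_{n+1}$ matches $A/(\text{adic power})$ up to equivalence of topologies — this again follows from exactness of $\Gamma(\cX_0,-)$ applied to the conormal exact sequences, exactly as in \Cref{T:almost-formal-fns:adequate}.

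\textbf{Main obstacle.} The essential difficulty is not the classical limit statement but the finite generation of the graded algebra $A_* = \bigoplus_i J_i$ over $A_0$, since the individual $\cX_n$ need not be affine — one is passing ideal-power data through a cohomologically affine (not affine) morphism and then through an inverse limit. The resolution is precisely the exactness and coherence-preservation of $\Gamma(\cX_0,-)$ for cohomologically affine $\cX_0$, combined with the finite-type hypothesis, so that \cite[Lem.~10.8]{MR0242802} (Artin–Rees / finite generation of Rees modules) applies on the level of global sections, mirroring the proof of \Cref{T:almost-formal-fns:adequate} verbatim. Once $A_*$ is known to be a finitely generated $A_0$-algebra with $A_0$ noetherian, everything reduces to the affine case and the remaining work is bookkeeping about equivalence of the two filtrations.
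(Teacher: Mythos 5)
Your overall shape---pass to graded data, use cohomological affineness of $\cX_0$ to compute global sections degreewise, establish finite generation of a graded algebra, and conclude---is the right one, and you correctly identify the finite generation of the graded algebra as the crux. But the specific graded object you work with, and the tools you cite for it, do not fit together, and this is a genuine gap. You form the Rees-type ring $A_*=\bigoplus_i J_i$ with $J_i=I_i=\ker(A\to A_{i-1})$ and claim it is finitely generated ``because $\Gamma(\cX_0,-)$ is exact and preserves coherence, mirroring the proof of \Cref{T:almost-formal-fns:adequate} verbatim.'' That mirroring is not available: in \Cref{T:almost-formal-fns:adequate} the Rees algebra is the global sections of the sheaf of graded algebras $\bigoplus_n\shv{I}^n$ on the \emph{ambient} stack $\cX$, and its finite generation comes from the adequate moduli space of $\Spec_{\cX}\bigl(\bigoplus_n\shv{I}^n\bigr)$. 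In \Cref{S:effectivityI-setup} there is no ambient stack---producing one is the whole point of \Cref{T:effectivity}---so $\bigoplus_i I_i$ is not the global sections of a sheaf of algebras on any single member of the tower, and neither coherence-preservation of $\Gamma(\cX_0,-)$ nor \cite[Lem.~10.8]{MR0242802} applies to it. Consequently the Veronese relation $J_{Nk}=(J_N)^k$, on which your reduction to an adic affine system and the appeal to \cite[$0_{\mathrm{I}}$.7.2.8]{EGA} rest, is unjustified; from finite generation of the associated graded one only gets $I_n=R_n+I_m$ for all $m\geq n$ (where $R$ is generated by lifts of graded generators), and passing from this to $I_n=R_n$ requires closedness of ideals, i.e.\ noetherianity of $A$---which is what you are trying to prove.

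The object whose finite generation \emph{can} be established is the associated graded algebra: the sheaves $\cF_i=\cI_{(i)}^i=\cI_{(k)}^i/\cI_{(k)}^{i+1}$ (for $k\geq i$) are $\Orb_{\cX_0}$-modules, so $\mathcal{A}^\bullet=\bigoplus_i\cF_i$ is a sheaf of graded $\Orb_{\cX_0}$-algebras generated in degree $1$, and cohomological affineness identifies $\Gamma(\cX_0,\mathcal{A}^\bullet)$ with $\Gr_{I_*}A=\bigoplus_n I_n/I_{n+1}$. Its finite generation over $A_0$ is then \cite[Lem.~A.2]{luna-field}---a good-moduli-space finiteness statement, not Artin--Rees---and noetherianity of $A$ follows from completeness with respect to $\{I_n\}$ together with noetherianity of the associated graded via \cite[Thm.~4]{godement_top-m-adiques}; no adic re-indexing is needed. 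The $I_1$-adic completeness then comes from the surjection $\hat{A}=\varprojlim_n A/I_1^n\to A$. If you replace your Rees algebra with the associated graded and swap in these two references, your argument becomes the paper's proof.
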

\begin{proof}
  For $n\geq 0$, let $\cF_n = (\cI_{(n)})^{n}$; then $\cF_n$ defines
  the closed immersion $\cX_{n-1} \hookrightarrow \cX_n$ and may be
  regarded as a coherent $\oh_{\cX_0}$-module.
  Let $\mathcal{A} = \bigoplus_{n=0}^\infty \cF_n$ as an
  $\oh_{\cX_0}$-module. The truncation $\mathcal{A}_{\leq N} = \bigoplus_{n=0}^N \cF_n$
  coincides with the graded $\Orb_{\cX_0}$-algebra $\Gr_{\cI_{(N)}} \oh_{\cX_N}
  := \bigoplus_{n=0}^N (\cI_{(N)})^n/(\cI_{(N)})^{n+1}$ which
  is finitely generated by elements of degree $1$ since $\cX_N$ is noetherian.
  The truncation map $\mathcal{A}_{\leq N}\to \mathcal{A}_{\leq M}$ is an
  algebra homomorphism for every $N\geq M$.  We can thus endow $\mathcal{A}$
  with the structure of a graded algebra, finitely generated by elements in
  degree $1$.
  
  By cohomological affineness, we have that
  $I_n/I_{n+1}=\ker(A_n\to A_{n-1})=\Gamma(\cX_0,\cF_n)$ and we obtain
  an isomorphism of graded algebras
  $\Gamma(\cX_0,\mathcal{A})=\Gr_{I_*} A:=\bigoplus I_n/I_{n+1}$.
  Now by \cite[Lem.~A.2]{luna-field}, the graded $A_0$-algebra $\Gr_{I_*} A$ is finitely
  generated (but not necessarily by elements of degree $1$). That
  is, for the filtration $\{I_n\}_{n\geq 0}$ on the ring $A$, the
  associated graded ring is a noetherian
  $A_0$-algebra. It follows from \cite[Thm.~4]{godement_top-m-adiques} that $A$
  is noetherian.

  Since $A$ is noetherian and complete with respect to the topology
  defined by $\{I_n\}_{n\geq 0}$ and $I_1^n \subseteq I_n$ for all
  $n$, it is also complete with respect to the $I_1$-adic
  topology. Indeed, the ideals $I_1^n$ are finitely generated, hence closed in
  the $(I_n)$-topology and we can conclude by~\cite[0\textsubscript{I}.7.2.4,
    Err\textsubscript{III}, 3]{EGA}. Alternatively, we can give a direct argument
  as follows. If $\hat{A}$
  denotes the $I_1$-adic completion of $A$, then there is a natural
  factorization
  \[
  A\to \hat{A}=\varprojlim_n A/(I_1)^n \to A=\varprojlim_n A/I_n
  \]
  of the
  identity. Since $\hat{A}\to A$ is surjective and $\hat{A}$ is noetherian and
  complete with respect to the $I_1$-adic topology, so is $A$.
\end{proof}

An easy case of effectivity is when the adic sequence $\{\cX_n\}_{n \ge 0}$ 
embeds into a linearly fundamental stack. 

\begin{lemma} \label{L:effectivity-if-embedded-into-fundamental}
  Let $\cH$ be a noetherian linearly fundamental algebraic stack.  
  If $\{\cX_n\}_{n \ge 0}$ is an adic sequence of noetherian algebraic
  stacks with compatible closed immersions $\cX_n \hookrightarrow \cH$,
  then the completion of $\{\cX_n\}_{n \ge 0}$ exists and is affine
  over $\cH$.
\end{lemma}

\begin{proof}
  Let $\pi \co \cH \to H$ be its good moduli space, $\hat{H}$ be the 
  completion of $H$ along $\pi(\cX_0)$, and $\hat{\cH}$ be the base change 
  $\cH \times_H \hat{H}$.  Let $\cH_0 = \cX_0$ be the induced closed substack 
  of $\hat{\cH}$ and $\cH_n$ be its $n$th infinitesimal
  neighborhood. We have a compatible sequence of closed immersions 
  $\cX_n \hookrightarrow \cH_n$.  If we let $\cK$ be the sheaf of ideals
  defining $\cH_0\to \hat{\cH}$, then
    $\cK\Orb_{\cX_n}=\cI_{(n)}$ and hence 
    $\cK^n\Orb_{\cX_n}=\cI^n_{(n)}$.  This means that the sequence 
    $\{\oh_{\cX_n} \}_{n \ge 0}$ of coherent $\oh_{\cH_n}$-modules is 
    adic, i.e., is an element of  $\varprojlim_n \COH(\cH_n)$.
    Since $\hat{\cH}\to \cH$ is affine, $\hat{\cH}$ is also linearly fundamental
    and hence has the resolution property.
    Thus, \Cref{T:complete:res-prop} 
    implies that $\hat{\cH}$ is coherently complete along $\cH_0$.  Hence, 
    there exists a closed
    immersion $\hat{\cX} \hookrightarrow \hat{\cH}$ that induces the $\cX_n$.
\end{proof}

We now prove the effectivity of an adic sequence $\{\cX_n\}$ under the 
hypothesis that $\cX_0$ is cohomologically affine and admits a representable 
map to a linearly fundamental stack $\cY$ that is smooth over $X$.  We will apply 
this result in the case that $\cY = B \GL_{n,X}$ or $\cY = B Q$ for a nice and
embeddable group scheme $Q \to X$.

\begin{proposition} \label{P:effectivity-over-smooth}
  Let $\{\cX_n\}_{n \ge 0}$ be an adic sequence as in \Cref{S:effectivityI-setup}.
  Let $\cY$ be a noetherian linearly 
  fundamental algebraic stack that is smooth over $X$.  If $\cX_0$ is
  cohomologically affine and there is a representable morphism
  $\cX_0\to \cY$, then the completion of $\{\cX_n\}_{n \ge 0}$ 
  exists and is affine over $\cY$.
\end{proposition}

\begin{proof}
  By \Cref{L:effectivity-if-embedded-into-fundamental}, it suffices to show 
  that there are compatible embeddings $\cX_n \hookrightarrow \cH$ into an 
  algebraic stack $\cH$ affine and of finite type over $\cY$.  This is a 
  consequence of the following lemma (which holds more generally if 
  $\cY$ is merely fundamental).
\end{proof}

\begin{lemma}[Embedding]\label{L:adic_embedding}
  Let $\{\cX_n\}_{n \ge 0}$ be an adic sequence as in \Cref{S:effectivityI-setup}.
  Let $\cY$ be a noetherian 
  fundamental algebraic stack that is smooth over $X$. If $\cX_0$ is
  cohomologically affine and there is a representable morphism
  $\cX_0\to \cY$ over $X$, then there exist
  \begin{enumerate}
  \item a smooth affine morphism $\cH \to \cY$ of finite type; and
  \item compatible closed immersions $\cX_n \hookrightarrow \cH$ over $X$.
  \end{enumerate}
\end{lemma}

\begin{remark}
In the proof of \cite[Thm.~1]{luna-field}, effectivity was established 
by embedding the residual gerbe $\cX_0$ and its thickenings into the 
normal space of $\cX_0$ in $\cX$.  The technique here is similar, but 
we instead first choose a deformation $\cX_1 \to \cY$, and then embed 
$\cX_1$ and its thickenings into the affine space $\cH$ over $\cY$ defined 
by a vector bundle resolution of the push-forward of $\oh_{\cX_1}$.
\end{remark}

\begin{proof}[Proof of \Cref{L:adic_embedding}]
  By \cite[Thm.~A.1]{luna-field}, $\cX_0 \to X_0$ is of finite
  type. Hence, $\cX_0 \to X$ is of finite type and cohomologically
  affine. But the diagonal of $\cY \to X$ is affine and of finite
  type, so $\phi_0 \colon \cX_0 \to \cY$ is cohomologically affine and
  of finite type. By assumption, it is representable, so Serre's
  theorem (e.g., \cite[Prop.~3.3]{alper-good}) tells us that
  $\phi_0 \colon \cX_0 \to \cY$ is also affine. 
  
  We claim that there is an $X$-morphism $\phi_1 \colon \cX_1 \to \cY$ that lifts $\phi_0$.
  The obstruction to such a lift belongs to the group
  $\Ext^1_{\Orb_{\cX_0}}(\LDERF \phi_0^*L_{\cY/X},\cI_{(1)})$
  \cite[Thm.~1.5]{olsson-deformation}. Since $\cY \to X$ is smooth,
  the cotangent complex $L_{\cY/X}$ is perfect of amplitude
  $[0,1]$. The assumption that $\cX_0$ is cohomologically affine now
  proves that this obstruction group vanishes. 
  As before, it follows that $\phi_1$ is affine and of finite type.

  Since $\cY$ is fundamental it has the resolution property, so
  there exists a vector
  bundle of finite rank $\cE$ on $\cY$ and a surjection of
  quasi-coherent $\Orb_{\cY}$-algebras
  $\Sym_{\Orb_{\cY}}(\mathcal{E}) \to
  (\phi_1)_*\Orb_{\cX_1}$. Defining
  $$\cH := \Spec_{\cY} \Sym_{\Orb_{\cY}}(\mathcal{E}),$$ 
  there is an induced closed immersion
  $i_1 \colon \cX_1 \hookrightarrow \cH$ and
  $\cH \to X$ is smooth. By using the same deformation theory 
  argument as above, we can produce compatible
  $X$-morphisms $i_n \colon \cX_n \to \cH$ lifting $i_1$. 
  The following Nakayama-like lemma implies that each $i_n$ 
  is a closed immersion.  
\end{proof}

The following lemma is a generalization of \cite[Prop.~A.8(1)]{luna-field} 
to the case where $|\cX_1|$ is not necessarily a single point.
\begin{lemma}\label{L:adic_thickening_embedding}
  Let $f \colon \cX \to \cY$ be a morphism of algebraic stacks. Let
  $\cI$ be a nilpotent quasi-coherent sheaf of ideals of
  $\Orb_{\cX}$. Let $\cX_1 \subseteq \cX$ be the closed immersion
  defined by $\cI^2$. If the composition
  $\cX_1 \to \cX \xrightarrow{f} \cY$ is a closed immersion, then $f$
  is a closed immersion.
\end{lemma}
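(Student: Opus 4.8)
The plan is to reduce the statement to an elementary fact in commutative algebra; the role of $\cI^2$ (rather than $\cI$) in the hypothesis is precisely what makes this reduction work.

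\emph{Reduction to the affine case.} Being a closed immersion is fppf-local on the target, so we may base change $f$ along a smooth surjection from a disjoint union of affine schemes and treat each component separately. Under such a base change the nilpotent sheaf of ideals $\cI$ pulls back to a nilpotent sheaf of ideals of the same index of nilpotency, the substack $\cX_1 = V(\cI^2)$ pulls back to the substack cut out by the square of the pullback of $\cI$, and the closed immersion $\cX_1 \to \cY$ pulls back to a closed immersion. We may therefore assume that $\cY = \Spec R$ is affine, that $\cI$ is a sheaf of ideals on $\cX$ with $\cI^N = 0$, and that $\cX_1 := V(\cI^2) \hookrightarrow \cX$ maps to $\cY$ by a closed immersion; in particular $\cX_1 \cong \Spec(R/\mathfrak{a})$ for an ideal $\mathfrak{a} \subseteq R$.

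\emph{$\cX$ is affine.} The closed substack $\cX_1 \hookrightarrow \cX$ is defined by the nilpotent ideal $\cI^2$, hence is a thickening, so $\cX$ is quasi-compact and quasi-separated (as $\cX_1$ is), and $\cX_{\mathrm{red}} = (\cX_1)_{\mathrm{red}}$ is a closed subscheme of the affine scheme $\cX_1$, hence affine. By standard results on finite-order thickenings (a thickening of a scheme is a scheme, and a finite-order thickening of an affine scheme is affine), $\cX = \Spec S$ is affine. If $J \subseteq S$ denotes the nilpotent ideal corresponding to $\cI$, then $\cX_1 = \Spec(S/J^2)$, and the hypothesis that $\cX_1 \to \cY$ is a closed immersion says exactly that $R \to S \to S/J^2$ is surjective.

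\emph{The commutative-algebra core.} It remains to show: if $\varphi \colon R \to S$ is a ring homomorphism, $J \subseteq S$ a nilpotent ideal, and $R \to S/J^2$ is surjective, then $\varphi$ is surjective. Put $\tilde R = \varphi(R)$, so that $S = \tilde R + J^2$. Given $j \in J$, write $j = r + q$ with $r \in \tilde R$ and $q \in J^2$; then $r = j - q \in J$, so $J = \mathfrak{b} + J^2$ where $\mathfrak{b} := \tilde R \cap J$ is an ideal of the subring $\tilde R$ with $\mathfrak{b} \subseteq J$. Then $\mathfrak{b} J \subseteq \mathfrak{b}(\mathfrak{b} + J^2) = \mathfrak{b}^2 + \mathfrak{b} J^2 \subseteq \mathfrak{b} + J^3$, so $J^2 = (\mathfrak{b} + J^2)J = \mathfrak{b} J + J^3 \subseteq \mathfrak{b} + J^3$, whence $J = \mathfrak{b} + J^2 \subseteq \mathfrak{b} + J^3$. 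Iterating, $J \subseteq \mathfrak{b} + J^n$ for every $n$; taking $n = N$ gives $J \subseteq \mathfrak{b} \subseteq \tilde R$, so $S = \tilde R + J^2 = \tilde R$ and $\varphi$ is surjective. Hence $\cX \to \cY$ is a closed immersion.

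The points requiring care are in the first two steps — carrying out the reduction to the affine case with no finiteness hypotheses on $f$, and correctly invoking the standard facts that thickenings of schemes are schemes and finite-order thickenings of affine schemes are affine. The commutative-algebra core is elementary but genuinely uses $\cI^2$: with only $R \to S/J$ surjective the conclusion fails, as one sees already for $R \to S = R[\varepsilon]/(\varepsilon^2)$ with $J = (\varepsilon)$.
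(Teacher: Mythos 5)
Your proof is correct and follows essentially the same route as the paper: localize smoothly on $\cY$ to reduce to $\cY$ affine, deduce that $\cX$ is affine because it is a finite-order thickening of the affine scheme $\cX_1$, and then run a Nakayama-type iteration. The only real difference is in the algebraic core: the paper works with $K=\ker(R\to S/J)$ and the $S$-ideal $KS$ (so that $J=KS+J^2$ immediately gives $J=KS+J^{2^k}=KS$, followed by a second Nakayama step for $S=\tilde R+KS$), whereas you work with $\mathfrak{b}=\tilde R\cap J$, which is only an ideal of the subring $\tilde R$. Your conclusion is still true, but the word ``iterating'' hides a genuine subtlety: repeating your displayed computation verbatim gives $J^3=J^2\cdot J\subseteq(\mathfrak{b}+J^3)J\subseteq\mathfrak{b}+J^3$, which goes nowhere; what one actually needs is $J^n=(\mathfrak{b}+J^2)^n=\sum_{i}\mathfrak{b}^iJ^{2(n-i)}\subseteq\mathfrak{b}+J^{n+1}$ (every term with $i<n$ lies in $J^{2n-i}\subseteq J^{n+1}$), or simply to replace $\mathfrak{b}$ by the $S$-ideal $\mathfrak{b}S$ as the paper in effect does. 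With that one line added, your argument is complete.
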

\begin{proof}
  The statement is local on $\cY$ for the smooth topology, so we may
  assume that $\cY=\spec A$. Then $\cX_1$ is affine, and since $\cX$ is an
  infinitesimal thickening of $\cX_1$, it follows that $\cX$ is also
  affine \cite[Cor.~8.2]{rydh-2009}. Hence, we may assume that
  $\cX = \spec B$ and $\cI = \tilde{I}$ for some nilpotent ideal $I$
  of $B$. Let $\phi\colon A\to B$ be the induced morphism.
  The assumptions are that the composition $A \to B\to B/I^2$ is
  surjective and that $I^{n+1}=0$ for some $n\geq 0$. Let
  $K=\ker(A \to B/I)$.
  Since $KB\to I\to I/I^2$ is surjective and $I^{n+1}=0$,
  it follows that %
  $I=KB+I^2=KB+I^4=\dots=KB$. That is,
  $KB=I$.
  Further, since $A \to B\to B/KB=B/I$ is surjective and $K^{n+1}B=I^{n+1}=0$, it
  follows that %
  $B=\im \phi + KB = \im \phi + K^2B = \dots = \im \phi$. That is, $\phi$ is surjective.
\end{proof}

\subsection{Effectivity I: characteristic zero}\label{SS:effectivity-1}

\begin{theorem}[Effectivity in characteristic zero]\label{T:effectivity:char-0}
  Let $\{\cX_n\}_{n\geq 0}$ be an adic sequence of noetherian
  algebraic $\QQ$-stacks. If $\cX_0$ is linearly fundamental, then the
  completion of the sequence exists and is linearly
  fundamental.
\end{theorem}
\begin{proof}
  Since $\cX_0$ is linearly fundamental, it admits an affine morphism
  to $B\GL_{N,\QQ}$ for some $N>0$. This gives an affine morphism
  $\cX_0\to \cY:=B\GL_{N,X}$.  Since
  $X = \Spec \big(\varprojlim_n \Gamma(\cX_n, \oh_{\cX_n}) \big)$ is
  a $\QQ$-scheme, 
  $\cY$ is linearly fundamental.  The conclusion now follows from \Cref{P:effectivity-over-smooth}. 
\end{proof}
To prove effectivity in positive and mixed characteristic
(\Cref{T:effectivity}), we will need to make a better choice of group than
$\GL_{N,\QQ}$. To do this, we will next study the deformations of nice group
schemes.

\subsection{Deformation of nice group schemes}\label{SS:nice}
We will now prove that a nice and embeddable group scheme (see \Cref{D:reductive/nice}) can be deformed along an affine henselian pair (\Cref{D:pairs}).
After we have established the general effectivity result, we will prove
the corresponding result for linearly reductive group schemes (\Cref{P:deformation-linearly-reductive}).

\begin{proposition}[Deformation of nice group schemes]\label{P:nice-deformations} 
  Let $(S,S_0)$ be an affine henselian pair. If $G_0\to S_0$ is a nice and embeddable
  group scheme, then there exists a nice and embeddable group scheme $G\to S$ whose restriction
  to $S_0$ is
  isomorphic to $G_0$.
\end{proposition}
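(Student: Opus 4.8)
The plan is to reduce the deformation of a nice group scheme to the deformation of its two constituent pieces---a group scheme of multiplicative type and a finite locally constant group---and then to deform an extension. Write $G_0^0 \subseteq G_0$ for the open and closed normal subgroup of multiplicative type with $H_0 = G_0/G_0^0$ finite locally constant of order invertible on $S_0$. First I would deform $H_0$: since $(S,S_0)$ is a henselian pair, the category of finite \'etale $S$-schemes is equivalent to that of finite \'etale $S_0$-schemes (this is one of the standard characterizations of henselian pairs, available here since $S$ is affine and hence quasi-compact and quasi-separated---cf.\ the discussion after \Cref{D:pairs}); the group structure on $H_0$, being given by morphisms of finite \'etale schemes, transports uniquely, so there is a unique finite locally constant group scheme $H\to S$ restricting to $H_0$, and $|H|=|H_0|$ is invertible on $S$ because being invertible is an open-and-closed condition that holds on all of $S_0$ hence (by henselianness, equivalently locality) on all of $S$. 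Next I would deform $G_0^0$: a group scheme of multiplicative type is, \'etale-locally, a diagonalizable group $D(M)=\Spec \oh[M]$ for a finitely generated abelian group $M$, and such groups are classified by the \'etale-locally-constant sheaf of finitely generated abelian groups $M = \underline{\Hom}(G_0^0,\Gm)$ together with a Galois descent datum; again this data is of finite \'etale nature over $S_0$, so it lifts uniquely across the henselian pair to give a group $G^{0,\mathrm{mult}}\to S$ of multiplicative type restricting to $G_0^0$.

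Now I would deform the extension. We have $1 \to G_0^0 \to G_0 \to H_0 \to 1$, and having produced $G^{0,\mathrm{mult}}$ and $H$ over $S$ I want an extension $1\to G^{0,\mathrm{mult}}\to G\to H\to 1$ restricting to it. Since $|H|$ is invertible on $S$ and $G^{0,\mathrm{mult}}$ has multiplicative-type (hence linearly reductive, in fact nice) fibers, the relevant extension groups and the action groups $H^i(H, \underline{\Aut}(G^{0,\mathrm{mult}}))$-type obstructions are torsion of order dividing a power of $|H|$ and therefore vanish; more structurally, classifying such extensions is a problem of finite \'etale / tame nature over the base, so it is again governed by a sheaf on the small \'etale site whose sections over $S$ and over $S_0$ agree by the henselian pair equivalence. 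Concretely I would phrase this as: the stack (or just functor) on $S$-schemes sending $T$ to the groupoid of nice group schemes over $T$ equipped with the chosen multiplicative kernel and finite locally constant quotient is limit-preserving (it is of finite presentation: this is essentially \Cref{L:approximation-reductivity} and \Cref{L:nicely-fund-fp-Q}) and formally \'etale over the base in directions invertible $|H|$, hence restriction along $S_0\hookrightarrow S$ is an equivalence; invoking the henselian pair characterization of \Cref{P:henselian-sections-of-etale-repr} (or its scheme-level predecessor) then produces the desired $G\to S$.

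Finally I would check the two auxiliary properties. Niceness of $G$ is built in by construction: $G^{0,\mathrm{mult}}\subseteq G$ is open and closed normal of multiplicative type, $G/G^{0,\mathrm{mult}}\cong H$ is finite locally constant, and $|H|$ is invertible on $S$. Embeddability requires a separate argument: we are given that $G_0\hookrightarrow \GL(\cE_0)$ is a closed subgroup for some vector bundle $\cE_0$ on $S_0$, and we want a similar embedding over $S$. I would obtain the regular representation: $G$ acts on the coherent sheaf $\pi_*\oh_G$ (where $\pi\colon G\to S$), which is locally free of finite rank since $G\to S$ is finite locally free-of-finite-type in the multiplicative-type-by-finite situation after possibly passing to a cover to bound ranks---actually more carefully, multiplicative-type groups need not be finite, so instead I would use that $G$ is embeddable \'etale-locally on $S$ (diagonalizable groups and finite constant groups both embed in some $\GL_n$) and then descend a faithful representation using that the obstruction to globalizing a faithful representation is, once more, of finite \'etale nature trivialized on $S_0$; alternatively cite that over an affine base a nice group scheme that is \'etale-locally embeddable and whose \'etale-local embeddings are compatible over the henselian pair is globally embeddable. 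The main obstacle is this last embeddability step: deforming the abstract group scheme across the henselian pair is formal once one isolates the finite-\'etale-type data, but producing a global closed embedding into a $\GL(\cE)$---rather than just \'etale-locally---genuinely uses the henselian hypothesis to glue/descend a representation, and getting the bookkeeping right for multiplicative-type kernels (which are not finite) is where care is needed; I would likely reduce to the diagonalizable case by a finite \'etale base change that splits $G^{0,\mathrm{mult}}$ and then Weil-restrict, as in \Cref{R:char-p-gerbe}.
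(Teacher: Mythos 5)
Your decomposition into deforming the multiplicative kernel, the finite \'etale quotient, the extension, and finally the embedding is a genuinely different route from the paper's, and the first two steps are sound. The two real gaps are in the last two steps. For the extension: the claim that the obstruction groups ``are torsion of order dividing a power of $|H|$ and therefore vanish'' is not yet an argument. Group cohomology $H^i(H,M)$ for $i>0$ is always killed by $|H|$, but this forces vanishing only when multiplication by $|H|$ is invertible on the coefficients; here the coefficients are groups like $\ker\bigl(G^{0,\mathrm{mult}}(S')\to G^{0,\mathrm{mult}}(S'_0)\bigr)$, which are not $\oh_S$-modules. For $\Gm$ one needs $1+I$ to be uniquely $|H|$-divisible (true, but via a separate Hensel argument); for $\mu_p$ in characteristic $p$ the restriction $\mu_p(A)\to\mu_p(A/I)$ is not even surjective for henselian pairs, and one must instead observe that $\mu_p(R)$ is $p$-torsion while $|H|$ is prime to $p$. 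Likewise ``classifying such extensions is of finite \'etale nature'' is false as stated: extensions of $\ZZ/2\ZZ$ by $\Gm$ over $T$ are governed by $T^*/(T^*)^2$, which is not finite \'etale data. A correct proof along your lines (trivialize $H$ on a finite \'etale cover, lift Hochschild cocycles, tame descent) exists but has to be written; the ``formal \'etaleness'' of the moduli of extensions is precisely what is in question.

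The more serious gap is embeddability. Having built $G$ abstractly, you must produce a closed embedding into some $\GL(\cE)$, and as \Cref{R:resolution} recalls, embeddability of abstract affine group schemes is a hard open problem even over the dual numbers; your fallback of splitting $G^{0,\mathrm{mult}}$ by a finite \'etale cover and Weil-restricting presupposes isotriviality of the multiplicative part, which fails for general multiplicative type groups (the non-isotrivial torus over the nodal cubic cited in the paper). The paper's proof avoids both gaps at once by a different device: it first reduces to the complete case via \Cref{T:artin-approximation} and \Cref{L:approximation-reductivity}, lifts the \emph{embedded} subgroup $(G_0)^0\hookrightarrow\GL_{n,S_0}$ to a multiplicative type closed subgroup $G^0_S\hookrightarrow\GL_{n,S}$, forms the normalizer $N$ of $G^0_S$ in $\GL_{n,S}$, lifts the closed immersion $H\to N/G^0_S$ through the infinitesimal neighborhoods (using linear reductivity of the finite tame $H$) and effectivizes using finiteness of $H$, and finally defines $G$ as the preimage of $H$ in $N$. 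The extension structure and the embedding then come for free. You should either adopt that construction or supply the missing cohomological, descent, and isotriviality arguments in full.
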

\begin{proof}
  Let $(S,S_0)=(\Spec A,\Spec A/I)$.
  By limit methods (\Cref{L:approximation-reductivity}), we may
  assume that $S$ is the henselization
  of an affine scheme of finite type over $\spec \ZZ$. Let 
  $S_n=\spec A/I^{n+1}$. Also, let $R$ be the $I$-adic 
  completion of $A$ and let $\hat{S} = \spec R$.

  Let $F \colon
  (\SCH{S})^{\opp} \to \SETS$ be the functor that assigns to each
  $S$-scheme $T$ the set of isomorphism classes of nice and embeddable
  group schemes over $T$. By 
  \Cref{L:approximation-reductivity}, $F$ is limit preserving.
  Suppose that we have a nice embeddable group scheme
  $G_{\hat{S}}\in F(\hat{S})$ restricting to $G_0$. By Artin approximation
  (\Cref{T:artin-approximation}), there exists $G_S
  \in F(S)$ that restricts to $G_0$.
  We can thus replace $S$ by $\hat{S}$ and assume that $A$ is complete.

  Fix a closed 
  immersion of $S_0$-group schemes
  $i\colon G_0 \to \GL_{n,S_0}$. By definition, there is an open and closed
  subgroup $(G_0)^0\subset G_0$ of multiplicative type. By \cite[Exp.~XI,
  Thm.~5.8]{MR0274459}, there is a lift of $i$ to a closed
  immersion of group schemes $i_S \colon G^0_S \to \GL_{n,S}$, where
  $G^0_S$ is of multiplicative type. Let
  $N=\mathrm{Norm}_{\GL_{n,S}}(G^0_S)$ be the normalizer, which is a
  smooth $S$-group scheme and closed $S$-subgroup scheme of
  $\GL_{n,S}$ \cite[Exp.~XI, 5.3~bis]{MR0274459}.

  Since $(G_0)^0$ is a
  normal $S_0$-subgroup scheme of $G_0$, it follows that $G_0$ is a closed
  $S_0$-subgroup scheme of $N\times_S S_0$. In particular, there is an
  induced closed immersion $q_{S_0}\colon (G_0)/(G_0)^0 \to (N/G^0_S)\times_S 
  S_0$ of group schemes over $S_0$. Since $G_0$ is nice, the locally constant group scheme
  $(G_0)/(G_0)^0$ has order prime to $p$. Since $R$ is complete, there is
  a unique locally constant group scheme $H$ over $S$ such that
  $H\times_S S_0 = (G_0)/(G_0)^0$. Note that $H$ is finite and linearly reductive 
  over $S$. 
  
  Since $N/G^0_S$ is a smooth and affine group scheme over $S$, there are 
  compatible closed immersions of $S_n$-group
  schemes $q_{S_n} \colon H\times_S S_n \to (N/G^0_S) \times_S S_n$ lifting $q_{S_0}$, 
  which are 
  unique up to conjugation \cite[Exp.~III,  Cor.~2.8]{sga3i}. Since $H$ is finite, 
  these morphisms effectivize to a morphism of group schemes $q_S \colon 
  H \to N/G^0_S$.
  We now
  define $G_S$ to be the preimage of $H$ under the quotient map
  $N\to N/G^0_S$. Then $G_S$ is nice and embeddable, and $G_S \times_S S_0 \cong G_0$.
\end{proof}

\subsection{Effectivity II: local case in positive characteristic}\label{SS:effectivity-2}
We can now establish the effectivity theorem for nicely fundamental stacks (\Cref{D:fundamental}).

\begin{theorem}[Effectivity for nice stacks]\label{T:effectivity-nice}
  Let $\{\cX_n\}_{n\geq 0}$ be an adic sequence of noetherian algebraic
  stacks. If $\cX_0$ is nicely fundamental, then the completion
  of the sequence exists and is nicely fundamental. 
\end{theorem}
\begin{proof} Let $X_0$ be the good moduli space of $\cX_0$.
  Since $\cX_0$ is nicely fundamental, it admits an affine morphism to
  $B_{X_0}Q_0$, for some nice and embeddable group scheme $Q_0 \to X_0$.
  By \Cref{L:adic_noetherian_gms}, the affine scheme
  $X = \Spec \big(\varprojlim_n \Gamma(\cX_n, \oh_{\cX_n}) \big)$
  is noetherian and complete along
  $X_0$. It follows from \Cref{P:nice-deformations} that there
  is a nice and embeddable group scheme $Q \to X$ lifting
  $Q_0 \to X_0$. Let $\cY = B_XQ$; then $\cY$ is linearly fundamental
  and smooth over $X$. The result now follows from
  \Cref{P:effectivity-over-smooth}.
\end{proof}
The following corollary will shortly be subsumed by 
\Cref{T:effectivity}, but is sufficient for many applications,
e.g., it is sufficient for \Cref{T:base}.
\begin{corollary}[Effectivity for local stacks]\label{C:effectivity-local}
  Let $\{\cX_n\}_{n\geq 0}$ be an adic sequence of noetherian
  algebraic stacks. Assume that $\cX_0$ is a gerbe over a field $k$.
  If $\cX_0$ is linearly fundamental (i.e., has linearly reductive stabilizer),
  then the completion of the sequence
  exists and is linearly fundamental.
\end{corollary}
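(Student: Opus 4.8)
The strategy is to reduce to \Cref{P:effectivity-nice} (the nicely fundamental case) in positive characteristic and to \Cref{cor:effectivize_char_0} in characteristic zero, after passing to a suitable finite flat cover. The point is that a gerbe $\cX_0$ over a field $k$ that is cohomologically affine need not itself be nicely fundamental in mixed/positive characteristic only because of its \emph{stabilizer}, but after a finite separable base extension $k'/k$ it becomes $BQ_0'$ for a linearly reductive — hence, by \Cref{R:nice_grp_pos_char}, \emph{nice} — group scheme $Q_0'$ over $k'$ (this is exactly the observation in \Cref{R:char-p-gerbe}). So in equal positive characteristic $\cX_0$ is already nicely fundamental by \Cref{R:char-p-gerbe} and we are done by \Cref{P:effectivity-nice}; in characteristic zero $\cX_0$ is nicely fundamental as well (over a $\Q$-scheme, fundamental $=$ linearly fundamental, and for a gerbe over a field we can likewise Weil-restrict), or more directly we just invoke \Cref{cor:effectivize_char_0}. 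The remaining, genuinely mixed-characteristic issue is handled by the reduction below.

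First I would dispose of the easy cases: if $k$ has characteristic zero, apply \Cref{cor:effectivize_char_0} directly (the sequence is a sequence of $\Q$-stacks and $\cX_0$ is linearly fundamental by hypothesis). If $k$ has characteristic $p>0$, then by \Cref{R:char-p-gerbe} the gerbe $\cX_0$ is nicely fundamental, and \Cref{P:effectivity-nice} gives that the completion $\hat\cX$ exists and is nicely fundamental, in particular linearly fundamental. The subtlety is that \emph{a priori} the residue characteristics occurring in the adic sequence could be mixed — but they cannot: $\cX_0 \to \Spec k$ pins down the residue field, and by \Cref{L:adic_noetherian_gms} the ring $A = \varprojlim_n \Gamma(\cX_n,\oh_{\cX_n})$ is noetherian and $I_1$-adically complete with $A/I_1 = \Gamma(\cX_0,\oh_{\cX_0})$, a finite $k$-algebra; hence $A$ is a complete (semi)local ring whose residue fields are finite over $k$, so it is an equicharacteristic ring — no mixed characteristic arises. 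This is the key reduction that legitimizes splitting into the two cases above.

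Concretely, here is how I would write the core argument in the style of \Cref{cor:effectivize_char_0}. Since $\cX_0$ is linearly fundamental and a gerbe over $k$, the above shows (\Cref{R:char-p-gerbe} if $\kar k = p > 0$, and the characteristic-zero discussion otherwise) that $\cX_0$ admits an affine morphism $\cX_0 \to B_{k}Q_0$ for a nice — equivalently, when $\kar k = 0$, linearly reductive — embeddable group scheme $Q_0 \to \Spec k$. Form $A = \varprojlim_n \Gamma(\cX_n,\oh_{\cX_n})$ and $X = \Spec A$ as in \Cref{S:effectivityI-setup}; by \Cref{L:adic_noetherian_gms}, $X$ is noetherian and complete along $X_0 = \Spec A_0$, and $A$ is an equicharacteristic ring. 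When $\kar k = p$, \Cref{P:nice-deformations} applied to the complete (hence henselian) affine pair $(X, X_0)$ produces a nice embeddable group scheme $Q \to X$ restricting to $Q_0$; set $\cY = B_X Q$, which is smooth and (nicely, hence) linearly fundamental over $X$, so by \Cref{prop:effectivize_adic_gms} the completion of $\{\cX_n\}$ exists as a closed substack of a linearly fundamental $\cH$ affine over $\cY$, and is itself linearly fundamental. When $\kar k = 0$, take instead $\cY = B\GL_{N,X}$ as in \Cref{cor:effectivize_char_0} — this is cohomologically affine since $X$ is a $\Q$-scheme — and conclude identically via \Cref{prop:effectivize_adic_gms}. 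In either case $\hat\cX$ is linearly fundamental, as claimed.

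The main obstacle is making airtight the claim that no mixed characteristic can intrude: one must check that $A_0 = \Gamma(\cX_0,\oh_{\cX_0})$ is finite over $k$ — which follows from \Cref{L:adic_noetherian_gms} (or directly, since $\cX_0$ is cohomologically affine of finite type over $k$ by \cite[Thm.~A.1]{luna-field}, so $A_0$ is a finite $k$-algebra) — and hence that $A$, being $I_1$-adically complete with $A/I_1 = A_0$ finite over $k$, is an equicharacteristic ring; this is what lets us feed the situation to \Cref{P:nice-deformations} (which needs the nice group scheme $Q_0$ to live over a base of equal characteristic) or to \Cref{cor:effectivize_char_0}. Everything else is a direct citation of \Cref{R:char-p-gerbe}, \Cref{P:nice-deformations}, \Cref{prop:effectivize_adic_gms}, and \Cref{cor:effectivize_char_0}.
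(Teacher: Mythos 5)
Your proof has the same skeleton as the paper's: split on $\kar k$, invoke \Cref{R:char-p-gerbe} together with \Cref{P:effectivity-nice} when $\kar k = p>0$, and \Cref{cor:effectivize_char_0} when $\kar k=0$. That core is correct and is exactly the argument in the text.

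However, the step you describe as ``the key reduction'' --- that $A=\varprojlim_n\Gamma(\cX_n,\oh_{\cX_n})$ is equicharacteristic because it is $I_1$-adically complete with $A/I_1$ finite over $k$ --- is false. Take $\cX_n=\Spec \Z/p^{n+1}$ with $\cI_{(n)}=(p)$: this is an adic sequence whose $\cX_0=\Spec\F_p$ is a (trivial) linearly fundamental gerbe over a field, yet $A=\Z_p$ has mixed characteristic. Completeness along $I_1$ only makes $p$ topologically nilpotent, not nilpotent. Fortunately nothing in the proof needs this claim. In characteristic zero the fact you actually need is that each $\cX_n$ is a $\Q$-stack, and that follows because units lift along the nilpotent surjections $\oh_{\cX_n}\to\oh_{\cX_0}$ --- no completeness enters. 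In characteristic $p$, \Cref{P:nice-deformations} is stated and proved for an arbitrary affine henselian pair $(S,S_0)$, with no equal-characteristic hypothesis (contrary to your parenthetical remark); niceness of the deformed group is automatic because a henselian pair is local, so invertibility of the order of the component group on $S_0$ propagates to $S$. Delete the equicharacteristic discussion and your proof coincides with the paper's.
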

\begin{proof}
  If $\cX$ is a $\QQ$-stack, then we
  are already done by \Cref{T:effectivity:char-0}. If not,
  then $k$ has characteristic
  $p>0$ and $\cX_0$ is nicely fundamental by
  \Cref{P:lr-gerbe-field}\eqref{PI:lr-gerbe-field:char-p}. \Cref{T:effectivity-nice}
  completes the proof.
\end{proof}

\subsection{Adequate moduli spaces with linearly reductive stabilizers are good} \label{SS:adequate}
We prove that adequate moduli spaces of stacks with linearly
reductive stabilizers at closed points are good
(\Cref{T:adequate+lin-red=>good}) by using the adequate version of the
formal function theorem (\Cref{T:almost-formal-fns:adequate}) and the
effectivity theorem in the form of \Cref{C:effectivity-local}.

\begin{lemma}\label{L:adequate-good-finite}
Let $\cX$ be an algebraic stack and let $\cZ\inj \cX$ be a closed
substack defined by the sheaf of ideals $\shv{I}$. Assume that $\cX$ has an adequate
moduli space $\pi\colon \cX\to \Spec A$ of finite type, where $A$ is noetherian and $I$-adically complete along
$I=\Gamma(\cX, \shv{I})$. Let $B_n=\Gamma(\cX, \Orb_{\cX}/\shv{I}^{n+1})$ for
$n \ge 0$ and $B=\varprojlim_n B_n$. If $\cZ$ is cohomologically
affine with affine diagonal, then the induced homomorphism $A\to B$ is finite.
\end{lemma}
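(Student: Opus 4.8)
The plan is to filter the rings $B_n$ using the canonical surjections $\Orb_{\cX}/\shv{I}^{n+1} \surj \Orb_{\cX}/\shv{I}^{n}$ and to prove finiteness by establishing that the associated graded ring $\Gr A \to \Gr B$ is a finite extension, where the gradings are taken with respect to the filtrations $I^\bullet$ on $A$ and the kernel of $B \to B_0$ on $B$. First I would set $\cA^{\bullet} = \bigoplus_{j \ge 0} \shv{I}^j/\shv{I}^{j+1}$, the associated graded $\Orb_{\cZ}$-algebra, which is a finitely generated $\Orb_{\cZ}$-algebra in degree $1$ since $\shv{I}$ is coherent. Because $\cZ$ is cohomologically affine, $\Gamma(\cZ, -)$ is exact, so the short exact sequences $0 \to \shv{I}^{n+1}/\shv{I}^{n+2} \to \Orb_{\cX}/\shv{I}^{n+2} \to \Orb_{\cX}/\shv{I}^{n+1} \to 0$ give $\ker(B_{n+1} \to B_n) = \Gamma(\cZ, \shv{I}^{n+1}/\shv{I}^{n+2})$, whence $\Gr B \cong \Gamma(\cZ, \cA^{\bullet})$ as a graded ring. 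Here I use that $\cZ$ has affine diagonal together with cohomological affineness so that $\Gamma(\cZ,-)$ preserves coherence and algebra structure; this is exactly the setting where \cite[Lem.~A.2]{luna-field}-type arguments (as in the proof of \Cref{L:adic_noetherian_gms}) show that $\Gamma(\cZ, \cA^{\bullet})$ is a finitely generated graded $B_0$-algebra.

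Next I would identify $B_0 = \Gamma(\cZ, \Orb_{\cZ})$ with $A/I$ up to a finite extension: the composition $\cZ \to \Spec(A/I) $ is the one induced by $\pi$, and since $\cZ$ is adequately affine (being cohomologically affine) with adequate moduli space $\Spec \Gamma(\cZ,\Orb_{\cZ})$, the map $A/I \to B_0$ is an adequate homeomorphism that is finite — indeed $A/I \to B_0$ is integral and of finite type (using that $\pi$ is of finite type and $A$ noetherian, so $\cZ \to \Spec(A/I)$ is of finite type and $B_0$ is a finite $A/I$-module by \cite[Thm.~6.3.3]{alper-adequate} applied to the good — in fact cohomologically affine — situation, or directly since $\cZ$ cohomologically affine forces $\Gamma(\cZ,\Orb_\cZ)$ finite over $A/I$). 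Combining this with the previous paragraph, $\Gr B$ is a finitely generated graded algebra over $A/I$, generated in bounded degrees, and each graded piece $\Gamma(\cZ, \shv{I}^j/\shv{I}^{j+1})$ is a finite $A/I = (\Gr A)_0$-module (again by coherence of $\Gamma(\cZ,-)$). A Rees-algebra / graded Nakayama argument then shows that $\Gr B$ is finite as a $\Gr A$-module: the degree-$1$ part $I/I^2 \to \Gamma(\cZ, \shv{I}/\shv{I}^2)$ together with the bounded generation of $\Gamma(\cZ,\cA^\bullet)$ over $B_0$ reduces everything to finitely many generators over $\Gr A$.

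Finally I would pass from the graded statement back to $A \to B$ itself. Both $A$ and $B$ are complete with respect to their respective filtrations: $A$ is $I$-adically complete by hypothesis, and $B = \varprojlim_n B_n$ is complete by construction with respect to the filtration $J^\bullet$ where $J = \ker(B \to B_0)$ — here I should note that the filtration $\{\ker(B\to B_n)\}$ on $B$ is, after possibly passing to a power of $\shv{I}$ as in the proof of \Cref{T:almost-formal-fns:adequate}, equivalent to the $J$-adic one, using that $\Gamma(\cZ,\cA^\bullet)$ is finitely generated so the filtration is essentially $I$-stable. Given $\Gr A \to \Gr B$ finite and both rings complete-filtered with the filtration on $B$ compatible with (i.e.\ a module filtration over) that on $A$, the standard lifting-generators-through-the-completion argument (lift a finite set of homogeneous $\Gr A$-module generators of $\Gr B$ to $B$, then use completeness and the graded surjectivity to conclude they generate $B$ over $A$) shows $A \to B$ is finite.

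The main obstacle I anticipate is the second paragraph: getting the correct control on $B_0$ versus $A/I$ and on the graded algebra $\Gamma(\cZ, \cA^{\bullet})$ being module-finite over $\Gr A$ in each degree. This is where cohomological affineness (exactness of $\Gamma(\cZ,-)$), affineness of the diagonal of $\cZ$ (to keep $\Gamma(\cZ,-)$ coherence-preserving and to make $\cA^\bullet$-module computations work), and finite type of $\pi$ all genuinely enter, and one must be careful — exactly as in \Cref{L:adic_noetherian_gms} and \Cref{T:almost-formal-fns:adequate} — that the non-adic filtration $\{\shv{I}^{n+1}\}$ behaves adically only after replacing $\shv{I}$ by a suitable power, which is harmless for the finiteness conclusion.
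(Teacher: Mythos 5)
Your overall strategy---pass to the associated graded rings, prove finiteness there, and lift back using completeness---is exactly the paper's, and your first and third paragraphs track the paper's proof closely (the final lifting step is the standard complete-filtered-ring lemma the paper cites from Godement). The gap is in your second paragraph. Knowing that $\Gr_{J_*} B=\Gamma(\cZ,\cA^{\bullet})$ is a finitely generated graded $B_0$-algebra in bounded degrees and that each graded piece is a finite $A/I$-module does \emph{not} imply that it is a finite $\Gr_{I_*}A$-module: take $\Gr_{I_*}A=k$ concentrated in degree $0$ and $\Gr_{J_*}B=k[x]$ with $x$ in degree $1$; every hypothesis you list holds, yet $k[x]$ is not module-finite over $k$. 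A graded Nakayama argument requires the positive-degree part of $\Gr_{I_*}A$ to generate an ideal of definition of the positive-degree part of $\Gr_{J_*}B$, and nothing in your argument supplies this: since $\pi$ is only adequately affine, the maps $I_n/I_{n+1}\to J_n/J_{n+1}$ are in general far from surjective, so "bounded generation over $B_0$" gives you algebra generators over $B_0$ with no control over $\Gr_{I_*}A$.

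The ingredient you are missing is adequacy. The surjection of quasi-coherent $\Orb_{\cX}$-algebras $\bigoplus_n\shv{I}^n\surj \Gr_{\shv{I}}(\Orb_{\cX})$ induces, by the defining property of adequately affine morphisms, an injective \emph{adequate} homomorphism $\Gr_{I_*}A=\bigoplus_n I_n/I_{n+1}\to\Gr_{J_*}B=\bigoplus_n J_n/J_{n+1}$, where $I_n=\Gamma(\cX,\shv{I}^n)$. Adequate maps are integral (every element has a power lying in the image, hence satisfies a monic equation), and the target is a finitely generated $A$-algebra because $\Spec(\Gr_{J_*}B)$ is the adequate moduli space of $\Spec_{\cX}\bigl(\Gr_{\shv{I}}(\Orb_{\cX})\bigr)$; an injective integral map of finite type is finite. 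Inserting this step closes your proof; without it the central finiteness claim is unsupported. A secondary point: the graded comparison must be run with the filtration $I_n=\Gamma(\cX,\shv{I}^n)$ on $A$ (with respect to which $A$ is complete by the adequate formal functions theorem), not with $I^n$; you flag the non-adic filtration issue only at the lifting stage, but it already matters when you set up the map of graded rings.
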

\begin{proof}
Since $\cZ=\thck{\cX}{}{0}$ is cohomologically affine with affine diagonal,
so are its infinitesimal neighborhoods $\thck{\cX}{}{n}$. The surjection 
$\oh_{\cX}/\cI^{n+1} \to \oh_{\cX}/\cI^{n}$ therefore induces a surjection 
$B_n\to B_{n-1}$ of rings with kernel $\Gamma(\cX,\shv{I}^n/\shv{I}^{n+1})$
for all $n$. Thus, if we let $J_{n+1} = \ker(B\to B_n)$; then $J_n/J_{n+1}=
\Gamma(\cX, \shv{I}^n/\shv{I}^{n+1})$ and $B$ is complete with respect to the topology given by 
the filtration $(J_n)$.

Let $I_n=\Gamma(\cX,\shv{I}^n)$; then \Cref{T:almost-formal-fns:adequate} 
implies that $A$ is also complete with respect to the filtration given by $(I_n)$,
that is, $A=\varprojlim_n A/I_n$.  Taking global sections of the 
short exact sequence $0 \to \cI^{n+1} \to \oh_{\cX} \to \oh_{\cX}/\cI^{n+1} \to 0$ 
induces an injection $A/I_{n+1}\to B_n$, which is an adequate ring homomorphism, 
i.e., every element of $B_n$ has a positive power contained in the image. 
Passing to inverse limits, we see that the homomorphism $A\to B$ is an
injective continuous map between complete topological rings.

Taking global sections of the exact sequence $0 \to \shv{I}^{n+1} \to \shv{I}^n \to \shv{I}^n/\shv{I}^{n+1} \to 0$ induces an injective map
$I_n/I_{n+1} \to J_n/J_{n+1}$. Taking direct sums gives a surjection of
algebras $\bigoplus \shv{I}^n \to \Gr_{\shv{I}}(\Orb_{\cX})$ with kernel $\bigoplus \shv{I}^{n+1}$. Since $\pi$ is an adequate moduli space, taking global sections provides an
injective adequate map $\Gr_{I_*} A=\bigoplus I_n/I_{n+1}\to \Gr_{J_*} B=\bigoplus J_n/J_{n+1}$.

Also, $\Gr_{\shv{I}}(\Orb_{\cX})$ is a finitely generated
algebra and since $\Spec\bigl(\Gr_{J_*} B\bigr)$ is the adequate moduli space of
$\Spec_{\cX}\bigl(\Gr_{\shv{I}}(\Orb_{\cX})\bigr)$, it follows that
$\Gr_{J_*} B$ is a finitely generated
$A$-algebra~\cite[Thm.~6.3.3]{alper-adequate}. Thus $\Gr_{I_*} A\to \Gr_{J_*}
B$ is an injective adequate map of finite type, hence finite. It follows that
$A\to B$ is finite~\cite[Lem.\ on p.~6]{godement_top-m-adiques}.
\end{proof}

\begin{remark}
It is, a priori, not clear that $A\to B$ is adequate. Consider the following
example: $A=\FF_2\llbracket x \rrbracket$, $B=A[y]/(y^2-x^2y-x)$. Then $\Spec B\to \Spec A$ is a
ramified, generically \'etale, finite flat cover of degree $2$, so not
adequate. But the induced map on graded rings $\FF_2[x]\to \FF_2[x,y]/(y^2-x)$
is adequate. Nevertheless, it follows from
\Cref{T:adequate+lin-red=>good}, proven below, that $A=B$ in \Cref{L:adequate-good-finite}. If the formal
functions theorem (\Cref{C:formal-fns}) holds for stacks with adequate
moduli spaces, then $A=B$ without assuming that $\cZ$ is cohomologically
affine.
\end{remark}

\begin{theorem}\label{T:adequate+lin-red=>good}
Let $S$ be a noetherian algebraic space. Let $\cX$ be an algebraic stack
of finite type over $S$ with an adequate moduli space $\pi\colon \cX\to
X$. Assume that $\pi$ has affine diagonal. Then $\pi$ is a good moduli space if
and only if every closed point of $\cX$ has linearly reductive stabilizer.
\end{theorem}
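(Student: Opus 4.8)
I would prove the two implications separately. For the \emph{only if} direction, recall that a good moduli space is cohomologically affine; if $x\in|\cX|$ is a closed point, then its residual gerbe $\cG_x$ is a \emph{closed} substack, so the composite $\cG_x\hookrightarrow\cX\xrightarrow{\pi}X$ is cohomologically affine. This composite factors as $\cG_x\to\Spec\kappa(x)\to X$ with $\Spec\kappa(x)\to X$ affine, hence with affine diagonal, so the cancellation property of cohomologically affine morphisms gives that $\cG_x\to\Spec\kappa(x)$ is cohomologically affine; trivializing the gerbe by a finite separable extension identifies this with $BG\to\Spec k'$ being cohomologically affine, i.e.\ the stabilizer at $x$ is linearly reductive.

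For the converse, assume every closed point of $\cX$ has linearly reductive stabilizer. I would first reduce to a complete local base. Cohomological affineness of $\pi$ is fppf-local on $X$, so we may take $X=\Spec A$ with $A$ noetherian, and then it suffices to show $R^1\pi_*\shv{F}=0$ for every quasi-coherent $\shv{F}$, i.e.\ that $\pi_*$ is exact. Since a quasi-coherent sheaf on $\Spec A$ vanishes as soon as it does after the faithfully flat base changes along $\Spec\hat{\oh}_{X,x}\to X$ over closed points $x\in X$, and since (residue fields being unchanged by completion) the fiber of $\cX\times_X\Spec\hat{\oh}_{X,x}$ over the closed point coincides with $\pi^{-1}(x)$, we may assume that $A$ is a complete noetherian local ring. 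Then $\pi$ is of finite type \cite[Thm.~A.1]{luna-field}, $\cX$ has a unique closed point $x$, and $\cG_x$ — a gerbe over $\kappa(x)$ with linearly reductive stabilizer — is linearly fundamental. Let $\cX_n=\thck{\cX}{\cG_x}{n}$ be the infinitesimal neighborhoods of $\cG_x$ in $\cX$; this is an adic sequence of noetherian stacks with $\cX_0=\cG_x$ linearly fundamental, so by \Cref{C:effectivity-local} the completion $\hat{\cX}$ of $\{\cX_n\}$ exists and is linearly fundamental. Since $A$ is $I$-adically complete (as $\sqrt{I}=\fm$) and $\cG_x$ is cohomologically affine with affine diagonal, \Cref{L:adequate-good-finite} shows $B:=\Gamma(\hat{\cX},\oh_{\hat{\cX}})=\varprojlim_n\Gamma(\cX_n,\oh_{\cX_n})$ is finite over $A$; as $\hat{\cX}\to\Spec B$ is a good moduli space and $\Spec B\to\Spec A$ is affine, $\hat{\cX}\to\Spec A$ is cohomologically affine. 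Finally, because $\cX$ has affine diagonal and $\hat{\cX}$ is coherently complete along $\cG_x$, Tannaka duality produces a canonical $\Spec A$-morphism $g\colon\hat{\cX}\to\cX$ restricting to the identity on each $\cX_n$.

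The remaining step — which I expect to be the heart of the matter — is to prove that $g$ is an isomorphism, equivalently that $\cX$ is itself coherently complete along $\cG_x$, so that the uniqueness of completions forces $\cX\cong\hat{\cX}$; granting this, $\cX$ is cohomologically affine over $\Spec A$ by the previous paragraph, and since $\pi_*\oh_{\cX}=\oh_X$ the morphism $\pi$ is a good moduli space. The obstruction is that coherent completeness of $\cX$ along $\cG_x$ would ordinarily be proved (as in \Cref{P:complete1}) using that $\cX$ is cohomologically affine and has the resolution property — exactly what we are trying to establish — whereas a priori only the \emph{adequate} counterparts are at hand: for example, full faithfulness of $\COH(\cX)\to\varprojlim_n\COH(\cX_n)$ amounts to the identity $\Gamma(\cX,\shv{Q})=\varprojlim_n\Gamma(\cX,\shv{Q}/\shv{I}^{n+1}\shv{Q})$ for coherent $\shv{Q}$, which differs from the conclusion of the adequate formal functions theorem \Cref{T:almost-formal-fns:adequate} only through the discrepancy between $\Gamma(\cX,\shv{Q})/\Gamma(\cX,\shv{I}^{n+1}\shv{Q})$ and $\Gamma(\cX,\shv{Q}/\shv{I}^{n+1}\shv{Q})$. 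I would bridge this gap by exploiting the finiteness of $A\to B$ supplied by \Cref{L:adequate-good-finite}, together with the explicit linearly fundamental presentation of $\hat{\cX}$, to bound the finitely many additional algebra generators that the adequate — rather than the cohomological — formal functions statement permits, and then deduce, arguing as in \Cref{P:complete1}, that $g$ induces an equivalence on coherent sheaves and is therefore an isomorphism.
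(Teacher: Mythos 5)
Your argument coincides with the paper's through the construction of $\hat{\cX}$ via \Cref{C:effectivity-local}, the finiteness of $A\to B$ via \Cref{L:adequate-good-finite}, and the Tannakian morphism $g\colon\hat{\cX}\to\cX$. The gap is in the final step. You propose to show that $\cX$ is coherently complete along $\cG_x$ and then invoke uniqueness of completions, but the plan you sketch --- ``bounding the finitely many additional algebra generators'' permitted by the adequate formal functions theorem --- is not an argument. The discrepancy between $\Gamma(\cX,\shv{Q})/\Gamma(\cX,\shv{I}^{n+1}\shv{Q})$ and $\Gamma(\cX,\shv{Q}/\shv{I}^{n+1}\shv{Q})$ is precisely the failure of $\pi_*$ to be exact, i.e.\ the statement being proved; finiteness of $A\to B$ only says the two towers differ by a module-finite amount, and a finite, injective, non-surjective adequate comparison map is a real possibility in general (see the remark following \Cref{L:adequate-good-finite}, where $A=B$ is deduced only \emph{as a consequence} of this theorem, not beforehand). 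Essential surjectivity of $\COH(\cX)\to\varprojlim_n\COH(\thck{\cX}{\cG_x}{n})$ is even further out of reach, since the argument of \Cref{P:complete1} uses both the resolution property of $\cX$ and exactness of $\Gamma(\cX,-)$, neither of which is available here. So you have correctly diagnosed the circularity but not escaped it.

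The paper escapes it by never proving coherent completeness of $\cX$: it analyzes the morphism $f\colon\hat{\cX}\to\cX$ directly. Since $f$ restricts to isomorphisms on all infinitesimal neighborhoods of $\cG_x$ it is formally \'etale, and since $X'=\Spec B\to X=\Spec A$ is finite and $\hat{\cX}\to X'$ is of finite type, $f$ is of finite type, hence \'etale; it is moreover affine by \cite[Prop.~3.2]{luna-field}, hence representable, and it induces an isomorphism of stabilizers at the unique closed points. Luna's fundamental lemma (\Cref{L:fundamental-lemma}) then gives $\hat{\cX}\cong X'\times_X\cX$, so $f$ is finite; being finite and an isomorphism over the unique closed point of $\cX$ it is a closed immersion by Nakayama, and being also \'etale it is an open and closed immersion, hence an isomorphism. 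This forces $X'=X$ and the exactness of $\pi_*$. Replacing your final paragraph with this \'etale-plus-Luna argument (note that \Cref{L:fundamental-lemma} is proved earlier and independently) would complete the proof.
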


\begin{remark}
See \Cref{C:adequate+lin-red=>good:fundamental,C:adequate+lin-red=>good2}
for non-noetherian versions.
\end{remark}

\begin{proof}
By \cite[Thm.~6.3.3]{alper-adequate}, 
$X$ is of finite type over $S$.
We can thus replace $S$ with $X$. If $\pi$
is a good moduli space, then every closed point has linearly reductive
stabilizer~\cite[Prop.~12.14]{alper-good}. For the converse, we need to prove that
$\pi_*$ is exact. This can be verified after replacing $S$ with the completion
at every closed point (since adequate moduli spaces commute with flat base change).
We may thus assume that $X=S$ is a complete local scheme. Since $\pi$ is an adequate moduli space, $\cX$ has a unique closed point and we let $\cZ \hookrightarrow \cX$ be the corresponding closed immersion.

By \Cref{C:effectivity-local}, the adic sequence
$\thck{\cX}{\cZ}{0}\hookrightarrow\thck{\cX}{\cZ}{1}\hookrightarrow\dots$ has completion
$\hat{\cX}$ that has a good moduli space $X'$. By Tannaka
duality (see \S \ref{SS:tannaka}), there is a natural map $f\colon
\hat{\cX}\to \cX$ and it suffices to prove it is an isomorphism. Now $f$ induces a map $g\colon X'\to X$ of adequate
moduli spaces. In the notation of \Cref{L:adequate-good-finite}, $X'=\Spec
B$ and $X=\Spec A$, and we conclude that $X'\to X$ is finite.
In particular, $f\colon \hat{\cX}\to \cX$ is also
of finite type since the good moduli map $\hat{\cX}\to X'$ is of finite
type~\cite[Thm.~A.1]{luna-field}. The morphism $f\colon \hat{\cX}\to \cX$ is 
formally \'etale, hence \'etale, and also 
affine~\cite[Prop.~3.2]{luna-field}, hence representable.  
Moreover, $f\colon \hat{\cX}\to \cX$
induces an isomorphism of stabilizer groups at the unique closed 
points so we may apply
Luna's fundamental lemma (\Cref{L:fundamental-lemma}) to conclude 
that $X' \times_{X} \cX = \hat{\cX}$ and thus $f \co \hat{\cX} \to \cX$ is 
finite. But $f$ is an isomorphism over the
unique closed point of $\cX$, hence $f$ is a closed immersion. But $f$
is also \'etale, hence a closed and open immersion, hence an isomorphism.
\end{proof}

\begin{corollary} \label{C:geom-red=>lin-red}
Let $S$ be a noetherian algebraic space and let $G\to S$ be an 
affine flat group scheme of finite presentation. Then $G\to S$ is linearly reductive if and
only if $G\to S$ is geometrically reductive and every closed fiber is linearly
reductive.  \epf
\end{corollary}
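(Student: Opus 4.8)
The plan is to apply \Cref{T:adequate+lin-red=>good} to the classifying stack $\cX=B_SG$ together with its structure morphism $\pi\colon B_SG\to S$. The forward implication is formal: if $G\to S$ is linearly reductive, then it is geometrically reductive by the chain of implications recorded just after \Cref{D:reductive}, and since $B_SG\to S$ is cohomologically affine and cohomological affineness is stable under arbitrary base change (this is built into the definition used in this paper), each $B_{\kappa(s)}G_s\to\Spec\kappa(s)$ is cohomologically affine; that is, every fiber $G_s$ is linearly reductive.

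For the converse, suppose $G\to S$ is geometrically reductive and every closed fiber of $G\to S$ is linearly reductive. First I would check that $\pi\colon B_SG\to S$ satisfies the hypotheses of \Cref{T:adequate+lin-red=>good}: it is of finite presentation because $G\to S$ is, the base $S$ is noetherian, $\pi$ has affine diagonal because $G\to S$ is affine, and $\pi_*\oh_{B_SG}=\oh_S$, so geometric reductivity of $G$ says precisely that $\pi$ is an adequate moduli space. It then remains to translate the condition ``every closed point of $B_SG$ has linearly reductive stabilizer'' into the condition ``every closed fiber of $G\to S$ is linearly reductive''. Since $\pi$ is an adequate moduli space it is universally closed \cite[Thm.~5.3.1]{alper-adequate}, and since every fiber of $\pi$ over a point of $|S|$ consists of a single point, the map $|B_SG|\to|S|$ is a bijection which carries closed points to closed points; the stabilizer at the point lying over $s\in|S|$ is $G_s\to\Spec\kappa(s)$, so it is linearly reductive exactly when the closed fiber $G_s$ is. By hypothesis this holds for all closed $s\in|S|$, so \Cref{T:adequate+lin-red=>good} gives that $\pi$ is a good moduli space, i.e.\ $B_SG\to S$ is cohomologically affine, i.e.\ $G\to S$ is linearly reductive.

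The only point that requires any care is this last dictionary between the topology and stabilizers of $B_SG$ and the closed fibers of $G\to S$; once one observes that $\pi$ is universally closed with singleton fibers over $|S|$ the translation is immediate, and the remainder is just an unwinding of \Cref{D:reductive} and of the characterizations of good and adequate moduli spaces. I do not anticipate any genuine obstacle: the substance of the corollary is entirely carried by \Cref{T:adequate+lin-red=>good}, which is why it can be called immediate.
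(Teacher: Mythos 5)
Your proof is correct and is exactly the argument the paper intends: the corollary is stated as an immediate consequence of \Cref{T:adequate+lin-red=>good} applied to $\pi\colon B_SG\to S$ (with $X=S$), and the paper omits precisely the dictionary you spell out between closed points of $|B_SG|$ with linearly reductive stabilizer and closed fibers of $G\to S$. The forward direction via the implication chain after \Cref{D:reductive} and base-change stability of cohomological affineness is likewise what the paper takes for granted.
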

\begin{proof}
  Apply \Cref{T:adequate+lin-red=>good} to $BG \to S$, which is an
  adequate (resp.~good) moduli space if and only if $G \to S$ is geometrically (resp.~linearly)
  reductive.
\end{proof}
The corollary also holds in the non-noetherian case by
\Cref{C:adequate+lin-red=>good2}.

\subsection{Effectivity III: the general case}\label{SS:effectivity-3}
We now finally come to the proof of the general effectivity result for
adic systems of algebraic stacks. Recall that this says that the completion
$\hat{\cX}$ of an adic sequence $\{\cX_n\}_{n\geq 0}$ of noetherian algebraic stacks,
such that $\cX_0$ is linearly fundamental, exists and is linearly fundamental.

\begin{proof}[Proof of \Cref{T:effectivity}]
Let $X$ be as in \Cref{S:effectivityI-setup}.
  Since $\cX_0$ is (linearly) fundamental, it admits an affine morphism
  to $\cY = B\GL_{N,X}$. By \Cref{L:adic_embedding}, there
  is an affine morphism $\cH \to \cY$ of finite type and 
  compatible closed immersions
  $\cX_n \hookrightarrow \cH$.
  Let $H=\Spec \Gamma(\cH,\Orb_{\cH})$ be the
  adequate moduli space of $\cH$. Since the composition $\cH \to \cY \to X$ is
  of finite type and $X$ is noetherian (\Cref{L:adic_noetherian_gms}), 
  $H \to X$ is of finite type
  \cite[Thm.~6.3.3]{alper-adequate} and so $\cH \to H$
  is of finite type and $H$ is noetherian. 
  
  Since $\cX_n \to X_n$ is a good moduli space, there are uniquely induced
  morphisms $X_n \to H$. Passing to limits, we produce a
  unique morphism $X \to H$, which is a closed immersion as the composition
  $X \to H \to X$ is the identity. Take $\cH'$ to be the base
  change of $\cH \to H$ along $X \to H$. Let
   $H'=\spec \Gamma(\cH',\Orb_{\cH'})$; then the induced morphism $H' \to X$ is a 
   finite type adequate universal homeomorphism, hence finite. Since $\cH' \to X$ 
   is universally closed, the closed points of $\cH'$ are identified with the 
   closed points of $\cX_0$ and thus have linearly reductive stabilizer.  By
   \Cref{T:adequate+lin-red=>good}, $\cH'$ is cohomologically
   affine.  We may now apply \Cref{L:effectivity-if-embedded-into-fundamental} to 
   the induced closed immersions $\cX_n \hookrightarrow \cH'$ 
   to conclude that the completion of $\{\cX_n\}_{n \ge 0}$ exists.
\end{proof}
\begin{remark}[Quasi-excellence]\label{R:qe-effective}
  In \Cref{T:effectivity}, if $\Gamma(\cX_0,\oh_{\cX_0})$ is
  quasi-excellent, then $\hat{\cX}$ is locally
  quasi-excellent. Indeed, using the notation of
  \Cref{S:effectivityI-setup}, we know that
  $A=\Gamma(\hat{\cX},\oh_{\hat{\cX}})$ is an
  $I_1$-adically complete noetherian ring
  (\Cref{L:adic_noetherian_gms}). Since $\cX_0$ is cohomologically
  affine, $A/I_1 = \Gamma(\cX_0,\oh_{\cX_0})$, which is
  quasi-excellent by assumption. Hence, $A$ is quasi-excellent by the
  Gabber--Kurano--Shimomoto theorem \cite[Main Thm.\
  1]{kurano-shimomoto}. But $\hat{\cX} \to \spec A$ is of finite
  type, so $\hat{\cX}$ is locally quasi-excellent.
\end{remark}
\subsection{Coherent completeness II: general case}\label{SS:coherently-complete:general}
We can now finish the general coherent completeness theorem. Recall that this says that
a noetherian algebraic stack $\cX$ with affine good moduli space $X$ is coherently
complete along $\cZ$ if and only if $X$ is coherently complete along the image of
$\cZ$. This is under the assumption that $\cZ$ has the resolution property and it also
follows that $\cX$ has the resolution property.

\begin{proof}[Proof of \Cref{T:complete}]
  The necessity of the condition follows from
  \Cref{P:gms-cohocomp-necc}. For the sufficiency:  
  the completion $\hat{\cX}$ of
  $\{\cX_{\cZ}^{[n]}\}$ exists and is linearly fundamental (\Cref{T:effectivity}).
  By formal functions (\Cref{C:formal-fns}), 
  \[
    A=\Gamma(\cX,\oh_{\cX}) \simeq \varprojlim_n \Gamma(\cX,\oh_{\thck{\cX}{\cZ}{n}}) = \varprojlim_n \Gamma(\hat{\cX},\oh_{\thck{\cX}{\cZ}{n}}) \simeq \Gamma(\hat{\cX},\oh_{\hat{\cX}}).
  \]
  Hence, the good moduli space of $\hat{\cX}$ is $X$. By Tannaka
  duality, there is an induced morphism
  $f\colon \hat{\cX}\to \cX$ and it is
  affine~\cite[Prop.~3.2]{luna-field}.  The composition
  $\hat{\cX}\to \cX\to X$ is a good moduli space and hence of finite
  type~\cite[Thm.~A.1]{luna-field}. It follows that $f$ is of finite type. 
  Since $f$ is formally
  \'etale, it is thus \'etale. Luna's fundamental lemma (\Cref{L:fundamental-lemma}) 
  now implies that $f \co \hat{\cX} \to \cX$ is an isomorphism. 
  In particular, $\cX$ is linearly fundamental, i.e., has the
  resolution property.
\end{proof}

We are now in position to prove Formal GAGA (\Cref{C:formal-gaga}).

\begin{proof}[Proof of \Cref{C:formal-gaga}]  The first case follows from the second since if $I \subset A$ is a maximal ideal, 
$\cX \times_{\Spec A} \Spec(A/I)$ necessarily has the resolution property \cite[Cor.~4.14]{luna-field}.   
The corollary then follows from applying \Cref{T:complete} with $\cZ = \cX \times_{\Spec A} \Spec(A/I)$.
\end{proof}

\section{The local structure of algebraic stacks}\label{S:local-structure}
In this section, we first prove \Cref{T:microlocalization}, which
establishes the existence of formally syntomic neighborhoods of locally
closed substacks.  We then use this theorem
to prove \Cref{C:existence-completions} establishing the existence of completions at points with linearly reductive stabilizers. 
Finally, we prove the local structure of algebraic stacks (\Cref{T:base}) in a
slightly more general form, see \Cref{T:base-general}.

The results of this section establish the local structure of algebraic stacks near not necessarily closed points or immersions.  It turns out to be convenient to work in the more general setting  of \emph{pro-unramified} morphisms%
.
Recall that if $\cX$ is a noetherian algebraic stack, then a morphism
$\cV \to \cX$ is \emph{pro-unramified} (resp.\ a \emph{pro-immersion}) if it can be
written as a composition
$\cV \hookrightarrow \cV' \to \cX$, where $\cV \hookrightarrow \cV'$
is a flat quasi-compact monomorphism and $\cV' \hookrightarrow \cX$ is unramified and of finite type (resp.\ a closed immersion). Clearly, pro-immersions are pro-unramified. Note that residual gerbes on quasi-separated algebraic
stacks are pro-immersions \cite[Thm.~B.2]{MR2774654}. Moreover, every monomorphism of finite type is pro-unramified.

\subsection{Existence of formally syntomic neighborhoods}\label{SS:formal-neighborhoods}
Recall that a morphism is \emph{syntomic} if it is flat and locally of
finite presentation, with fibers that are local complete intersections
(e.g., smooth). In particular, if a morphism is syntomic and
representable, then its cotangent complex is perfect of tor-amplitude
$[-1,0]$ \cite[Tag \spref{0FK3}]{stacks-project}, which is the only
property of syntomic morphisms that we will use. As promised, we now
establish the following generalization of \Cref{T:microlocalization}.

\begin{theorem}[Formal neighborhoods]\label{T:microlocalization:general}
  Let $\cX$ be a noetherian algebraic stack. Let
  $\cX_0 \to \cX$ be pro-unramified. Let
  $h_0 \colon \cW_0 \to \cX_0$ be a syntomic (e.g., smooth) morphism. Assume that $\cW_0$ is linearly fundamental. If either
  \begin{enumerate}
  \item \label{TI:microlocalization:quaff} $\cX$ has quasi-affine diagonal; or
  \item \label{TI:microlocalization:exc} $\cX$ has affine stabilizers
    and $\Gamma(\cW_0,\Orb_{\cW_0})$ is quasi-excellent;
  \end{enumerate}
  then there is a flat morphism $h \colon \hat{\cW} \to \cX$, where
  $\hat{\cW}$ is noetherian, linearly fundamental,
  $h|_{\cX_0} \simeq h_0$, and $\hat{\cW}$ is coherently complete
  along $\cW_0=h^{-1}(\cX_0)$. Moreover if $h_0$ is smooth (resp.\ \'etale),
  then $h$ is unique up to non-unique $1$-isomorphism (resp.\ unique up to
  unique $2$-isomorphism).
\end{theorem}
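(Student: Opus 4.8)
The plan is to run the four-step strategy of the introduction in the relative, pro-unramified setting, using \Cref{T:effectivity} and Tannaka duality (\S\ref{SS:tannaka}) as the two main inputs. First I would record the relevant tower of thickenings: factoring $\cX_0 \to \cX$ as $\cX_0 \hookrightarrow \cX_0' \to \cX$ with $\cX_0 \hookrightarrow \cX_0'$ a flat quasi-compact monomorphism and $\cX_0' \to \cX$ unramified of finite type, the infinitesimal neighbourhoods $(\cX_0')^{[n]} \to \cX$ of the unramified morphism give, by base change, an adic sequence $\{\cX_n := \cX_0 \times_{\cX_0'} (\cX_0')^{[n]}\}_{n \ge 0}$ of noetherian algebraic stacks with $\cX_0 \hookrightarrow \cX_n$ closed immersions, and $\cX_0 \to \cX$ factors through every $\cX_n \to \cX$ (which is again pro-unramified, and a monomorphism when $\cX_0 \to \cX$ is a pro-immersion). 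Then, starting from $h_0$, I would construct inductively compatible flat --- indeed syntomic --- deformations $h_n \colon \cW_n \to \cX_n$ with $\cW_n \times_{\cX_n} \cX_{n-1} \simeq \cW_{n-1}$: the obstruction to deforming the flat $\cX_{n-1}$-stack $\cW_{n-1}$ over the square-zero extension $\cX_{n-1} \hookrightarrow \cX_n$ lies in $\Ext^2_{\Orb_{\cW_0}}(L_{\cW_0/\cX_0}, h_0^*\cJ_n)$ by deformation theory \cite[Thm.~1.5]{olsson-deformation}, and since $h_0$ is syntomic the complex $L_{\cW_0/\cX_0}$ is perfect of amplitude $[-1,0]$, so cohomological affineness of $\cW_0$ forces this group to vanish --- exactly as in \Cref{L:deformation-obstructions}. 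Each $\cW_n$ is noetherian, being of finite type over $\cX_n$, and $\{\cW_n\}_{n \ge 0}$ is an adic sequence.

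Next, since $\cW_0$ is linearly fundamental, \Cref{T:effectivity} produces the completion $\hat{\cW}$ of $\{\cW_n\}$, which is linearly fundamental --- hence noetherian with affine diagonal and coherently complete along $\cW_0$ --- with good moduli space $\Spec A$, $A = \varprojlim_n \Gamma(\cW_n, \Orb_{\cW_n})$. The compatible system of composites $\cW_n \xrightarrow{h_n} \cX_n \to \cX$ defines an element of $\varprojlim_n \Hom(\cW_n, \cX)$, and Tannaka duality, applied with $\hat{\cW}$ as the coherently complete stack and $\cX$ as the target, produces a unique morphism $h \colon \hat{\cW} \to \cX$ restricting to $h_n$ on each $\cW_n$. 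Both stacks are noetherian with affine stabilizers; in the first case one invokes the Tannaka hypothesis that $\cX$ has quasi-affine diagonal, and in the second the hypothesis that the coherently complete stack is locally the spectrum of a $G$-ring. For the latter: $\Gamma(\cW_0, \Orb_{\cW_0})$ is quasi-excellent, hence so is $A$ --- it is noetherian and $I_1$-adically complete with $\Gr_{I_*} A$ of finite type over $\Gamma(\cW_0, \Orb_{\cW_0})$, cf.\ \Cref{L:adic_noetherian_gms} --- and therefore $\hat{\cW} \simeq [\Spec R/\GL_N]$ with $R$ of finite type over $A$ (good moduli space morphisms being of finite type \cite[Thm.~A.1]{luna-field}), so $\hat{\cW}$ is locally the spectrum of a $G$-ring.

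I expect the identification of the fibre together with flatness of $h$ to be the main obstacle, since it forces one to track the coherent-completeness equivalence against the compatibilities built in at the deformation stage. For each $n$, the closed substack $h^{-1}(\cX_n) = \hat{\cW} \times_\cX \cX_n$ of $\hat{\cW}$ pulls back along $\cW_m \hookrightarrow \hat{\cW}$ to $\cW_m \times_{\cX_m} \cX_{\min(m,n)} \simeq \cW_{\min(m,n)}$, using that $\cW_m \to \cX$ factors through $\cX_m$, that $\cX_n \to \cX$ behaves well under this base change (being a monomorphism in the pro-immersion case), and the cartesian squares from the construction; this is also what the thickening $\cW_n \hookrightarrow \hat{\cW}$ pulls back to, so by coherent completeness $h^{-1}(\cX_n) = \cW_n$. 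Taking $n = 0$ gives $\cW_0 = h^{-1}(\cX_0)$, with the induced map to $\cX_0$ equal to $h_0$ (the $n = 0$ instance of the Tannaka identity), so $h\mid_{\cX_0} \simeq h_0$. Finally $h$ is flat by the local criterion for flatness: $\hat{\cW}$ is noetherian and coherently complete along $h^{-1}(\cX_0) = \cW_0$, $\cX$ is noetherian, and each $h^{-1}(\cX_n) = \cW_n \to \cX_n$ is flat.

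For the uniqueness clause, given another solution $h' \colon \hat{\cW}' \to \cX$, the stacks $\cW_n' := \hat{\cW}' \times_\cX \cX_n$ are syntomic deformations with $\cW_0' = \hat{\cW}' \times_\cX \cX_0 \simeq \cW_0$, and $\hat{\cW}'$ is the completion of $\{\cW_n'\}$. When $h_0$ is smooth (resp.\ étale) the sheaf $L_{\cW_0/\cX_0} = \Omega_{\cW_0/\cX_0}$ is locally free, so cohomological affineness of $\cW_0$ gives $\Ext^{\ge 1}_{\Orb_{\cW_0}}(\Omega_{\cW_0/\cX_0}, -) = 0$, whence an isomorphism $\cW_{n-1} \xrightarrow{\sim} \cW_{n-1}'$ over $\cX_{n-1}$ lifts to one over $\cX_n$ --- uniquely precisely when the ambiguity group $\Hom_{\Orb_{\cW_0}}(\Omega_{\cW_0/\cX_0}, -)$ vanishes, i.e.\ precisely when $h_0$ is étale. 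Assembling these over $n$ and passing to the limit via uniqueness of completions yields an isomorphism $\hat{\cW} \simeq \hat{\cW}'$ over $\cX$, non-unique in general and unique up to a unique $2$-isomorphism when $h_0$ is étale.
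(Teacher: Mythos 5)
Your overall strategy coincides with the paper's: deform $h_0$ infinitesimally using Olsson's deformation theory (with the $\Ext^2$-obstruction killed by syntomicity of $h_0$ plus cohomological affineness of $\cW_0$), effectivize the resulting adic system via \Cref{T:effectivity}, produce $h\colon \hat{\cW}\to\cX$ by Tannaka duality, and finish with the local criterion for flatness and an $\Ext^{\leq 1}$ computation for uniqueness. The one structural divergence is where the deformation problem is set up: you thicken $\cX_0$ itself, via a tower built from ``infinitesimal neighbourhoods of the unramified morphism $\cX_0'\to\cX$,'' whereas the paper first applies the \'etale d\'evissage of unramified morphisms \cite[Thm.~1.2]{MR2818725} to replace $\cX$ by an \'etale cover on which $\cX_0'$ (the paper's $\cV_0$) becomes an honest closed substack, and then deforms $\cW_0$ over the genuine thickenings $\thck{\cX}{\cV_0}{n}$, using that $L_{\cW_0/\cV_0}\simeq L_{\cW_0/\cX_0}$ because the flat monomorphism $\cX_0\to\cV_0$ has vanishing cotangent complex.

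This divergence is where the genuine gaps lie. First, the expression $\cX_n:=\cX_0\times_{\cX_0'}(\cX_0')^{[n]}$ is not a fibre product in any literal sense (there is no map $(\cX_0')^{[n]}\to\cX_0'$), and both the existence of the thickenings $(\cX_0')^{[n]}\to\cX$ of an unramified morphism and the claimed extension of the flat monomorphism $\cX_0\to\cX_0'$ across them already require the d\'evissage you omit; the same omission resurfaces at the end, where the local criterion for flatness needs an ideal sheaf on $\cX$ (or on an \'etale cover of it) cutting out the tower, which is available only after reducing to a closed immersion. It also affects your fibre identification, since $h^{-1}(\cX_n)=\hat{\cW}\times_\cX\cX_n$ is not a closed substack of $\hat{\cW}$, so the coherent-completeness comparison of closed substacks does not apply to it as stated. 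Second, Olsson's deformation theorems require the morphism being deformed to be representable; the paper works around this by replacing $\cX$ with $\cX\times B\GL_N$ so that $\cW_0\to\cX_0\times B\GL_N$ becomes representable while remaining syntomic, and you never address representability (your appeal to \Cref{L:deformation-obstructions} also conflates the $\Ext^1$ problem of lifting a morphism to a smooth target with the $\Ext^2$ problem of flatly deforming the source). Third, your deduction that $A=\Gamma(\hat{\cW},\Orb_{\hat{\cW}})$ is quasi-excellent from ``$A$ noetherian and complete with $A/J$ quasi-excellent'' is exactly the Gabber--Kurano--Shimomoto theorem \cite{kurano-shimomoto}; it does not follow from finite generation of the associated graded ring and must be invoked. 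All three are repairable, but the first is substantive: without the d\'evissage your tower is not actually constructed.
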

\begin{proof}
  We first reduce to the situation where $\cX_0 \to \cX$ is a pro-immersion.
  Since $\cX_0 \to \cX$ is pro-unramified, it factors as
  $\cX_0 \xrightarrow{j} \cV_0 \xrightarrow{u} \cX$, where $j$ is a
  flat quasi-compact monomorphism and $u$ is unramified and
  of finite type. By
  \cite[Thm.~1.2]{MR2818725}, there is a further factorization
  $\cV_0 \xrightarrow{i} \cX' \xrightarrow{p} \cX$ of $u$, where $i$ is a
  closed immersion and $p$ is \'etale, representable and finitely presented.
  In particular, $\cX_0 \xrightarrow{j} \cV_0 \xrightarrow{i} \cX'$ is a pro-immersion.
  Since $p$ has quasi-affine diagonal, $\cX'$ inherits the conditions
  \eqref{TI:microlocalization:quaff} or \eqref{TI:microlocalization:exc} from
  $\cX$; hence, we may replace $\cX$ by $\cX'$ and assume that $\cX_0 \to \cX$
  is a pro-immersion.

  We thus have a factorization $\cX_0 \xrightarrow{j} \cV_0 \xrightarrow{i} \cX$
  where $j$ is a flat quasi-compact monomorphism and $i$ is a closed immersion.
  Note that $j$ is schematic~\cite[Tag
  \spref{0B8A}]{stacks-project} and even
  quasi-affine~\cite[Prop.~1.5]{raynaud_sem-samuel} and that $\cX_0$
  is noetherian~\cite[Prop.~1.2]{raynaud_sem-samuel}. In particular, $\cW_0$ is
  also noetherian.
  
  Let
  $g_0 = j\circ h_0 \colon \cW_0 \to \cV_0=\thck{\cX}{\cV_0}{0}$ which is flat.
  We claim that it 
  suffices to prove, using induction on $n \ge 1$, that there are
  compatible cartesian diagrams:
  \[
    \xymatrix{\cW_{n-1} \ar[r] \ar[d]_{g_{n-1}} & \cW_n \ar[d]^{g_n}
      \\ \thck{\cX}{\cV_0}{n-1} \ar[r] & \thck{\cX}{\cV_0}{n},}
  \]
  where each $g_n$ is flat and the $\cW_n$ are noetherian. Indeed, the
  flatness of the $g_n$ implies that the resulting system
  $\{\cW_n\}_{n\geq 0}$ is adic. By \Cref{T:effectivity}, the
  completion $\hat{\cW}$ of the sequence
  $\{\cW_n\}_{n\geq 0}$ exists and is noetherian and linearly
  fundamental. If $\cX$ has quasi-affine diagonal, then the morphisms
  $\cW_n \to \cX$ induce a unique morphism $\hat{\cW}$ by Tannaka
  duality (case \itemref{SS:tannaka:b} of \S \ref{SS:tannaka}). If $\cX$ only has affine stabilizers, however,
  then Tannaka duality (case \itemref{SS:tannaka:a} of \S \ref{SS:tannaka}) has the additional hypothesis that
  $\hat{\cW}$ is locally the spectrum of a G-ring, e.g., locally quasi-excellent, which follows from the quasi-excellency of $\Gamma(\cW_0,\Orb_{\cW_0})$ (\Cref{R:qe-effective}). The flatness of $\hat{\cW} \to \cX$ is just the local
  criterion for flatness \cite[$0_{\mathrm{III}}$.10.2.1]{EGA}.

  We now get back to solving the lifting problem. If $g_0$ is not representable, choose an
  affine morphism $\cW_0 \to B \GL_N$ for some $N$.
  Since $B\GL_N$ has smooth diagonal, the induced
  \emph{representable} morphism $\cW_0 \to \cX_0 \times B\GL_N$ is syntomic. Hence, we
  may replace $\cX$ with $\cX \times B\GL_N$ and  assume that $g_0$ is representable. By
  \cite[Thm.~1.4]{olsson-deformation}, the obstruction to lifting
  $g_{n-1}$ to $g_n$ belongs to the group
  $\Ext^2_{\Orb_{\cW_{0}}}(L_{\cW_0/\cV_0},g_0^*(\cI^n/\cI^{n+1}))$,
  where $\cI$ is the coherent ideal sheaf defining the closed
  immersion $i \colon \cV_0 \hookrightarrow \cX$. 

  Now since
  $\cX_0 \to \cV_0$ is a flat monomorphism, $L_{\cX_0/\cV_0} \simeq 0$ \cite[Prop.~17.8]{lmb}. Hence,
  $L_{\cW_0/\cV_0} \simeq L_{\cW_0/\cX_0}$. But $\cW_0 \to \cX_0$ is
  syntomic, so $L_{\cW_0/\cX_0}$ is perfect of amplitude $[-1,0]$
  and $\cW_0$ is
  cohomologically affine. Thus, the Ext-group vanishes, and we have
  the required lift. That $\cW_n$ is noetherian is clear: it is a
  thickening of a noetherian stack by a coherent sheaf of ideals.

  For
  the uniqueness statement:
  Let $h\colon \hat{\cW}\to \cX$ and $h'\colon \hat{\cW'}\to \cX$ be two
  different morphisms as in the theorem. Let $g_n=j_n\circ h_n\colon \cW_n\to
  \thck{\cX}{\cV_0}{n}$ and $g'_n=j_n\circ h'_n\colon \cW'_n\to
  \thck{\cX}{\cV_0}{n}$ be the induced $n$th infinitesimal neighborhoods.
  By Tannaka duality, it is enough
  to show that an isomorphism $f_{n-1}\colon \cW_{n-1}\to \cW'_{n-1}$ lifts
  (resp.\ lifts up to a unique $2$-isomorphism) to an isomorphism
  $f_n\colon \cW_n\to \cW'_n$. The obstruction to a lift lies in
  $\Ext^1_{\Orb_{\cW_{0}}}(f_0^*L_{\cW'_0/\cV_0},g_0^*(\cI^n/\cI^{n+1}))$,
  which vanishes if $h_0=h'_0$ is smooth. The obstruction to the existence
  of a $2$-isomorphism between two lifts lies in
  $\Ext^0_{\Orb_{\cW_{0}}}(f_0^*L_{\cW'_0/\cV_0},g_0^*(\cI^n/\cI^{n+1}))$ and
  the $2$-automorphisms of a lift lies in
  $\Ext^{-1}_{\Orb_{\cW_{0}}}(f_0^*L_{\cW'_0/\cV_0},g_0^*(\cI^n/\cI^{n+1}))$.
  All three groups vanish if $h_0$ is \'etale.
\end{proof}
\subsection{Existence of completions}\label{SS:coh-completions-along-substacks}
If $\cX_0 \to \cX$ is a morphism of algebraic stacks, we
say that a morphism of pairs $(\cW, \cW_0) \to (\cX, \cX_0)$, that is, compatible maps $\cW \to \cX$ and $\cW_0 \to \cX_0$, is the {\it completion of $\cX$ along $\cX_0$}
if $(\cW, \cW_0)$ is a coherently complete pair (\Cref{D:coherently-complete}) and  $(\cW, \cW_0) \to (\cX, \cX_0)$ is final among morphisms from coherently complete pairs.
That is, if $ (\cZ,\cZ_0) \to (\cX,\cX_0)$ is any other morphism of pairs
from a coherently complete pair, there exists a morphism $(\cZ, \cZ_0) \to
(\cW,\cW_0)$ over $\cX$ unique up to unique $2$-isomorphism.  In
particular, the pair $(\cW,\cW_0)$ is unique up to unique
$2$-isomorphism.

We prove the following generalization of \Cref{C:existence-completions}.
\begin{corollary}[Existence of completions]\label{C:existence-completions:general}
Let $\cX$ be a noetherian algebraic stack. Let
$\cX_0\to \cX$ be a pro-immersion such that $\cX_0$ is linearly fundamental,
e.g., the residual gerbe at a point with linearly reductive stabilizer.
If either
\begin{enumerate}
\item $\cX$ has quasi-affine diagonal; or
\item $\cX$ has affine stabilizers
  and $\Gamma(\cX_0,\Orb_{\cX_0})$ is quasi-excellent;
\end{enumerate}
then
the completion of $\cX$ along $\cX_0$ exists and is linearly fundamental.
\end{corollary}
\begin{proof}
Applying \Cref{T:microlocalization:general} to the pro-immersion $\cX_0 \to \cX$ with $\cW_0 = \cX_0$, we obtain a flat morphism
$h\colon \widehat{\cX}\to \cX$ where $\widehat{\cX}$ is a linearly fundamental stack,
coherently complete along $h^{-1}(\cX_0) \cong \cX_0$.
Let $(\cZ,\cZ_0)$ be any
other coherently complete stack with a morphism $\varphi\colon \cZ\to \cX$ such
that $\varphi|_{\cZ_0}$ factors through $\cX_0$. Let $\cI\subset \Orb_{\cX}$ be
the sheaf of ideals defining the closure of $\cX_0$. Then $\cX_n=V(\cI^{n+1}\Orb_{\widehat{\cX}})$
and $\cZ_n\subseteq V(\cI^{n+1}\Orb_{\cZ})$. Since $\cX_n\to
V(\cI^{n+1})$ is a flat monomorphism, it follows that $\cZ_n\to
\cX$ factors uniquely through $\cX_n$. By coherent completeness of $\cZ$ and
Tannaka duality (using that $\widehat{\cX}$ has affine diagonal), there is a
unique morphism $\cZ\to \widehat{\cX}$.
\end{proof}

Let $\cX$ be a noetherian algebraic stack and let $x$ be a point of $\cX$ with linearly reductive stabilizer. Applying \Cref{C:existence-completions:general} to the pro-immersion corresponding to the residual gerbe $\cG_x \to \cX$ of $x$, we obtain a flat morphism $\widehat{\cX}_x \to \cX$, which we refer to as the \emph{completion} at the point $x$. Note that
when $\cG_x=V(\cI)$ is a closed point, then $\widehat{\cX}_x=\varinjlim_n
V(\cI^{n+1})$ in the category of noetherian algebraic stacks with affine stabilizers.

\subsection{Representability properties of presentations}
We will now give a representability criteria for morphisms from fundamental
stacks to algebraic stacks, generalizing \cite[Prop.~3.2 and Prop.~3.4]{luna-field}
and partially answering \cite[Question 1.10]{luna-field}.
This will then be used in the local structure theorem.

\begin{proposition} \label{P:refinement}
 Let $f \co \cW \to \cX$ be a morphism of algebraic stacks such that $\cW$ is adequately affine with affine diagonal (e.g., fundamental). Suppose $\cW_0 \subset \cW$ is a closed substack such that $f|_{\cW_0}$ is representable.
 \begin{enumerate}
 \item \label{P:refinement:affine_diag} If $\cX$ has affine diagonal, then there exists an adequately affine open neighborhood $\cU \subseteq \cW$ of $\cW_0$ such that $f|_{\cU}$ is affine.
 \item \label{P:refinement:separated_diag} If $\cX$ has separated diagonal and $\cW$ is fundamental, then there exists an adequately affine open neighborhood $\cU \subseteq \cW$ of $\cW_0$ such that $f|_{\cU}$ is representable.
 \end{enumerate} 
\end{proposition}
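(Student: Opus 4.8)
The plan is to reduce both statements to the case where $\cW = [\Spec B/\GL_N]$ (for (2), this is a hypothesis; for (1), use that $\cW$ adequately affine with affine diagonal is $[\Spec B/\GL_N]$ by the resolution-property characterization recalled in \S\ref{SS:tannaka}) and where $\cX$ is replaced by a quasi-compact open through which $f$ factors, so that $\cX$ is quasi-compact and quasi-separated. The key point is that the failure of $f$ to be affine (resp.\ representable) is measured by the relative diagonal $\Delta_f \colon \cW \to \cW\times_\cX \cW$, which is a section of $\mathrm{pr}_1 \colon \cW\times_\cX\cW \to \cW$; more precisely, $f$ is affine (resp.\ representable, i.e.\ has trivial relative stabilizers) if and only if $\Delta_f$ is affine (resp.\ a monomorphism, equivalently $\Isomstk_\cX(\cW) \to \cW$ has trivial fibers). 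The hypothesis that $f|_{\cW_0}$ is representable says that $\Delta_{f}$ restricted over $\cW_0$ is a monomorphism; if moreover $\cX$ has affine diagonal, then $\cW\times_\cX\cW$ has affine diagonal over $\cW\times\cW$ hence $\Delta_f$ is affine after restricting over $\cW_0$.

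The main step is then to spread out these properties from $\cW_0$ to an open neighborhood, but in a way that respects the adequate moduli space $\pi \colon \cW \to W$. Here I would invoke Luna's fundamental lemma (\Cref{L:fundamental-lemma}) and, more basically, the fact that for an adequately affine morphism $\pi\colon \cW\to W$ the saturated opens of $\cW$ correspond to opens of $W$. Concretely: consider the inertia stack $I_\cX(\cW) = \cW \times_{\cW\times_\cX\cW} \cW \to \cW$, a group algebraic space over $\cW$, affine over $\cW$ when $\cX$ has affine diagonal and separated over $\cW$ when $\cX$ has separated diagonal. For (2): by hypothesis the open locus $U' \subseteq \cW$ where $I_\cX(\cW)\to\cW$ is trivial (i.e.\ an isomorphism onto $\cW$) contains $|\cW_0|$; since $\cW$ is fundamental, $\pi$ is adequately affine, and $\cW\smallsetminus U'$ is closed and $\pi$-saturated-complement-able, so $\cU := \cW \smallsetminus \pi^{-1}(\pi(\cW\smallsetminus U'))$ is a saturated open neighborhood of $\cW_0$ on which $I_\cX(\cU)\to\cU$ is trivial, whence $f|_\cU$ is representable; and $\cU$, being a saturated open of a fundamental stack, is adequately affine. (The subtlety that $\pi(\cW\smallsetminus U')$ is closed uses that $\pi$ is universally closed, \cite[Thm.~5.3.1]{alper-adequate}.) For (1): similarly, over $\cW_0$ the relative diagonal $\Delta_f$ is affine (using affineness of $\cX$'s diagonal), hence $I_\cX(\cW)\to\cW$ is finite over a neighborhood of $\cW_0$; I would then argue, again using the adequately-affine structure and that finiteness plus quasi-finiteness of the inertia forces $\Delta_f$ to be affine, that after passing to a saturated open $\cU\supseteq\cW_0$ the morphism $f|_\cU$ is cohomologically affine and representable-up-to-finite-inertia in a way that makes it affine; more directly, one composes $\cU \to \cX$ with an affine morphism $\cU\to B\GL_M$ arising from the resolution property and checks the composite $\cU\to \cX\times B\GL_M$ is a closed-immersion-up-to-the-diagonal, concluding via \Cref{L:adic_thickening_embedding}-type arguments that $f|_\cU$ is affine.

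The hard part will be step two: controlling the passage to a \emph{saturated} open neighborhood rather than merely an open one, and verifying that the relevant loci (where the relative inertia is trivial, resp.\ finite) are themselves saturated or have saturated complements. This is exactly where adequate affineness of $\cW$ (so that $\pi$ is universally closed with $\pi_*\Orb_\cW = \Orb_W$) is essential, and where one must be careful that shrinking $\cX$ to a quasi-compact open does not destroy the separated/affine diagonal hypothesis. I expect the bookkeeping to parallel closely the shrinking argument at the end of the proof of \Cref{T:base} above (the paragraph beginning ``To keep $\stk{W}$ adequately affine''), and the representability statement to be a mild enhancement of \cite[Prop.~3.2, Prop.~3.4]{luna-field} obtained by replacing ``$G$ linearly reductive over a field'' with ``$\cW$ fundamental'' throughout, using \Cref{L:fundamental-lemma} in place of its field-theoretic predecessor.
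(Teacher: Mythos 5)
Your overall architecture matches the paper's: first shrink $\cW$ so that the relative diagonal of $f$ (equivalently the relative inertia $I_{\cW/\cX}\inj I_\cW$) becomes unramified/quasi-finite, then spread triviality (resp.\ affineness) out from $\cW_0$ to an adequately affine open neighborhood. But there is a genuine gap at the crux of part \itemref{P:refinement:separated_diag}: you assert that the locus $U'$ where $I_{\cW/\cX}\to\cW$ is trivial is \emph{open}. For a separated (even affine) quasi-finite group algebraic space this is false in general: take $G\subset(\ZZ/2\ZZ)_{\AA^1}$ to be the open subgroup obtained by deleting the non-trivial element over the origin; then $G\to\AA^1$ is affine, \'etale and quasi-finite, yet its trivial locus is the non-open set $\{0\}$. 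Openness of the trivial locus requires $I_{\cW/\cX}\to\cW$ to be \emph{finite} (so that the complement of the identity section has closed image, or equivalently so that Nakayama applies to the augmentation ideal of the corresponding finite $\Orb_\cW$-algebra). This finiteness is exactly what the paper isolates as \Cref{L:fundamental:quasi-finite-subgroup-of-inertia}: a closed quasi-finite subgroup of $I_\cW$ for $\cW$ fundamental is automatically finite. Its proof is not formal---it approximates $\cW$ by fundamental stacks of finite type over $\Spec\ZZ$ and then verifies a valuative criterion over DVRs using \cite[Prop.~5.7, Lem.~5.14]{ahlh}. Your hypotheses (the relative inertia is a \emph{closed} subgroup of $I_\cW$ because $\cX$ has separated diagonal, and is quasi-finite after the first shrinking) are precisely the input to that lemma, and nothing in your proposal supplies a substitute for it.

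Two smaller points. In part \itemref{P:refinement:affine_diag} your concluding step (``closed-immersion-up-to-the-diagonal, concluding via \Cref{L:adic_thickening_embedding}-type arguments'') is not an argument: \Cref{L:adic_thickening_embedding} concerns nilpotent thickenings and is inapplicable to a general closed substack $\cW_0\subset\cW$. The paper instead reduces to \cite[Prop.~3.2]{luna-field}, replacing Serre's criterion for cohomologically affine morphisms with its adequate analogue \cite[Cor.~4.3.2]{alper-adequate}, so that adequate affineness of $\cW$ plus quasi-finite (quasi-affine) diagonal of $f$ forces $f$ to be affine on a neighborhood. Finally, your saturated open $\cU=\cW\smallsetminus\pi^{-1}(\pi(\cW\smallsetminus U'))$ need not contain $\cW_0$ unless $\cW_0$ (or $U'$) is $\pi$-saturated, so the shrinking should be phrased as choosing an adequately affine open neighborhood of $\cW_0$ contained in $U'$, rather than as saturating the complement.
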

To prove \Cref{P:refinement}, we require the following Lemma.
\begin{lemma}\label{L:fundamental:quasi-finite-subgroup-of-inertia}
Let $\cW$ be a fundamental stack and let $G\inj I_\cW$ be a closed subgroup.
If $G\to \cW$ is quasi-finite, then $G\to \cW$ is finite.
\end{lemma}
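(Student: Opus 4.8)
The plan is to reduce to the noetherian case, then verify the valuative criterion of properness, where the reductivity of $\GL_N$ does the real work. Write $\cW=[U/\GL_{N}]$ with $U$ affine. The base change of $I_\cW$ along the $\GL_N$-torsor $U\to\cW$ is the stabilizer group scheme $\Stab_U\subseteq\GL_{N,U}$, which is a closed (hence affine) subscheme of $\GL_{N,U}$ because $U$ is separated. Thus $G\inj I_\cW$ corresponds to a $\GL_N$-invariant closed subgroup scheme $H\subseteq\Stab_U\subseteq\GL_{N,U}$, quasi-finite over $U$, and $G\to\cW$ is finite if and only if $H\to U$ is. Writing $U=\varprojlim_\lambda U_\lambda$ with each $U_\lambda$ affine of finite type over $\Z$ (hence noetherian), standard limit arguments produce for $\lambda\gg 0$ a $\GL_N$-invariant closed subgroup scheme $H_\lambda\subseteq\GL_{N,U_\lambda}$ contained in $\Stab_{U_\lambda}$ with $H=H_\lambda\times_{U_\lambda}U$; moreover $H\to U$ quasi-finite forces $H_\lambda\to U_\lambda$ quasi-finite for $\lambda\gg 0$ (the non-quasi-finite locus is closed with empty inverse limit), and finiteness of some $H_\lambda\to U_\lambda$ would imply that of $H\to U$. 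So we may assume $\cW$, and hence $G$, is noetherian. Then $G\to\cW$ is separated, quasi-finite and of finite type, so it suffices to prove it is universally closed, which may be checked with discrete valuation rings.

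Let $V$ be a DVR with fraction field $K$ and residue field $k$, and suppose given $\Spec V\to\cW$ together with a point $\xi\in G(K)$ over it; we must extend $\xi$ to a $V$-point of $G$. Since $\GL_N$-torsors over the local ring $V$ are trivial, $\Spec V\to\cW$ lifts to $\phi\colon\Spec V\to U$; put $u_0=\phi_K\in U(K)$ and let $u_1=\phi_k$ be its specialization. Let $p\in G$ be the image of $\xi$: it lies over the point underlying $u_0$, and by quasi-finiteness $\kappa(p)$ is finite over $\kappa(u_0)$. Any lift $\Spec V\to G$ of $\xi$ must factor through $D:=\overline{\{p\}}$ (the reduced closure of $p$ in $G$), mapping to $\overline{\{u_0\}}\subseteq\cW$; conversely it suffices to show that $D\to\overline{\{u_0\}}$ is \emph{finite}, for then the valuative criterion for this finite (= proper, separated) morphism, applied to the factorization $\Spec V\to\overline{\{u_0\}}$ of $\phi$ and to $\xi\colon\Spec K\to D$, produces a unique $\Spec V\to D\subseteq G$ restricting to $\xi$, as required.

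So the whole problem comes down to the finiteness of $D\to\overline{\{u_0\}}$, and this is where the hypotheses are used essentially: without the $\GL_N$-invariance of $H$ the lemma is simply false, since a closed subgroup scheme of $\GL_{N,V}$ which is quasi-finite over a DVR $V$ need not be finite. The argument I would give: let $\Sigma:=\GL_N\cdot p\subseteq H$ be the $\GL_N$-orbit of $p$. Because the fibre $H|_{u_0}$ is finite, the $\Stab(u_0)$-conjugacy class of $\xi$ inside $\Stab(u_0)$ is finite, so the induced map of orbits $\Sigma\to\GL_N\cdot u_0\subseteq U$ is finite; and since $H$ is closed in $\GL_{N,U}$, the orbit closure $\overline{\Sigma}$ is a $\GL_N$-stable closed subscheme of $\GL_{N,U}$, affine and quasi-finite over $U$. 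The main obstacle is to show $\overline{\Sigma}\to\overline{\GL_N\cdot u_0}$ is finite: granting this, $D=\overline{\{p\}}\subseteq\overline{\Sigma}$ is closed, hence finite over $\overline{\GL_N\cdot u_0}$, hence finite over its image $\overline{\{u_0\}}$, which is what we need. The finiteness of $\overline{\Sigma}\to\overline{\GL_N\cdot u_0}$ is precisely the point where reductivity of $\GL_N$ must be invoked — morally, an orbit closure of a reductive group acting on an affine scheme cannot ``escape to infinity'' over the closure of the projected orbit; concretely I would reduce to the case where $\GL_N\cdot u_0$ is closed (by transporting the question to a closed orbit in its closure, whose stabilizer is geometrically reductive so the relevant homogeneous space is affine), where the boundedness of $\overline{\Sigma}$ follows from the finiteness of $\Stab(u_0)$-conjugacy classes of the (finite-order) element $\xi$. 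I expect this last reduction together with the orbit-closure estimate to be the delicate step.
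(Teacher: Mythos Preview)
Your reduction to the noetherian case is fine and mirrors the paper's. The real gap is the ``delicate step'' you flag at the end: the finiteness of $\overline{\Sigma}\to\overline{\GL_N\cdot u_0}$ is essentially the entire content of the lemma, and your sketch does not prove it. The proposed reduction ``transport the question to a closed orbit in its closure'' does not make sense as stated: the DVR map $\phi\colon\Spec V\to U$ is fixed by the valuative criterion you are checking, and there is no reason $u_0$ or $u_1$ should lie in a closed orbit. You cannot simply replace $u_0$ by a point in a smaller orbit, because $\xi$ lives over $u_0$, not over that smaller orbit. Concretely, the statement you need is that any $\tilde\xi\in H_{u_0}(K)\subseteq\GL_N(K)$ already lies in $\GL_N(V)$; finite order alone does not give this (e.g.\ $\bigl(\begin{smallmatrix}1&1/t\\0&-1\end{smallmatrix}\bigr)$ has order $2$ in $\GL_2(k((t)))$ but is not integral), and while the normality of $H_{u_0}$ in $\Stab(u_0)$ rules out such naive examples, turning that into a proof is exactly the missing step.

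The paper avoids this difficulty by \emph{not} checking the valuative criterion for $G\to\cW$ directly. Instead it first reduces to the closure $\cZ=\overline{\{h\}}$ of a single finite-order point, and then invokes \cite[Lem.~8.3.1]{alper-adequate}: for an affine quasi-finite morphism over a fundamental stack, finiteness follows from (a) closed points going to closed points and (b) universal closedness of the induced map on adequate moduli spaces $Z\to W$. Condition (b) is checked by a DVR criterion \emph{downstairs}: given $\Spec R\to W$ and a $K$-point of $\cZ$, one may first choose any lift $\xi\colon\Spec R\to\cW$ hitting a closed point, and then one is free to replace $\xi$ by a different lift $\xi'$ (even after an extension of DVRs $R\hookrightarrow R'$) provided $\xi'(0)=\xi(0)$. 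That extra freedom is exactly what makes the problem tractable: \cite[Prop.~5.7, Lem.~5.14]{ahlh} show that a finite-order automorphism of $\xi_K$ extends to an automorphism of some such modified $\xi'$. Your approach, by contrast, fixes the lift and tries to extend over it, which is strictly harder and is not what you have argued.
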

\begin{proof}
Note that $I_\cW\to \cW$ is affine so $G\to \cW$ is also affine. If $h\in |G|$
is a point, then the order of $h$ is finite. It is thus enough to prove the
following: if $h\in |I_\cW|$ is a point of finite order such that
$\cZ:=\overline{\{h\}}\to \cW$ is quasi-finite, then $\cZ\to \cW$ is finite.
Using approximation of fundamental stacks
(\Cref{L:excellent-approx-fundamental}) we reduce this question to the case
where $\cW$ is of finite presentation over $\Spec \ZZ$.

By~\cite[Lem.~8.3.1]{alper-adequate}, it is enough to prove that $\cZ\to \cW$
takes closed points to closed points and that the morphism on their adequate
moduli spaces $Z\to W$ is universally closed. This can be checked using DVRs as
follows: for every DVR $R$ with fraction field $K$, every morphism $f\co \Spec
R\to W$ and every lift $h\co \Spec K\to \cZ$, there exists a lift $\tilde{h}\co
\Spec R\to \cZ$ such that the closed point $0\in \Spec R$ maps to a point in
$\cW$ that is closed in the fiber over $f(0)$.

Since $\cW\to W$ is universally closed, we can start
with a lift $\xi\co \Spec R\to \cW$, such that $\xi(0)$ is closed in the fiber
over $f(0)$. We can then identify $h$ with an
automorphism $h\in \Aut_\cW(\xi)(K)$ of finite order. Applying
\cite[Prop.~5.11 and Lem.~5.10]{ahlh} gives us an extension of DVRs $R\inj R'$
and a new lift $\xi'\co \Spec R'\to \cW$ such that $\xi'(0)=\xi(0)$ together
with an automorphism $\tilde{h}\in \Aut_\cW(\xi')(R')$. Since $\cZ$ is closed
in $I_\cW$, this is a morphism $\tilde{h}\co \Spec R\to \cZ$ as requested. Note that while the paper \cite{ahlh} cites this paper on several occasions, the proofs of \cite[Prop.~5.11 and Lem.~5.10 (when $\cX$ is fundamental)]{ahlh} do not rely on it.
\end{proof}

\begin{remark}
If $h\in |I_\cW|$ is any element of finite order, then every element of
$\cZ=\overline{\{h\}} \subseteq I_{\cW}$ is of finite order but $\cZ$ is not always quasi-finite.
For example, consider the action of $\Gm\rtimes \ZZ/2\ZZ$ on $\AA^2$ as in~\cite[Ex.~3.56]{ahlh}. Then the generic point has stabilizer group $\ZZ/2\ZZ$. If we let $h=\tau$ be the non-trivial element of the generic point, then $\overline{\{h\}}$ has fiber
$\{(x/y,\tau)\}$ outside $xy=0$, is empty along $xy=0$ outside $x=y=0$ and
is $\Gm\times \{\tau\}$ over $x=y=0$.

Note that subgroups of inertia stacks are automatically normal and the corresponding result for non-normal quasi-finite subgroup schemes of geometrically reductive group schemes is false. Indeed, take $H \subseteq \GL_{2,k[T]}$ to be the closed subgroup with only non-trivial element $
\begin{bmatrix}
  0 & T^{-1}\\ T & 0
\end{bmatrix}$, which has order $2$. Then $H$ is quasi-finite over $\Spec k[T]$, not finite, but is also not normal in $\GL_{2,k[T]}$.
\end{remark}

\begin{proof}[Proof of \Cref{P:refinement}]
Since $f|_{\cW_0}$ is representable, we can after replacing $\cW$ with an open,
adequately affine, neighborhood of $\cW_0$, assume that $f$ has
quasi-finite diagonal (or in fact, even unramified diagonal).
For 
\itemref{P:refinement:affine_diag} we argue exactly as in
\cite[Prop.~3.2]{luna-field} but replace \cite[Prop.~3.3]{alper-good}
with \cite[Cor.~4.3.2]{alper-adequate}.

For \itemref{P:refinement:separated_diag}, we note that the subgroup
$G:=I_{\cW/\cX}\inj I_\cW$ is closed because $\cX$ has separated diagonal
and is quasi-finite over $\cW$ because $f$ has quasi-finite diagonal.
We conclude by \Cref{L:fundamental:quasi-finite-subgroup-of-inertia}
and Nakayama's lemma.
\end{proof}

\subsection{The local structure theorem}\label{SS:proof-main-theorem}
We will now prove the following local structure theorem. Note that \Cref{T:base} is the case where $\cW_0$ is a gerbe over a field.

\begin{theorem}[Local structure]\label{T:base-general}
  Suppose that
  $S$ is a quasi-separated algebraic space;
 $\cX$ is an algebraic stack, locally of finite presentation and quasi-separated
  over $S$, with affine stabilizers;
 $x\in |\cX|$ is a point with
  residual gerbe $\cG_x$ and image $s\in |S|$ such that the residue field extension $\kappa(x)/\kappa(s)$ is finite; and
 $h_0 \colon \stk{W}_0 \to \cG_x$ is a smooth (resp.\ \'etale) morphism, where  $\stk{W}_0$ is linearly fundamental and $\Gamma(\cW_0,\Orb_{\cW_0})$ is a field.
      Then there exists a cartesian diagram of algebraic stacks
\[
\xymatrix{
\cW_0 \ar[r]^{\smash{h_0}} \ar[d]		& \cG_x \ar[d] \\
\mathllap{[\Spec A/\GL_n] = \;}\cW \ar[r]^h		& \cX
}
\]
where
  $h\colon (\stk{W},w) \to (\cX,x)$ is a 
smooth (resp.\ \'etale) pointed morphism and $w$ is closed in its fiber over $s$.
   Moreover, if $\cX$ has separated (resp.\ affine) diagonal and $h_0$ is representable, then $h$
  can be arranged to be representable (resp.\ affine). 
\end{theorem}
\begin{remark}
In \Cref{T:base,T:base-general}, the condition that $\kappa(x)/\kappa(s)$ is finite
is equivalent to the condition that the morphism $\cG_x\to \cX_s$ is of finite type.
In particular, it holds if $x$ is closed in its fiber $\cX_s = \cX \times_S \Spec \kappa(s)$.
\end{remark}
To prove \Cref{T:base-general}, we will need the following version of equivariant Artin algebraization (cf.~\cite[Thm.~A.18]{luna-field}).
\begin{theorem}[Equivariant Artin algebraization]\label{T:algebraization-fundamental}
Let $S$ be an excellent scheme.
Let $\cX$ be an algebraic stack, locally of finite presentation over $S$.
Let $\cZ$ be a noetherian  fundamental stack with
adequate moduli space map $\pi\colon \cZ\to Z$ of finite type (automatic
if $\cZ$ is linearly fundamental). Let
$z\in |\cZ|$ be a closed point such that $\cG_z\to S$ is of finite type.
Let $\eta\colon \cZ\to \cX$
be a morphism over $S$ that is formally versal at $z$.
Then there exist
\begin{enumerate}
\item \label{TI:algebraization-fundamental:ft} an algebraic stack $\stk{W}$ which is  fundamental and
  of finite type
  over $S$;
\item \label{TI:algebraization-fundamental:pt} a closed point $w\in |\stk{W}|$;
\item \label{TI:algebraization-fundamental:map} a morphism $\xi\colon \stk{W}\to \cX$ over $S$;
\item \label{TI:algebraization-fundamental:fv} isomorphisms $\varphi^{[n]}\colon \thck{\stk{W}}{}{n}\to
  \thck{\cZ}{}{n}$ over $\cX$ for every $n$; and
\item \label{TI:algebraization-fundamental:lr} if $\Stab(z)$ is linearly reductive, an isomorphism
  $\hat{\varphi}\colon \widehat{\stk{W}}\to \widehat{\cZ}$
  over $\cX$, where $\widehat{\stk{W}}$ and $\widehat{\cZ}$ denote the
  completions of $\stk{W}$ at $w$ and $\cZ$ at $z$ which exist by
  \Cref{C:existence-completions}.
\end{enumerate}
In particular, $\xi$ is
formally versal at $w$.
\end{theorem}
\begin{proof}
Since $\cZ$ is a fundamental stack, by definition there exists an affine morphism $\cZ \to B\GL_m$ for some $m>0$. We now apply~\cite[Thm.~A.18]{luna-field} with $T=Z$ and $\cX_1=\cX$ and
$\cX_2=B\GL_m$, which gives \eqref{TI:algebraization-fundamental:ft}--\eqref{TI:algebraization-fundamental:fv}. Claim \eqref{TI:algebraization-fundamental:lr} is an immediate consequence
of \eqref{TI:algebraization-fundamental:fv}.
\end{proof}
\begin{proof}[Proof of \Cref{T:base-general}]
\textbf{Step 1: Reduction to $S$ an excellent scheme.}
It is enough to find a solution $(\stk{W},w)\to (\cX,x)$ after replacing
$S$ with an \'etale neighborhood of $s$ so we can assume that $S$ is affine.
We can also
replace $\cX$ with a quasi-compact neighborhood of $x$ and assume that
$\cX$ is of finite presentation.

Write $S$ as a limit of
affine schemes $S_\lambda$ of finite type over $\Spec \ZZ$. For
sufficiently large $\lambda$, we can
find $\cX_\lambda\to S_\lambda$ of finite presentation such that
$\cX=\cX_\lambda\times_{S_\lambda} S$. Let $w_0 \in |\cW_0|$ be the unique
closed point and let $x_\lambda\in
|\cX_\lambda|$ be the image of $x$. Since $\cG_x$ is the limit of the
$\cG_{x_\lambda}$, we can, for sufficiently large $\lambda$,
also find a smooth (or \'etale if $h_0$ is \'etale) morphism
$h_{0,\lambda}\colon (\stk{W}_{0,\lambda}, w_{0, \lambda})\to (\cG_{x_\lambda}, x_\lambda)$ with pull-back
$h_0$. For sufficiently large $\lambda$:
\begin{enumerate}
\item $\cX_\lambda$ has affine
stabilizers~\cite[Thm.~2.8]{hallj_dary_alg_groups_classifying};
\item if $\cX$ has separated (resp.\ affine) diagonal, then so has $\cX_\lambda$;
\item $\Stab(x_\lambda)=\Stab(x)$ (because
  $\Stab(x_\mu)\to \Stab(x_\lambda)$ is a closed immersion for every
  $\mu>\lambda$); and
\item $\stk{W}_{0,\lambda}$ is fundamental (\Cref{P:approximation-fundamental}).
\end{enumerate}
That $\cG_x\to \cG_{x_\lambda}$ is stabilizer-preserving implies that
$\cG_x=\cG_{x_\lambda}\times_{\Spec \kappa(x_\lambda)} \Spec \kappa(x)$ and, in
particular, $\stk{W}_0=\stk{W}_{0,\lambda}\times_{\Spec \kappa(x_\lambda)} \Spec
\kappa(x)$. It follows, by flat descent, that $\stk{W}_{0,\lambda}$ is cohomologically affine and that $\Gamma(\stk{W}_{0,\lambda},\Orb_{\stk{W}_{0,\lambda}})$ is the spectrum
of a field.  We can thus replace $S$, $\cX$, $\stk{W}_0$ with
$S_\lambda$, $\cX_\lambda$, $\stk{W}_{0,\lambda}$ and assume that $S$ is
an excellent scheme. By standard limit arguments, it is also enough to find
a solution after replacing $S$ with $\Spec \Orb_{S,s}$. We can thus assume
that $s$ is closed.

\textbf{Step 2: An effective formally smooth solution.}
Since $\cW_0$ is linearly fundamental and $\cX$ has affine stabilizers,
we can find a formal neighborhood of $\stk{W}_0\to \cX_0:=\cG_x \hookrightarrow \cX$, that is, deform the smooth morphism  $\stk{W}_0\to \cX_0$ to a flat morphism 
$\hat{\stk{W}}\to \cX$ where $\hat{\stk{W}}$ is a linearly fundamental stack which is
coherently complete along $\stk{W}_0$ (\Cref{T:microlocalization}).
Since $\stk{W}_n\to \cX_n$ is smooth, $\hat{\stk{W}}\to
\cX$ is formally smooth at $\stk{W}_0$~\cite[Prop.~A.14]{luna-field}.
Since the good moduli space of $\stk{W}_0$ is a field, hence equals the good moduli
space of the residual gerbe $\cG_{w_0}$, it follows that
$\stk{W}_0$ and $\hat{\stk{W}}$ are coherently complete along $w_0$ (\Cref{T:complete}).

\textbf{Step 3: Algebraization.}  We now apply equivariant Artin algebraization 
(\Cref{T:algebraization-fundamental}), with $\stk{Z}=\widehat{\stk{W}}$, to obtain
a fundamental
stack $\stk{W}$, a closed point $w\in |\stk{W}|$, a morphism
$h\colon (\stk{W},w)\to (\cX,x)$ smooth
at $w$, and an isomorphism $\widehat{\stk{W}}_w\cong \widehat{\stk{W}}$
over $\cX$. Let $\widetilde{\stk{W}}_0=h^{-1}(\overline{\stk{X}}_0)$. Then we have
the cartesian diagrams
\[
\vcenter{\xymatrix{%
  \stk{W}_0\ar[r]^{c_0}\ar[d] & \widetilde{\stk{W}}_0\ar[r]\ar[d] & \overline{\stk{X}}_0\ar[d] \\
  \widehat{\stk{W}}\ar[r]^{c} & \stk{W}\ar[r]^h & \stk{X}
}}\qquad\text{and}\quad%
\vcenter{\xymatrix{%
  \stk{W}_0\ar[r]^{c_0}\ar[d] & \widetilde{\stk{W}}_0\ar[d]^{\pi_0} \\
  \Spec \widehat{\Orb}_{\widetilde{W}_0,\pi_0(w)}\ar[r] & \widetilde{W}_0
}}%
\]
where $c$ and $c_0$ are completions at $w$ and $\pi_0$ is an adequate moduli spaces.
But $\stk{W}_0$ has good moduli space $\Spec k$ so $\widetilde{W}_0$ is the disjoint
union of $\{\pi_0(w)\}=\Spec k$ and its complement $Q$. If $\pi\colon \stk{W}\to W$ is
the adequate moduli space, then $\widetilde{W}_0\to W$ is closed and injective so
after replacing $W$ with an open neighborhood $V$ of $\pi(w)$ and replacing
$\stk{W}$, $\widetilde{W}_0$, $\widetilde{W}_0$ with the inverse images of $V$, we
can assume that $\widetilde{\stk{W}}_0=\stk{W}_0$.

Note that if $U\subset \stk{W}$ is an open neighborhood of $w$, we can shrink to the
smaller open
neighborhood $\pi^{-1}(V)$, where $V$ is an open affine neighborhood of $\pi(w)$
contained in $W\smallsetminus \pi(\stk{W}\smallsetminus U)\bigr)$;
then $\pi^{-1}(V) \to V$ remains adequately affine.

If $h_0\colon \stk{W}_0\to \stk{X}_0$ is \'etale, then $h$ is
\'etale at $w$. After shrinking $\stk{W}$ as above, we can assume that $h$ is smooth
(resp.\ \'etale). If $\cX$ has separated (resp.\ affine) diagonal, then we can shrink $\stk{W}$ as above 
so that $h$ becomes representable (resp.\ affine), see \Cref{P:refinement}.
\end{proof}

\section{Applications to stacks with good moduli spaces}\label{S:applications}
In this section, we prove that if $\pi\colon \stX\to X$ is a good moduli space,
with affine stabilizers and separated diagonal, then $\stX$ has the resolution
property \'etale-locally on $X$ (\Cref{T:etale-local-gms}). This generalizes
\cite[Thm.~4.12]{luna-field} to the relative case. As a consequence,
linearly reductive groups are Nisnevich-locally embeddable.
We also give a version for adequate moduli spaces (\Cref{T:etale-local-ams}).
It follows that the derived category of a stack with a good moduli space
is compactly generated (\Cref{P:compact-generation}).

\subsection{Good moduli spaces and linearly reductive groups}\label{SS:applications:gms}

\begin{theorem} \label{T:etale-local-gms}
Let $\cX$ be an algebraic stack with good moduli space $\pi \co \cX \to X$.
Assume that $\cX$ has affine stabilizers, separated diagonal and
is of finite presentation over a quasi-compact and quasi-separated algebraic
space $S$.
\begin{enumerate}
\item\label{TI:etale-local-gms:lin-fund}
  There is a Nisnevich covering $X'\to X$ such that the pull-back
  $\cX'=\cX\times_X X'$ is linearly fundamental.
\item\label{TI:etale-local-gms:affine-diag}
  $\pi\co \cX \to X$ has affine diagonal.
\item\label{TI:etale-local-gms:fp1}
  $\cX \to X$ and $X\to S$ are of finite presentation.
\item\label{TI:etale-local-gms:fp2}
  $\pi_*\sF$ is finitely presented if $\sF$ is a finitely presented
  $\Orb_\cX$-module.
\end{enumerate}
Moreover, if every closed point $x\in |X|$ either has $\kar \kappa(x)>0$ or has
an open neighborhood of characteristic zero, then we can arrange that $\cX'
\cong [\Spec A/G]$ where $G \to X'$ is linearly reductive and embeddable.
\end{theorem}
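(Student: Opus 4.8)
The plan is to deduce the whole statement from the local structure theorem \Cref{T:base-general} and Luna's fundamental lemma \Cref{L:fundamental-lemma}: first establish \itemref{TI:etale-local-gms:lin-fund}, and then read \itemref{TI:etale-local-gms:affine-diag}--\itemref{TI:etale-local-gms:fp2} off from the (elementary) corresponding facts for a quotient stack $[\Spec A/\GL_n]$, all of which may be checked after the Nisnevich covering. First I would reduce, by standard limit and approximation arguments (\Cref{L:approximation-fundamental}), to the case where $S$, and hence $X$ by \cite[Thm.~6.3.3]{alper-adequate}, is noetherian, so that $\pi$ is of finite type and the cited theorems apply; the general case then follows by descent. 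For \itemref{TI:etale-local-gms:lin-fund} it suffices to produce, for each $x\in|X|$, an étale morphism $(X',x')\to(X,x)$ with $\kappa(x')=\kappa(x)$ for which $\cX\times_X X'$ is linearly fundamental, and then to assemble finitely many of these into a Nisnevich covering (finite disjoint unions of linearly fundamental stacks are linearly fundamental, since for $N\geq n$ one may rewrite $[\Spec A/\GL_n]$ as $[\Spec B/\GL_N]$ with $B$ affine, using that $\GL_N/\GL_n$ is affine). Replacing $X$ by $\Spec\Orb_{X,x}$ (and spreading out afterwards, \Cref{L:approximation-fundamental}) I may assume $x$ is closed; I then take $\xi$ to be the unique closed point of the fibre $\pi^{-1}(x)$, which is then the unique closed point of $\cX$, in particular closed in $\cX$, has linearly reductive stabilizer because $\pi$ is a good moduli space \cite[Prop.~12.14]{alper-good}, and has $\kappa(\xi)/\kappa(x)$ finite and purely inseparable. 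The residual gerbe $\cG_\xi$ is cohomologically affine with $\Gamma(\cG_\xi,\Orb)=\kappa(\xi)$ a field, and it is linearly fundamental: it is fundamental, being cohomologically affine in characteristic zero and nicely fundamental in characteristic $p$ by \Cref{R:char-p-gerbe}.

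Next I would apply \Cref{T:base-general} with base $X$, point $\xi$, and $h_0=\id\colon\cG_\xi\to\cG_\xi$; since $\cX$ has separated diagonal and $h_0$ is representable, this gives a representable, étale, pointed morphism $h\colon(\cW,w)\to(\cX,\xi)$ with $\cW=[\Spec A/\GL_n]$, with $w$ closed in its fibre over $x$ hence closed in $\cW$, and inducing an isomorphism $\cG_w\xrightarrow{\sim}\cG_\xi$; in particular $h$ is stabilizer-preserving at $w$ and $\kappa(w)=\kappa(\xi)$. Then Luna's fundamental lemma \Cref{L:fundamental-lemma} applies to $h$ at $w$ (one checks $h$ is étale and representable near $w$, $h(w)=\xi$ is closed, and $h$ is stabilizer-preserving at $w$), producing a saturated open $\cU\subseteq\cW$ containing $w$ with $h|_{\cU}$ strongly étale; after shrinking $\cU$ to the preimage of an affine open of its adequate moduli space around the image of $w$, one gets an affine scheme $U$, an étale morphism $U\to X$, and an isomorphism $\cX\times_X U\cong\cU$. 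Here $\cU$ is an open substack of $[\Spec A/\GL_n]$, hence a global quotient stack, and it is cohomologically affine because $\cX\to X$ is and $U$ is affine; therefore $\cU$ is linearly fundamental. Finally, writing $x'\in U$ for the image of $w$ (so $x'\mapsto x$), the extension $\kappa(x')\hookrightarrow\kappa(w)=\kappa(\xi)$ is purely inseparable (adequate moduli space of $\cU$) while $\kappa(x')/\kappa(x)$ is separable ($U\to X$ is étale); since $\kappa(\xi)/\kappa(x)$ is purely inseparable this forces $\kappa(x')=\kappa(x)$, so $U\to X$ is a Nisnevich neighbourhood, which proves \itemref{TI:etale-local-gms:lin-fund}. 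For the remaining parts: $[\Spec A/\GL_n]$ has affine diagonal, its good moduli space $\Spec A^{\GL_n}$ is of finite presentation over $S$, and its good moduli space map sends finitely presented sheaves to finitely presented modules (finiteness of $\GL_n$-invariants, \cite[Thm.~6.3.3]{alper-adequate}); since each of these is fppf-local on $X$ and $\pi_*$ commutes with flat base change, \itemref{TI:etale-local-gms:affine-diag}, \itemref{TI:etale-local-gms:fp1} and \itemref{TI:etale-local-gms:fp2} follow from the Nisnevich covering just produced.

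For the final statement I would split according to the characteristic. If $x$ has a characteristic-zero open neighbourhood, then over that neighbourhood $\GL_n$ is already linearly reductive and embeddable, so the presentation produced above works with $G=\GL_n$. If $\kar\kappa(x)=p>0$, I would feed the linearly fundamental stack $\cX'=\cX\times_X X'$ into \Cref{C:linearly-fundamental:lin-red-quot}, which---under exactly the hypothesis assumed---upgrades $\cX'$, étale-locally over $X'$ and with trivial residue field extensions, to a quotient $[\Spec A/G]$ with $G\to X'$ linearly reductive (in fact nice) and embeddable. The reason the extra localization can be taken Nisnevich rather than merely smooth is that in positive residual characteristic the residual gerbe $\cG_\xi$ is nice and every nice group over $\kappa(\xi)$ descends along the purely inseparable extension $\kappa(\xi)/\kappa(x)$, so that $\cG_\xi\cong[\Spec B_0/Q_0]$ for a nice embeddable $Q_0\to\Spec\kappa(x)$; then $Q_0$ deforms via \Cref{P:nice-deformations} and approximation to a nice embeddable $Q\to X'$ over a genuine Nisnevich neighbourhood, along which $\cX'$ acquires an affine morphism to $B_{X'}Q$. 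This last step---producing the group scheme $G$, equivalently the affine morphism to $B_{X'}Q$, over the Nisnevich neighbourhood itself rather than only after a purely inseparable base change---is the main obstacle, and it is precisely what \Cref{C:linearly-fundamental:lin-red-quot} is designed to handle.
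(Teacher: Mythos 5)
Your route is essentially the paper's: part \itemref{TI:etale-local-gms:lin-fund} (together with \itemref{TI:etale-local-gms:affine-diag}) is obtained by applying the local structure theorem with $h_0$ an isomorphism onto the residual gerbe of the closed point of the fibre and then invoking Luna's fundamental lemma---this is exactly the content of \Cref{T:etale-local-ams}, which the paper's proof cites---while \itemref{TI:etale-local-gms:fp1}--\itemref{TI:etale-local-gms:fp2} follow by descent (packaged in the paper as \Cref{C:gms-is-of-fp}) and the final claim is \Cref{C:linearly-fundamental:lin-red-quot}. The details you supply beyond the paper's two-line proof (the separable-versus-purely-inseparable residue field argument forcing $\kappa(x')=\kappa(x)$, the saturation and affinization of $\cU$ to see it is linearly fundamental, the assembly into a covering) are correct.

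The one step that does not work as written is the opening reduction ``to the case where $S$, and hence $X$, is noetherian'' by limit arguments. Writing $S=\varprojlim_\lambda S_\lambda$ and descending $\cX$ to $\cX_\lambda\to S_\lambda$ of finite presentation is fine, but there is then no good (or even adequate) moduli space of $\cX_\lambda$ at finite level from which to ``descend'': approximating a good moduli space along an inverse limit is precisely \Cref{C:gms-approximation}, which requires one of the hypotheses \ref{Cond:FC}, \ref{Cond:PC} or \ref{Cond:N}, and \Cref{A:mixed-char-counterexamples} shows that some such hypothesis is unavoidable (there are linearly fundamental stacks that are not inverse limits of noetherian ones). \Cref{L:approximation-fundamental}, which you cite for this reduction, does not apply because $\cX$ is not yet known to be fundamental---that is what you are trying to prove, locally on $X$. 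The paper sidesteps the issue entirely: \Cref{T:base} and \Cref{L:fundamental-lemma} are stated and proved without noetherian hypotheses, and the only limit argument occurs \emph{inside} the proof of \Cref{T:base}, where it is the finitely presented morphism $\cX\to S$ (never the good moduli space $\pi$) that gets approximated. So you should drop the global noetherian reduction and apply \Cref{T:base} (equivalently, \Cref{T:etale-local-ams}) directly to the closed point of $\pi^{-1}(x)$; everything after that point in your argument goes through unchanged.
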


We will prove \Cref{T:etale-local-gms} at the end of \S \ref{SS:nice-nbhds} after establishing the adequate
version and some auxiliary results on \'etale neighborhoods of points with
nice stabilizers.

If there are closed points of characteristic zero without characteristic
zero neighborhoods, then it is sometimes impossible to find a linearly
reductive $G$; see \Cref{A:mixed-char-counterexamples}. See \Cref{T:etale-local-gms-connected} for some variants in mixed characteristic, however.

Applying \Cref{T:etale-local-gms} to the classifying stack $BG$ of a linearly
reductive group scheme we obtain:

\begin{corollary}\label{C:lin-red-groups} %
Let $S$ be a quasi-separated algebraic space.
\begin{enumerate}
\item\label{C:lin-red:sep+af=>affine}
  If $G\to S$ is a separated group algebraic space, flat and of finite
  presentation, with affine fibers such that $BG\to S$ is a good moduli space,
  then $G\to S$ is affine, that is, $G$ is linearly reductive.
\item\label{C:lin-red:etale-loc-emb}
  If $G\to S$ is linearly reductive, then there exists a Nisnevich covering $S'\to S$ such that $G'=G\times_S S'$ is embeddable.\epf
\end{enumerate}
\end{corollary}

\begin{remark}
A consequence of \Cref{C:lin-red-groups} is that in the definition of 
a tame group scheme given in
\cite[Defn.\ 2.26]{hoyois_six-operations}, if we assume that $G\to B$ is
separated with affine fibers, then the condition on having the $G$-resolution
property Nisnevich-locally is automatic.
\end{remark}

Over a field, normal subgroups, quotients and extensions of reductive groups
are reductive. The analogous statement holds for linearly reductive groups
over a base.

\begin{corollary}
Let $S$ be an algebraic space. Let $1\to G'\to G\to G''\to 1$ be an exact
sequence of flat group algebraic spaces of finite presentation over $S$. Then
the following are equivalent:
\begin{enumerate}
\item\label{C:lin-red-ses:middle}
  $G\to S$ is linearly reductive and $G'\to G$ is a closed immersion.
\item\label{C:lin-red-ses:outer}
  $G'\to S$ and $G''\to S$ are linearly reductive.
\end{enumerate}
\end{corollary}
\begin{proof}
\itemref{C:lin-red-ses:middle}$\implies$\itemref{C:lin-red-ses:outer}:
  Since $G'\to G$ is closed and $G\to S$ is affine, the quotient $G''\to S$
  is separated with affine fibers. Since $BG\to S$ is cohomologically affine,
  so is $BG''\to S$ \cite[Prop.~12.17(i)]{alper-adequate}. By
  \Cref{C:lin-red-groups}\itemref{C:lin-red:sep+af=>affine}, $G''\to S$
  is linearly reductive and in particular affine. Since $G''$ is affine,
  the $G''$-torsor $BG'\to BG$ is affine so
  $BG'\to S$ is also cohomologically affine, hence linearly reductive.

\itemref{C:lin-red-ses:outer}$\implies$\itemref{C:lin-red-ses:middle}:
  Since $G'\to S$ is affine, so is the $G'$-torsor $G\to G''$. Since $G''\to S$
  is affine, so is $G\to S$. The result then follows by
  \cite[Prop.~12.17(ii)]{alper-adequate}.
\end{proof}

\subsection{Adequate moduli spaces and geometrically reductive groups}
\begin{theorem} \label{T:etale-local-ams}
Let $\cX$ be an algebraic stack with adequate moduli space $\pi \co \cX \to X$ and let
$x\in X$ be a point. Assume that
\begin{enumerate}
\item\label{TI:gms:aff}
  $\cX$ has affine stabilizers and separated diagonal;
\item\label{TI:gms:fp}
  $\cX$ is of finite presentation over a quasi-separated algebraic space; and
\item\label{TI:gms:linred}
  the unique closed point $y$ in $\pi^{-1}(x)$ has linearly reductive stabilizer.
\end{enumerate}
Then there exists an \'etale neighborhood $(X', x') \to (X,x)$ with $\kappa(x') = \kappa(x)$ such that the pull-back $\cX'$ of $\cX$ is fundamental. That is, there is
a cartesian diagram
\[
\xymatrix{
\mathllap{[\Spec A / \GL_n]=\;}\cX'\ar[r]^{f} \ar[d]_{\smash{\pi'}}		& \cX \ar[d]^{\smash{\pi}} \\
\mathllap{\Spec B=\;}X' \ar[r]							& X. \ar@{}[ul]|\square
}
\]
where $\pi'$ is an adequate moduli space (i.e., $B = A^{\GL_n}$). 
In particular, $\pi$ has affine diagonal in an open neighborhood of $x$.
\end{theorem}
\begin{proof}
Applying \Cref{T:base} with $h_0 \co \cW_0 \to \cG_y$ an isomorphism yields an \'etale representable morphism 
$f \co ([\Spec A / \GL_n], w) \to (\cX, y)$ inducing an isomorphism $\cG_w \to \cG_y$. The result follows from Luna's fundamental lemma (\Cref{L:fundamental-lemma}).
\end{proof}

\begin{corollary}\label{C:geomred-linred-etale-loc-embeddable}
Let $S$ be a quasi-separated algebraic space. Let $G\to S$ be a
flat and separated group algebraic space of finite presentation with affine fibers 
such that $BG\to S$ is adequately affine (e.g., $G\to S$ is geometrically reductive).
If $s\in S$ is a point such that
$G_s$ is linearly reductive, then there exists an \'etale neighborhood
$(S',s')\to (S,s)$, with trivial residue field extension, such that $G'=G\times_S
S'$ is embeddable.
\end{corollary}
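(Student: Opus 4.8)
The plan is to apply \Cref{T:etale-local-ams} to the classifying stack $\cX := B_S G$, for which all the hypotheses translate directly into the given conditions on $G \to S$. First I would verify these hypotheses. The structure morphism $\pi \colon B_S G \to S$ is an adequate moduli space: it is adequately affine by the standing assumption (this is exactly the condition that $B_S G \to S$ is adequately affine), and $\oh_S \to \pi_*\oh_{B_S G}$ is an isomorphism because the section $S \to B_S G$ given by the trivial torsor is an fppf cover and $\pi_*\oh_{B_S G}$ computes the $G$-invariants of $\oh_S$ under the trivial action. Moreover $B_S G$ is of finite presentation over the quasi-separated algebraic space $S$ since $G \to S$ is; it has affine stabilizers since the fibres of $G \to S$ are affine; and it has separated diagonal since $G \to S$ is separated (the relative inertia of $B_S G \to S$ is a twisted form of $G$, and the diagonal of $S$ is a monomorphism, hence separated). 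Finally, the unique closed point of $\pi^{-1}(s) = B_{\kappa(s)} G_s$ has stabilizer $G_s$, which is linearly reductive by hypothesis.

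Next I would invoke \Cref{T:etale-local-ams}: it produces an \'etale neighborhood $(S', s') \to (S, s)$ with $\kappa(s') = \kappa(s)$, which we may take to be affine (\'etale neighborhoods can always be refined to affine ones), such that the pullback $B_{S'} G' = (B_S G)\times_S S'$ is fundamental, where $G' = G\times_S S'$. (This is the analogue, for $G \to S$ only geometrically reductive, of \Cref{C:linred-etale-loc-embeddable}, whose proof instead used the Nisnevich-local structure available when $G \to S$ is linearly reductive.)

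It remains to deduce that $G' \to S'$ is embeddable, and here is the only genuine subtlety: although $G \to S$ was assumed merely separated with affine fibres, once $B_{S'}G'$ is known to be fundamental, affineness of $G' \to S'$ comes for free. Indeed, a fundamental stack admits an affine morphism to $B\GL_{n,\ZZ}$, which has affine diagonal, so $B_{S'}G'$ has affine diagonal; hence the inertia $I_{B_{S'}G'} \to B_{S'}G'$ is affine, and pulling back along the tautological section $S' \to B_{S'}G'$ shows that $G' \to S'$ is affine. Now $G' \to S'$ is an affine, flat, finitely presented group scheme over an affine scheme with $B_{S'}G'$ fundamental, so \Cref{R:BG-fundamental} yields that $G' \to S'$ is geometrically reductive and embeddable; in particular it is embeddable, as required. (Concretely, one can also see embeddability by hand along the lines of \Cref{R:resolution}: fundamentality provides a faithful representation $G' \hookrightarrow \GL(\cE)$ with $\GL(\cE)/G' \to S'$ affine, hence separated, so its identity section $S' \to \GL(\cE)/G'$ is a closed immersion and therefore $G'$ is a closed subgroup of $\GL(\cE)$.) Thus the main obstacle is not conceptual but organizational: correctly matching the hypotheses on $G \to S$ to those on $B_S G$ required by \Cref{T:etale-local-ams}, and observing that affineness of $G' \to S'$, not assumed at the outset, is recovered a posteriori from fundamentality of $B_{S'}G'$.
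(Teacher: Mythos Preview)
Your proof is correct and follows essentially the same approach as the paper: apply \Cref{T:etale-local-ams} to $\cX = B_S G$ and then invoke \Cref{R:BG-fundamental}. You are in fact more careful than the paper's one-line proof, since \Cref{R:BG-fundamental} as stated assumes $G$ is affine, and you explicitly recover affineness of $G' \to S'$ from the affine diagonal of the fundamental stack $B_{S'}G'$ before applying it.
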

\begin{proof}
From \Cref{T:etale-local-ams} we obtain an \'etale neighborhood $S'\to S$ such
that $BG'$ is fundamental. Then $G'$ is affine and
embeddable (\Cref{R:BG-fundamental}).
\end{proof}

\begin{remark}
If $G\to S$ is a reductive group scheme (i.e., geometrically reductive, smooth,
and with connected fibers) then $G\to S$ is \'etale-locally split reductive.  A
split reductive group is a pull-back from $\Spec \ZZ$~\cite[Exp.~XXV, Thm.~1.1,
  Cor.~1.2]{MR0274459}, hence embeddable.
\end{remark}

\subsection{Nice neighborhoods}\label{SS:nice-nbhds}
Parts~\itemref{TI:etale-local-gms:lin-fund} and
\itemref{TI:etale-local-gms:affine-diag} of \Cref{T:etale-local-gms} follow
directly from \Cref{T:etale-local-ams}.
To deduce the rest of the theorem, we need to study the structure around
points of positive characteristic. The following proposition shows that
fundamental stacks (resp.\ geometrically reductive
and embeddable group schemes) are nicely fundamental
(resp.\ nice) in an \'etale neighborhood of a nice point.

\begin{proposition}[Niceness is \'etale-local]\label{P:nice-neighborhood}
  Let $\cX$ be a fundamental algebraic stack with adequate moduli space
  $\cX\to X$. Let $x\in |X|$ be a point and let $y\in |\cX|$ be the unique closed
  point in the fiber of $x$.
  If the stabilizer of $y$ is nice, then there exists an \'etale neighborhood
  $(X',x')\to (X,x)$, with $\kappa(x')=\kappa(x)$, such that $\cX\times_X X'$
  is nicely fundamental.
\end{proposition}
\begin{proof}
  Since nicely fundamental stacks can be approximated
  (\Cref{P:approximation-fundamental}\itemref{PI:approx:fundamental}), we may assume that $X$ is henselian
  with closed point $x$. Then $y$ is the unique closed point of $|\cX|$.
  By \Cref{P:lr-gerbe-field}\eqref{PI:lr-gerbe-field:nice}, the residual gerbe $\cG_y=\overline{\{y\}}$ is nicely fundamental.

  Now \Cref{L:excellent-approx-fundamental} says that we can write $\cX=\varprojlim_\lambda \cX_\lambda$, where the $\cX_\lambda$
  are fundamental and of finite type over $\Spec \ZZ$ with adequate moduli
  space $X_\lambda$ of finite type over $\Spec \ZZ$. Let $x_\lambda\in |X_\lambda|$ be
  the image of $x$ and let $y_\lambda\in |\cX_\lambda|$ be the unique closed
  point above $x_\lambda$. Then $y_\lambda$ is contained in the closure of
  the image of $y$. Thus, for all sufficiently large $\lambda$,
  the point $y_\lambda$
  has nice stabilizer (\Cref{P:approximation-fundamental}\itemref{PI:approx:nice-gerbes}).

  Let $X_\lambda^h$ denote the henselization of $X_\lambda$ at $x_\lambda$
  and $\cX_\lambda^h=\cX_\lambda\times_{X_\lambda} X_\lambda^h$.
  Then the canonical map $X\to X_\lambda$ factors uniquely through $X_\lambda^h$
  and the induced map $\cX\to \cX_\lambda^h$ is affine. It is thus enough
  to prove that $\cX_\lambda^h$ is nicely fundamental.
  By \Cref{T:adequate+lin-red=>good}, the adequate moduli space
  $\cX_\lambda^h\to X_\lambda^h$ is good, that is, $\cX_\lambda^h$ is
  linearly fundamental.

  We can thus assume that $X$ is excellent and henselian and that $\cX$ is
  linearly fundamental.
  Let $\cX_n$ be the $n$th infinitesimal
  neighborhood of $x$. Let $Q_0 \to \Spec \kappa(x)$ be a
  nice group scheme such that there exists an affine morphism $f_0 \co \cX_0
  \to B_{\kappa(x)} Q_0$.
  By the existence of deformations of nice group
  schemes (\Cref{P:nice-deformations}), there exists a nice and embeddable
  group scheme $Q \to X$. Let $\cI \subset \cX$ denote the sheaf of ideals
  defining $\cX_0$.  By \cite[Thm.~1.5]{olsson-deformation}, the obstruction to
  lifting a morphism $\cX_{n-1} \to B_X Q$ to $\cX_{n} \to
  B_X Q$ is an element of $\Ext^1_{\oh_{\cX_0}}(Lf_0^* L_{B_XQ/X},
  \cI^{n}/\cI^{n+1})$. The obstruction vanishes because the cotangent complex
  $L_{B_XQ/X}$ is perfect of amplitude $[0,1]$, since $B_XQ \to X$ is smooth,
  and $\cX_0$ is cohomologically affine.

  Let $\hat{X} = \Spec \hat{\oh}_{X,x}$ and $\hat{\cX} = \cX \times_X \hat{X}$.
  Since $\hat{\cX}$ is linearly fundamental, it is coherently complete along
  $\cX_0$ (\Cref{T:complete:res-prop}). By
  Tannaka duality (see \S\ref{SS:tannaka}), we may thus extend $\cX_0 \to
  B_{X_0}Q_0$ to a morphism $\hat{\cX} \to B_X Q$. Applying Artin
  approximation (\Cref{T:artin-approximation}) to the functor $\Hom_X(\cX \times_X -, B_X Q) \co (\SCH{X})^{\opp} \to
  \SETS$ yields a morphism $\cX \to B_X Q$, which is affine by
  \Cref{P:refinement}\itemref{P:refinement:affine_diag}.
\end{proof}

Note that if $\cX$ is linearly fundamental and $\kar \kappa(x)>0$, then $y$
has nice stabilizer. We thus have the following corollaries:

\begin{corollary} \label{C:linearly-fundamental:lin-red-quot}
Let $\cX$ be a linearly fundamental algebraic stack with good moduli space
$\cX\to X$ and let $x\in |X|$ be a point.
If either $\kar \kappa(x)>0$ or $x$
has an open neighborhood of characteristic zero, then there exists an \'etale
neighborhood $(X',x')\to (X,x)$, with $\kappa(x')=\kappa(x)$, such that
$\cX\times_X X' = [\Spec A / G]$ where $G\to X'$ is a linearly reductive
embeddable group scheme.\epf
\end{corollary}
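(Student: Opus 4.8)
The plan is to treat separately the two hypotheses on $x$, reducing each to results already established---principally \Cref{P:nice-neighborhood} and the structural remarks on fundamental stacks. Assume first that $\kar\kappa(x)=p>0$. As observed immediately after \Cref{P:nice-neighborhood}, the unique closed point $y\in|\cX|$ in the fibre over $x$ then has nice stabilizer. Since $\cX$ is linearly fundamental it is in particular fundamental with adequate moduli space $X$, so \Cref{P:nice-neighborhood} furnishes an \'etale neighborhood $(X',x')\to(X,x)$ with $\kappa(x')=\kappa(x)$ such that $\cX':=\cX\times_X X'$ is nicely fundamental; replacing $X'$ by an affine open neighborhood of $x'$ changes neither the residue field nor \'etaleness over $X$, so we may assume $X'$ affine. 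The adequate moduli space of $\cX'$ is then $X'$, adequate moduli spaces being stable under flat base change. By \Cref{R:BG-fundamental}, a nicely fundamental stack is of the form $[U/Q]$ with $U$ affine over its adequate moduli space and $Q$ a nice, embeddable group scheme over that space; applied to $\cX'$ this gives $\cX'\cong[\Spec A/G]$ with $G:=Q\to X'$ nice---hence linearly reductive---and embeddable.

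Assume instead that $x$ has an open neighborhood of characteristic zero, and pick an affine open $X'\subseteq X$ containing $x$ inside it; set $x'=x$. Then $\cX':=\cX\times_X X'$ is again linearly fundamental, and being a $\QQ$-stack it is fundamental by the characteristic-zero equivalence of fundamental and linearly fundamental stacks. Applying \Cref{R:BG-fundamental} once more, now with structure group $\GL_n$ and base the adequate (which here equals the good) moduli space $X'$, yields $\cX'\cong[\Spec A/\GL_{n,X'}]$. The group $\GL_{n,X'}\to X'$ is plainly embeddable, and it is linearly reductive because $\GL_{n,\QQ}$ is reductive over a field of characteristic zero and hence linearly reductive, and linear reductivity is stable under the base change $X'\to\Spec\QQ$.

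Because \Cref{P:nice-neighborhood} already incorporates the deformation-theoretic lifting over the infinitesimal neighborhoods, coherent completeness, Tannaka duality, and Artin approximation, nothing genuinely hard remains in this corollary; the only subtlety is the bookkeeping needed to obtain the structure group over $X'$ itself rather than over some auxiliary affine base, which is supplied by the remark in \Cref{R:BG-fundamental} that the base may be taken to be the adequate moduli space, together with the preliminary reduction making $X'$ affine.
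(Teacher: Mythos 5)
Your proof is correct and follows exactly the route the paper intends (the paper leaves the verification to the reader): the positive-characteristic case via \Cref{P:nice-neighborhood} together with the observation that linearly reductive stabilizers in positive characteristic are nice, and the characteristic-zero case by shrinking to an affine open of equal characteristic zero where fundamental implies linearly fundamental and $\GL_n$ is linearly reductive, with \Cref{R:BG-fundamental} supplying the presentation with structure group over the moduli space $X'$ itself. The only cosmetic slip is the sentence deducing that $\cX'$ is ``fundamental'' from the characteristic-zero equivalence after having already asserted it is linearly fundamental --- fundamentality of $\cX'$ should instead be noted directly from the affineness of $\cX'\to\cX$ (which holds since $X'\to X$ is a morphism of affine schemes) --- but this does not affect the argument.
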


\begin{corollary} \label{C:nice-charp}
Let $(S,s)$ be a Henselian local scheme such that $\kar \kappa(s) > 0$. 
\begin{enumerate}
	\item If $\cX$ is a linearly fundamental algebraic stack with good moduli
      space $\cX \to S$, then $\cX$ is nicely fundamental.
	\item  If $G \to S$ is a linearly reductive and embeddable group scheme, then $G \to S$ is nice.\epf
\end{enumerate}
\end{corollary}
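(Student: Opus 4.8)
The plan is to deduce both statements from \Cref{P:nice-neighborhood} together with the observation recorded just before the corollary, using the henselian hypothesis on $(S,s)$ to pass from an \'etale neighborhood back to $\cX$ itself.

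\emph{Part (1).} Since $\cX$ is linearly fundamental it is in particular fundamental, and $\pi\colon\cX\to S$ is a good, hence adequate, moduli space. As $S$ is local with closed point $s$ and $\kar\kappa(s)=p>0$, the unique closed point $y\in|\cX|$ lies over $s$ and its stabilizer is linearly reductive over the characteristic-$p$ field $\kappa(y)$, hence nice by \Cref{R:nice_grp_pos_char} --- this is the observation preceding the corollary. The hypotheses of \Cref{P:nice-neighborhood} therefore hold for $\cX\to S$ at $s$, and we obtain an \'etale neighborhood $(S',s')\to(S,s)$ with $\kappa(s')=\kappa(s)$ such that $\cX':=\cX\times_S S'$ is nicely fundamental; we may arrange $S'\to S$ to be separated, e.g.\ affine.

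The henselian hypothesis now enters. Since $(S,s)$ is henselian local and $\kappa(s')=\kappa(s)$, the \'etale morphism $S'\to S$ admits a section through $s'$; equivalently, $S'$ contains an open and closed subscheme $S''\ni s'$ with $S''\xrightarrow{\sim}S$. Pulling $\cX'$ back along $S''\hookrightarrow S'$ yields an open and closed substack $\cX\times_S S''\subseteq\cX'$ isomorphic to $\cX$. Since an open and closed immersion is affine, composing it with an affine morphism $\cX'\to B_TQ$ --- where $Q\to T$ is a nice embeddable group scheme over an affine scheme $T$, which exists because $\cX'$ is nicely fundamental --- exhibits $\cX$ as nicely fundamental.

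\emph{Part (2).} By \Cref{R:BG-fundamental}, $B_SG$ is linearly fundamental, since $G\to S$ is linearly reductive and embeddable; and its good moduli space is $S$, since the structure morphism $p\colon B_SG\to S$ is cohomologically affine and $\Orb_S\xrightarrow{\sim}p_*\Orb_{B_SG}$. Applying part (1) to $\cX=B_SG$ shows that $B_SG$ is nicely fundamental, whence \Cref{R:BG-fundamental} gives that $G\to S$ is nice. The only step above that is not a direct unwinding of definitions is the splitting of the \'etale neighborhood used in Part (1) --- namely, that an \'etale neighborhood of the closed point of a henselian local scheme with trivial residue field extension contains a clopen copy of the base --- which is classical, so I expect no genuine obstacle; alternatively one could sidestep it by observing that the proof of \Cref{P:nice-neighborhood}, applied to the already-henselian base $X=S$, produces $\cX$ nicely fundamental outright.
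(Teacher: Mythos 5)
Your proof is correct and follows essentially the route the paper intends: the corollary is stated with no written proof precisely because it is the henselian‑local case of \Cref{P:nice-neighborhood} (whose proof itself first reduces to a henselian base), combined with the observation that the unique closed point has nice stabilizer in positive characteristic and the translation of \Cref{R:BG-fundamental} for part (2). Your explicit splitting of the \'etale neighborhood over the henselian base is a correct (if slightly more laborious) way to close the loop, and your parenthetical alternative is exactly what the paper has in mind.
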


We also obtain the following non-noetherian variants of
\Cref{T:adequate+lin-red=>good} at the expense of assuming that
either $\cX$ has the resolution property or is of finite presentation over
some base.
\begin{corollary}\label{C:adequate+lin-red=>good:fundamental}
Let $\cX$ be a fundamental algebraic stack. Then the following are equivalent.
\begin{enumerate}
\item\label{CI:adequate+lin-red=>good:fundamental:lf} $\cX$ is linearly fundamental.
\item\label{CI:adequate+lin-red=>good:fundamental:lr} Every closed point of $\cX$ has linearly reductive stabilizer.
\item\label{CI:adequate+lin-red=>good:fundamental:nice} Every closed point of $\cX$ with positive characteristic has nice stabilizer.
\end{enumerate}
\end{corollary}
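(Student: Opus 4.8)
The plan is to prove the cycle of implications $(1)\Rightarrow(2)\Rightarrow(3)\Rightarrow(1)$. The implication $(1)\Rightarrow(2)$ is soft: if $\cX$ is linearly fundamental it is cohomologically affine, and since it has affine diagonal it is quasi-separated, so for a closed point $x\in|\cX|$ the residual gerbe $\cG_x$ is a closed substack of $\cX$; hence $\cG_x$ is cohomologically affine, and since $\cG_x$ is an fppf form of $B\Stab(x)$ over $\Spec\kappa(x)$ and cohomological affineness is fppf-local on the base, $\Stab(x)$ is linearly reductive. For $(2)\Rightarrow(3)$ it suffices to note that for a fundamental stack $\cX=[U/\GL_n]$ the stabilizer $\Stab(x)$ is a closed subgroup of a form of $\GL_{n,\kappa(x)}$, hence an affine group scheme of finite type over the field $\kappa(x)$; when $\kar\kappa(x)=p>0$, Nagata's theorem in the form recalled in \Cref{R:nice_grp_pos_char} shows that $\Stab(x)$ is nice as soon as it is linearly reductive.

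The real content is $(3)\Rightarrow(1)$. Let $\pi\colon\cX\to X=\Spec A$ be the adequate moduli space (which exists since $\cX$ is fundamental). As $\cX$ has affine diagonal, $\pi$ is quasi-compact and quasi-separated, and since $\pi$ is an adequate moduli space the map $\oh_X\to\pi_*\oh_\cX$ is an isomorphism; thus showing that $\cX$ is linearly fundamental amounts to showing that $\pi$ is cohomologically affine, i.e.\ that $\pi_*$ is exact on quasi-coherent sheaves. Exactness of $\pi_*$, a functor with values in $A$-modules, may be checked after applying $-\otimes_A A_\fm$ for every maximal ideal $\fm\subset A$, and by flat base change for $\pi_*$ along the localizations $\Spec A_\fm\to X$ it therefore suffices to prove that $\pi_\fm\colon\cX_\fm:=\cX\times_X\Spec A_\fm\to\Spec A_\fm$ is cohomologically affine for each maximal $\fm$. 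If $\kar\kappa(\fm)=0$ then every prime integer is a unit in the local ring $A_\fm$, so $A_\fm$ is a $\Q$-algebra; then $\cX_\fm$ is a fundamental $A_\fm$-stack, so it admits an affine morphism to $B\GL_{n,A_\fm}$, which is cohomologically affine over $A_\fm$ because $\GL_{n,\Q}$ is linearly reductive (characteristic zero) and linear reductivity is stable under base change, and hence $\pi_\fm$ is cohomologically affine. If $\kar\kappa(\fm)=p>0$, let $y$ be the unique closed point of $\cX$ lying over $\fm$; since the fibre $\pi^{-1}(\fm)$ is a closed substack of $\cX$, the point $y$ is closed in $\cX$, and $\kappa(y)\supseteq\kappa(\fm)$ has characteristic $p$, so by $(3)$ the group $\Stab(y)$ is nice. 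Then \Cref{P:nice-neighborhood} produces an \'etale neighborhood $(X',x')\to(X,\fm)$ such that $\cX\times_X X'$ is nicely fundamental, in particular cohomologically affine; base-changing $\cX\times_X X'\to X'$ along the affine morphism $\Spec\oh_{X',x'}\to X'$ and using that $A_\fm\to\oh_{X',x'}$ is faithfully flat (a flat local homomorphism of local rings), the fact that cohomological affineness is fppf-local on the base gives that $\pi_\fm$ is cohomologically affine. Hence $\pi_*$ is exact, so $\cX$ is linearly fundamental.

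I expect the main obstacle to be the characteristic-zero part of $(3)\Rightarrow(1)$: a closed point of $X$ with residue characteristic zero need not have a characteristic-zero open neighborhood, so one cannot simply localize on $X$ in the Zariski topology, and one is forced to work at the local rings $A_\fm$ — which in turn requires reducing exactness of $\pi_*$ to the stalks at maximal ideals and invoking flat base change for $\pi_*$ along a morphism that need not be of finite type (since $\cX$ is permitted to be non-noetherian). Once this bookkeeping is arranged, the positive-characteristic case is immediate from the already-established \Cref{P:nice-neighborhood}, and the remaining steps are formal.
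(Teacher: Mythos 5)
Your proof is correct and follows essentially the same route as the paper: the content is entirely in \itemref{CI:adequate+lin-red=>good:fundamental:nice}$\implies$\itemref{CI:adequate+lin-red=>good:fundamental:lf}, which in both arguments is reduced to a local statement at closed points of the adequate moduli space, with the residue characteristic zero case handled by the coincidence of adequate and good over $\Q$-algebras and the positive characteristic case handled by \Cref{P:nice-neighborhood}. The only (cosmetic) difference is that you Zariski-localize at maximal ideals and then descend cohomological affineness along the faithfully flat map $A_\fm\to\oh_{X',x'}$, whereas the paper base changes directly to the henselization, through which the \'etale neighborhood of \Cref{P:nice-neighborhood} factors.
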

\begin{proof}
The only non-trivial implication is \itemref{CI:adequate+lin-red=>good:fundamental:nice}$\implies$\itemref{CI:adequate+lin-red=>good:fundamental:lf}.
Let $\pi\colon \cX\to X$ be the adequate moduli space. It is enough to prove that $\pi$ is a good moduli space after base change to
the henselization at a closed point. We may thus assume that $X$ is the spectrum of a henselian local ring.
If $X$ is a $\QQ$-scheme, then the notions of adequate and good coincide.
If not, then the closed point of $X$ has positive characteristic, hence the
unique closed point of $\cX$ has nice stabilizer. We conclude that
$\cX$ is nicely fundamental by \Cref{P:nice-neighborhood}.
\end{proof}
\begin{corollary}\label{C:adequate+lin-red=>good2}
Let $\cX$ be an algebraic stack of finite presentation over a quasi-compact and quasi-separated
algebraic space $S$. Suppose that there exists an adequate moduli space $\pi \co \cX \to X$.
Then $\pi$ is a good moduli space with affine diagonal if and only if
\begin{enumerate}
\item $\cX$ has separated diagonal and affine stabilizers; and
\item every closed point of $\cX$ has linearly reductive stabilizer.
\end{enumerate}
\end{corollary}
\begin{proof}
The conditions are clearly necessary. If they are satisfied, then \Cref{T:etale-local-ams} implies that $\pi$ has affine diagonal. To verify that $\pi$ is
a good moduli space, we may replace $X$ with the henselization at a closed point.
Then $\cX$ is fundamental by \Cref{T:etale-local-ams} and
the result follows from \Cref{C:adequate+lin-red=>good:fundamental}.
\end{proof}

We will now finish the proof of \Cref{T:etale-local-gms}.

\begin{corollary}\label{C:base-change-of-ams/gms}
Let $\cX$ be a fundamental stack with adequate moduli space $\pi\colon \cX\to
X$. Let $g\colon X'\to X$ be a morphism of algebraic spaces such that
$\cX':=\cX\times_X X'$ has a good moduli space. Then $\pi'\colon \cX'\to X'$ is
its good moduli space and the natural transformation $g^*\pi_* \to \pi'_*g'^*$
is an isomorphism on all quasi-coherent $\Orb_\cX$-modules.
\end{corollary}
\begin{proof}
Both claims can be checked on stalks
so we may assume that
$X'=\Spec A'$ and $X=\Spec A$ are spectra of local rings and that the closed
point $x'\in X'$ maps to the closed point $x\in X$. Since $\cX'$ has a good
moduli space, it follows that the unique closed point of $\cX$ has linearly
reductive stabilizer. Hence $\cX$ is linearly fundamental
(\Cref{C:adequate+lin-red=>good:fundamental}) and the result follows from
\cite[Prop.~4.7]{alper-good}.
\end{proof}

\begin{corollary}\label{C:gms-is-of-fp}
Let $\cX$ be a linearly fundamental stack of finite presentation over a
quasi-separated algebraic space $S$ with good moduli space $\pi\colon \cX\to X$.
Then $X$ is of finite presentation over $S$ and $\pi_*$ takes finitely
presented $\Orb_\cX$-modules to finitely presented $\Orb_X$-modules.
\end{corollary}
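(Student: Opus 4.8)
The statement is essentially the conjunction of parts \itemref{TI:etale-local-gms:fp1} and \itemref{TI:etale-local-gms:fp2} of \Cref{T:etale-local-gms}, but without any separatedness hypothesis on the diagonal of $\cX$ and under the stronger hypothesis that $\cX$ is already linearly fundamental. So the plan is to reduce to a setting where the earlier machinery applies, and then invoke it or its key ingredients directly.

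First I would reduce to the case $S = \Spec B$ affine, and then, by standard limit arguments, to the case $B$ of finite type over $\Z$; in particular $S$ becomes noetherian and excellent. Writing $\cX = \varprojlim_\lambda \cX_\lambda$ with each $\cX_\lambda$ fundamental and of finite type over $\Spec\Z$ (\Cref{L:excellent-approx-fundamental}), one has, for $\lambda$ large, that $\cX_\lambda$ is linearly fundamental (\Cref{L:approximation-fundamental}\itemref{LI:approx:fundamental} together with \Cref{C:adequate+lin-red=>good:fundamental}, since the linearly reductive stabilizers of closed points of $\cX$ descend), hence noetherian; so after base change we may assume $\cX$ is noetherian and linearly fundamental over the excellent noetherian affine $S$. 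Now $\pi\colon\cX\to X$ is a good moduli space, $X = \Spec\Gamma(\cX,\Orb_\cX)$ is noetherian by \cite[Thm.~4.16(x)]{alper-good}, and $X\to S$ is of finite type by \cite[Thm.~A.1]{luna-field}; finite presentation over $S$ then follows since $S$ is noetherian. For the second assertion, $\pi_*$ preserves coherence: this is again \cite[Thm.~6.3.3]{alper-adequate} (applicable since $\pi$ is even cohomologically affine and $\cX\to X$ is of finite type), so $\pi_*$ takes coherent $\Orb_\cX$-modules to coherent $\Orb_X$-modules, which over noetherian bases is the same as finitely presented.

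To descend finite presentation back from $\cX_\lambda$ to the original $\cX$ and $S$, I would argue as in the proof of \Cref{T:etale-local-gms}\itemref{TI:etale-local-gms:fp1}: the formation of the good moduli space commutes with the flat base change $S\to S_\lambda$ once $\cX_\lambda$ is linearly fundamental (\Cref{C:base-change-of-ams/gms}, whose hypotheses are met because $\cX = \cX_\lambda\times_{S_\lambda}S$ is linearly fundamental), so $X = X_\lambda\times_{S_\lambda}S$ and $X\to S$ is of finite presentation, being a base change of the finite-type morphism $X_\lambda\to S_\lambda$ between noetherian schemes. Likewise $\pi_*$ of a finitely presented module on $\cX$ is, via \Cref{C:base-change-of-ams/gms}, the pullback of $(\pi_\lambda)_*$ of a finitely presented module on $\cX_\lambda$ (after increasing $\lambda$ so that the module is defined at level $\lambda$), hence finitely presented on $X$.

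The main obstacle is not any single hard theorem but making sure the reduction to a noetherian, linearly fundamental situation over an excellent base is valid: one must check that linear fundamentality is inherited by a sufficiently large approximation $\cX_\lambda$ (this is exactly where \Cref{C:adequate+lin-red=>good:fundamental} is needed, since \Cref{L:approximation-fundamental} only gives \emph{fundamental}, not linearly fundamental), and that the good moduli space commutes with the relevant base change so that finite presentation descends — which is precisely the content of \Cref{C:base-change-of-ams/gms}. Once these are in place, the two conclusions are immediate consequences of \cite[Thm.~A.1]{luna-field} and \cite[Thm.~6.3.3]{alper-adequate}.
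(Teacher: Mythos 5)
Your overall route---write $S=\varprojlim S_\lambda$ with $S_\lambda$ of finite presentation over $\Spec\ZZ$, descend $\cX$ to a fundamental $\cX_\lambda\to S_\lambda$, use the finiteness and coherence results for the moduli space of $\cX_\lambda$, and then descend to $\cX$ via \Cref{C:base-change-of-ams/gms}---is exactly the paper's. However, there is a genuine error in your reduction: the claim that $\cX_\lambda$ is \emph{linearly} fundamental for $\lambda\gg 0$ is false in general. This is precisely the content of \Cref{R:not-limit-preserving}: $B\GL_{2,\Q}=\varprojlim_m B\GL_{2,\ZZ[\frac{1}{m}]}$ is linearly fundamental, but no $B\GL_{2,\ZZ[\frac{1}{m}]}$ is. Your justification---``the linearly reductive stabilizers of closed points of $\cX$ descend''---does not work because the closed points of $\cX_\lambda$ are not controlled by those of $\cX$: in the example above the closed points of $B\GL_{2,\ZZ[\frac{1}{m}]}$ have stabilizer $\GL_{2,\FF_p}$, which is not linearly reductive, even though the unique point of $B\GL_{2,\Q}$ has linearly reductive stabilizer. \Cref{C:adequate+lin-red=>good:fundamental} therefore cannot be invoked for $\cX_\lambda$, and the approximation result for linearly fundamental stacks (\Cref{T:approximation-of-lin-fund:relative}) requires the hypotheses \ref{Cond:FC}, \ref{Cond:PC} or \ref{Cond:N}, which are not assumed here (see also \Cref{A:mixed-char-counterexamples}).

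Fortunately the false step is also unnecessary, and this is where the paper's proof differs from yours: it works throughout with the \emph{adequate} moduli space $X_\lambda$ of the merely fundamental $\cX_\lambda$. Already for a fundamental stack of finite presentation, \cite[Thm.~6.3.3]{alper-adequate} gives that $X_\lambda\to S_\lambda$ is of finite type and that $(\pi_\lambda)_*$ preserves coherence; and \Cref{C:base-change-of-ams/gms} only requires $\cX_\lambda$ to be fundamental together with the hypothesis that the base change $\cX=\cX_\lambda\times_{S_\lambda}S$ admits a good moduli space (which holds since $\cX$ is linearly fundamental, hence cohomologically affine). That corollary then simultaneously identifies $X$ with $X_\lambda\times_{S_\lambda}S$ and gives $\pi_*\sF\cong g^*(\pi_\lambda)_*\sF_\lambda$, from which both conclusions follow. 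If you delete the detour through linear fundamentality of $\cX_\lambda$ and replace your appeal to \cite[Thm.~A.1]{luna-field} for the finite type of $X_\lambda\to S_\lambda$ by \cite[Thm.~6.3.3]{alper-adequate}, your argument becomes the paper's proof.
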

\begin{proof}
We may assume that $S$ is quasi-compact and can thus write $S$ as an inverse
limit of algebraic spaces $S_\lambda$ of finite presentation over $\Spec \ZZ$
with affine transition maps \cite[Thm.~D]{rydh-2009}. For sufficiently large
$\lambda$, we can find $\cX_\lambda\to S_\lambda$ of finite presentation that
pulls back to $\cX\to S$. After increasing $\lambda$, we can assume that
$\cX_\lambda$ is fundamental by
\Cref{P:approximation-fundamental}\itemref{PI:approx:fundamental} and
that a given $\Orb_\cX$-module $\sF$ of finite presentation is the pull-back
of a coherent $\Orb_{\cX_\lambda}$-module $\sF_\lambda$. Then
$\cX_\lambda$ has an adequate moduli space $X_\lambda$ of finite presentation
over $S_\lambda$ and the push-forward of $\sF_\lambda$ is a coherent
$\Orb_{X_\lambda}$-module \cite[Thm.~6.3.3]{alper-adequate}. The result now
follows from \Cref{C:base-change-of-ams/gms}. In particular,
$X=X_\lambda\times_{S_\lambda} S$
is the good moduli space of $\cX$.
\end{proof}

\begin{proof}[Proof of \Cref{T:etale-local-gms}]
By \Cref{T:etale-local-ams}, we obtain a Nisnevich covering $X' \to X$ such that $\pi' \colon \cX' = \cX \times_X X'$ is fundamental. Since $\pi \colon \cX \to X$ is a good moduli space, it follows that $\pi'$ is also a good moduli space and so linearly fundamental. This proves \itemref{TI:etale-local-gms:lin-fund} and
\itemref{TI:etale-local-gms:affine-diag}. By \Cref{C:gms-is-of-fp}, we see that \itemref{TI:etale-local-gms:fp1} and \itemref{TI:etale-local-gms:fp2} hold for $\pi'$ and, by \'etale  descent, also for $\pi$. The final claim follows
from \Cref{C:linearly-fundamental:lin-red-quot}.
\end{proof}

\subsection{Compact generation of derived categories} \label{SS:compact-generation-results}
Here we prove a variant of \cite[Thm.~5.1]{luna-field} in the mixed characteristic situation. 
\begin{proposition}\label{P:compact-generation}
  Let $\cX$ be a quasi-compact algebraic stack with good moduli space
  $\pi \colon \cX \to X$. If $\cX$ has affine stabilizers, separated
  diagonal and is of finite presentation over a quasi-separated
  algebraic space $S$, then $\cX$ has the Thomason condition; that is,
  \begin{enumerate}
  \item $\DQCOH(\cX)$ is compactly generated by a countable set of perfect complexes; and 
  \item for every quasi-compact open immersion $\cU \subseteq \cX$,
    there exists a compact and perfect complex $P \in \DQCOH(\cX)$
    with support precisely $\cX\setminus \cU$.
  \end{enumerate}
\end{proposition}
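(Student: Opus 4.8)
The plan is to reduce, via the \'etale-local structure theorem already established, to the case where $\cX$ is linearly fundamental, and to check the Thomason condition there by hand, using the good moduli space to control cohomology and the resolution property to produce generators.

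By \Cref{T:etale-local-gms}\itemref{TI:etale-local-gms:lin-fund} there is a Nisnevich covering $X'\to X$ such that $\cX'=\cX\times_X X'$ is linearly fundamental; in particular $\cX'\cong[\Spec A/\GL_n]$ has affine diagonal, the resolution property, and good moduli space $\Spec A^{\GL_n}$. I would then invoke the Nisnevich-locality of the Thomason condition (the gluing/descent results of Hall and Rydh on perfect complexes on algebraic stacks): concretely, one runs the standard induction over a splitting sequence $\emptyset=U_0\subseteq U_1\subseteq\dots\subseteq U_m=X$ of the Nisnevich cover, at each stage using a Mayer--Vietoris triangle in $\DQCOH$ for the resulting distinguished square. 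All stacks occurring there are of the form $\cX\times_X W$ with $W$ a saturated open of $X$ or an \'etale algebraic space over such, so each again has a good moduli space and is, Nisnevich-locally, linearly fundamental; thus the induction bottoms out at the linearly fundamental case, and it suffices to treat that.

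So write $\pi\colon\cX=[\Spec A/\GL_n]\to X=\Spec A^{\GL_n}$. Since $\pi$ is cohomologically affine, $\pi_*$ preserves filtered colimits of quasi-coherent sheaves (as for any good moduli space, cf.\ the proof of \Cref{L:approximation-fundamental}), and $X$ is affine, the functor $R\Gamma(\cX,-)=\Gamma(X,\pi_*(-))$ is $t$-exact and preserves arbitrary coproducts. A perfect complex $P$ is dualizable, so $\Hom(P,-)\simeq R\Gamma(\cX,P^\vee\otimes(-))$ preserves coproducts; hence every perfect complex on $\cX$ is compact. The resolution property shows these generate: if $C\in\DQCOH(\cX)$ has $\Hom(\cE[m],C)=0$ for every vector bundle $\cE$ and every $m$, then $t$-exactness of $R\Gamma$ identifies this with $\Hom_{\oh_\cX}(\cE,H^{-m}(C))=0$, so each $H^j(C)$ is a quasi-coherent sheaf receiving no nonzero map from a vector bundle; since a nonzero such sheaf has a nonzero finite-type subsheaf, which is a quotient of a vector bundle, we get $H^j(C)=0$ for all $j$, i.e.\ $C=0$. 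For countability one approximates $\cX$ by fundamental stacks of finite type over $\Spec\Z$ (\Cref{L:excellent-approx-fundamental}), on each of which the coherent sheaves, hence a sufficient family of vector bundles, form a countable set, and pulls back along the affine transition morphisms. Finally, given a quasi-compact open $\cU\subseteq\cX$ with complement $\cZ$, choose a finite-type ideal sheaf $\cI$ with $V(\cI)=\cZ$ and a surjection $\cE\twoheadrightarrow\cI$ from a vector bundle; the Koszul complex of the resulting map $\cE\to\oh_\cX$ is a perfect, hence compact, complex with support precisely $\cZ$.

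The main obstacle is the Nisnevich descent step of the second paragraph: one needs not merely that compact generators glue, but that the \emph{countability} of the generating set and the \emph{support} clause both survive the gluing --- i.e.\ that a perfect complex with prescribed support on each corner of a distinguished square can be assembled into one with the prescribed support on the total stack. This is exactly the content of the relevant gluing theorems of Hall and Rydh, so in practice the proof amounts to verifying their hypotheses for $\cX$ (quasi-compactness, affine diagonal, the resolution property Nisnevich-locally, finite presentation) and quoting them; the third paragraph above is essentially the verification that a linearly fundamental stack is ``crisp'' in their sense.
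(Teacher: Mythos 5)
Your proposal is correct and follows essentially the same route as the paper: both reduce via \Cref{T:etale-local-gms} to an \'etale (Nisnevich on the good moduli space) cover by a fundamental stack $[\Spec A/\GL_n]$ and then invoke the Hall--Rydh descent machinery for compact generation (their Theorem~C), which is exactly the ``gluing'' step you defer to. The only difference is that you sketch the base case (perfect implies compact via $t$-exactness of $R\Gamma$, generation via the resolution property, Koszul complexes for supports) by hand, whereas the paper simply cites the statement that a concentrated stack with affine diagonal and the resolution property is $\aleph_0$-crisp.
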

\begin{proof}
  By \cite[Thm.~C]{perfect_complexes_stacks} and
  \cite[Prop.~8.4]{perfect_complexes_stacks}, it suffices to construct
  an \'etale, separated and representable covering
  $p\colon \cW \to \cX$ such that $\cW=[\Spec C/\GL_n]$ (note that
  $\cW$ is automatically concentrated because $\cX$ and $p$ are).  This
  follows from \Cref{T:etale-local-gms}.
\end{proof}

\section{Approximation and deformation of linearly fundamental stacks}\label{S:approximation-deformation}
In this section we use the local structure of good moduli stacks
(\Cref{T:etale-local-gms}) and nice points (\Cref{P:nice-neighborhood}) to extend
the approximation results for fundamental and nicely fundamental stacks
in \Cref{SS:approximation-fund} to linearly fundamental stacks (\Cref{T:approximation-of-lin-fund:relative}) and good moduli
spaces (\Cref{C:gms-approximation}).
We will then deform objects over henselian pairs (\Cref{D:pairs}), using
the approximation results to reduce from the henselian case to the
excellent henselian case.
These approximation and deformation results will be used 
prominently in \S \ref{S:refinements}--\ref{S:further-applications}, 
and in this section we give a first consequence by generalizing 
the universal property of good moduli spaces (\Cref{T:universal}) 
to good moduli space morphisms (\Cref{T:universal-gms-variant}). 

To this end, we introduce the following
mild mixed characteristic assumptions on an algebraic stack~$\cW$:
\begin{enumerate}
\myitem{FC}\label{Cond:FC} There is only a finite number of
  different characteristics in $\cW$.
\myitem{PC}\label{Cond:PC} Every closed point of $\cW$ has
  positive characteristic.
\myitem{N}\label{Cond:N} Every closed point of $\cW$ has nice
  stabilizer.
\end{enumerate}
\begin{remark}\label{R:fc-closed}
  Note that if $\eta \leadsto s$ is a specialization in $\cW$, then
  the characteristic of $\eta$ is $0$ or agrees with that of $s$. In
  particular, if $(\cW,\cW_0)$ is a local pair (\Cref{D:pairs}), so that
  every closed point of $\cW$ belongs to $\cW_0$, it follows that 
  $\cW$ satisfies
  \ref{Cond:FC}, \ref{Cond:PC} or \ref{Cond:N}, respectively, if and only if $\cW_0$ does so.
\end{remark}

\subsection{Approximation}

\newcommand{\nicelocus}{\mathrm{nice}}

  Let $\cX$ be a fundamental stack with adequate moduli space
  $\pi \co \cX\to X$. Let
  $X_\nicelocus\subseteq |X|$ be the locus of points $x\in |X|$ such that the
  unique closed point in the fiber $\pi^{-1}(x)$ has nice stabilizer.
  If $x\in X_\nicelocus$, then there exists an \'etale neighborhood
  $X'\to X$ of $x$ such that $\cX\times_X X'$ is nicely fundamental
  (\Cref{P:nice-neighborhood}). It follows that $X_\nicelocus$ is open
  and that $\cX\times_X X_\nicelocus\to X_\nicelocus$ is a good moduli space.

\begin{lemma}\label{L:abs-approximation:lin-fund-pos-char}
Let $\cX$ be a fundamental stack with adequate moduli space $X$.
Let $\cX=\varprojlim_\lambda \cX_\lambda$ be an inverse limit of fundamental
stacks with affine transition maps. Let $X_\lambda$ denote the adequate moduli space
of $\cX_\lambda$. Then
\begin{enumerate}
\item \label{LI:abs-approximation:lin-fund-pos-char:pullback} $X\times_{X_\lambda} (X_\lambda)_\nicelocus\subseteq X_\nicelocus$ for
every $\lambda$; and
\item \label{LI:abs-approximation:lin-fund-pos-char:qc} if $V\subseteq X_\nicelocus$ is a quasi-compact open subset, then
$V\subseteq X\times_{X_\lambda} (X_\lambda)_\nicelocus$ for
every sufficiently large $\lambda$.
\end{enumerate}
\end{lemma}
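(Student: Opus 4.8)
The plan is to first isolate an elementary fact about limits and unique closed points, and then to reduce both parts to it together with the approximation results already at hand. The fact: for $x\in X$ with image $x_\lambda\in X_\lambda$, writing $y\in|\cX|$ and $y_\lambda\in|\cX_\lambda|$ for the unique closed points of the fibres $\pi^{-1}(x)$ and $\pi_\lambda^{-1}(x_\lambda)$ (these fibres are adequate moduli spaces over fields, so each has a unique closed point), the point $y$ maps to $y_\lambda$ under $|\cX|\to|\cX_\lambda|$. I would prove this by noting that $\pi^{-1}(x)\to\Spec\kappa(x)$ is universally closed while $\pi_\lambda^{-1}(x_\lambda)\times_{\kappa(x_\lambda)}\Spec\kappa(x)\to\Spec\kappa(x)$ is separated (because $\cX_\lambda\to X_\lambda$ has affine diagonal), so the induced morphism between them is universally closed and therefore carries $y$ to the unique closed point of the target; its further image in $\pi_\lambda^{-1}(x_\lambda)$ is then a point whose closure is a nonempty closed subset, hence contains $y_\lambda$, and so equals $y_\lambda$.

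For \itemref{LI:abs-approximation:lin-fund-pos-char:pullback} I would fix $\lambda$, take $x\in X\times_{X_\lambda}(X_\lambda)_\nicelocus$ so that $\Stab(y_\lambda)$ is nice, and use that the transition morphism $\cX\to\cX_\lambda$ is affine, hence representable, to present $\Stab(y)$ as a closed subgroup scheme of a base change of $\Stab(y_\lambda)$. Since niceness passes to closed subgroup schemes and is stable under ground field extension, $\Stab(y)$ is nice, i.e.\ $x\in X_\nicelocus$. The same reasoning applied to $\cX_\mu\to\cX_\lambda$ for $\mu\geq\lambda$ gives $X_\mu\times_{X_\lambda}(X_\lambda)_\nicelocus\subseteq(X_\mu)_\nicelocus$, hence $X\times_{X_\lambda}(X_\lambda)_\nicelocus\subseteq X\times_{X_\mu}(X_\mu)_\nicelocus$.

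For \itemref{LI:abs-approximation:lin-fund-pos-char:qc}, the previous remark shows that the open sets $U_\lambda:=X\times_{X_\lambda}(X_\lambda)_\nicelocus\subseteq X$ form a directed family contained in $X_\nicelocus$, so by quasi-compactness of $V$ it suffices to prove $V\subseteq\bigcup_\lambda U_\lambda$: a finite subcover then lies in a single $U_{\lambda_0}$, and hence so do $V$ and all $U_\lambda$ with $\lambda\geq\lambda_0$. Given $x\in X_\nicelocus$ with unique closed point $y$ of $\pi^{-1}(x)$, I would observe that $\Stab(y)$ is nice and affine of finite type over $\kappa(y)$, hence embeddable, so the residual gerbe $\cG_y$ is nicely fundamental by the Weil-restriction argument of \Cref{R:char-p-gerbe} (which uses only that the band is nice and embeddable, so applies in any characteristic). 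Since the image of $y$ in $\cX_\lambda$ is $y_\lambda$ by the elementary fact, \Cref{L:approximation-fundamental}\itemref{LI:approx:nice-gerbes} then gives that $\cG_{y_\lambda}$ is nicely fundamental for all sufficiently large $\lambda$, whence its band $\Stab(y_\lambda)$---a closed subgroup scheme of a fibre of a nice group scheme---is nice; that is, $x_\lambda\in(X_\lambda)_\nicelocus$, i.e.\ $x\in U_\lambda$.

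The main obstacle I anticipate is the elementary fact itself: verifying that the unique closed point of $\pi^{-1}(x)$ maps to the unique closed point of $\pi_\lambda^{-1}(x_\lambda)$ is where one must argue with universal closedness, separatedness of the fibres, and the property that every nonempty closed subset of a quasi-compact stack with a unique closed point contains that point, rather than quoting a single earlier result. Everything else is a formal consequence of \itemref{LI:abs-approximation:lin-fund-pos-char:pullback}, \Cref{R:char-p-gerbe}, \Cref{L:approximation-fundamental}, the openness of $X_\nicelocus$, quasi-compactness of $V$, and the stability of niceness under closed subgroup schemes and ground field extensions; I would also record separately the easy fact that a nicely fundamental gerbe over a field has nice band (via an affine, representable morphism to some $B_TQ$), which is used to pass from nice-fundamentality of $\cG_{y_\lambda}$ back to niceness of $\Stab(y_\lambda)$.
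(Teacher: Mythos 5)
The load-bearing step of your proof---the ``elementary fact'' that the unique closed point $y$ of $\pi^{-1}(x)$ maps to the unique closed point $y_\lambda$ of $\pi_\lambda^{-1}(x_\lambda)$---is false, and both parts of your argument rest on it. The cancellation argument does not apply: a fundamental stack has \emph{affine} diagonal, not proper diagonal, so $\pi_\lambda^{-1}(x_\lambda)\times_{\kappa(x_\lambda)}\Spec\kappa(x)\to\Spec\kappa(x)$ is not separated (its diagonal is affine but not universally closed; think of $B\GL_n$), and separatedness of the target is exactly what you would need to deduce that $\pi^{-1}(x)\to\pi_\lambda^{-1}(x_\lambda)\times_{\kappa(x_\lambda)}\Spec\kappa(x)$ is universally closed. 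Concretely, let $\Gm$ act on $\AA^2_k=\Spec k[u,v]$ with weights $1,-1$, so that $\cX_0=[\AA^2_k/\Gm]$ is fundamental with adequate moduli space $X_0=\Spec k[uv]$, and let $\cX=[\Gm/\Gm]\cong\Spec k$ map to $\cX_0$ via the affine $\Gm$-equivariant immersion of the punctured $u$-axis; take the two-term inverse system $\cX\to\cX_0$. Then the unique (closed) point of $\cX=\pi^{-1}(x)$ maps to the open orbit of $\pi_0^{-1}(0)$, not to the closed orbit (the origin). This is exactly the phenomenon the paper's proof flags in its first sentence: $\cX\to\cX_\lambda\times_{X_\lambda}X$ is affine but need not be an isomorphism, and an affine morphism can carry closed orbits to non-closed ones.

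With the elementary fact gone, part \itemref{LI:abs-approximation:lin-fund-pos-char:pullback} of your argument breaks because the image $y'$ of $y$ may be a non-closed point of $\pi_\lambda^{-1}(x_\lambda)$, about whose stabilizer the hypothesis $x_\lambda\in(X_\lambda)_\nicelocus$ says nothing directly; and part \itemref{LI:abs-approximation:lin-fund-pos-char:qc} breaks more seriously, since the closed point $y_\lambda$ you must control need not lie in the image of $\cX\to\cX_\lambda$ at all (in the example above the origin is not in the image), so no argument that only tracks $y$, its residual gerbe, and their images can reach it. The repair, which is how the paper proceeds, is to pass from points to neighbourhoods via \Cref{P:nice-neighborhood}: for \itemref{LI:abs-approximation:lin-fund-pos-char:pullback}, an \'etale neighbourhood $U_\lambda\to X_\lambda$ of $x_\lambda$ over which $\cX_\lambda$ becomes nicely fundamental pulls back to a nicely fundamental stack over $X$ (affine over nicely fundamental is nicely fundamental), and \emph{every} point of a nicely fundamental stack has nice stabilizer; for \itemref{LI:abs-approximation:lin-fund-pos-char:qc}, one covers $V$ by such a neighbourhood $U$, descends it to $U_\lambda$, and applies \Cref{L:approximation-fundamental}\itemref{LI:approx:fundamental} to $\cX\times_X U=\varprojlim_\lambda\cX_\lambda\times_{X_\lambda}U_\lambda$ to make $\cX_\lambda\times_{X_\lambda}U_\lambda$ itself nicely fundamental for large $\lambda$, thereby controlling all of its points, including closed points of fibres outside the image of $\cX$. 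Your remaining ingredients (stability of niceness under closed subgroups and field extensions, the Weil-restriction argument, \Cref{L:approximation-fundamental}\itemref{LI:approx:nice-gerbes}, quasi-compactness of $V$) are sound but cannot substitute for this.
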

\begin{proof}
Note that the subtlety is that while the map $\cX\to \cX_\lambda\times_{X_\lambda} X$ is always affine, it is not necessarily an isomorphism.

For $x\in (X_\lambda)_\nicelocus$, let $U_\lambda\to X_\lambda$ be an \'etale
neighborhood of $x$ such that $\cX_\lambda\times_{X_\lambda} U_\lambda$ is nicely fundamental (\Cref{P:nice-neighborhood}). Then $\cX\times_{X_\lambda} U_\lambda$ is also
nicely fundamental as it is affine over the former. Thus
$X\times_{X_\lambda} (X_\lambda)_\nicelocus\subseteq X_\nicelocus$. This proves \itemref{LI:abs-approximation:lin-fund-pos-char:pullback}.

For \itemref{LI:abs-approximation:lin-fund-pos-char:qc}, let $U\to V$ be an \'etale surjective morphism such that $\cX\times_X
U$ is nicely fundamental (\Cref{P:nice-neighborhood}). Since $X=\varprojlim_\lambda
X_\lambda$ (\Cref{P:approximation-fundamental}\itemref{PI:approx:adequate}) and
$U\to X$ is affine, we can for all sufficiently large $\lambda$ find
$U_\lambda\to X_\lambda$ affine \'etale such that
$U=U_\lambda\times_{X_\lambda} X$.  Since $\cX\times_X U = \varprojlim_\lambda
\cX_\lambda\times_{X_\lambda} U_\lambda$ is nicely fundamental, so is
$\cX_\lambda\times_{X_\lambda} U_\lambda$ for all sufficiently large $\lambda$
(\Cref{P:approximation-fundamental}\itemref{PI:approx:fundamental}).  It
follows that $(X_\lambda)_\nicelocus$ contains the image of $U_\lambda$ so
$V \subseteq X\times_{X_\lambda} (X_\lambda)_\nicelocus$.
\end{proof}

The main theorem of this section is the following variant of
\Cref{P:approximation-fundamental}\itemref{PI:approx:fundamental} for
linearly fundamental stacks.

\begin{theorem}[Approximation of linearly fundamental]\label{T:approximation-of-lin-fund:relative}
Let $\cY$ be a quasi-compact and quasi-separated algebraic stack. Let
$\cX=\varprojlim_\lambda \cX_\lambda$ where $\cX_\lambda$ is an inverse system
of quasi-compact and quasi-separated algebraic stacks over $\cY$ with affine
transition maps. Assume that
\begin{enumerate*}
\item $\cY$ is \ref{Cond:FC}, or
\item $\cX$ is \ref{Cond:PC}, or 
\item $\cX$ is \ref{Cond:N}.
\end{enumerate*}
Then, if $\cX$ is linearly fundamental, so is $\cX_\lambda$ for all sufficiently
large $\lambda$.
\end{theorem}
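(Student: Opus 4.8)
The plan is to reduce \Cref{T:approximation-of-lin-fund:relative} to the already-established approximation results for fundamental stacks (\Cref{L:approximation-fundamental}) together with the criterion of \Cref{C:adequate+lin-red=>good:fundamental}, which says that a fundamental stack is linearly fundamental if and only if every closed point of positive characteristic has nice stabilizer. First I would note that since $\cX$ is linearly fundamental, it is in particular fundamental, so by \Cref{L:approximation-fundamental}\itemref{LI:approx:fundamental} we may replace $\Lambda$ by a cofinal subset and assume every $\cX_\lambda$ is fundamental; let $\pi_\lambda\colon\cX_\lambda\to X_\lambda$ be the adequate moduli spaces and $\pi\colon\cX\to X$ that of $\cX$, so that $X=\varprojlim_\lambda X_\lambda$ by \Cref{L:approximation-fundamental}\itemref{LI:approx:adequate}. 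Since $\cX$ is linearly fundamental, $\pi$ is a good moduli space, so every closed point of $\cX$ has linearly reductive stabilizer and $X=X_\nicelocus$ in the notation preceding \Cref{L:abs-approximation:lin-fund-pos-char} (closed points of characteristic $0$ trivially have nice — indeed linearly reductive — stabilizer once the stabilizer is linearly reductive, using that in characteristic $0$ linearly reductive is automatic; more precisely, a point lies in $X_\nicelocus$ as soon as the closed point in its fiber has nice stabilizer, and over characteristic $0$ points we use instead that linearly fundamental $=$ fundamental, so the relevant locus is all of $X$). The goal is to show $\cX_\lambda$ is linearly fundamental for $\lambda\gg 0$, equivalently (by \Cref{C:adequate+lin-red=>good:fundamental}) that every closed point of $\cX_\lambda$ of positive characteristic has nice stabilizer, equivalently that $X_\lambda=(X_\lambda)_\nicelocus$ after replacing $X_\lambda$ by a suitable open.

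The three hypotheses are handled as follows. Under \ref{Cond:PC}, every closed point of $\cX$ has positive characteristic, hence — since $\cX$ is linearly fundamental — nice stabilizer; under \ref{Cond:N} this is assumed directly; so in both cases $\cX$ is \ref{Cond:N}, i.e.\ $X=X_\nicelocus$. Under \ref{Cond:FC}, $\cX$ has only finitely many characteristics appearing, so $|X|=\coprod_{p} X^{(p)}\sqcup X^{(0)}$ where $X^{(p)}$ is the locus of characteristic $p$ and $X^{(0)}$ the characteristic $0$ locus, each open and closed, and this decomposition descends to some $X_{\lambda_0}$ (finitely many clopen pieces); on the characteristic-$0$ part fundamental already equals linearly fundamental, while on each $X^{(p)}$ every closed point of $\cX$ automatically has nice stabilizer since $\cX$ is linearly fundamental and the fiber point has positive characteristic. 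In all three cases we conclude $X\subseteq X_\nicelocus$, and since $X$ is quasi-compact, \Cref{L:abs-approximation:lin-fund-pos-char}\itemref{LI:abs-approximation:lin-fund-pos-char:qc} gives that $X\times_{X_\lambda}(X_\lambda)_\nicelocus = X$ for all sufficiently large $\lambda$.

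It remains to upgrade ``$X$ maps into $(X_\lambda)_\nicelocus$'' to ``$X_\lambda=(X_\lambda)_\nicelocus$'' for large $\lambda$. Since $(X_\lambda)_\nicelocus\subseteq |X_\lambda|$ is open (by \Cref{P:nice-neighborhood}) and $X_\lambda$ is quasi-compact, its complement $Z_\lambda:=|X_\lambda|\setminus(X_\lambda)_\nicelocus$ is closed and quasi-compact; the transition maps give a cofiltered system of closed sets whose inverse limit, namely the preimage of $\bigcap_\lambda Z_\lambda$ in $X=\varprojlim X_\lambda$, is empty because $X=X_\nicelocus$. By a standard limit argument for quasi-compact spaces (the inverse limit of a cofiltered system of nonempty quasi-compact spaces is nonempty), $Z_\lambda$ is already empty for some — hence all larger — $\lambda$. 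Then $X_\lambda=(X_\lambda)_\nicelocus$, so $\cX_\lambda\times_{X_\lambda}X_\lambda=\cX_\lambda$ is nicely fundamental Nisnevich-locally over $X_\lambda$; more directly, $(X_\lambda)_\nicelocus=X_\lambda$ means the adequate moduli space $\cX_\lambda\to X_\lambda$ is a good moduli space (as observed right before \Cref{L:abs-approximation:lin-fund-pos-char}), i.e.\ $\cX_\lambda$ is linearly fundamental. The main obstacle I expect is the bookkeeping in the \ref{Cond:FC} case — carefully arranging the clopen decomposition into characteristics to descend to a fixed index and checking that on each piece the niceness/linear-reductivity passes correctly — whereas the \ref{Cond:PC} and \ref{Cond:N} cases are essentially immediate once \Cref{C:adequate+lin-red=>good:fundamental} and \Cref{L:abs-approximation:lin-fund-pos-char}\itemref{LI:abs-approximation:lin-fund-pos-char:qc} are invoked.
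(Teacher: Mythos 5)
Your handling of cases \ref{Cond:PC} and \ref{Cond:N} matches the paper's proof, and your explicit limit argument---the closed complements $Z_\lambda=X_\lambda\smallsetminus(X_\lambda)_{\mathrm{nice}}$ form an inverse system of closed subsets of affine schemes whose limit is empty, hence are eventually empty---is a correct and worthwhile expansion of a step that \Cref{L:abs-approximation:lin-fund-pos-char}\itemref{LI:abs-approximation:lin-fund-pos-char:qc} does not literally supply (it only controls the preimage of $(X_\lambda)_{\mathrm{nice}}$ in $X$). One caveat: non-emptiness of cofiltered limits fails for arbitrary quasi-compact spaces; you need spectral (or compact Hausdorff) spaces, which holds here since the $Z_\lambda$ are closed in affine schemes.

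The case \ref{Cond:FC} contains a genuine error. The loci $X^{(p)}$ and $X^{(0)}$ are \emph{not} open and closed: already for $X=\Spec\ZZ_{(p)}$, the locus $V(p)$ is closed but not open, and the characteristic-zero locus is open (this is the only place \ref{Cond:FC} is needed) but not closed. There is thus no clopen decomposition to descend, and your intermediate conclusion that $X\subseteq X_{\mathrm{nice}}$ in all three cases is false under \ref{Cond:FC}: for instance, the base change of the stack $[\lasl_2/\SL_2]$ of \Cref{A:mixed-char-counterexamples} along $\Spec\ZZ_{(3)}[t,\tfrac{1}{t+3}]\to\Spec\ZZ[\tfrac12,t]$ is linearly fundamental and satisfies \ref{Cond:FC}, yet $t=0$ is a closed point of its good moduli space with residue field $\QQ$ over which the closed fiber point has stabilizer $\SL_{2,\QQ}$---linearly reductive but not nice (compare \Cref{SS:noetherian-counterexample}). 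Since your concluding argument is run against the single closed set $X_\lambda\smallsetminus(X_\lambda)_{\mathrm{nice}}$, it does not close in this case. The repair is the paper's: use \ref{Cond:FC} only to see that $X_\QQ$ and each $(X_\lambda)_\QQ$ is open, write $X=X_{\mathrm{nice}}\cup X_\QQ$ as a union of two non-disjoint opens (the first because linearly reductive implies nice in positive equal characteristic, the second because adequate equals good in characteristic zero), choose a quasi-compact open $V\subseteq X_{\mathrm{nice}}$ with $X=V\cup X_\QQ$ (possible since $X\smallsetminus X_\QQ$ is closed in the affine scheme $X$), and run your limit argument instead on the closed sets $X_\lambda\smallsetminus\bigl((X_\lambda)_{\mathrm{nice}}\cup(X_\lambda)_\QQ\bigr)$, whose preimages in $X$ are eventually empty by \Cref{L:abs-approximation:lin-fund-pos-char}\itemref{LI:abs-approximation:lin-fund-pos-char:qc} applied to $V$. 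One then concludes because being a good moduli space is local on $X_\lambda$ and holds over each of the two opens.
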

\begin{proof}
By \Cref{P:approximation-fundamental}\itemref{PI:approx:fundamental} we can
assume that the $\cX_\lambda$ are fundamental.
Since $\cX$ is linearly fundamental, \ref{Cond:PC}$\implies$\ref{Cond:N}.
If $\cX$ satisfies \ref{Cond:N}, then $X_\nicelocus=X$ and it follows from \Cref{L:abs-approximation:lin-fund-pos-char} that $(X_\lambda)_\nicelocus=X_\lambda$ for all sufficiently large $\lambda$; hence that $\cX_\lambda$ is linearly fundamental.
Thus, it remains to prove the theorem when $\cY$ satisfies~\ref{Cond:FC}.
In this case, $\cY_\QQ:=\cY\times_{\Spec \ZZ} \Spec \QQ$ is open in 
$\cY$. Similarly for the other stacks. In particular, if $X$ denotes the good
moduli space of $\cX$, then $X$ is the union of the two open subschemes
$X_\nicelocus$ and $X_\QQ$. In addition, since $X\smallsetminus X_\QQ$ is
closed, hence quasi-compact, we may find a quasi-compact open subset
$V\subseteq X_\nicelocus$ such that $X=V\cup X_\QQ$. For sufficiently large
$\lambda$, we have
that $V\subseteq (X_\lambda)_\nicelocus\times_{X_\lambda} X$ (\Cref{L:abs-approximation:lin-fund-pos-char}\itemref{LI:abs-approximation:lin-fund-pos-char:qc}) and thus, after possibly increasing $\lambda$,
that $X_\lambda = (X_\lambda)_\nicelocus\cup (X_\lambda)_\QQ$.
It follows that $\cX_\lambda$ is linearly fundamental.
\end{proof}

\begin{corollary}\label{C:excellent-approx-lin-fund}
Let $\cX$ be a linearly fundamental stack. Assume that $\cX$ satisfies
\ref{Cond:FC}, \ref{Cond:PC} or \ref{Cond:N}.
Then we can write $\cX=\varprojlim_\lambda \cX_\lambda$ as an inverse limit of
linearly fundamental stacks, with affine transition maps, such that
each $\cX_\lambda$ is essentially of finite type over $\Spec \ZZ$.
\end{corollary}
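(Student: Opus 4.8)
The plan is to deduce this from \Cref{T:approximation-of-lin-fund:relative} once $\cX$ is exhibited as a cofiltered limit of fundamental stacks that are (essentially) of finite type over $\ZZ$ and that all lie over a single quasi-compact and quasi-separated base $\cY$ inheriting the relevant mixed-characteristic hypothesis.

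First I would note that a linearly fundamental stack is in particular fundamental, so by \Cref{L:excellent-approx-fundamental} there is an affine morphism $\cX\to B\GL_{n,\ZZ}$ for some $n$ together with an expression $\cX=\varprojlim_\lambda\cX_\lambda$, where the $\cX_\lambda$ are fundamental, of finite type over $\ZZ$, quasi-compact and quasi-separated, with affine transition maps. If $\cX$ satisfies \ref{Cond:PC} or \ref{Cond:N}, I would simply take $\cY=B\GL_{n,\ZZ}$: the system then lives over $\cY$, and since $\cX$ satisfies \ref{Cond:PC} (resp.\ \ref{Cond:N}), the corresponding hypothesis of \Cref{T:approximation-of-lin-fund:relative} holds, so $\cX_\lambda$ is linearly fundamental for all $\lambda\gg 0$. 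Restricting to the cofinal subsystem of such indices settles these cases, since finite type over $\ZZ$ is a fortiori essentially of finite type over $\ZZ$.

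The case \ref{Cond:FC} is the one where the naive approximation does not suffice, and this is exactly why the statement only claims ``essentially of finite type.'' If $\cX$ has a point of characteristic $0$, then, using that its adequate moduli space $\pi\colon\cX\to X=\Spec A$ (with $A=\Gamma(\cX,\Orb_\cX)$) is surjective and that every point of $\cX$ has the same characteristic as its image in $X$, the ring $A$ dominates $\ZZ$; likewise $\Gamma(\cX_\lambda,\Orb_{\cX_\lambda})$ for $\lambda\gg 0$, so the $\cX_\lambda$ above (and $B\GL_{n,\ZZ}$ itself) have points of all but finitely many positive characteristics and cannot serve as an \ref{Cond:FC} base. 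Instead I would change the base first. By \ref{Cond:FC} only finitely many positive characteristics $q_1,\dots,q_r$ occur in $\cX$; set $N=q_1\cdots q_r$ (with $N=1$ if $r=0$), let $W\subset\ZZ$ be the multiplicative set of integers prime to $N$, and put $\Lambda=W^{-1}\ZZ$. Then $\Lambda$ is noetherian, essentially of finite type over $\ZZ$, and $\Spec\Lambda=\{(0),(q_1),\dots,(q_r)\}$, so $B\GL_{n,\Lambda}$ is quasi-compact, quasi-separated and \ref{Cond:FC}. Every $w\in W$ is a unit in $A$, since otherwise $A/(w)\neq 0$ would produce a point of $X$, hence of $\cX$, of characteristic a prime dividing $w$, contradicting the choice of $q_1,\dots,q_r$; therefore $\cX\to\Spec\ZZ$ factors through $\Spec\Lambda$. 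Combined with $\cX\to B\GL_{n,\ZZ}$ this yields a morphism $\cX\to\cY:=B\GL_{n,\Lambda}$, which is affine because the composite $\cX\to B\GL_{n,\Lambda}\to B\GL_{n,\ZZ}$ is affine and the second map, a pullback of the monomorphism $\Spec\Lambda\to\Spec\ZZ$, is separated.

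To finish the \ref{Cond:FC} case I would argue as in the proof of \Cref{L:excellent-approx-fundamental} but over $\Lambda$: using that $\Lambda$ is noetherian, write $\cX=\Spec_{\cY}\cA$ and express $\cA$ as the filtered union of its finite-type $\Orb_{\cY}$-subalgebras $\cA_\lambda$, so that $\cX=\varprojlim_\lambda\Spec_{\cY}\cA_\lambda$ with each $\Spec_{\cY}\cA_\lambda$ fundamental, of finite type over $\Lambda$ (hence essentially of finite type over $\ZZ$) and with affine transition maps. Since $\cY$ is quasi-compact, quasi-separated and satisfies \ref{Cond:FC}, \Cref{T:approximation-of-lin-fund:relative} applies and shows that these stacks are linearly fundamental for all $\lambda\gg 0$; passing to those indices completes the proof. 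The hard part is precisely this \ref{Cond:FC} step: one cannot retain a finite-type-over-$\ZZ$ approximation, so one is forced onto the semilocalization $\Lambda$ of $\ZZ$, and one must verify that $\cX$ genuinely lies over $B\GL_{n,\Lambda}$, which hinges on comparing the characteristics occurring in $\cX$ with those occurring in its adequate moduli space.
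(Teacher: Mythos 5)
Your proposal is correct and follows essentially the same route as the paper: for \ref{Cond:PC}/\ref{Cond:N} approximate over $\Spec\ZZ$, and for \ref{Cond:FC} first factor $\cX$ through the semi-localization of $\Spec\ZZ$ at the occurring characteristics (your $\Lambda$ is exactly the paper's $S$), write $\cX$ as a limit of fundamental stacks of finite type over that base, and conclude by \Cref{T:approximation-of-lin-fund:relative}. The only difference is that you spell out details the paper leaves implicit (the units argument giving the canonical map $\cX\to\Spec\Lambda$ and the affineness of $\cX\to B\GL_{n,\Lambda}$), which is fine.
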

\begin{proof}
If $\cX$ satisfies \ref{Cond:FC}, let $S$ be the semi-localization of $\Spec \ZZ$ in all
characteristics that appear in $\cX$. Then there is a canonical map $\cX\to
S$. If $\cX$ satisfies \ref{Cond:PC} or \ref{Cond:N}, let $S=\Spec \ZZ$.
Since $\cX$ is fundamental, we can write $\cX$ as an inverse limit of algebraic
stacks $\cX_\lambda$ that are fundamental and of finite presentation over $S$.
The result then follows from \Cref{T:approximation-of-lin-fund:relative}.
\end{proof}

\Cref{C:excellent-approx-lin-fund} is not true unconditionally, even if we
merely assume that the $\cX_\lambda$ are noetherian, see
\Cref{A:mixed-char-counterexamples}.

\begin{corollary}[Approximation of good moduli spaces]\label{C:gms-approximation}
Let $X=\varprojlim_\lambda X_\lambda$ be an inverse system of quasi-compact
algebraic spaces with affine transition maps. Let $\alpha$ be an index, let
$f_\alpha\co \cX_\alpha \to X_\alpha$ be a morphism of finite presentation and
let $f_\lambda\co \cX_\lambda\to X_\lambda$, for $\lambda\geq \alpha$, and $f\co
\cX\to X$ denote the base changes of $f_\alpha$. Assume that $X_\alpha$ satisfies
\ref{Cond:FC} or $\cX$ satisfies \ref{Cond:PC} or \ref{Cond:N}. Then if $\cX\to X$
is a good moduli space with affine diagonal,
so is $\cX_\lambda\to X_\lambda$ for all sufficiently
large~$\lambda$.
\end{corollary}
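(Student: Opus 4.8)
The plan is to reduce to the case where $X$ is affine and $\cX$ is linearly fundamental, then to descend linear fundamentality to a finite stage using the approximation theorems of this section, and finally to identify $X_\lambda$ as the good moduli space of $\cX_\lambda$. The mixed-characteristic hypotheses \ref{Cond:FC}, \ref{Cond:PC}, \ref{Cond:N} are needed precisely to carry out the descent of cohomological affineness --- equivalently, of linear reductivity of the closed-point stabilizers --- which fails in general by \Cref{R:not-limit-preserving}.

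First I would reduce to the affine case. Being a good moduli space is local in the fppf topology on the base (cohomological affineness is stable under base change by definition and satisfies flat descent, and the condition $\Orb_X\xrightarrow{\sim}f_*\Orb_\cX$ is fppf-local), so it suffices to prove the statement after a faithfully flat base change on $X$. Covering the quasi-compact $X$ by finitely many affine opens --- each pulled back from a quasi-compact open of some $X_\lambda$ by standard limit methods \cite{rydh-2009}, and together covering $X_\lambda$ for $\lambda$ large since a closed subset of $X_\lambda$ with empty preimage in $X$ is eventually empty --- we may assume $X=\Spec A$, $X_\lambda=\Spec A_\lambda$, $A=\varinjlim_\lambda A_\lambda$. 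Since $f_\alpha$ is of finite presentation, $\cX$ and each $\cX_\lambda$ is quasi-compact and quasi-separated and $\cX=\cX_\lambda\times_{X_\lambda}X$, so the transition maps $\cX_\mu\to\cX_\lambda$ are affine. Applying the \'etale-local structure of good moduli spaces (\Cref{T:etale-local-gms}) and pulling back the resulting Nisnevich covering of $X$ from some $X_\lambda$, we may further assume $\cX$ is linearly fundamental.

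Now $\cX=\varprojlim_\lambda\cX_\lambda$ over $\cY:=\cX_\alpha$ with affine transition maps, the $\cX_\lambda$ are fundamental for $\lambda$ large (\Cref{L:approximation-fundamental}), and $\cX$ satisfies one of \ref{Cond:FC}, \ref{Cond:PC}, \ref{Cond:N}: if $X_\alpha$ is \ref{Cond:FC} then so are $\cX_\alpha$ and $\cX$, since the residue characteristics occurring in $\cX_\alpha$ are among those of $X_\alpha$; and \ref{Cond:PC}, \ref{Cond:N} for $\cX$ are hypotheses. By \Cref{T:approximation-of-lin-fund:relative}, $\cX_\lambda$ is linearly fundamental for all sufficiently large $\lambda$, hence has a good moduli space $X'_\lambda$, and $X'_\lambda\to X_\lambda$ is of finite presentation (\Cref{C:gms-is-of-fp}) and its formation commutes with base change on $X_\lambda$ (\Cref{C:base-change-of-ams/gms}). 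Since $\cX_\lambda\to X'_\lambda\to X_\lambda$ is cohomologically affine, $f_{\lambda*}\Orb_{\cX_\lambda}$ commutes with the base change $X\to X_\lambda$; as $\cX=\cX_\lambda\times_{X_\lambda}X$ has good moduli space $X$, this yields $\Gamma(\cX_\lambda,\Orb_{\cX_\lambda})\otimes_{A_\lambda}A=A$, i.e.\ $X'_\lambda\times_{X_\lambda}X=X$. Thus the finitely presented morphism $X'_\lambda\to X_\lambda$ becomes an isomorphism after base change along $X=\varprojlim_\mu X_\mu\to X_\lambda$, so $X'_\lambda\times_{X_\lambda}X_\mu\to X_\mu$ is an isomorphism for $\mu$ large \cite{rydh-2009}; since this base change is the good moduli space $X'_\mu$ of $\cX_\mu$ (\Cref{C:base-change-of-ams/gms}), we conclude that $\cX_\mu\to X_\mu$ is a good moduli space for $\mu$ large, and descend back along the Nisnevich covering of $X$. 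The hard part is the appeal to \Cref{T:approximation-of-lin-fund:relative} in this paragraph: cohomological affineness is not limit-preserving in mixed characteristic, and it is only the detour through nice and geometrically reductive group schemes and \Cref{T:adequate+lin-red=>good} that makes the argument valid under \ref{Cond:FC}, \ref{Cond:PC} or \ref{Cond:N}.
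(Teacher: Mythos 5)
Your proposal is correct and follows essentially the same route as the paper's proof: reduce via \Cref{T:etale-local-gms} and descent to the case where $\cX$ is linearly fundamental, descend linear fundamentality to a finite stage by \Cref{T:approximation-of-lin-fund:relative}, and then use \Cref{C:gms-is-of-fp} together with base-change compatibility and standard limit arguments to see that the good moduli space $X'_\lambda \to X_\lambda$ is eventually an isomorphism. The only difference is that you spell out the last step (via \Cref{C:base-change-of-ams/gms} and finite presentation of $X'_\lambda\to X_\lambda$) in more detail than the paper, which simply asserts it.
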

\begin{proof}
\Cref{T:etale-local-gms} gives an \'etale and surjective morphism
$X'\to X$ such that $\cX'=\cX\times_X X'$ is linearly fundamental. For
sufficiently large $\lambda$, we can find an \'etale surjective morphism
$X'_\lambda\to X_\lambda$ that pulls back to $X'\to X$. For sufficiently
large $\lambda$, we have that $\cX'_\lambda:=\cX_\lambda\times_{X_\lambda} X'_\lambda$ is linearly fundamental by \Cref{T:approximation-of-lin-fund:relative}.
Its good moduli space $\overline{X}'_\lambda$ is of finite presentation over
$X'_\lambda$ (\Cref{C:gms-is-of-fp}). It follows that $\overline{X}'_\lambda\to
X'_\lambda$ is an isomorphism for all sufficiently large $\lambda$. By descent,
it follows that $\cX_\lambda\to X_\lambda$ is a good moduli space for all
sufficiently large $\lambda$.
\end{proof}

\subsection{Deformation: setup}
For most of the remainder of this section, we will be in the following situation. 
\begin{setup}\label{Setup:deformations}
  Let $\cX$ be a quasi-compact algebraic stack with affine diagonal
  and affine good moduli space $X$. Let $\cX_0 \inj \cX$ be a closed
  substack with good moduli space $X_0$. Assume that $(X,X_0)$ is an
  affine henselian pair and one of the following conditions holds:
\begin{enumerate}[label=(\alph*), ref=\alph*]
\item\label{SetupI:deformations:complete} $\cX_0$ has the resolution property, $\cX$ is noetherian and $(X,X_0)$ is complete;
\item\label{SetupI:deformations:exc} $\cX_0$ has the resolution property, $\cX$ is noetherian and $(X,X_0)$ is quasi-excellent;
\item\label{SetupI:deformations:fcpcn} $\cX$ has the resolution property and $\cX_0$ satisfies \ref{Cond:FC}, \ref{Cond:PC}, or \ref{Cond:N}; or
\customitem{(c$'$)}{c$'$}\label{SetupI:deformations:fcpcn-2} $\cX_0$ has the resolution property, $\cX\to X$ is of finite presentation, and $\cX_0$ satisfies \ref{Cond:FC}, \ref{Cond:PC}, or \ref{Cond:N}.
\end{enumerate}
\end{setup}
\begin{remark}\label{R:deformations-setup}
  Note that \ref{Cond:FC} and \ref{Cond:PC} for $\cX_0$ are 
  equivalent to the corresponding properties for $X_0$. Since the pair
  $(X,X_0)$ is henselian and so local, it follows that these are
  equivalent to the corresponding properties for $X$ and $\cX$ (\Cref{R:fc-closed}).
\end{remark}

\subsection{Deformation of the resolution property}
The first result of this section is the following remarkable proposition. It is a simple consequence of some results proved several sections ago.
\begin{proposition}[Deformation of the resolution property]\label{P:deformation-resprop}
Let $(\cX,\cX_0)$ be as in
\Cref{Setup:deformations} \itemref{SetupI:deformations:complete},
\itemref{SetupI:deformations:exc} or \itemref{SetupI:deformations:fcpcn-2}.
Then $\cX$ has the resolution property; in particular, $\cX$ is linearly fundamental
and \itemref{SetupI:deformations:fcpcn-2} implies \itemref{SetupI:deformations:fcpcn}.
\end{proposition}
\begin{proof}
Case \itemref{SetupI:deformations:complete} is part of the coherent completeness result
(\Cref{T:complete}). For \itemref{SetupI:deformations:exc}, let $\widehat{X}$ denote the completion
of $X$ along $X_0$ and $\widehat{\cX}=\cX\times_X \widehat{X}$. By the complete case,
$\widehat{\cX}$ has the resolution property. Equivalently, there is a
quasi-affine morphism $\widehat{\cX}\to B\GL_n$ for some $n$. The functor
parametrizing quasi-affine morphisms to $B\GL_n$ is locally of finite
presentation~\cite[Thm.~C]{rydh-2009} so by Artin approximation (\Cref{T:artin-approximation}), there
exists a quasi-affine morphism $\cX\to B\GL_n$. In particular, $\cX$ has the resolution property.

For \itemref{SetupI:deformations:fcpcn-2}, let $S=\Spec \ZZ$ be
the semi-localization of $\Spec \ZZ$ in all characteristics that appear in
$X$. Since $(X,X_0)$ is an affine henselian pair over $S$, we may write it as
an inverse limit of affine excellent henselian pairs
$(X_\lambda,X_{\lambda,0})$ over $S$.
For a sufficiently large $\lambda$, we have a morphism
of finite presentation $\cX_\lambda\to X_\lambda$ such that the pull-back along
$X\to X_\lambda$ is $\cX\to X$. After increasing $\lambda$ we can assume that
$\cX_\lambda\to X_\lambda$ is a good moduli space
(\Cref{C:gms-approximation}) and that $\cX_\lambda\times_{X_\lambda}
X_{\lambda,0}$ has the resolution property
(\Cref{P:approximation-fundamental}\itemref{PI:approx:fundamental}).
Since $X_\lambda$ is excellent, $\cX_\lambda$ has the resolution property by case
\itemref{SetupI:deformations:exc} and we conclude
that $\cX$ has the resolution property since $\cX\to \cX_\lambda$ is affine.
\end{proof}

\subsection{Deformation of sections}
If $f \co \cX' \to \cX$ is a morphism of algebraic stacks, we will denote the groupoid of sections $s \co \cX \to \cX'$ of $f$ as $\Gamma(\cX'/\cX)$ \cite[I.2.5.5]{EGA} (the notation $\mathrm{Sec}(\cX'/\cX)$ is also used \cite{MR2239345}).

\begin{proposition}[Deformation of sections]\label{P:deformation-sections}
Let $(\cX,\cX_0)$ be as in \Cref{Setup:deformations}.
Let $f\colon \cX' \to \cX$ be a quasi-separated and smooth (resp.\ smooth gerbe, resp.\  
\'etale) morphism with affine stabilizers. In case
\Cref{Setup:deformations} \itemref{SetupI:deformations:complete}, also assume that
$f$ has quasi-affine diagonal. Then $\Gamma(\cX'/\cX)\to
\Gamma(\cX'\times_{\cX}\cX_0/\cX_0)$ is essentially surjective
(resp.\ essentially surjective and full, resp.\ an equivalence of groupoids). 
\end{proposition}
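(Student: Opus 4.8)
The plan is to reduce the statement to an obstruction-theoretic computation via the infinitesimal thickenings of $\cX_0$ in $\cX$, combined with the effectivity and coherent completeness machinery already developed. First I would treat the complete case \itemref{SetupI:deformations:complete}, which is the heart of the matter, and then deduce the excellent case \itemref{SetupI:deformations:exc} and the finite-presentation case \itemref{SetupI:deformations:fcpcn} by Artin approximation (\Cref{T:artin-approximation}) together with the approximation results for linearly fundamental stacks (\Cref{T:approximation-of-lin-fund:relative}, \Cref{C:gms-approximation}) and \Cref{P:deformation-resprop}, exactly as in the proof of \Cref{P:deformation-resprop}. So assume $(X,X_0)$ is complete; let $\cX_n = \thck{\cX}{\cX_0}{n}$ and $\cX'_n = \cX' \times_\cX \cX_n$. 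A section over $\cX_0$ is an object of $\Gamma(\cX'_0/\cX_0)$, and by \Cref{T:complete} (coherent completeness of $\cX$ along $\cX_0$) together with Tannaka duality (\S\ref{SS:tannaka}, case \itemref{SS:tannaka:b}, since $\cX$ has affine diagonal hence so does $\cX'$ over it), giving a section of $f$ over $\cX$ is the same as giving a compatible system of sections over all the $\cX_n$; likewise for isomorphisms of sections. Hence it suffices to prove that $\Gamma(\cX'_{n}/\cX_{n}) \to \Gamma(\cX'_{n-1}/\cX_{n-1})$ is essentially surjective (resp.\ essentially surjective and full, resp.\ an equivalence) for each $n \ge 1$.

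Next I would run the standard deformation theory for each such step. Writing $\cI_n$ for the coherent ideal sheaf defining $\cX_{n-1} \hookrightarrow \cX_n$, which satisfies $\cI_n^2 = 0$, the obstruction to lifting a section $s_{n-1}\colon \cX_{n-1}\to \cX'_{n-1}$ to a section over $\cX_n$ lies in $\Ext^1_{\Orb_{\cX_0}}(s_0^* L_{\cX'/\cX}, \cI_n)$, the set of lifts (when nonempty) is a torsor under $\Ext^0_{\Orb_{\cX_0}}(s_0^* L_{\cX'/\cX}, \cI_n) = \Hom_{\Orb_{\cX_0}}(s_0^*\Omega_{\cX'/\cX}, \cI_n)$, and the automorphisms of a fixed lift form the group $\Ext^{-1}_{\Orb_{\cX_0}}(s_0^* L_{\cX'/\cX}, \cI_n)$; these are the obstruction, deformation and infinitesimal automorphism groups from \cite{olsson-deformation}. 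As in the proof of \Cref{T:microlocalization:general}, Olsson's formalism requires representability of the relevant morphism, which I would arrange by the same trick: choose an affine morphism $\cX_0 \to B\GL_N$, replace $\cX$ by $\cX \times B\GL_N$ (which has smooth diagonal, so the induced morphism is still smooth resp.\ \'etale and is now representable), and note that this replacement does not change the groupoid $\Gamma(\cX'/\cX)$ up to equivalence since the extra $B\GL_N$-factor is pulled back. Because $f$ is smooth, $L_{\cX'/\cX}$ has amplitude in $[0,1]$, so $\Ext^{<0}$ automatically vanishes, which already gives faithfulness (the automorphism groups vanish); because $\cX_0$ is cohomologically affine, $\Ext^{\ge 1}$ of a coherent sheaf against a quasi-coherent sheaf vanishes, killing the obstruction and giving essential surjectivity. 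For the \'etale case, $L_{\cX'/\cX} \simeq 0$, so $\Ext^0$ vanishes too and the functor is fully faithful; combined with essential surjectivity this gives an equivalence. For the smooth-gerbe case one has in addition that $s_0^*\Omega_{\cX'/\cX}$ is locally free and the relative gerbe hypothesis forces the deformation group to act appropriately, yielding fullness; I would verify the precise cohomological statement by passing to a smooth presentation where a smooth gerbe looks like $B$ of a smooth group.

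I expect the main obstacle to be the \emph{fullness} assertion in the smooth-gerbe case: essential surjectivity and faithfulness follow cleanly from cohomological affineness and the amplitude of the cotangent complex, but showing that every isomorphism of the restricted sections over $\cX_0$ lifts to an isomorphism over $\cX$ requires understanding the $\Hom$-groups $\Hom_{\Orb_{\cX_0}}(s_0^*\Omega_{\cX'/\cX}, \cI_n)$ well enough to see that the relevant map of such groups in the Tannakian limit is surjective — the subtlety being that a smooth gerbe is not affine, so these are genuinely nonzero groups and one must use the gerbe structure to identify the image. A secondary technical point is keeping the representability reduction compatible with the groupoid of sections when passing to the limit over $n$ and then invoking Tannaka duality; I would handle this by checking that the equivalence $\Gamma(\cX'/\cX) \simeq \varprojlim_n \Gamma(\cX'_n/\cX_n)$ is natural enough that the $B\GL_N$-twist can be inserted at the level of the whole tower. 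Finally, deducing \itemref{SetupI:deformations:exc} and \itemref{SetupI:deformations:fcpcn} from the complete case is routine: complete along $X_0$, apply the complete case, and descend back to $X$ by Artin approximation applied to the limit-preserving functor of sections, using \Cref{P:deformation-resprop} to know $\cX$ is linearly fundamental in those cases and \Cref{R:deformations-setup} to match up the conditions \ref{Cond:FC}, \ref{Cond:PC}, \ref{Cond:N}.
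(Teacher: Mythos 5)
Your overall architecture coincides with the paper's: reduce to finitely presented $f$, treat the complete case by deforming the section over the thickenings $\thck{\cX}{\cX_0}{n}$ via Olsson's obstruction theory and concluding with coherent completeness plus Tannaka duality, then deduce case \itemref{SetupI:deformations:exc} by Artin approximation and case \itemref{SetupI:deformations:fcpcn} by approximating by linearly fundamental stacks essentially of finite type over $\Spec\ZZ$. However, there is a genuine gap in the cohomological bookkeeping, precisely at the point you yourself flag as the crux. For the smooth-gerbe case, fullness is controlled by the obstruction to lifting a $2$-isomorphism between two lifts, which lives in $\Ext^0_{\Orb_{\cX_0}}(\LDERF s_0^*L_{\cX'/\cX},\cI^n/\cI^{n+1})$. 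Since a smooth gerbe has $\Omega_{\cX'/\cX}=0$ and $L_{\cX'/\cX}$ perfect of amplitude exactly $1$, this group is $\Ext^1_{\Orb_{\cX_0}}(s_0^*\COHO{1}(L_{\cX'/\cX}),\cI^n/\cI^{n+1})$, which vanishes because $\COHO{1}(L_{\cX'/\cX})$ is locally free and $\cX_0$ is cohomologically affine. Your proposal instead asserts that the relevant group is $\Hom_{\Orb_{\cX_0}}(s_0^*\Omega_{\cX'/\cX},\cI_n)$, calls it ``genuinely nonzero,'' and proposes to control a surjectivity of such groups in the Tannakian limit using ``the gerbe structure''---this misidentifies the group (that $\Hom$ is zero for a gerbe, and is the reason fullness fails for merely smooth $f$), and the surjectivity argument you sketch is not what closes the gap. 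Relatedly, your claim that $\Ext^{<0}$ vanishes for smooth $f$ because $L_{\cX'/\cX}$ has amplitude $[0,1]$ is false: $\Ext^{-1}(\LDERF s_0^*L_{\cX'/\cX},\cF)\cong\Hom(s_0^*\COHO{1}(L_{\cX'/\cX}),\cF)$, which is why the proposition does not assert faithfulness outside the \'etale case; this over-claim is harmless for the stated result but signals the same degree-shifting confusion.

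Two smaller points. First, the $B\GL_N$ replacement is unnecessary here and not innocuous: a section of a quasi-separated morphism is automatically representable (it is injective on automorphism groups), and $\Gamma(\cX'\times B\GL_N/\cX\times B\GL_N)$ is not obviously equivalent to $\Gamma(\cX'/\cX)$; the paper instead addresses Olsson's hypotheses (which concern the base being a scheme, not representability of the section) by deforming the graph, i.e., applying \cite[Thm.~1.1]{olsson-deformation} to $s_n\co\cX_n\to\cX'$ and $\cX_n\inj\cX$. Second, for case \itemref{SetupI:deformations:fcpcn} the recipe ``complete along $X_0$ and Artin-approximate'' is not available as stated, since $X$ need not be excellent or even noetherian there; one must first extend $s_0$ over a finitely presented closed substack $\tilde{\cX}_0$, descend the whole situation to a linearly fundamental stack essentially of finite type over $\Spec\ZZ$ using \Cref{C:excellent-approx-lin-fund} (this is where \ref{Cond:FC}, \ref{Cond:PC} or \ref{Cond:N} enter), and only then pass to the henselization of the good moduli space and invoke case \itemref{SetupI:deformations:exc}. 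Your first paragraph gestures at this, but the final paragraph's summary would not compile into a proof as written.
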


\begin{proof}
  Any section $s_0$
  of $\cX'\times_{\cX}\cX_0\to \cX_0$ has quasi-compact image. In
  particular, we may immediately reduce to the situation where $f$ is
  finitely presented.
  
  We first handle case \itemref{SetupI:deformations:complete}:
By \Cref{T:complete}, $\cX$ is coherently complete along $\cX_0$.  
Let $\cI$ be the ideal sheaf defining $\cX_0 \subset \cX$ and let $\cX_n := \cX^{[n]}_{\cX_0}$ be its nilpotent thickenings.  
Set $\cX'_n = \cX' \times_{\cX} \cX_n$.
Let $s_0 \co \cX_0 \to \cX'_0$ be a section of $\cX'_0 \to \cX_0$.  Given a section $s_{n-1}$ of $\cX'_{n-1} \to \cX_{n-1}$, lifting $s_0$, the obstruction to deforming $s_{n-1}$ to a section $s_n$ of $\cX'_n \to \cX_n$ is an element of $\Ext^1_{\oh_{\cX_0}}(Ls_0^* L_{\cX'/\cX}, \cI^{n}/\cI^{n+1})$ by \cite[Thm.~1.5]{olsson-deformation}.\footnote{Note that \cite[Thm.~1.5]{olsson-deformation} only treats
  the case of embedded deformations over a base \emph{scheme}. In the
  case of a relatively flat target morphism, however, this can be
  generalized to a base algebraic stack by deforming the graph and employing
  \cite[Thm.~1.1]{olsson-deformation}, together with the tor-independent base
  change properties properties of the cotangent complex. In the situation at
  hand we may also simply apply \cite[Thm.~1.1]{olsson-deformation}
  to $s_n\co \cX_n\to \cX'$ and $\cX_n\inj \cX$.}
Since $\cX' \to \cX$ is smooth (resp.\ a smooth gerbe, resp.\ \'etale), the cotangent complex $L_{\cX'/\cX}$  is perfect of amplitude
  $[0,1]$ (resp.\ perfect of amplitude $1$, resp.\ zero).  Further $\cX_0$ is cohomologically affine, so 
  there exists a lift (resp.\ a unique lift up to non-unique 2-isomorphism, resp.\ a unique lift up to unique 2-isomorphism).  
By Tannaka duality (see \S \ref{SS:tannaka}), these sections lift to a unique section $s \co \cX \to \cX'$. Here we need that $f$ has quasi-affine diagonal unless $X$ is quasi-excellent.

We now handle case \itemref{SetupI:deformations:exc}.  Let $\hat{X}$ be the completion of $X$ along $X_0$ and set $\hat{\cX} = \cX \times_X \hat{X}$ and $\hat{\cX}' = \cX' \times_X \hat{X}$.  Case \itemref{SetupI:deformations:complete} yields a section $\hat{s} \co \hat{\cX} \to 
\hat{\cX}'$ extending $s_0$.  The functor assigning an $X$-scheme $T$ to the set of sections $\Gamma(\cX' \times_X T / \cX \times_X T)$ is limit preserving, and we may apply Artin approximation (\Cref{T:artin-approximation}) to obtain a section of $s \co \cX' \to \cX$ restricting to $s_0$. 

Finally, we handle case \itemref{SetupI:deformations:fcpcn}. By
\Cref{C:excellent-approx-lin-fund}, we may write the linearly fundamental stack
$\cX$ as an inverse limit of linearly fundamental excellent stacks
$\cX_\lambda$, with affine transition maps. Since $\cX_0$ is the intersection
of finitely presented closed substacks \cite{rydh-2014}, we can also write the
pair $(\cX,\cX_0)$ as an inverse limit of pairs $(\cX_\lambda,\cX_{\lambda,0})$.
Since the good moduli space $(X,X_0)$ is a henselian pair, the induced map
$(X,X_0)\to (X_\lambda,X_{\lambda,0})$ on good moduli spaces factors through
the henselization $(X^h_\lambda,X_{\lambda,0})$. After replacing $\cX_\lambda$
with $\cX_\lambda\times_{X_\lambda} X^h_\lambda$, we may thus assume
that $(X_\lambda,X_{\lambda,0})$ is henselian for every $\lambda$.

Since $\cX' \to \cX$ is smooth (resp.~a smooth gerbe, resp.~\'etale) and
finitely presented, after possibly increasing $\lambda$ it descends to
$\cX'_\lambda \to \cX_\lambda$ and retains its properties of being smooth
(resp.~a smooth gerbe, resp.~\'etale) \cite[Prop.~B.3]{rydh-2009}.  If $s_0
\colon \cX_0 \to \cX'_0$ is a section of $\cX'_0\to \cX_0$, then for
sufficiently large $\lambda$, we have a section $s_{\lambda,0} \colon
\cX_{\lambda,0}\to \cX'_{\lambda,0}$. From case \itemref{SetupI:deformations:exc},
we obtain a section $s_\lambda \colon \cX_\lambda\to \cX'_\lambda$, hence
a section $s\colon \cX\to \cX'$ as requested.

The full and full faithfulness statements in \itemref{SetupI:deformations:exc} and  \itemref{SetupI:deformations:fcpcn} can be deduced using similar methods: the quasi-excellent case can be reduced to the complete case using Artin approximation, and the non-excellent case can be reduced to the excellent case using limits.
\end{proof}

\subsection{Deformation of morphisms}

A simple application of \Cref{P:deformation-sections} yields a deformation result of morphisms.

\begin{proposition}\label{P:deformation-morphisms}
Let $(\cX,\cX_0)$ be as in \Cref{Setup:deformations}.
Let $\cY \to X$ be a quasi-separated and smooth (resp.\ smooth gerbe, resp.\ \'etale) morphism with affine stabilizers. In case
\Cref{Setup:deformations} \itemref{SetupI:deformations:complete}, also assume that
$\cY\to X$ has quasi-affine diagonal.
Then any morphism $\cX_0 \to \cY$ can be extended (resp.\ extended uniquely up to non-unique 2-isomorphism, resp.\ extended uniquely up to unique 2-isomorphism) to a morphism $\cX \to \cY$.  In particular,
\begin{enumerate}
\item \label{PI:deformation-morphisms:fet} the natural functor $\FET(\cX)\to \FET(\cX_0)$ between the categories of finite \'etale covers is an equivalence;
\item \label{PI:deformation-morphisms:vect} the natural functor $\VB(\cX)\to \VB(\cX_0)$ between the categories of vector bundles is essentially
  surjective and full;
\item \label{PI:deformation-morphisms:quot} if $G\to X$ is an affine flat
  group scheme of finite presentation and $\cX_0=[\Spec A/G]$, then there is a $G$-equivariant closed immersion $\Spec A \hookrightarrow \Spec B$ over $X$ that induces $\cX_0 \hookrightarrow \cX=[\Spec B/G]$; and
\item \label{PI:deformation-morphisms:nice} if $\cX_0$ is nicely fundamental,
  then so is $\cX$.
\end{enumerate}
\end{proposition}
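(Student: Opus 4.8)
The plan is to deduce \Cref{P:deformation-morphisms} directly from \Cref{P:deformation-sections} by the standard device of replacing a morphism with its graph section. Given $\cY \to X$ quasi-separated, smooth (resp.\ a smooth gerbe, resp.\ \'etale) with affine stabilizers, and a morphism $g_0 \colon \cX_0 \to \cY$ over $X$, set $\cX' = \cX \times_X \cY$ and let $f \colon \cX' \to \cX$ be the projection. Then $f$ is quasi-separated, smooth (resp.\ a smooth gerbe, resp.\ \'etale) with affine stabilizers, being a base change of $\cY \to X$, and $\cX' \times_\cX \cX_0 = \cX_0 \times_X \cY$. The morphism $g_0$ corresponds to a section $s_0 \colon \cX_0 \to \cX' \times_\cX \cX_0$ of $f_0$. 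By \Cref{P:deformation-sections}, $s_0$ extends to a section $s \colon \cX \to \cX'$ (resp.\ extends, uniquely up to non-unique $2$-isomorphism; resp.\ uniquely up to unique $2$-isomorphism), and composing $s$ with the second projection $\cX' \to \cY$ gives the desired extension $\cX \to \cY$ with the same uniqueness properties. I expect no real obstacle here; it is a formal translation, and the only point to check is that the uniqueness and fullness statements transfer correctly under the graph construction, which they do because sections of $f$ and morphisms to $\cY$ over $X$ form equivalent groupoids.

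For the itemized consequences, each follows by applying the main statement to an appropriate classifying stack over $X$. For \itemref{PI:deformation-morphisms:fet}, a finite \'etale cover of $\cX_0$ of degree $n$ is a morphism $\cX_0 \to B_X(S_n \wr \mathrm{pt})$—more precisely, to the stack $\underline{\mathrm{F\acute{E}T}}_n$ over $X$ classifying degree-$n$ finite \'etale covers, which is a smooth (in fact \'etale) algebraic $X$-stack with affine (finite) stabilizers; since the degree is locally constant and $(\cX,\cX_0)$ is local, it suffices to treat each degree separately, and then the \'etale case of the proposition gives the equivalence. For \itemref{PI:deformation-morphisms:vect}, a rank-$r$ vector bundle on $\cX_0$ is a morphism $\cX_0 \to B\GL_{r,X}$, and $B\GL_{r,X} \to X$ is smooth with affine stabilizers, so the smooth case gives essential surjectivity and fullness; again one works rank by rank using locality. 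For \itemref{PI:deformation-morphisms:quot}: if $\cX_0 = [\Spec A / G]$ then there is an affine morphism $\cX_0 \to B_X G$, and since $B_X G \to X$ is smooth with affine stabilizers (as $G \to X$ is flat, affine—or at least with affine fibers—of finite presentation), the proposition extends this to a morphism $\cX \to B_X G$; this morphism is affine by \Cref{P:refinement}\itemref{P:refinement:affine_diag} (note $\cX$ has affine diagonal and, in cases \itemref{SetupI:deformations:complete}, \itemref{SetupI:deformations:exc}, is linearly fundamental by \Cref{P:deformation-resprop}, while in case \itemref{SetupI:deformations:fcpcn} $\cX$ has the resolution property hence affine diagonal), giving $\cX = [\Spec B / G]$. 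For \itemref{PI:deformation-morphisms:nice}: if $\cX_0$ is nicely fundamental it admits an affine morphism to $B_{S_0} Q_0$ for a nice embeddable group scheme $Q_0$ over an affine scheme $S_0$; by \Cref{L:nicely-fund-fp-Q} we may take $S_0$ of finite presentation over $\Spec\ZZ$, and by \Cref{P:nice-deformations} $Q_0$ deforms to a nice embeddable $Q$ over a suitable henselian pair, so that $B_X Q \to X$ is smooth with affine stabilizers; applying the proposition extends $\cX_0 \to B_X Q$ to $\cX \to B_X Q$, affine by \Cref{P:refinement}\itemref{P:refinement:affine_diag} as above, whence $\cX$ is nicely fundamental.

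The one step requiring genuine care is the affineness of the extended morphisms in \itemref{PI:deformation-morphisms:quot} and \itemref{PI:deformation-morphisms:nice}: the proposition only produces \emph{some} extension, which need not a priori be affine even though the restriction to $\cX_0$ is. The remedy is exactly \Cref{P:refinement}\itemref{P:refinement:affine_diag}, which shrinks to an open neighborhood of $\cX_0$ on which the morphism becomes affine; but here $(\cX,\cX_0)$ is local, so the only open neighborhood of $\cX_0$ is $\cX$ itself, and thus the extension is automatically affine on all of $\cX$. This is the point I would state most carefully. Everything else is a routine unwinding of definitions together with the observation that all the relevant classifying stacks $B_X G$, $B_X Q$, $B\GL_{r,X}$, $\underline{\mathrm{F\acute{E}T}}_n$ are smooth over $X$ with affine stabilizers, so that the hypotheses of \Cref{P:deformation-sections} (equivalently, of the main statement of \Cref{P:deformation-morphisms}) are met.
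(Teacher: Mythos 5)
Your proposal is correct and follows essentially the same route as the paper: the main statement is deduced from \Cref{P:deformation-sections} via the graph construction with $\cX' = \cX \times_X \cY$, and the four consequences are obtained by applying it to the classifying stacks $\coprod_n BS_{n,X}$, $\coprod_n B\GL_{n,X}$, $B_XG$, and $B_XQ$ (the last after deforming the nice group via \Cref{P:nice-deformations}), with affineness secured by \Cref{P:refinement}\itemref{P:refinement:affine_diag}. Your observation that the open neighborhood produced by \Cref{P:refinement} must be all of $\cX$ because the pair $(\cX,\cX_0)$ is local is exactly the point the paper leaves implicit, and is correctly handled.
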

\begin{proof} For the main statement, apply \Cref{P:deformation-sections} with $\cX' = \cX \times_X \cY$.
For \itemref{PI:deformation-morphisms:fet}, apply the result to $\cY=\coprod_n BS_{n,X}$ noting
that $BS_n$ classifies finite \'etale covers of degree $n$.
Similarly, for \itemref{PI:deformation-morphisms:vect}, apply the result to
$\cY=\coprod_n B\GL_{n,X}$. For \itemref{PI:deformation-morphisms:quot}, apply the result to $\cY =
BG$ together with \Cref{P:refinement}\itemref{P:refinement:affine_diag}
to ensure that the induced
morphism $\cX \to BG$ is affine. For \itemref{PI:deformation-morphisms:nice},
note that, by definition, $\cX_0=[\Spec A/G_0]$ where $G_0\to S_0$ is nice and
embeddable. We next deform $G_0$ to a nice and embeddable group scheme $G\to S$
(\Cref{P:nice-deformations}) and then apply
\itemref{PI:deformation-morphisms:quot}.
\end{proof}

\subsection{Deformation of linearly fundamental stacks}
If $(S,S_0)$ is an affine complete noetherian pair and $\cX_0$ is a linearly fundamental stack with a syntomic morphism $\cX_0 \to S_0$ that is a good moduli space, \Cref{T:microlocalization} constructs a noetherian and linearly fundamental stack $\cX$ that is flat over $S$, such that $\cX_0 = \cX \times_S S_0$ and $\cX$ is coherently complete along $\cX_0$.  The following lemma shows that $\cX \to S$ is also a good moduli space. We also consider non-noetherian generalizations.

\begin{lemma} \label{L:deformations-fundamental}
  Let $\cX$ be a quasi-compact algebraic stack with affine diagonal and affine good moduli space $X$. Let $\pi \colon \cX \to S$ be a flat morphism. Let $S_0 \inj S$ be a closed immersion. Let $\cX_0 = \cX\times_S S_0$ and assume $\pi_0 \colon \cX_0 \to S_0$ is a good moduli space and $(\cX,\cX_0)$ is a local pair. In addition, assume that $(S,S_0)$ is an affine local pair and 
  \begin{enumerate}[label=(\alph*), ref=\alph*]
  \item \label{LC:deformations-fundamental:complete} $\cX$ is
    noetherian and $(S,S_0)$ is complete;
  \item \label{LC:deformations-fundamental:ft} $\cX$ is
    noetherian and $\pi$ is of finite type; or
  \item \label{LC:deformations-fundamental:fcpcn}
    $\pi$ is of finite presentation and $\cX_0$
    satisfies \ref{Cond:FC}, \ref{Cond:PC}, or \ref{Cond:N}.
  \end{enumerate}
  Then $\pi$ is a good moduli space morphism of finite presentation.  Moreover,
  \begin{enumerate}
    \item \label{LI:deformations-fundamental:syntomic}
    if $\pi_0$ is syntomic (resp.\ smooth, resp.\ \'etale), then so is
      $\pi$; and
    \item \label{LI:deformations-fundamental:gerbe}
    if $\pi_0$ is an fppf gerbe (resp.\ a smooth gerbe, resp.\ an \'etale
      gerbe), then so is $\pi$.
    \end{enumerate}
\end{lemma}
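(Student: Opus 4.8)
The strategy is to reduce everything to case \itemref{LC:deformations-fundamental:complete} and then to bootstrap to the other cases by limit methods and Artin approximation, exactly as in the proofs of \Cref{P:deformation-resprop} and \Cref{P:deformation-sections}. First I would observe that the statement that $\pi$ is a good moduli space is insensitive to replacing $X$ by a completion or henselization at a closed point, since being a good moduli space (cohomological affineness plus $\oh_X \xrightarrow{\sim} \pi_*\oh_\cX$) can be checked after faithfully flat base change along $X \to X$ and, via \Cref{C:formal-fns}, after completion. So in case \itemref{LC:deformations-fundamental:complete} I would argue: $\cX$ is noetherian with affine diagonal, $\cX_0 \inj \cX$ is the closed substack cut out by the preimage of the ideal of $S_0$, the pair $(\cX,\cX_0)$ is coherently complete by \Cref{T:complete} (using that $\cX_0$, being cohomologically affine with affine diagonal and a good moduli space to the field-free affine $S_0$, has the resolution property since it is linearly fundamental — or deducing the resolution property for $\cX_0$ and then for $\cX$ directly from \Cref{T:complete}), and hence $\cX$ is linearly fundamental. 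Then cohomological affineness of $\pi$ follows because $\cX \to X$ is cohomologically affine (good moduli space of a linearly fundamental stack) and $X \to S$ is affine. Finally $\oh_S \to \pi_*\oh_\cX$ is an isomorphism: it becomes an isomorphism modulo each power of the ideal of $S_0$ by the good moduli space property of $\pi_0$ combined with the exactness of $\Gamma(\cX,-)$ on the graded pieces, and then one concludes by completeness of $(S,S_0)$ together with \Cref{C:formal-fns} applied to $\oh_\cX$. Finite presentation of $\pi$ in this case follows from \cite[Thm.~A.1]{luna-field} applied to the good moduli space $\cX \to X$ and finiteness of $X \to S$.

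For case \itemref{LC:deformations-fundamental:ft}, I would pass to the completion: let $\hat S$ be the completion of $S$ along $S_0$, $\hat\cX = \cX \times_S \hat S$. Case \itemref{LC:deformations-fundamental:complete} shows $\hat\cX \to \hat S$ is a good moduli space, hence $\hat\cX$ has the resolution property, i.e.\ there is a quasi-affine morphism $\hat\cX \to B\GL_n$ for some $n$. Since $\pi$ is of finite type and $\cX$ is noetherian, $\pi$ is of finite presentation, so the functor of quasi-affine morphisms to $B\GL_n$ is limit preserving \cite[Thm.~C]{rydh-2009}; applying Artin approximation (\Cref{T:artin-approximation}) over the excellent henselian pair $(S,S_0)$ — here I use that $\cX$ noetherian and $\pi$ of finite type forces $S$ to be noetherian, and I may assume $S$ excellent after a further reduction, or invoke the henselian version directly — produces a quasi-affine morphism $\cX \to B\GL_n$, so $\cX$ is linearly fundamental. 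Cohomological affineness of $\pi$ then follows as before, and $\oh_S \xrightarrow{\sim} \pi_*\oh_\cX$ follows by checking after completion using formal functions, since the map is already known to be an isomorphism after completing $S$ along $S_0$ and, by the local pair hypothesis, $S_0$ meets every closed subset of $S$, so completeness is not needed — one uses instead that the cokernel and kernel are coherent and vanish after completion along $S_0$, hence vanish by Nakayama for the local pair $(S,S_0)$ (\Cref{R:nakayama}). Finite presentation again comes from \cite[Thm.~A.1]{luna-field}.

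For case \itemref{LC:deformations-fundamental:fcpcn}, I would use the approximation machinery of \Cref{S:approximation}: since $\cX$ has the resolution property and satisfies one of \ref{Cond:FC}, \ref{Cond:PC}, \ref{Cond:N} (these pass between $\cX_0$ and $\cX$ by \Cref{R:fc-closed}), and $\pi$ is of finite presentation, I can write the pair $(\cX_0 \inj \cX)$ over $(S_0 \inj S)$ as a limit of pairs $(\cX_{\lambda,0} \inj \cX_\lambda)$ over $(S_{\lambda,0}\inj S_\lambda)$ with $\cX_\lambda$ fundamental, essentially of finite type over $\Spec\ZZ$, and with affine transition maps (\Cref{C:excellent-approx-lin-fund} and standard limit arguments for $S$); after increasing $\lambda$ the good moduli space property of $\pi_0$ descends (\Cref{C:gms-approximation}). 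Pulling back along the henselization of the good moduli space of $\cX_\lambda$ along that of $\cX_{\lambda,0}$, I land in the excellent henselian case, to which case \itemref{LC:deformations-fundamental:ft} (or its excellent-henselian variant) applies. Then descend the conclusion back to $\cX$.

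The moreover statements \itemref{LI:deformations-fundamental:syntomic} and \itemref{LI:deformations-fundamental:gerbe} follow by a general deformation-theoretic argument independent of which case we are in: whether $\pi$ is syntomic/smooth/\'etale (resp.\ an fppf/smooth/\'etale gerbe) is an open condition on $\cX$ that is detected on the cotangent complex $L_{\cX/S}$, which is tor-independent of base change; since $L_{\cX/S}$ restricts to $L_{\cX_0/S_0}$ along $\cX_0 \inj \cX$ (flatness of $\pi$), and $\cX_0 \inj \cX$ is the thickening meeting every closed subset (local pair), the relevant amplitude/invertibility conditions on $L_{\cX/S}$ hold in a neighborhood of $\cX_0$, hence everywhere by the local-pair property; the gerbe statements additionally use that being a gerbe is equivalent to the diagonal being smooth/flat plus surjectivity, again checkable near $\cX_0$. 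I expect the main obstacle to be the bookkeeping in case \itemref{LC:deformations-fundamental:fcpcn} — ensuring that all of: the resolution property, the good moduli space property, flatness, and finite presentation descend simultaneously to a common index $\lambda$ — rather than any single hard idea; the genuinely substantive input, coherent completeness (\Cref{T:complete}), has already been established.
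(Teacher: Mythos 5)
Your overall architecture (settle the complete case by computing on infinitesimal neighborhoods, descend to case \itemref{LC:deformations-fundamental:ft} via the completion, approximate for case \itemref{LC:deformations-fundamental:fcpcn}) matches the paper's, and your treatment of case \itemref{LC:deformations-fundamental:fcpcn} and of the two ``moreover'' claims is essentially the paper's. But there is a gap on the critical path of case \itemref{LC:deformations-fundamental:complete}. Writing $S=\Spec A$, $X=\Spec B$, the whole content of ``$\pi$ is a good moduli space'' is that $A\to B$ is an isomorphism (cohomological affineness of $\pi$ is free, since $\cX\to X$ is a good moduli space and $X\to S$ is a morphism of affines). Your graded-pieces argument correctly gives $A/I^{n+1}\xrightarrow{\sim} B_n:=\Gamma(\cX,\oh_\cX/I^{n+1}\oh_\cX)$ for all $n$, but you then conclude via \Cref{C:formal-fns} applied to $\oh_\cX$ — whose hypothesis is that $\Gamma(\cX,\oh_\cX)=B$ is complete with respect to $\Gamma(\cX,I\oh_\cX)$. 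That is not among your hypotheses and is essentially what you are trying to prove; the invocation is circular. What you actually get from the limit is $A\xrightarrow{\sim}\hat B$ (the $IB$-adic completion), and to descend to $A\xrightarrow{\sim} B$ you need $B\to\hat B$ to be injective, i.e.\ Krull intersection with $IB$ contained in the Jacobson radical of $B$. This is exactly where the hypothesis that $(\cX,\cX_0)$ is a local pair enters — it forces $(\Spec B,\Spec B/IB)$ to be a local pair — and your proof never uses that hypothesis. The paper's proof runs $A\to B\to\hat B$, notes the composite is an isomorphism and $B\to\hat B$ is faithfully flat, and concludes.

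Two of your intermediate steps are also unjustified, though they turn out to be detours rather than necessities. In case \itemref{LC:deformations-fundamental:complete} you invoke \Cref{T:complete} to make $\cX$ linearly fundamental, but \Cref{T:complete} requires $\cX_0$ to have the resolution property, which is not a hypothesis in cases \itemref{LC:deformations-fundamental:complete} or \itemref{LC:deformations-fundamental:ft}; your parenthetical claim that a cohomologically affine stack with affine diagonal and affine good moduli space is automatically linearly fundamental is false (this is precisely why the resolution property appears as a hypothesis throughout the paper). In case \itemref{LC:deformations-fundamental:ft} you invoke Artin approximation over $(S,S_0)$, but that pair is only assumed local — not henselian, not excellent — so \Cref{T:artin-approximation} does not apply, and one cannot ``assume $S$ excellent after a further reduction.'' Fortunately neither linear fundamentality nor the resolution property of $\cX$ is needed for the conclusion. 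The paper's case \itemref{LC:deformations-fundamental:ft} instead first shows $\pi$ is surjective (its image is open, stable under generization, and contains $S_0$, so its closed complement misses $S_0$ and is empty by the local pair hypothesis), deduces that $S$ is noetherian — a point you also need but do not justify, since finite type alone does not transfer noetherianity without surjectivity — and then performs faithfully flat descent along $A\to\hat A$ to reduce to case \itemref{LC:deformations-fundamental:complete}. Your closing remark about ``coherent kernel and cokernel'' vanishing by Nakayama is not right as stated ($B$ is a finitely generated $A$-algebra, not a finite $A$-module), but the correct mechanism — faithful flatness of $A\to\hat A$ because $I\subset\mathrm{Jac}(A)$ — is within reach of what you wrote.
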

\begin{proof}
  We first show that $\pi$ is a good moduli space morphism of finite presentation.

  Let $S=\spec A$, $S_0 = \spec (A/I)$ and $X=\spec B$. Since
  $(\cX,\cX_0)$ is a local pair, it follows that
  $(\spec B,\spec B/IB)$ is a local pair. In particular, $IB$ is
  contained in the Jacobson radical of $B$. Note that if $\cX$ is
  noetherian, then $\cX \to \spec B$ is of finite
  type~\cite[Thm.~A.1]{luna-field}. Moreover, in the commuting diagram:
  \[
    \xymatrix{\cX_0 \ar@{_(->}[d] \ar[r] & \spec (B/IB) \ar@{_(->}[d] \ar[r] & S_0 \ar@{_(->}[d]
      \\ \cX \ar[r] & X \ar[r] & S,}
  \]
  the outer rectangle is cartesian, as is the right square, so it
  follows that the left square is cartesian. Since the formation of
  good moduli spaces is compatible with arbitrary base change, it
  follows that the morphism $A/I \to B/IB$ is an isomorphism.

  Case \itemref{LC:deformations-fundamental:complete}: let
  $A_n=A/I^{n+1}$ and $\cX_n=V(I^{n+1}\Orb_{\cX})$. Since $\cX_n$ is
  noetherian and $\pi_n \co \cX_n\to S_n:=\Spec A/I^{n+1}$ is flat, it
  follows that $B_n=\Gamma(\cX_n,\Orb_{\cX_n})=B/I^{n+1}B$ is a
  noetherian and flat $A_n=A/I^{n+1}$-algebra
  \cite[Thm.~4.16(ix)]{alper-good}. But $B_n/IB_n=A/I$ so $A_n\to B_n$
  is surjective and hence an isomorphism. Let $\hat{B}$ be the
  $IB$-adic completion of $B$; then the composition
  $A \to B \to \hat{B}$ is an isomorphism and $B \to \hat{B}$ is
  faithfully flat because $IB$ is contained in the Jacobson radical of
  $B$. It follows immediately that $A \to B$ is an isomorphism.

  Case \itemref{LC:deformations-fundamental:ft}: now the image
  of $\pi$ contains $S_0$ and by flatness is stable under
  generizations; it follows immediately that $\pi$ is faithfully
  flat. Since $\cX$ is noetherian, it follows that $S$ is
  noetherian.
  We may now base change everything along the
  faithfully flat morphism $\spec \hat{A} \to \spec A$, where
  $\hat{A}$ is the $I$-adic completion of $A$. By faithfully flat
  descent of good moduli spaces, we are now reduced to Case
  \itemref{LC:deformations-fundamental:complete}.

  Case \itemref{LC:deformations-fundamental:fcpcn}: the good moduli space $X\to S$ is also
  of finite presentation (\Cref{T:etale-local-gms}). The result then
  follows from \itemref{LC:deformations-fundamental:ft} using an approximation
  argument similar to that employed in the proof of \Cref{P:deformation-resprop}.

  Now claim \itemref{LI:deformations-fundamental:syntomic} follows because the conditions are open and all closed points of $\cX$ lie in $\cX_0$. For claim
  \itemref{LI:deformations-fundamental:gerbe}, since $\cX\to S$ and
  $\cX\times_S \cX\to S$ are flat and $\cX_0$ contains all closed
  points, the fiberwise criterion of flatness shows that
  $\Delta_{\cX/S}$ is flat if and only if $\Delta_{\cX_0/S_0}$ is
  flat.  It then follows that $\Delta_{\cX/S}$ is smooth (resp.\
  \'etale) if $\Delta_{\cX_0/S_0}$ is so.
\end{proof}

Combining \Cref{T:microlocalization}/\Cref{T:microlocalization:general} and \Cref{L:deformations-fundamental} with Artin approximation yields the following result. 

\begin{proposition}[Deformation of linearly fundamental stacks]\label{P:deformation-linearly-fundamental}
  Let $\pi_0 \colon \cX_0 \to S_0$ be a good moduli space,
  where $\cX_0$ is linearly fundamental. Let $(S,S_0)$ be an affine
  henselian pair and assume one of the following conditions:
\begin{enumerate}[label=(\alph*), ref=\alph*]
\item \label{PC:deformation-linearly-fundamental:complete} $(S,S_0)$ is a noetherian complete pair; 
\item \label{PC:deformation-linearly-fundamental:excellent}$S$ is quasi-excellent; or
\item \label{PC:deformation-linearly-fundamental:fcpcn} $\cX_0$
  satisfies \ref{Cond:FC}, \ref{Cond:PC}, or \ref{Cond:N}.
\end{enumerate}
If $\pi_0$ is syntomic, then there exists a syntomic morphism
$\pi\colon \cX \to S$ that is a good moduli space such that:
  \begin{enumerate}
  \item \label{PI:deformation-linearly-fundamental:lift} $\cX \times_{S} S_0 \cong \cX_0$;
  \item \label{PI:deformation-linearly-fundamental:linearly-fundamental}$\cX$ is linearly fundamental;
  \item \label{PI:deformation-linearly-fundamental:coh-complete} $\cX$ is coherently complete along $\cX_0$ if $(S,S_0)$ is a noetherian complete pair;
  \item \label{PI:deformation-linearly-fundamental:smooth/etale} $\pi$
    is smooth (resp.\ \'etale) if $\pi_0$ is smooth (resp.\ \'etale); and
  \item \label{PI:deformation-linearly-fundamental:gerbe} $\pi$
    is an fppf (resp.\ smooth, resp.\ \'etale) gerbe if $\pi_0$ is such a gerbe.
\end{enumerate}
Moreover, if $\pi_0$ is smooth (resp.\ a smooth gerbe, resp.\ \'etale), then $\pi$ is unique up to non-unique isomorphism (resp.\ non-unique
$2$-isomorphism, resp.\ unique $2$-isomorphism).
\end{proposition}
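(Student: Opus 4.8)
### Proof proposal

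The plan is to reduce everything to \Cref{T:microlocalization} and \Cref{L:deformations-fundamental}, producing the deformation $\cX\to S$ in two stages and then transporting it back along the henselization by Artin approximation.

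\textbf{Step 1: The complete noetherian case \itemref{PC:deformation-linearly-fundamental:complete}.} Since $\cX_0$ is linearly fundamental with good moduli space $\pi_0\colon\cX_0\to S_0$, and $\pi_0$ is syntomic, I would first view $\cX_0$ as a locally closed (in fact closed) substack of itself and apply \Cref{T:microlocalization} (more precisely \Cref{T:microlocalization:general}, taking $\cX=S$, $\cX_0=S_0$ the closed immersion, and $h_0=\pi_0$) — note that $S$ has affine (indeed quasi-affine) diagonal as it is affine, so hypothesis \itemref{TI:microlocalization:quaff} holds. This yields a flat morphism $h\colon\hat\cW\to S$ with $\hat\cW$ noetherian, linearly fundamental, $h^{-1}(S_0)\simeq\cX_0$, and $\hat\cW$ coherently complete along $\cX_0$. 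Set $\cX=\hat\cW$. This immediately gives \itemref{PI:deformation-linearly-fundamental:lift}, \itemref{PI:deformation-linearly-fundamental:linearly-fundamental} and \itemref{PI:deformation-linearly-fundamental:coh-complete}. To see that $\pi\colon\cX\to S$ is itself a good moduli space (not merely that $\cX$ has one), I apply \Cref{L:deformations-fundamental}\itemref{LC:deformations-fundamental:complete}: $\cX$ is noetherian with affine diagonal and affine good moduli space (because linearly fundamental stacks have an affine good moduli space $\Spec\Gamma(\cX,\Orb_\cX)$), $(S,S_0)$ is a complete affine pair, $\pi$ is flat, and $\pi_0$ is a good moduli space; the hypothesis that $(\cX,\cX_0)$ is a local pair holds since $(S,S_0)$ is henselian hence local and $\cX_0$ contains every closed point of $\cX$ by flatness and universal closedness of the gms. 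Then \Cref{L:deformations-fundamental}\itemref{LI:deformations-fundamental:syntomic} and \itemref{LI:deformations-fundamental:gerbe} give \itemref{PI:deformation-linearly-fundamental:smooth/etale} and \itemref{PI:deformation-linearly-fundamental:gerbe}. Syntomicity of $\pi$ follows the same way: every closed point of $\cX$ lies in $\cX_0$, so $\pi$ syntomic $\iff$ $\pi_0$ syntomic.

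\textbf{Step 2: The excellent and \ref{Cond:FC}/\ref{Cond:PC}/\ref{Cond:N} cases via Artin approximation.} For \itemref{PC:deformation-linearly-fundamental:excellent} and \itemref{PC:deformation-linearly-fundamental:fcpcn}, write $(S,S_0)=(\Spec A,\Spec A/I)$ and let $\hat A$ be the $I$-adic completion, $\hat S=\Spec\hat A$, $\hat S_0=\Spec\hat A/I\hat A$; since $(S,S_0)$ is an excellent henselian pair (in case \itemref{PC:deformation-linearly-fundamental:fcpcn} one first reduces to this: $\cX_0$ — hence $S_0$, hence $S$ — satisfies the same finiteness condition, and by \Cref{C:excellent-approx-lin-fund} together with the henselization trick one arranges $S$ excellent), Step 1 applied to $(\hat S,\hat S_0)$ produces a syntomic good moduli space $\hat\cX\to\hat S$ extending $\cX_0$, linearly fundamental, coherently complete along $\cX_0$, with the appropriate gerbe/smoothness properties. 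Now consider the functor $F$ on $(\SCH{S})^{\opp}$ sending $T\mapsto$ (isomorphism classes of) pairs consisting of a linearly fundamental algebraic stack $\cZ\to T$ that is syntomic and a good moduli space, together with an isomorphism $\cZ\times_T T_0\cong\cX_0\times_{S_0}T_0$ where $T_0=T\times_S S_0$. This $F$ is limit preserving: linear fundamentality and the property of being a good moduli space are limit preserving for finite-presentation morphisms by \Cref{L:approximation-fundamental}\itemref{LI:approx:fundamental} and \Cref{C:gms-approximation}, and syntomicity descends by standard limit methods \cite[Prop.~B.3]{rydh-2009}. Since $\hat\cX\in F(\hat S)$ restricts to $\cX_0\in F(S_0)$, Artin approximation (\Cref{T:artin-approximation}, with $n=0$) gives $\cX\in F(S)$ restricting to $\cX_0$ over $S_0$ — this is the desired deformation. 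In case \itemref{PC:deformation-linearly-fundamental:fcpcn} one also needs the intermediate non-complete noetherian result \Cref{L:deformations-fundamental}\itemref{LC:deformations-fundamental:fcpcn} to know the approximated object is still a good moduli space without completing.

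\textbf{Step 3: Uniqueness.} Uniqueness up to (possibly non-unique) isomorphism, respectively $2$-isomorphism, follows from the uniqueness clauses in \Cref{T:microlocalization:general} (when $h_0$ is smooth, $h$ is unique up to non-unique $1$-isomorphism; when étale, unique up to unique $2$-isomorphism; the smooth-gerbe case lies in between) for the complete case, and then propagates to the excellent and \ref{Cond:FC}/\ref{Cond:PC}/\ref{Cond:N} cases by the standard reduction: two solutions become isomorphic after $I$-adic completion by the complete case, and an isomorphism over $\hat S$ can be approximated over $S$ by another application of \Cref{T:artin-approximation} to the (limit-preserving) functor of isomorphisms, matching any prescribed jet over $S_0$. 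The main obstacle I anticipate is not any single deep input — those are all in hand — but the bookkeeping in Step 2: verifying carefully that the moduli functor $F$ is genuinely limit preserving (in particular that ``good moduli space'' survives approximation, which is exactly why \Cref{C:gms-approximation} and its mixed-characteristic hypotheses \ref{Cond:FC}/\ref{Cond:PC}/\ref{Cond:N} are needed and cannot be dropped), and making the reduction in case \itemref{PC:deformation-linearly-fundamental:fcpcn} to an excellent base precise.
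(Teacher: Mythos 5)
Your Step 1 (the complete case via \Cref{T:microlocalization} followed by \Cref{L:deformations-fundamental}\itemref{LC:deformations-fundamental:complete}) and your Step 3 are essentially the paper's argument. The problem is in Step 2, and specifically in case \itemref{PC:deformation-linearly-fundamental:excellent}. You apply Artin approximation (\Cref{T:artin-approximation}) to a functor $F$ whose objects are \emph{linearly fundamental} stacks that are syntomic \emph{good moduli spaces} over $T$, and you justify limit preservation by citing \Cref{L:approximation-fundamental}\itemref{LI:approx:fundamental} and \Cref{C:gms-approximation}. But \Cref{L:approximation-fundamental} only gives limit preservation of \emph{fundamental} (or nicely fundamental), not linearly fundamental, and both \Cref{T:approximation-of-lin-fund:relative} and \Cref{C:gms-approximation} require one of \ref{Cond:FC}, \ref{Cond:PC}, \ref{Cond:N} --- hypotheses that are exactly what you do \emph{not} have in case \itemref{PC:deformation-linearly-fundamental:excellent}. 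Indeed, \Cref{R:not-limit-preserving} and \Cref{A:mixed-char-counterexamples} show that linear fundamentality and the good moduli space property genuinely fail to be limit preserving in mixed characteristic, so your $F$ is not a functor to which \Cref{T:artin-approximation} applies over a general excellent henselian pair. You flag this as ``the main obstacle,'' but asserting that \Cref{C:gms-approximation} handles it does not close the gap in the one case where its hypotheses are unavailable.

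The paper's resolution is to approximate a strictly weaker functor: isomorphism classes of \emph{fundamental} stacks $\cY\to T$ with $\cY\to T$ syntomic. This \emph{is} limit preserving unconditionally (\Cref{L:approximation-fundamental}\itemref{LI:approx:fundamental} plus standard limit methods for syntomicity), so Artin approximation produces a fundamental, syntomic $\cX\to S$ restricting to $\cX_0$; one then recovers the good moduli space property and linear fundamentality \emph{a posteriori} via \Cref{L:deformations-fundamental}\itemref{LC:deformations-fundamental:ft}, using that $(S,S_0)$ is henselian so the relevant pair is local and the closed points lie over $S_0$, where the stabilizers are known to be linearly reductive (cf.\ \Cref{C:adequate+lin-red=>good:fundamental}). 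Case \itemref{PC:deformation-linearly-fundamental:fcpcn} is then deduced from case \itemref{PC:deformation-linearly-fundamental:excellent} by approximation (this is where \ref{Cond:FC}/\ref{Cond:PC}/\ref{Cond:N} and \Cref{C:excellent-approx-lin-fund} are actually used), rather than being folded into the same Artin approximation step. If you restructure Step 2 along these lines --- approximate only the fundamental-plus-syntomic data, then upgrade --- the rest of your argument, including your uniqueness discussion (which the paper handles more directly via \Cref{P:deformation-morphisms} applied with $\cY=\cX'$), goes through.
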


\begin{proof}
In case \itemref{PC:deformation-linearly-fundamental:complete}: the existence of a flat morphism $\cX\to S$ 
satisfying \itemref{PI:deformation-linearly-fundamental:lift}--\itemref{PI:deformation-linearly-fundamental:coh-complete} is immediate from \Cref{T:microlocalization:general}\itemref{TI:microlocalization:quaff}
  applied to $\cX_0 \to S_0 \to S$.
\Cref{L:deformations-fundamental}\itemref{LC:deformations-fundamental:complete} implies that $\cX \to S$ is syntomic, a good moduli space, and satisfies \itemref{PI:deformation-linearly-fundamental:smooth/etale}--\itemref{PI:deformation-linearly-fundamental:gerbe}.
If $\cX' \to S$ is another lift, the uniqueness statements follow by applying \Cref{P:deformation-morphisms} with $\cY = \cX'$.

In case \itemref{PC:deformation-linearly-fundamental:excellent}: consider the functor   
assigning an $S$-scheme $T$ to the set of isomorphism classes of fundamental 
stacks $\cY$ over $T$ such that $\pi \co \cY \to T$ is syntomic. This functor is limit preserving by \Cref{P:approximation-fundamental}\itemref{PI:approx:fundamental}, so we may use the construction in the complete case and Artin approximation (\Cref{T:artin-approximation}) to obtain a fundamental stack $\cX$ over $S$ such that $\cX \times_S S_0 = \cX_0$ and $\cX \to S$ is syntomic. An application of \Cref{L:deformations-fundamental}\itemref{LC:deformations-fundamental:ft} completes the argument again.

Case \itemref{PC:deformation-linearly-fundamental:fcpcn} follows from
case \itemref{PC:deformation-linearly-fundamental:excellent} by
approximation (similar to that used in the proof of
\Cref{P:deformation-resprop}).
\end{proof}

\subsection{Deformation of linearly reductive groups}

As a direct consequence of \Cref{P:deformation-linearly-fundamental}, we can prove the following result, cf.\ \Cref{P:nice-deformations}.

\begin{proposition}[Deformation of linearly reductive group schemes]\label{P:deformation-linearly-reductive}
Let $(S,S_0)$ be an affine henselian pair and
$G_0\to S_0$ a linearly reductive and embeddable group scheme. Assume one
of the following conditions:
\begin{enumerate}[label=(\alph*),ref=\alph*]
	\item $(S,S_0)$ is a noetherian complete pair; 
	\item $S$ is quasi-excellent; or
	\item $G_0$ has nice fibers at closed points or $S_0$ satisfies \ref{Cond:PC} or \ref{Cond:FC}.
\end{enumerate}
Then there exists a linearly reductive and embeddable group scheme $G\to S$ such that $G_0 = G \times_S S_0$.  If, in addition, $G_0 \to S_0$ is smooth (resp.\ \'etale), then $G \to S$ is smooth (resp.\ \'etale) and unique up to non-unique (resp.\ unique) isomorphism.
\end{proposition}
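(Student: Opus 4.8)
The plan is to deduce this from \Cref{P:deformation-linearly-fundamental} applied to the classifying stack $BG_0 \to S_0$, using the dictionary between embeddable linearly reductive group schemes over an affine base and linearly fundamental stacks that are gerbes recorded in \Cref{R:BG-fundamental}. Concretely: since $G_0 \to S_0$ is linearly reductive and embeddable, the stack $\cX_0 := B_{S_0}G_0$ is linearly fundamental, and $\pi_0 \colon \cX_0 \to S_0$ is a good moduli space which is moreover an fppf gerbe (it is a smooth gerbe if $G_0 \to S_0$ is smooth, an \'etale gerbe if $G_0 \to S_0$ is \'etale); in particular $\pi_0$ is syntomic. The three hypotheses in the statement match the three cases of \Cref{P:deformation-linearly-fundamental} after observing, via \Cref{R:deformations-setup} (or \Cref{R:fc-closed}), that conditions \ref{Cond:FC}, \ref{Cond:PC} on $S_0$ coincide with the same conditions on $\cX_0$, and that $G_0$ having nice fibers at closed points is exactly \ref{Cond:N} for $\cX_0$.

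First I would invoke \Cref{P:deformation-linearly-fundamental} to produce a syntomic good moduli space $\pi \colon \cX \to S$ with $\cX$ linearly fundamental, $\cX \times_S S_0 \cong \cX_0 = B_{S_0}G_0$, and with $\pi$ an fppf gerbe (resp.\ smooth, resp.\ \'etale gerbe) since $\pi_0$ is. Then I would recover the group scheme: an fppf gerbe $\pi \colon \cX \to S$ with a section over $S_0$ need not a priori have a section over $S$, but the section $S_0 \to \cX_0 \to \cX$ corresponding to the trivial torsor, together with \Cref{P:deformation-sections} (case matching the hypothesis, using that $\cX \to S$ has affine diagonal since $\cX$ is linearly fundamental and that $\cX_0 \to S_0$ has a tautological section), extends to a section $\sigma \colon S \to \cX$. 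Defining $G := \Aut_S(\sigma)$, i.e.\ the inertia of $\cX$ pulled back along $\sigma$, gives a flat affine group scheme of finite presentation over $S$ with $\cX \cong B_S G$ (a gerbe with a section is the classifying stack of its band), hence $BG \to S$ is cohomologically affine and $G \to S$ is linearly reductive; and $B_SG$ is linearly fundamental, so by \Cref{R:BG-fundamental} $G \to S$ is embeddable. Restricting to $S_0$ recovers $G \times_S S_0 \cong \Aut_{S_0}(\sigma|_{S_0}) \cong G_0$. The smoothness (resp.\ \'etaleness) of $G \to S$ follows from that of the gerbe $\pi$ (the inertia of a smooth gerbe is smooth over the base, since $\Delta_{\cX/S}$ is smooth), and the uniqueness up to non-unique (resp.\ unique) isomorphism is inherited from the corresponding uniqueness clause of \Cref{P:deformation-linearly-fundamental} for $\cX$ together with the fact that an isomorphism of gerbes compatible with sections induces an isomorphism of the associated group schemes.

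I would double-check the one genuinely delicate point, which is that the reconstruction $\cX \rightsquigarrow (G, B_SG \xrightarrow{\sim} \cX)$ is valid over a general affine base $S$: a gerbe equipped with a section over $S$ is canonically equivalent, over $S$, to $B_S$ of its inertia group. This is standard but worth stating carefully in the fppf (not just smooth) setting, and it is where I expect any friction — everything else is an application of results already proved in the excerpt. Note also that in cases (b) and (c) one may alternatively bypass the reconstruction step entirely by applying Artin approximation directly to the functor of embeddable linearly reductive group schemes as in the proof of \Cref{P:nice-deformations}, reducing to case (a); but routing everything through \Cref{P:deformation-linearly-fundamental} is cleaner since that proposition already handles the approximation bookkeeping for the fundamental-stack functor.
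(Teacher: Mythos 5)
Your proposal is correct and follows essentially the same route as the paper's own proof: apply \Cref{P:deformation-linearly-fundamental} to $BG_0\to S_0$ to get a linearly fundamental fppf gerbe $\cX\to S$, extend the tautological section via \Cref{P:deformation-sections}, identify $\cX\cong B_SG$ for $G=\Aut(\sigma)$, and conclude via \Cref{R:BG-fundamental}. The extra bookkeeping you flag (matching \ref{Cond:FC}/\ref{Cond:PC}/\ref{Cond:N}, and the gerbe-with-section reconstruction) is implicit in the paper's terser argument and is handled correctly here.
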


\begin{proof}
Applying \Cref{P:deformation-linearly-fundamental} to $BG_0 \to S_0$ yields a linearly fundamental fppf gerbe $\cX \to S$ such that $BG_0 = \cX \times_S S_0$. By \Cref{P:deformation-sections}, we may extend the canonical section $S_0 \to BG_0$ to a section $S \to \cX$ with the stated uniqueness property.
We conclude that $\cX$ is isomorphic to $BG$ for an fppf affine group scheme $G \to S$ extending $G_0$.  Since $BG$ is linearly fundamental, $G \to S$ is linearly reductive and embeddable (see \Cref{R:BG-fundamental}).  
\end{proof}

\begin{remark}
  When $G_0\to S_0$ is a split reductive group scheme, then the existence of
  $G\to S$ follows from the classification of reductive groups: $G_0\to S_0$ is
  the pull-back of a split reductive group over $\Spec \ZZ$~\cite[Exp.~XXV,
    Thm.~1.1, Cor.~1.2]{MR0274459}. Our methods require linear reductivity
  but also work for non-connected, non-split and non-smooth group schemes.
\end{remark}

\subsection{Extension over \'etale neighborhoods}
\label{SS:deformations:extensions-etale}
In this subsection, we consider the problem of extending objects
over \'etale neighborhoods. Recall that if $\pi\co \cX\to X$ is an adequate
moduli space, then a morphism $\cX'\to \cX$ is \emph{strongly \'etale} if
$\cX'=\cX\times_X X'$ for some \'etale morphism $X'\to X$
(\Cref{D:strongly-etale}).

\begin{proposition}[Extension of gerbes]\label{P:extension-gerbes}
Let $(S,S_0)$ be an affine pair. Let $\pi_0\co \cX_0\to S_0$ be an fppf gerbe (resp.\ smooth gerbe, resp.\ \'etale gerbe). Suppose that
$\cX_0$ is linearly
fundamental and satisfies \ref{Cond:PC}, \ref{Cond:N} or \ref{Cond:FC}.
Then, there exists an \'etale neighborhood $S'\to S$ of $S_0$ and a fundamental
fppf gerbe (resp.\ smooth gerbe, resp.\ \'etale gerbe) $\pi\co \cX'\to S'$
extending $\pi_0$.
\end{proposition}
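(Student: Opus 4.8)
The plan is to reduce the extension-over-an-\'etale-neighborhood problem to the extension-over-a-henselian-pair problem, which has already been solved in \Cref{P:deformation-linearly-fundamental}, and then descend the resulting construction back down along a limit argument. Concretely, I would first dispose of the case where $(S,S_0)$ is henselian: here \Cref{P:deformation-linearly-fundamental}\itemref{PC:deformation-linearly-fundamental:fcpcn} directly produces a syntomic good moduli space morphism $\pi\colon \cX\to S$ with $\cX\times_S S_0\cong \cX_0$, and $\cX$ linearly fundamental; moreover parts \itemref{PI:deformation-linearly-fundamental:smooth/etale} and \itemref{PI:deformation-linearly-fundamental:gerbe} of that proposition upgrade the conclusion so that $\pi$ is an fppf (resp.\ smooth, resp.\ \'etale) gerbe whenever $\pi_0$ is, and the gerbe is in particular fundamental once we know it is linearly fundamental. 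So the henselian case is essentially immediate.

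Next I would pass from the henselian case to the \'etale-neighborhood case. Write $S_0=\Spec(A/I)$, $S=\Spec A$; then $\Spec A^h$, the henselization along $I$, is the filtered limit of the affine \'etale neighborhoods $S_\lambda=\Spec A_\lambda\to S$ of $S_0$ (those inducing the identity on $S_0$). Having built a fundamental gerbe $\cX^h\to \Spec A^h$ extending $\pi_0$ as above, the key point is that fundamental stacks and their relevant structural properties are limit-preserving: by \Cref{L:approximation-fundamental}\itemref{LI:approx:fundamental} (together with \Cref{L:excellent-approx-fundamental} to spread $\cX^h$ out to a fundamental stack of finite presentation over some $S_\lambda$), and by \Cref{C:gms-approximation} to ensure the good moduli space property descends, there is some $\lambda$ and a fundamental stack $\cX'\to S_\lambda=:S'$ with $\cX'\times_{S'}\Spec A^h\cong \cX^h$, hence $\cX'\times_{S'}S_0\cong \cX_0$. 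Being an fppf (resp.\ smooth, resp.\ \'etale) gerbe is a finitely presented property, so it also descends to $\cX'\to S'$ for $\lambda$ large, using \cite[Prop.~B.3]{rydh-2009} for the diagonal and the fiberwise flatness criterion exactly as in the proof of \Cref{L:deformations-fundamental}\itemref{LI:deformations-fundamental:gerbe}; alternatively one invokes \Cref{L:deformations-fundamental} directly over $S'$ after base change. Note that the hypotheses \ref{Cond:PC}, \ref{Cond:N}, \ref{Cond:FC} on $\cX_0$ are exactly what is needed to feed both \Cref{P:deformation-linearly-fundamental} and the approximation results, and by \Cref{R:fc-closed}/\Cref{R:deformations-setup} they are inherited by the relevant larger stacks along the henselian and \'etale-local pairs involved.

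The main obstacle, and the reason the three side conditions appear, is precisely the failure of linearly fundamental stacks (equivalently, linearly reductive embeddable group schemes) to be limit-preserving in mixed characteristic, as emphasized in \Cref{R:not-limit-preserving}. Without one of \ref{Cond:PC}, \ref{Cond:N} or \ref{Cond:FC} one cannot guarantee that the linearly fundamental gerbe $\cX^h$ descends to a linearly fundamental — or even to a \emph{fundamental} gerbe with a good moduli space — over a genuine \'etale neighborhood rather than only over the henselization; \Cref{A:mixed-char-counterexamples} shows this is a real phenomenon. Thus the crux of the argument is organizing the reduction so that \Cref{T:approximation-of-lin-fund:relative} and \Cref{C:gms-approximation} apply, which is why the three conditions are placed in the hypotheses. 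Everything else — the deformation-theoretic lifting, the coherent completeness input, Artin approximation — is already packaged inside \Cref{P:deformation-linearly-fundamental} and is invoked as a black box.
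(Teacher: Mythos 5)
Your proposal is correct and follows essentially the same route as the paper: the paper's (two-line) proof likewise observes that the henselization of $(S,S_0)$ is the limit of the affine \'etale neighborhoods of $S_0$, applies \Cref{P:deformation-linearly-fundamental} over the henselian pair, and then descends via \Cref{L:approximation-fundamental}\itemref{LI:approx:fundamental}. The only minor difference is that you invoke more machinery than needed for the descent step (\Cref{T:approximation-of-lin-fund:relative}, \Cref{C:gms-approximation}): since the conclusion only asks for a \emph{fundamental} gerbe over $S'$, the unconditional limit-preservation of fundamentality suffices, and the side conditions \ref{Cond:PC}, \ref{Cond:N}, \ref{Cond:FC} are consumed entirely by \Cref{P:deformation-linearly-fundamental} over the henselization.
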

\begin{proof}
The henselization $S^h$ of $(S,S_0)$ is the limit of the
affine \'etale neighborhoods $S'\to S$ of $S_0$ so the result follows
from \Cref{P:deformation-linearly-fundamental} and
\Cref{P:approximation-fundamental}\itemref{PI:approx:fundamental}.
\end{proof}

\begin{proposition}[Extension of groups]\label{P:extension-groups}
Let $(S,S_0)$ be an affine pair. Let $G_0\to S_0$ be a linearly
reductive and embeddable group scheme. Suppose that $G_0$ has nice fibers or
that $S_0$ satisfies \ref{Cond:PC} or \ref{Cond:FC}.
Then, there exists an \'etale neighborhood $S'\to S$ of $S_0$ and a geometrically
reductive embeddable group $G'\to S'$ extending~$G_0$.
\end{proposition}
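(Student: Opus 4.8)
The plan is to reduce to the henselian situation treated in \Cref{P:deformation-linearly-reductive} and then to spread out. Write $(S^h,S_0)$ for the henselization of the affine pair $(S,S_0)$ along $S_0$; this is again an affine pair, now henselian, and $S^h=\varprojlim_\lambda S_\lambda$, where $\{S_\lambda\to S\}$ ranges over the affine \'etale neighborhoods of $S_0$, equipped with their canonical closed immersions $S_0\hookrightarrow S_\lambda$ (all compatible with $S_0\hookrightarrow S^h$). By hypothesis $G_0$ has nice fibers, or $S_0$ satisfies \ref{Cond:PC} or \ref{Cond:FC}; in every case condition~(c) of \Cref{P:deformation-linearly-reductive} applies to the pair $(S^h,S_0)$, and that proposition produces a linearly reductive---hence in particular geometrically reductive---and embeddable group scheme $G^h\to S^h$ with $G^h\times_{S^h}S_0\cong G_0$.

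Since $G^h$ is embeddable it is affine, flat and of finite presentation over $S^h$, so by standard limit methods \cite{rydh-2009} it descends: there is an index $\lambda$ and a flat, finitely presented group scheme $G_\lambda\to S_\lambda$ whose pullback along $S^h\to S_\lambda$ is $G^h$. Because $S_0\hookrightarrow S_\lambda$ is compatible with $S_0\hookrightarrow S^h$, we get $G_\lambda\times_{S_\lambda}S_0\cong G_0$. Now $G^h$ is the pullback of $G_\lambda$ along $S^h=\varprojlim_{\mu\ge\lambda}S_\mu\to S_\lambda$, and $G^h$ is geometrically reductive and embeddable, so \Cref{L:approximation-reductivity} furnishes some $\mu\ge\lambda$ for which $G_\mu\to S_\mu$ is geometrically reductive and embeddable. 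Taking $S'=S_\mu$ and $G'=G_\mu$ yields the desired \'etale neighborhood of $S_0$ together with a geometrically reductive, embeddable group scheme extending $G_0$.

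I expect no genuine obstacle: the only points requiring a little care are verifying that $(S^h,S_0)$ is a henselian pair, so that \Cref{P:deformation-linearly-reductive} is applicable, and keeping track of the identifications over $S_0$ through the limit, both routine. (An alternative would be to apply \Cref{P:extension-gerbes} to $BG_0$, obtaining a fundamental fppf gerbe over some $S'$; but one would then be left having to produce a section of that gerbe over $S'$, and \Cref{P:deformation-sections} deforms sections only along smooth or \'etale morphisms, not arbitrary fppf gerbes---so the direct route above is preferable.)
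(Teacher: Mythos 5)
Your proposal is correct and follows essentially the same route as the paper: the paper's proof is literally "argue as before" (i.e., as in \Cref{P:extension-gerbes}: write the henselization of $(S,S_0)$ as the limit of affine \'etale neighborhoods of $S_0$), "using \Cref{P:deformation-linearly-reductive} and \Cref{L:approximation-reductivity}." Your verification that case (c) of \Cref{P:deformation-linearly-reductive} applies and your spreading-out step via \Cref{L:approximation-reductivity} are exactly the intended argument.
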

\begin{proof}
Argue as before, using \Cref{P:deformation-linearly-reductive}
and \Cref{L:approximation-reductivity}.
\end{proof}

\begin{proposition}[Extension of the resolution property]\label{P:extension-resprop}
Let $X$ be an affine scheme and let $\cX\to X$ be an adequate moduli
space of finite presentation and affine diagonal. Let $\cX_0$ be a closed substack
which is linearly fundamental. Suppose that $X$ is quasi-excellent or that
$\cX_0$ satisfies \ref{Cond:PC}, \ref{Cond:N} or \ref{Cond:FC}.
Then there exists a strongly \'etale
neighborhood $\cX'\to \cX$ of $\cX_0$ such that $\cX'$ is fundamental.
\end{proposition}
\begin{proof}
Let $X^h$ be the henselization of $X$ along $X_0=\pi_0(\cX_0)$. Since $X_0\inj
X^h$ contains all closed points, it follows that $\cX\times_X X^h$ is linearly
fundamental (\Cref{C:adequate+lin-red=>good2} and
\Cref{P:deformation-resprop}). Since $X^h$ is the limit of all affine \'etale
neighborhoods of $X_0$ the result follows from
\Cref{P:approximation-fundamental}\itemref{PI:approx:fundamental}.
\end{proof}

\begin{proposition}[Extension of sections and morphisms]\label{P:extension-morphisms}
Let $(\cX,\cX_0)$ be a fundamental pair over an algebraic stack $S$. Suppose
that $\cX_0$ is linearly fundamental and satisfies \ref{Cond:PC}, \ref{Cond:N}
or \ref{Cond:FC}. Given one of the following:
\begin{enumerate}
\item \label{PI:extension-morphisms:section}
  a section $s_0\co \cX_0\to \cY_0$ of a smooth morphism $\cY\to \cX$
  that is quasi-separated with affine stabilizers;
\item an $S$-morphism $f_0\co \cX_0\to \cY$ where $\cY\to S$ is a smooth
  morphism that is quasi-separated with affine stabilizers;
\item an affine $S$-morphism $f_0\co \cX_0\to \cY$
  where $\cY\to S$ is a smooth morphism with affine diagonal;
\item a vector bundle $\cE_0$ on $\cX_0$; or
\item a finite \'etale morphism $\cW_0\to \cX_0$.
\end{enumerate}
Then there exists a
  strongly \'etale neighborhood $\cX'\to \cX$ of $\cX_0$ such that
the object over $\cX_0$ ($s_0$, $f_0$, $\cE_0$ or $\cW_0$) extends to a
corresponding object over $\cX'$.
\end{proposition}
\begin{proof}
Let $X$ be the adequate moduli space of $\cX$ and $X_0\subseteq X$ the image of
$\cX_0$. Then $(X,X_0)$ is an affine pair and its henselization $X^h$ is the
limit of \'etale neighborhoods $X'\to X$ of $X_0$. Since $X_0\inj X^h$ contains
all closed points, it follows that $\stX^h:=\stX\times_X X^h$ is linearly
fundamental by~\Cref{C:adequate+lin-red=>good:fundamental}. The result
follows from \Cref{P:deformation-sections,P:deformation-morphisms},
\Cref{P:refinement}\itemref{P:refinement:affine_diag} and standard limit
methods.
\end{proof}

\begin{proposition}[Extension of nicely fundamental]\label{P:extension-nicefund}
Let $(\cX,\cX_0)$ be a fundamental pair. If $\cX_0$ is nicely fundamental, then
there exists a strongly \'etale neighborhood $\cX'\to \cX$ of $\cX_0$ such that
$\cX'$ is nicely fundamental.
\end{proposition}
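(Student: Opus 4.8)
The plan is to follow the pattern of \Cref{P:extension-morphisms}: pass to the henselization of the adequate moduli space, deform the nicely fundamental structure over the resulting henselian pair, and then descend to an \'etale neighborhood by standard limit methods. First I would let $\pi\colon\cX\to X$ be the adequate moduli space (so $X$ is affine) and let $X_0\subseteq X$ be the image of $\cX_0$; since $\pi$ is universally closed and $\cX_0\hookrightarrow\cX$ is a closed immersion, $X_0$ is closed, so $(X,X_0)$ is an affine pair. Let $X^h$ be the henselization of $X$ along $X_0$, so that $(X^h,X_0)$ is an affine henselian pair and $X^h=\varprojlim_\lambda X_\lambda$ over the affine \'etale neighborhoods $X_\lambda\to X$ of $X_0$. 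Set $\cX^h:=\cX\times_X X^h$; it is fundamental with adequate moduli space $X^h$.

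The key step is to show that $\cX^h$ is linearly fundamental. Since the pair $(X^h,X_0)$ is henselian it is local (\Cref{D:pairs}), so every closed point of $X^h$ lies in $X_0$; as $\pi^h\colon\cX^h\to X^h$ is closed, every closed point $y$ of $\cX^h$ lies over a closed point $x\in X_0$, and $y$ is the unique closed point of the fibre $(\cX^h)_x$. The fibre of $\cX_0\to X_0$ over $x$ is a non-empty closed substack of $(\cX^h)_x$, hence contains $y$; thus $y\in|\cX_0|$, and since $\cX_0\hookrightarrow\cX^h$ is a monomorphism, $\Stab_{\cX^h}(y)=\Stab_{\cX_0}(y)$, which is nice because $\cX_0$ is nicely fundamental (closed points of nicely fundamental stacks have nice stabilizers). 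So every closed point of $\cX^h$ has nice, hence linearly reductive, stabilizer, and \Cref{C:adequate+lin-red=>good:fundamental} shows $\cX^h$ is linearly fundamental. In particular $\cX^h$ is cohomologically affine, so $X^h$ is its good moduli space and the good moduli space of $\cX_0$ is a closed subspace of $X^h$; thus the pair $(\cX^h,\cX_0)$ satisfies the hypotheses of \Cref{Setup:deformations}, with condition \itemref{SetupI:deformations:fcpcn} holding since $\cX^h$ has the resolution property and $\cX_0$ satisfies \ref{Cond:N}. Then \Cref{P:deformation-morphisms}\itemref{PI:deformation-morphisms:nice} gives that $\cX^h$ is nicely fundamental.

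Finally I would write $\cX^h=\varprojlim_\lambda\cX_\lambda$ with $\cX_\lambda=\cX\times_X X_\lambda$ and affine transition maps; since $\cX^h$ is nicely fundamental, \Cref{L:approximation-fundamental}\itemref{LI:approx:fundamental} yields that $\cX_\lambda$ is nicely fundamental for all sufficiently large $\lambda$. Because $X_\lambda\to X$ is \'etale and $\cX_\lambda=\cX\times_X X_\lambda$ (with adequate moduli space $X_\lambda$), the morphism $\cX_\lambda\to\cX$ is strongly \'etale, and it is a neighborhood of $\cX_0$ since $X_\lambda\to X$ admits a section over $X_0$; taking $\cX'=\cX_\lambda$ completes the proof. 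The only step that requires genuine care is establishing that $\cX^h$ is linearly fundamental — this amounts to chasing closed points through the adequate moduli space, using that henselian pairs are local and that fibres of adequate moduli spaces have a unique closed point — after which everything reduces to the deformation machinery of \Cref{S:deformations} and routine limit arguments.
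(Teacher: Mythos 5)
Your proposal is correct and follows essentially the same route as the paper: pass to the henselization $X^h$ of the adequate moduli space along the image of $\cX_0$, use \Cref{C:adequate+lin-red=>good:fundamental} to see that $\cX^h=\cX\times_X X^h$ is linearly fundamental, upgrade to nicely fundamental via \Cref{P:deformation-morphisms}\itemref{PI:deformation-morphisms:nice}, and descend to an \'etale neighborhood by \Cref{L:approximation-fundamental}\itemref{LI:approx:fundamental}. The only difference is that you spell out the closed-point chase showing every closed point of $\cX^h$ lies in $\cX_0$ and hence has nice stabilizer, which the paper leaves implicit by referring back to the proof of \Cref{P:extension-morphisms}.
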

\begin{proof}
As in the previous proof, it follows that $\stX^h$ is linearly fundamental,
hence nicely fundamental by
\Cref{P:deformation-morphisms}\itemref{PI:deformation-morphisms:nice}.  By
\Cref{P:approximation-fundamental}\itemref{PI:approx:fundamental}, there exists
an \'etale neighborhood $X'\to X$ of $X_0$ such that $\cX':=\cX\times_X X'$ is
nicely fundamental.
\end{proof}

\begin{proposition}[Extension of linearly fundamental]\label{P:extension-linfund}
Let $(\cX,\cX_0)$ be a fundamental pair. Suppose that $\cX_0$ satisfies
\ref{Cond:PC}, or \ref{Cond:N}, or that $\cX$ satisfies \ref{Cond:FC} in an open
neighborhood of $\cX_0$. If $\cX_0$ is linearly fundamental, then there exists a
saturated open neighborhood $\cX' \subset \cX$
of $\cX_0$ such that $\cX'$ is linearly fundamental.
\end{proposition}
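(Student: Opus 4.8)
The plan is to reduce to a situation where we can apply the deformation results already established (Proposition~\ref{P:deformation-linearly-fundamental} and Proposition~\ref{P:deformation-morphisms}) and then spread the resulting structure out over an \'etale, hence open-after-saturation, neighborhood using approximation (Lemma~\ref{L:approximation-fundamental}). First I would let $\pi\colon\cX\to X$ be the adequate moduli space of the fundamental stack $\cX$ and let $X_0\subseteq X$ be the closed image of $\cX_0$; since $\cX_0$ contains all closed points of $\pi^{-1}(X_0)$, the pair $(X,X_0)$ is an affine pair in the sense of Definition~\ref{D:pairs}, and its henselization $X^h=\Spec\oh_{X,X_0}^h$ is the cofiltered limit of the affine \'etale neighborhoods $(X',X_0)\to(X,X_0)$. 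Set $\cX^h=\cX\times_X X^h$. Because $X_0\inj X^h$ contains all closed points of $X^h$, the restriction of every closed point of $\cX^h$ to $\cX_0$ is a closed point, so $\cX^h$ has linearly reductive stabilizers at its closed points; by Corollary~\ref{C:adequate+lin-red=>good:fundamental} (whose hypothesis is precisely that $\cX^h$ is fundamental and its closed points have linearly reductive stabilizers) $\cX^h$ is linearly fundamental, i.e.\ $\cX^h\to X^h$ is a good moduli space. Note that conditions \ref{Cond:PC}, \ref{Cond:N}, and \ref{Cond:FC} (in a neighborhood) pass from $\cX_0$ to $\cX^h$ by Remark~\ref{R:fc-closed}/Remark~\ref{R:deformations-setup}, since $(\cX^h,\cX_0)$ is a local pair.

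The next step is to descend the good moduli space property from the henselian limit to a finite stage. Here I would invoke the approximation machinery: write $\cX^h=\varprojlim_\lambda \cX'_\lambda$ where $\cX'_\lambda=\cX\times_X X'_\lambda$ for the affine \'etale neighborhoods $X'_\lambda\to X$ of $X_0$, each of which is fundamental. Since $\cX^h$ is linearly fundamental and satisfies one of \ref{Cond:PC}, \ref{Cond:N}, \ref{Cond:FC}, Theorem~\ref{T:approximation-of-lin-fund:relative} (applied with $\cY=\cX$ and the inverse system $\{\cX'_\lambda\}$) shows that $\cX'_\lambda$ is linearly fundamental for all sufficiently large $\lambda$. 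Fix such a $\lambda$ and set $X'=X'_\lambda$, an affine \'etale neighborhood of $X_0$ in $X$, with $\cX\times_X X'$ linearly fundamental.

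Finally I would pass from the \'etale neighborhood $X'\to X$ of $X_0$ to a saturated open neighborhood of $\cX_0$ in $\cX$. The point is that being linearly fundamental is an \'etale-local condition on the adequate moduli space, and $X'\to X$ need not be an open immersion, so one must work slightly. Since $X'\to X$ is \'etale and separated (after shrinking) and the residue field extension over $X_0$ is trivial, there is a section over a henselian/limit neighborhood, hence $X'\to X$ admits a section over some open $U\subseteq X$ containing the image of $X_0$, so that $\cX\times_X U$ is a retract of $\cX'$ and therefore affine over $\cX'$, and an affine morphism to a linearly fundamental stack is itself linearly fundamental (cf.\ Remark~\ref{R:BG-fundamental}, since cohomological affineness and the resolution property descend through affine morphisms). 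Then $\cX':=\pi^{-1}(U')$ for a suitable open affine $U'\subseteq U$ is a saturated open neighborhood of $\cX_0$ that is linearly fundamental. The main obstacle I anticipate is precisely this last reduction---controlling the difference between $\cX\times_X X'$ (\'etale over $\cX$ on the moduli space) and an honest open substack of $\cX$; this is the same subtlety handled for sections of non-representable \'etale morphisms in Proposition~\ref{P:deformation-sections} and in the shrinking argument at the end of the proof of Theorem~\ref{T:base-general}, and I would model the argument on those, using that $(X^h,X_0)$ is henselian to produce the section and then spreading out.
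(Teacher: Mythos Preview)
Your overall strategy is sound through the point where you obtain a linearly fundamental \'etale neighborhood $\cX\times_X X'$, but the route is more elaborate than needed and the final step contains a genuine error. The paper's proof uses the \emph{Zariskification} $X^Z$ (the limit of all affine \emph{open} neighborhoods of $X_0$ in $X$) rather than the henselization $X^h$. Since $X_0\hookrightarrow X^Z$ also contains all closed points, the same argument via Corollary~\ref{C:adequate+lin-red=>good:fundamental} shows $\cX^Z:=\cX\times_X X^Z$ is linearly fundamental; then Theorem~\ref{T:approximation-of-lin-fund:relative} produces an \emph{open} neighborhood $X'\subseteq X$ of $X_0$ directly, and $\pi^{-1}(X')$ is the desired saturated open. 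Henselization is overkill here: we never need \'etale neighborhoods, so by working Zariski-locally from the outset one sidesteps entirely the descent problem you flag at the end.

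Your proposed fix for that problem is incorrect as stated. You write that a section of $X'\to X$ over $X^h$ yields ``a section over some open $U\subseteq X$ containing the image of $X_0$''. This is false: Artin approximation and limit arguments over $X^h$ only produce a section over some \'etale neighborhood of $X_0$, not over a Zariski open subset of $X$. (An \'etale morphism with a section over a closed subscheme need not have a section over any Zariski neighborhood of it.) A correct salvage is different and simpler: let $U\subseteq X$ be the open image of the \'etale map $X'\to X$; cohomological affineness of $\cX\times_X X'\to X'$ descends along the fpqc cover $X'\to U$ to give that $\cX\times_X U\to U$ is a good moduli space, and then one shrinks $U$ to a principal affine open of $X$ containing $X_0$. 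But once you see this, you also see that the henselian detour was never necessary.
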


\begin{proof}
Let $X$ be the adequate moduli space of $\cX$ and $X_0$ the image of $\cX_0$.
The Zariskification $X^Z$ of $X$ is the limit of all affine open neighborhoods
$X'\to X$ of $X_0$. Since $X_0\inj X^Z$ contains all closed points, the stack
$\stX^Z:=\stX\times_X X^Z$ is linearly fundamental
(\Cref{C:adequate+lin-red=>good:fundamental}). By
\Cref{T:approximation-of-lin-fund:relative}, there exists an open neighborhood
$X'\to X$ of $X_0$ such that $\cX':=\cX\times_X X'$ is linearly fundamental.
\end{proof}

\begin{remark}\label{R:extension-non-closed-points}
Note that when $S_0$ is a single point, then \ref{Cond:FC} always holds for
$S_0$ and for objects over $S_0$.  In the results of this subsection, the
substacks $S_0\subseteq S$ and $\cX_0\subseteq \cX$ are by definition closed
substacks. The results readily generalize to the following situation:
$S_0=\{s\}$ is any point and $\cX_0=\cG_x$ is the residual gerbe of a point $x$
closed in its fiber over the adequate moduli space.
\end{remark}

\subsection{Universal property of good moduli space morphisms}

\begin{theorem}[Universal property]\label{T:universal-gms-variant}
Let $\pi\colon \cX\to \cY$ be a good moduli space morphism of finite presentation between
algebraic stacks. Let $\cZ$ be an algebraic stack with quasi-separated diagonal
and let $f\colon \cX\to \cZ$ be a morphism.
Then $f$ factors through $\pi$ if and only if the induced map on inertia, $I_\pi\to f^*I_\cZ$, factors through the identity section. Moreover, the factorization of $f$ through $\pi$ is unique up to unique $2$-isomorphism and if $\cY$ is quasi-compact and quasi-separated, then the condition is equivalent to:
\[
  \ker\bigl(\Aut_{\cX}(x)\to \Aut_{\cY}(\pi(x))\bigr) \;\subseteq\;
  \ker\bigl(\Aut_{\cX}(x)\to \Aut_{\cZ}({f(x)})\bigr)
\]
for every closed point $x\in |\cX|$.
\end{theorem}
We begin with
the uniqueness, which also holds for adequate moduli space morphisms.

\begin{lemma}\label{L:adequate-is-epimorphism}
Let $\pi\colon \cX\to \cY$ be an adequate moduli space morphism between
algebraic stacks.  Then $\pi$ is a categorical epimorphism, that is,
if $f,g\colon \cY\to \cZ$ are two morphisms then every $2$-isomorphism
$f\circ\pi \simeq g\circ \pi$ descends to a unique $2$-isomorphism $f\simeq g$.
\end{lemma}
\begin{proof}
Two morphisms $f,g\colon \cY\to \cZ$ gives rise to a morphism $(f,g)\colon
\cY\to \cZ\times \cZ$. Let $I:=\Isom(f,g)=\cZ\times_{\Delta,\cZ\times \cZ,(f,g)}
\cY$.  Then $I\to \cY$ is representable and its sections correspond to
$2$-isomorphisms between $f$ and $g$. Similarly, a $2$-isomorphism between
$f\circ\pi$ and $g\circ \pi$ corresponds to a $\cY$-morphism $\cX\to I$. That
every $\cY$-morphism $\cX\to I$ descends to a unique $\cY$-morphism $\cY\to I$
can be checked smooth-locally on $\cY$ and thus follows directly from
\Cref{T:universal}.
\end{proof}

\begin{proof}[Proof of \Cref{T:universal-gms-variant}]
The uniqueness is \Cref{L:adequate-is-epimorphism}. For the existence, we may
work smooth-locally on $\cY$ and assume that $X=\cY$ is an affine scheme.  Let
$p\colon U\to \cZ$ be a smooth presentation where $U$ is an algebraic space.
This gives a smooth, representable and quasi-separated morphism
$q\colon U\times_\cZ \cX \to \cX$.
Let $x\in |\cX|$ be a point, closed in its fiber over $X$.  The assumption on
inertia shows that $\stG_x\to \cZ$ factors through the structure morphism
$\stG_x\to \Spec \kappa(x)$. It follows that $q|_{\stG_x}$ has a section after
passing to a separable field extension of $\kappa(x)$ and that can be
accomplished \'etale-locally on $X$. By
\Cref{P:extension-resprop,P:extension-morphisms}\itemref{PI:extension-morphisms:section}
we obtain
a section of $q$ after replacing $X$ with an \'etale neighborhood of $\pi(x)$.
We can thus factor $f\colon \cX\to \cZ$ through $p$ and hence also through
$\pi$ by \Cref{T:universal}.
\end{proof}

\Cref{T:universal-gms-variant} generalizes \cite[Thm.~2.3.6]{MR4172700} from
tame stacks to good moduli space morphisms. The analogous result for adequate
moduli space morphisms does not hold. In fact, the result is false even if
$\stX$ is a wild Deligne--Mumford stack and $\stX \to \stY$ is its coarse
moduli space \cite[A.2.3]{MR4172700}.

\section{Refinements of local structure}\label{S:refinements}
In \Cref{T:base}, we have seen that for an algebraic stack $\cX$ 
satisfying mild hypotheses
and a point $x \in |\cX|$ with linearly reductive stabilizer with 
image $s \in |S|$ such that $\kappa(x)/\kappa(s)$ is finite, there exists 
an \'etale morphism
$$(\cW, w) \to (\cX, x),$$
where $\cW = [\Spec A/\GL_n]$ is a fundamental stack, 
$w \in |\cW|$ is closed in its fiber $\cW_s$, 
and the induced map $\cG_w \to \cG_x$ on residual gerbes is an isomorphism. 

We now prove two theorems providing \'etale neighborhoods $\cW' \to \cW$ 
of a point $w$ of a fundamental stack $\cW$ such that 
$\cW' = [\Spec A'/G]$ and the group scheme $G$ has a specific form.  
When applied to the 
output of \Cref{T:base}, these theorems yield refinements of the local 
structure theorem.  In \Cref{T:etale-local-gms-connected}, $\cW' \to \cW$ 
is even finite \'etale and 
$G \to \Spec \ZZ$ is split reductive such that the stabilizer 
of the action of $G$ at a point $u \in \Spec A'$ over $w$
is the connected component 
$G_w^0$.  On the other hand, in \Cref{T:refinement}, if 
the residual gerbe $\cG_w = BG_0$ is neutral, 
then  $G \to S'$ is a geometrically reductive 
group scheme defined over an \'etale neighborhood $S' \to S$ and  
is a deformation of $G_0$.  Moreover, under mild characteristic 
hypothesis, $G \to S'$ is linearly reductive.  
When the gerbe $\cG_w$ is not neutral,
then $\cW'$ can be arranged to be affine over a fundamental gerbe
$\cH' \to S'$ which is a deformation of $\cG_w$.

\subsection{Split local structure of fundamental stacks}

\begin{theorem}[Split local structure]\label{T:etale-local-gms-connected}
  Let $S$ be a quasi-separated algebraic space. Let $\cW$ be a
  fundamental stack of finite presentation over $S$.  Let
  $w\in |\cW|$ be a point with linearly reductive stabilizer and
  image $s\in |S|$ such that $w$ is closed in its fiber $\cW_s$. Then there exists a finite \'etale morphism
  $f\colon \cW' \to \cW$ such that:
  \begin{enumerate}
  \item $\cW'=[U/G]$ where $U$ is affine and
  $G\to \Spec \ZZ$ is a geometrically reductive embeddable group scheme;
  \item \label{TI:etale-local-gms-connected:0} there is a point $u\in |U|$ above $w$ fixed by $G$
    and $f$ identifies $G_u$ with the connected component of $\Aut_\cW(w)$;
  \item \label{TI:etale-local-gms-connected:red} if $\kar(\kappa(w))=0$, then $G\to \Spec \ZZ$ is split
    reductive; and
  \item \label{TI:etale-local-gms-connected:diag}if $\kar(\kappa(w))=p$, then $G\to \Spec \ZZ$ is
    diagonalizable.
  \end{enumerate}
\end{theorem}

In other words, there is a commutative diagram of adequate
moduli spaces
\[
\xymatrix{
  \mathllap{[U/G] = \;}\cW' \ar[r]^-{\smash{f}} \ar[d]  & \cW\mathrlap{\; = [\Spec A/\GL_n]} \ar[d]^{\pi} \\
  \mathllap{U\gitq G = \;}W' \ar[r]   & W\mathrlap{\; = \Spec A \gitq \GL_n}
}
\]
where $f$ is finite \'etale.
Note that $W'\to W$ is finite but not necessarily \'etale,
and that the diagram is not necessarily cartesian.
If $\kar(\kappa(w))=0$, then $G\to \Spec \ZZ$ is smooth with geometrically
connected fibers. If $\kar(\kappa(w))=p$, then $G\to \Spec \ZZ$ need neither be
smooth nor have connected fibers, e.g., $G=\Gmu_{p,\ZZ}$.

\begin{proof}[Proof of \Cref{T:etale-local-gms-connected}]
  Using standard limit methods, we may replace the adequate moduli space $W$ of $\cW$ with its henselization at $\pi(w)$.  In this case,  $w \in |\cW|$ is the unique closed point and $\cW$ is \emph{linearly} fundamental by \Cref{C:adequate+lin-red=>good:fundamental}.

The structure morphism of the residual gerbe $\cG_w\to \Spec \kappa(w)$ is
smooth. Thus, there exists a separable field extension $k/\kappa(w)$ that
neutralizes the gerbe.  Let $G_w$ be the stabilizer group of a lift $\Spec k\to
\cG_w$. Let $(G_w)^0$ be its connected component. If $\kar k>0$, then $(G_w)^0$
is of multiplicative type and if $\kar k=0$, then $(G_w)^0$ is smooth,
connected and reductive. In either case, we may thus find a further separable
field extension $k'/k$ such that $(G_w)^0$ becomes diagonalizable or a split
reductive group. That is, $(G_w^0)_{k'}=G_{k'}$ where $G$ is a group scheme
over $\ZZ$ which is either diagonalizable or split reductive. We now have a
group $G$ as in \itemref{TI:etale-local-gms-connected:red} or
\eqref{TI:etale-local-gms-connected:diag}.

We now apply \Cref{P:deformation-morphisms}\itemref{PI:deformation-morphisms:fet} to the finite \'etale morphism $\cW'_0:=B(G_w^0)_{k'} \to \cG_w$ and obtain a finite \'etale morphism $\cW' \to \cW$. Then $(\cW',\cW'_0)$ is a local linearly fundamental pair by construction
and its good moduli space $W'$ is finite over $W$ by
\Cref{T:etale-local-gms}\itemref{TI:etale-local-gms:fp2}, hence henselian.
By \Cref{P:deformation-morphisms} and \Cref{P:refinement}\itemref{P:refinement:affine_diag}, we may extend the affine morphism $\cW'_0 \to BG$ to an affine
morphism $\cW' \to BG$. That is, $\cW'=[U/G]$ for some affine
scheme $U$ and the unique point $u\in |U|$ above $\cW'_0$ is fixed by $G$.
\end{proof}

\subsection{Refinements on the local structure theorem}\label{SS:main-theorem-refinements}

\begin{theorem}[Local structure refinement]\label{T:refinement}
  Let $S$ be a quasi-separated algebraic space. Let $\cW$ be a
  fundamental stack of finite presentation over $S$.  Let
  $w\in |\cW|$ be a point with linearly reductive stabilizer and
  image $s\in |S|$ such that $w$ is closed in its fiber $\cW_s$.
  Then there exist a commutative diagram of algebraic stacks
  \[
    \xymatrix{%
    \mathllap{[\Spec A'/G] = \;}\cW' \ar[rrr]^{\smash{h}} \ar[d]_t &&& \cW \mathrlap{\; = [\Spec A/\GL_n]} \ar[d] \\
    \cH \ar[r]^-r & BG\ar[r] & S' \ar[r]^g	& S
    }%
    \vspace{-2mm}
  \]
  and a point $w' \in |\cW'|$ over $w \in |\cW|$ with image $s' \in |S'|$ where
  \begin{enumerate}
  \item  \label{T:refinement-h}
    $h \co (\cW',w') \to (\cW,w)$ is a strongly \'etale (see \Cref{D:strongly-etale}) neighborhood of
      $w$ such that $\cG_{w'} \to \cG_w$ is an isomorphism; 
  \item \label{T:refinement-g}
    $g \co (S',s')\to (S,s)$ is a smooth (\'etale if $\kappa(w)/\kappa(s)$
    is separable) morphism such that there is a $\kappa(s)$-isomorphism $\kappa(w)\cong \kappa(s')$;
  \item \label{T:refinement-G}
    $G\to S'$ is a geometrically reductive embeddable group scheme; 
  \item \label{T:refinement-cH}
    $\stk{H}\to S'$ is a gerbe such that $\stk{H}$ is fundamental and
    $\stk{H}_{s'}\cong \cG_w$;
  and
  \item \label{T:refinement-t}
    $t \co \cW' \to \cH$ and $r\co \cH \to BG$ are affine morphisms, so
    $\cW'=[\Spec A' / G]$.
  \end{enumerate}
  Moreover, we can arrange so that:
  \begin{enumerate}[resume]
    \item \label{T:refinement-neutral}
    if $\cG_w$ is neutral, then $\cH \to BG$ is an isomorphism;
    \item \label{T:refinement-lin-red-group} 
    if $s$ has an open
    neighborhood of characteristic zero, then
    $\stk{H}$ is linearly fundamental and $G$ is linearly reductive;
    \item \label{T:refinement-nice} 
      if $w$ has nice stabilizer (e.g., $\kar \kappa(s)>0$), then $\stk{H}$ is
      nicely fundamental and $G$ is nice; and
    \item \label{T:refinement-smooth} 
      if $\cW\to S$ is smooth at $w$ and $\kappa(w)/\kappa(s)$ is separable, then there exists a commutative diagram
    \[
    \xymatrix{%
    \VV(\cN_\sigma)\ar[dr] & \cW' \ar[l]_-{\smash{q}} \ar[d]_t \\
    & \cH\ar@/_/[u]_\sigma \ar@/^/[ul]^0
    }%
    \]
    where $q$ is strongly \'etale and $\sigma$ is a section of $t$ such that
  $\sigma(s')=w'$.
  \end{enumerate}
\end{theorem}

\begin{proof}
  We can replace $(S,s)$ with an \'etale neighborhood and assume that $S$
  is an affine scheme. For \itemref{T:refinement-g}, we note that we have a tower of
  finite extensions $\kappa(s) \subseteq \kappa(w)^s \subseteq \kappa(w)$, where the first extension
  is separable and the latter is purely inseparable. By \cite[IV.18.1.1]{EGA}, there is an \'etale
  neighborhood $(S',s') \to (S,s)$ such that $\kappa(s') \cong \kappa(w)^s$ over $\kappa(s)$. Hence,
  we are reduced to the situation where $\kappa(s) \subseteq \kappa(w)$ is purely inseparable; then
  $\kappa(w) = \kappa(s)[a_1^{p^{-r_1}},\dots,a_n^{p^{-r_n}}]$, where $p$ is the characteristic of
  $\kappa(s)$ and the $a_i \in \kappa(s)$. There is a 
  point $s'$ of $S'=\AA^n_{S}$ such that $\kappa(s')=\kappa(w)$
  and $(S',s') \to (S,s)$ is a smooth morphism as desired.
  In the sequel, we will repeatedly replace $(S',s')$ by further \'etale
  neighborhoods but without residue field extensions.

  For \itemref{T:refinement-cH}, we can by \Cref{P:extension-gerbes} (and
  \Cref{R:extension-non-closed-points}) extend the residual gerbe $\cG_w$ over
  $\kappa(w)=\kappa(s')$ to a fundamental gerbe $\cH\to S'$ after replacing
  $S'$ by an \'etale neighborhood of $s'$. If $s'$ has an open neighborhood of
  characteristic zero, then $\cH$ is linearly fundamental after restricting to
  this neighborhood.  If $w$ has nice stabilizer, then $\cH$ is nicely
  fundamental after replacing $S'$ with an \'etale neighborhood by
  \Cref{P:extension-nicefund}.

  If $\cG_w$ is neutral, that is, has a section $\sigma_0$, then after
  replacing $S'$ with an \'etale neighborhood, we obtain a section $\sigma$ of
  $\cH\to S'$
  (\Cref{P:extension-morphisms}\itemref{PI:extension-morphisms:section} with
  $\cX=S'$) and then $\cH=BG$ where $G=\Aut(\sigma)$. This gives
  \itemref{T:refinement-neutral}.

  If $\cG_w$ is not neutral, there exists, after replacing $S'$ with an \'etale
  neighborhood of $s'$, a finite \'etale surjective morphism $S''\to S'$ such
  that $\cH\times_{S'} S''\to S''$ has a section $\sigma'$.
  The group scheme $H'=\Aut(\sigma')\to S''$ is geometrically reductive and
  embeddable. We let $G$ be the Weil restriction of $H'$ along $S''\to S'$.
  It comes equipped with a morphism $\cH\to BG$ which is representable,
  hence affine by \cite[Cor.~4.3.2]{alper-adequate}.
  It can be seen that $G\to S'$ is geometrically reductive and embeddable and
  also linearly reductive (resp.\ nice) if $\cH$ is linearly fundamental
  (resp.\ nicely fundamental). This establishes \itemref{T:refinement-G},
  \itemref{T:refinement-lin-red-group} and \itemref{T:refinement-nice}.

  Since $\cH \to S'\to S$ is smooth, we may apply \Cref{P:extension-morphisms}
  to obtain a strongly \'etale neighborhood $h\co (\cW',w') \to (\cW,w)$ and an
  affine morphism $t\co \cW' \to \cH$. This establishes
  \itemref{T:refinement-h} and \itemref{T:refinement-t}.

  Finally, for \itemref{T:refinement-smooth}, if $\cW\to S$
  is smooth at $w$ and $\kappa(w)/\kappa(s)$ is separable, then $S'\to S$ is
  \'etale, so $\cW'\to S'$ is also smooth at $w'$. The result now follows
  from \Cref{P:smooth-refinement} below.
\end{proof}

\begin{proposition}[Smooth refinement]\label{P:smooth-refinement}
Let $S$ be a quasi-separated algebraic space. Let $\stk{H}\to S$ be an fppf gerbe such that $\stk{H}$ is fundamental and let $t\co\stk{W}\to \stk{H}$ be an affine morphism of finite presentation. 
Let $w\in |\stk{W}|$ be a point with
linearly reductive stabilizer and image $s\in |S|$ such that $w$ is closed in
its fiber $\stk{W}_s$.  Suppose that the induced map $\cG_w\to
\stk{H}_s$ is an isomorphism. If $\stk{W}\to S$ is smooth at $w$, then after replacing $S$ with an
\'etale neighborhood of $s$ (without residue field extension), there exist
\begin{enumerate}
\item a section $\sigma\co \stk{H}\to \stk{W}$ of $t$ such that
$\sigma(s)=w$; and
\item a morphism $q\co \stk{W}\to \VV(\cN_\sigma)$, where $\cN_{\sigma} = t_* (\cI/\cI^2)$ and $\cI$ is the sheaf of ideals in $\stk{W}$ defining $\sigma$, 
 which is strongly \'etale in
 a saturated open neighborhood of $\sigma$
  and such that $q\circ \sigma$ is the zero-section.
\end{enumerate}
\end{proposition}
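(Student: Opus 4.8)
The plan is to reduce to the case where $S$ is an affine, excellent, strictly henselian local scheme (by the usual limit arguments together with Artin approximation, exactly as in the proof of \Cref{T:base-general}), and then build the section $\sigma$ and the \'etale neighborhood $q$ by a deformation-and-effectivization argument mirroring \Cref{T:microlocalization} and \Cref{P:deformation-sections}. Since $\stk{W}\to S$ is smooth and $\cG_w\to \stk{H}_s$ is an isomorphism, the composite $\cG_w\cong \stk{H}_s\hookrightarrow \stk{H}$ lands in $\stk{W}_s$ via a point over $w$, giving a section of $t$ over $\stk{H}_s$. The point of the hypotheses on $w$ (linearly reductive stabilizer, closed in its fiber) is precisely that $\stk{H}$ is then linearly fundamental \'etale-locally on $S$ — via \Cref{P:extension-gerbes}/\Cref{P:extension-linfund} applied to the gerbe $\stk{H}\to S$ — so after shrinking $S$ we may assume $\stk{H}$ is linearly fundamental and in particular cohomologically affine with the resolution property.

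First I would construct $\sigma$. The morphism $t\colon\stk{W}\to\stk{H}$ is affine and smooth (being the composite of the smooth $\stk{W}\to S$ with... — more precisely, one checks $\stk{W}\to\stk{H}$ is smooth at $w$ since $\stk{W}_s\to\stk{H}_s$ is an isomorphism of the relevant local pieces and $\stk{H}\to S$, $\stk{W}\to S$ are both smooth; shrink $\stk{W}$ so $t$ is smooth). Now apply \Cref{P:deformation-sections} (or directly \Cref{P:deformation-morphisms}): the section $\sigma_0\colon\stk{H}_s\to\stk{W}\times_{\stk{H}}\stk{H}_s$ deforms, since the obstruction to lifting along each square-zero thickening lies in $\Ext^1_{\oh_{\cG_w}}(L\sigma_0^*L_{\stk{W}/\stk{H}},\,\cI^n/\cI^{n+1})$, which vanishes because $L_{\stk{W}/\stk{H}}$ is perfect of amplitude $[0,1]$ and $\cG_w$ is cohomologically affine; Tannaka duality (over the complete local base) plus Artin approximation then produces $\sigma\colon\stk{H}\to\stk{W}$ with $\sigma(s)=w$, after passing to an \'etale neighborhood of $s$. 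This uses that $\stk{H}$ is coherently complete along $\stk{H}_s$ over the complete base — which holds by \Cref{P:complete1} once $\stk{H}$ is linearly fundamental.

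Next I would produce $q$. Let $\cI\subseteq\oh_{\stk{W}}$ be the ideal sheaf of the closed immersion $\sigma\colon\stk{H}\hookrightarrow\stk{W}$ (it is a closed immersion since $t$ is separated — shrink $\stk{W}$ — and $\sigma$ is a section of $t$), and set $\cN_\sigma=t_*(\cI/\cI^2)$, a coherent sheaf on $S'$ (here I use that $t$ is affine, so $t_*$ is exact, and finite presentation). Because $\stk{W}\to S$ is smooth along $\sigma$, the conormal sequence shows $\cN_\sigma$ is locally free there. The surjection $\Sym_{\oh_{\stk{H}}}(\cI/\cI^2)\to\bigoplus_n\cI^n/\cI^{n+1}=\Gr_{\cI}\oh_{\stk{W}}$, together with the smoothness of $\stk{W}\to\stk{H}$ along $\sigma$, shows this is an isomorphism in a neighborhood of $\sigma$; hence the induced map $q\colon\stk{W}\to\VV(\cN_\sigma):=\uSpec_{\stk{H}}\Sym(\cI/\cI^2)$ (using the affine morphism $t$ to push down) is \'etale in an open neighborhood of $\sigma$, and $q\circ\sigma$ is the zero-section by construction. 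To upgrade "\'etale near $\sigma$" to "strongly \'etale near $\sigma$", I would invoke Luna's fundamental lemma (\Cref{L:fundamental-lemma}): $q$ is \'etale and representable near $w$ (shrinking $\stk{W}$ so that $q$ is representable, using that $\stk{W}$ has a suitable diagonal — or passing through $B\GL_N$ as in \Cref{T:microlocalization:general}), it sends $w$ to a closed point of the fiber, and it induces an isomorphism of stabilizers at $w$ since $\cG_w\to\stk{H}_s$ is an isomorphism; so there is a saturated open $\cX'\subseteq\stk{W}$ on which $q$ is strongly \'etale.

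The main obstacle I expect is \textbf{keeping the base change under control during the various shrinkings}: replacing $S$ by an \'etale neighborhood to make $\stk{H}$ linearly fundamental and to deform $\sigma_0$ to $\sigma$ must be done compatibly, and then the "strongly \'etale" conclusion requires that shrinking $\stk{W}$ to an open where $q$ is strongly \'etale be done \emph{saturated} with respect to the adequate moduli space of $\stk{W}$ (so as to preserve adequate affineness), exactly the bookkeeping at the end of the proof of \Cref{T:base-general} — namely, passing from an open $\cU\ni w$ to $\pi^{-1}(V)$ for $V$ an affine open of the moduli space avoiding $\pi(\stk{W}\smallsetminus\cU)$. A secondary technical point is justifying that $\VV(\cN_\sigma)$ as defined via the affine morphism $t$ agrees with a smooth $S$-stack to which \Cref{L:fundamental-lemma} applies; this is routine since $\stk{H}$ is fundamental, hence $\VV(\cN_\sigma)$ is affine over $\stk{H}$ and therefore fundamental, with the obvious adequate moduli space.
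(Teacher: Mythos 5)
Your overall architecture matches the paper's: reduce (by approximation) to the case where $S$ is henselian local at $s$, where $\stk{H}$ becomes linearly fundamental (the paper gets this from \Cref{C:adequate+lin-red=>good:fundamental}); produce the section $\sigma$ by deformation theory (the paper simply cites \Cref{P:extension-morphisms}, which packages exactly the obstruction-theoretic argument you sketch); and finish with Luna's fundamental lemma (\Cref{L:fundamental-lemma}) to upgrade ``\'etale along $\sigma$'' to ``strongly \'etale near $\sigma$''. However, there is a genuine gap at the crux of part (2): the morphism $q$ is never actually constructed. You assert that the isomorphism $\Sym_{\Orb_{\stk{H}}}(\cI/\cI^2)\to\Gr_{\cI}\Orb_{\stk{W}}$ ``induces'' a map $q\co\stk{W}\to\VV(\cN_\sigma)$. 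It does not: an isomorphism onto the associated graded algebra of $t_*\Orb_{\stk{W}}$ gives no algebra homomorphism $\Sym_{\Orb_{\stk{H}}}(\cN_\sigma)\to t_*\Orb_{\stk{W}}$, and in general there is no canonical retraction of a scheme onto the normal bundle of a regularly embedded closed substack, even when the embedding admits a section of the structure map.

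The missing idea — which is the heart of the Luna slice construction and the reason linear reductivity is needed here — is to split the surjection $t_*\cI\to t_*(\cI/\cI^2)=\cN_\sigma$ of $\Orb_{\stk{H}}$-modules. This splitting exists because $\cN_\sigma$ is locally free and $\stk{H}$ is cohomologically affine (so $\SHom(\cN_\sigma,-)$ followed by global sections is exact, and the identity of $\cN_\sigma$ lifts). The composite $\cN_\sigma\to t_*\cI\to t_*\Orb_{\stk{W}}$ then extends to an algebra map $\Sym(\cN_\sigma)\to t_*\Orb_{\stk{W}}$, i.e.\ to $q\co\stk{W}\to\VV(\cN_\sigma)$ over $\stk{H}$, which by construction kills $\sigma$ to the zero section and induces an isomorphism on normal spaces along $\sigma$, hence is \'etale there. (Two smaller points: $\cN_\sigma$ is a sheaf on $\stk{H}$, not on $S'$; and representability of $q$ is automatic since $q$ is a morphism over $\stk{H}$ between stacks affine over $\stk{H}$, so no detour through $B\GL_N$ is needed.) With the splitting step inserted, your argument coincides with the paper's proof.
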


\begin{proof}
Since $\stk{H}\to S$ is a gerbe, $\stk{W}\to S$ is smooth at $w$ and
$\cG_w\cong \stk{H}_s$, it follows that $t\colon \stk{W}\to \stk{H}$ is
smooth at $w$.
The existence of the section $\sigma$ thus follows from
\Cref{P:extension-morphisms}\itemref{PI:extension-morphisms:section}.
Note that since $t$ is affine and smooth, the
section $\sigma$ is a regular closed immersion.
An easy approximation argument allows us to replace $S$ by the henselization at
$s$. Then $\stk{H}$ is linearly fundamental
(\Cref{C:adequate+lin-red=>good:fundamental}). Let $\cI\subset \Orb_\cW$ be
the ideal sheaf defining $\sigma$. Since $\cN_\sigma=t_*(\cI/\cI^2)$ is locally free
and $\stk{H}$ is cohomologically affine, the surjection $t_*\cI\to \cN_\sigma$ of
$\Orb_{\stk{H}}$-modules admits a section. The composition $\cN_\sigma\to
t_*\cI\to t_*\Orb_\cW$ gives a morphism $q\co \cW\to \VV(\cN_\sigma)$. By
definition, $q$ maps $\sigma$ to the zero-section and induces an isomorphism of
normal spaces along $\sigma$, hence is \'etale along $\sigma$, hence
is strongly \'etale in a neighborhood by Luna's fundamental lemma
(\Cref{L:fundamental-lemma}).
\end{proof}

\section{The structure of linearly reductive groups} \label{S:structure}
Recall from \Cref{D:reductive/nice} that a linearly reductive (resp.\ geometrically reductive)
group scheme $G\to S$ is flat, affine and of finite presentation such that
$BG\to S$ is a good moduli space (resp.\ an adequate moduli space). In this
section we will show that a group algebraic space is linearly
reductive if and only if it is flat, separated, of finite presentation, has
linearly reductive fibers, and has
a finite component group (\Cref{T:char-of-lin-reductive-groups}).

\subsection{Extension of closed subgroups}
\begin{lemma}[Anantharaman]\label{L:group-over-DVR}
Let $S$ be the spectrum of a DVR. If $G\to S$ is a separated group algebraic space of finite
type, then $G$ is a scheme. If in addition $G\to S$ has affine fibers or
is flat with affine generic fiber $G_\eta$, then $G$ is affine.
\end{lemma}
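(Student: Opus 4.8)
The plan is to establish first that $G$ is a scheme — which is a theorem of Anantharaman — and then to deduce affineness under either of the two extra hypotheses by reducing everything to the case where $G$ is flat with affine generic fibre. Throughout, write $S=\Spec R$ with $R$ a discrete valuation ring, generic point $\eta$ and closed point $s$, and let $\pi$ be a uniformiser.

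For the first assertion, observe that $G_\eta$ is a group scheme of finite type over the field $\kappa(\eta)$, hence a scheme (indeed quasi-projective), and $G_s$ is a group algebraic space of finite type over $\kappa(s)$, hence also a scheme; so both fibres of $G\to S$ are schemes. The passage from this to the statement that $G$ itself is a scheme — concretely, that the identity section $e\colon S\to G$, a closed immersion since $G/S$ is separated, admits an affine open neighbourhood which, together with translates formed via the multiplication map, covers $G$ — is exactly Anantharaman's theorem, which I would cite rather than reprove; I note that it does not follow formally from the fibres being schemes, since being a scheme neither is detected fibrewise nor descends along \'etale base change of $S$.

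For the affineness, suppose first that $G\to S$ is flat with $G_\eta$ affine. By a theorem of Raynaud on group schemes over a trait, a flat, separated group scheme of finite type over a discrete valuation ring whose generic fibre is affine is affine over $S$ (so in particular its special fibre is automatically affine too); since $S$ is affine, $G$ is affine. Now suppose instead that $G\to S$ merely has affine fibres, without flatness. Let $G'\subseteq G$ be the schematic closure of the generic fibre $G_\eta$, defined by the quasi-coherent ideal $\mathcal{I}\subseteq\Orb_G$ given by the $R$-torsion subsheaf; then $G'$ is $S$-flat, and because $G/S$ is separated one checks — by comparing generic fibres and using that $G'\times_S G'$ is again the schematic closure of its own generic fibre, so that any closed subscheme of it containing the generic fibre is everything — that the multiplication, inversion and unit of $G$ restrict to $G'$. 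Thus $G'$ is a flat closed subgroup scheme with $G'_\eta=G_\eta$ affine, hence $G'$ is affine by the flat case. Now $G'$, $G_s$ and their scheme-theoretic intersection $G'\cap G_s=V(\mathcal{I}+\pi\Orb_G)$ are all affine closed subschemes of $G$, so by Ferrand's theorem the pushout $G'\sqcup_{G'\cap G_s}G_s$ exists and equals the affine scheme $V(\mathcal{I}\cap\pi\Orb_G)$. Since $G$ is Noetherian the torsion is annihilated by a fixed power of $\pi$, which forces $\mathcal{I}\cap\pi\Orb_G$ to be nilpotent; hence $G$ is a nilpotent thickening of the affine scheme $V(\mathcal{I}\cap\pi\Orb_G)$, and is therefore affine.

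The main obstacles are the two imported inputs: Anantharaman's theorem that $G$ is a scheme, and Raynaud's affineness criterion for flat group schemes over a trait; both are substantive and I would quote them. Granting these, the only genuinely new bookkeeping is the reduction of the ``affine fibres'' case to the flat case, and there the one delicate point is checking that $G$ is \emph{merely} a nilpotent thickening of the pushout $G'\sqcup_{G'\cap G_s}G_s$ — which, as indicated, uses the Noetherian hypothesis to bound the $R$-torsion of $\Orb_G$ uniformly.
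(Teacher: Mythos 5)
Your proof is correct and follows essentially the same route as the paper: cite Anantharaman for the scheme statement, reduce the non-flat case to the schematic closure $\overline{G_\eta}$, and invoke the known affineness theorem for flat separated group schemes over a trait with affine generic fibre (the paper cites this as Anantharaman, Prop.~2.3.1, rather than Raynaud, but it is the same substantive input). The only difference is that you spell out, via the Ferrand pushout and the nilpotence of $\mathcal{I}\cap\pi\Orb_G$, why affineness of the flat closure and of $G_s$ forces affineness of $G$ — a step the paper leaves implicit in the phrase ``it is enough to show'' — and your verification of that step is sound.
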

\begin{proof}
The first statement is \cite[Thm.~4.B]{MR0335524}. For the second statement,
it is enough to show that the flat group scheme $\overline{G}=\overline{G_\eta}$
is affine. This is \cite[Prop.~2.3.1]{MR0335524}.
\end{proof}

\begin{proposition}\label{P:geomreductive-mono}
Let $H\to S$ be a geometrically reductive group scheme that is embeddable
fppf-locally on $S$, e.g., $H$ linearly
reductive (\Cref{C:lin-red-groups}).
\begin{enumerate}
\item \label{P:geomreductive:closed-qf-normal-are-finite}
  If $N\subset H$ is a closed normal subgroup such that $N\to S$ is
  quasi-finite, then $N\to S$ is finite.
\end{enumerate}
Let $G\to S$ be a separated group algebraic space of finite presentation
and let $u\colon H\to G$ be a homomorphism.
\begin{enumerate}[resume]
\item \label{P:geomreductive:mono-closed}
  If $u$ is a monomorphism, then $u$ is a closed immersion.
\item \label{P:geomreductive:closed-imm}
  If $u_s\colon H_s\to G_s$ is a monomorphism for a point $s\in S$, then
  $u_U\colon H_U\to G_U$ is a closed immersion for some open neighborhood
  $U$ of $s$.
\end{enumerate}
\end{proposition}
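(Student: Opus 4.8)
The plan is to reduce everything to statements that can be checked after faithfully flat base change, so that we may assume $G$ is embeddable, i.e. $G \hookrightarrow \GL(\cE)$ for a vector bundle $\cE$ on $S$. For \itemref{P:geomreductive:closed-qf-normal-are-finite}, I would first note that the question of whether the quasi-finite closed subgroup $N \subseteq G$ is finite is local on $S$ and stable under the fppf base changes making $G$ embeddable (properness descends fppf-locally), so I may assume $G$ is embeddable and geometrically reductive. Then, by Matsushima's theorem in the form of \cite[Thm.~9.4.1]{alper-adequate} (cf.\ \Cref{R:resolution}), $\GL(\cE)/G$ is affine, so $[\Spec_S \Orb_S/G] = B_S G$ is fundamental and hence $B_S N$, being affine over $B_S G$ via the closed immersion $N \hookrightarrow G$ pushed to classifying stacks, is fundamental as well. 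Since $N$ is normal in $G$, the conjugation action shows $\overline{\{h\}} \subseteq I_{B_S N} = N$-twisted inertia is the relevant structure; the cleanest route is to invoke \Cref{L:fundamental:quasi-finite-subgroup-of-inertia} directly: $B_S N \to B_S G$ exhibits $N$ (or rather its image in $I_{B_S G}$ under $N \hookrightarrow G \to I_{B_S G}$, which is closed since $N$ is normal) as a closed, quasi-finite subgroup of the inertia of a fundamental stack, hence finite. I expect the normality hypothesis is exactly what makes $N \hookrightarrow I_{B_S G}$ a \emph{subgroup} rather than just a subsheaf, which is needed to apply that lemma.

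For \itemref{P:geomreductive:mono-closed}, I would argue that a monomorphism $u \colon G \to H$ of group algebraic spaces is automatically separated and quasi-finite is not needed; rather, I would factor the question through the graph $\Gamma_u \colon G \to G \times_S H$. The key point: $u$ being a monomorphism means $G \to G\times_G H = $ (the fiber of $u$ over itself) is an isomorphism, i.e. the kernel of $u$ is trivial; combined with geometric reductivity of $G$ this should force $u$ to be affine and proper. Concretely, after fppf base change assume $G \hookrightarrow \GL(\cE)$; then $\GL(\cE)/G$ affine means $G$ is "reductive enough" that closed-ness of $u$ can be checked fiberwise via the valuative criterion. Alternatively, and I think more robustly: since $u$ is a monomorphism it is representable by algebraic spaces and separated, and $u$ is of finite presentation (as both $G$ and $H$ are of finite presentation over $S$); one then checks $u$ is proper using \cite[Lem.~on p.~6]{godement_top-m-adiques}-style arguments or by reducing to DVRs via \Cref{L:group-over-DVR}: over a DVR $S$, $G$ is affine, and a monomorphism from an affine group scheme of finite type into a separated group algebraic space of finite type lands in the affine group scheme $H$ (again \Cref{L:group-over-DVR}), where monomorphisms of affine group schemes are closed immersions. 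This gives the valuative criterion for properness, hence $u$ is a closed immersion.

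For \itemref{P:geomreductive:closed-imm}, I would deduce it from \itemref{P:geomreductive:mono-closed} by a standard spreading-out argument: the locus in $S$ where $u$ is a monomorphism is the locus where the kernel $\ker u = G \times_H S$ (pullback of the identity section) equals the trivial subgroup, equivalently where $\ker u \to S$ is an isomorphism. Since $\ker u$ is of finite presentation over $S$ and $(\ker u)_s$ is trivial, by \cite[Thm.~C]{rydh-2009} or standard limit methods $\ker u \to S$ is an isomorphism over an open neighborhood $U$ of $s$; then $u_U$ is a monomorphism and \itemref{P:geomreductive:mono-closed} applies (noting $G_U$ remains geometrically reductive and embeddable fppf-locally). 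The main obstacle I anticipate is part \itemref{P:geomreductive:mono-closed}: making precise why a monomorphism out of a geometrically reductive group is proper without circularity — I believe the reduction to DVRs via Anantharaman's \Cref{L:group-over-DVR} is the safest path, since it sidesteps needing any structure theory of $H$ beyond separatedness and finite presentation, but one must be careful that the valuative criterion for properness of $u$ only requires lifting over DVRs mapping to $S$, where $G$ becomes affine and the classical statement for affine group schemes applies cleanly.
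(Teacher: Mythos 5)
Your part \itemref{P:geomreductive:closed-qf-normal-are-finite} is essentially the paper's argument: reduce to the embeddable case, note that normality of $N$ is exactly what realizes $[N/G]$ as a closed subgroup of the inertia $I_{BG}=[G/G]$ (conjugation action) of the fundamental stack $BG$, and apply \Cref{L:fundamental:quasi-finite-subgroup-of-inertia}. (The relevant object is the quotient stack $[N/G]\subseteq I_{BG}$ over $BG$, not $B_SN$ and not an ``image of $N$ under $G\to I_{B_SG}$''---there is no such map over $S$---but the idea is the right one.)

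Parts \itemref{P:geomreductive:mono-closed} and \itemref{P:geomreductive:closed-imm} each contain a genuine gap. In \itemref{P:geomreductive:mono-closed}, after reducing to a DVR you invoke two facts: that $H$ is affine by \Cref{L:group-over-DVR}, and that ``monomorphisms of affine group schemes are closed immersions.'' The first is unjustified as stated: the affineness clause of \Cref{L:group-over-DVR} requires $H$ to have affine fibers or to be flat with affine generic fiber, neither of which is assumed; the paper first replaces $H$ by the schematic closure of $u(G_\eta)$, which is flat with affine generic fiber. The second claimed fact is false over a DVR: the paper's own example in \S\ref{S:sm-id-cmpt-lr}, the extension by zero $j_!\Gmu_{p,\QQ_p}\to \Gmu_{p,\ZZ_p}$, is a monomorphism of affine, flat, separated, finite-type group schemes over $\ZZ_p$ that is not an immersion. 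What rescues the statement is precisely the geometric reductivity of $G$, which your DVR argument never uses. The paper's proof uses it at exactly this point: $H/G\to BG$ is affine (its pullback along $S\to BG$ is $H$) and $BG\to S$ is adequately affine by definition, so the algebraic space $H/G$ is adequately affine and representable over $S$, hence affine, hence separated; the unit section $S\to H/G$ is therefore a closed immersion and $G=H\times_{H/G}S\to H$ is closed.

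In \itemref{P:geomreductive:closed-imm}, the assertion that $(\ker u)_s$ trivial forces $\ker u$ to be trivial over a neighborhood of $s$ ``by standard limit methods'' is also false as a general principle: $j_!\Gmu_{p,\QQ_p}$ over $\ZZ_{(p)}$ is affine and of finite presentation with trivial special fiber, yet it is nontrivial, and Nakayama does not apply because the augmentation ideal of its coordinate ring is not a finite module. Your argument uses neither the normality of $\ker u$ nor the geometric reductivity of $G$, so it would ``prove'' the same openness for that example. The paper's route is forced: $\ker u$ is a closed \emph{normal} subgroup that is quasi-finite over a neighborhood of $s$ (openness of the locus of zero-dimensional fibers along the identity section), hence \emph{finite} there by part \itemref{P:geomreductive:closed-qf-normal-are-finite}; only then does Nakayama make it trivial near $s$, after which part \itemref{P:geomreductive:mono-closed} applies. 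This is the reason part \itemref{P:geomreductive:closed-qf-normal-are-finite} is stated at all.
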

\begin{proof}
  The questions are local on $S$ so we can assume that $H$ is embeddable.
  For \itemref{P:geomreductive:closed-qf-normal-are-finite} we note that
  a normal closed subgroup $N\subset H$ gives rise to a closed subgroup $[N/H]$
  of the inertia stack $[H/H]=I_{BH}$ (where $H$ acts on itself via conjugation). 
  The result thus follows from
  \Cref{L:fundamental:quasi-finite-subgroup-of-inertia}.

  For \itemref{P:geomreductive:mono-closed}, it is
  enough to prove that $u$ is proper. After noetherian approximation, we can
  assume that $S$ is noetherian. By the valuative criterion for properness, we
  can further assume that $S$ is the spectrum of a DVR. We can also replace
  $G$ with the closure of $u(H_\eta)$. Then $G$ is an
  affine group scheme (\Cref{L:group-over-DVR}) so $G/H\to BH\to S$ is adequately
  affine.  As $G/H \to S$ is also representable, it follows from \cite[Thm.~4.3.1]{alper-adequate} that it is also affine. It follows that $u$ is a closed immersion.

  For \itemref{P:geomreductive:closed-imm}, we apply
  \itemref{P:geomreductive:closed-qf-normal-are-finite} to $\ker(u)$ which is
  quasi-finite, hence finite, in an open neighborhood of $s$. By Nakayama's lemma
  $u$ is thus a monomorphism in an open neighborhood and we conclude by
  \itemref{P:geomreductive:mono-closed}.
\end{proof}

\begin{remark}
  If $G\to S$ is flat, then \itemref{P:geomreductive:mono-closed} says
  that any representable morphism $BH\to BG$ is separated.
  When $H$ is of multiplicative type then \Cref{P:geomreductive-mono}
  is~\cite[Exp.~IX, Thm.~6.4 and Exp.~VIII, Rmq.~7.13b]{MR0274459}. When $H$ is
  reductive (i.e., smooth with connected reductive fibers) it is
  \cite[Exp.~XVI, Prop.~6.1 and Cor.~1.5a]{MR0274459}.
\end{remark}

\begin{proposition}\label{P:deformations-of-subgroups}
Let $(S,s)$ be a henselian local ring, let $G\to S$ be a flat group algebraic space of
finite presentation with affine fibers and let $u_s\co H_s\inj G_s$ be a closed
subgroup. If $H_s$ is linearly reductive and $G_s/H_s$ is smooth, then there exists a linearly reductive and embeddable group scheme $H\to S$ and a homomorphism
$u\co H\to G$ extending $u_s$.
\begin{enumerate}
\item\label{PI:deformation-of-subgroups:et}
  If $G_s/H_s$ is \'etale (i.e., if $u_s$ is open and closed),
  then $u$ is \'etale and the pair $(H,u)$ is unique: if $H'$ is a
  quasi-separated group algebraic space and $u'\colon H'\to G$ is an \'etale
  homomorphism extending $u_s$, then there exists a unique homomorphism $H\to
  H'$ over $G$.
\item\label{PI:deformation-of-subgroups:sep}
  If $G\to S$ is separated, then $u$ is a closed immersion.
\end{enumerate}
\end{proposition}
\begin{proof}
Note that since $(S,s)$ is local, condition \ref{Cond:FC} is satisfied.
By \Cref{P:deformation-linearly-fundamental}, the gerbe $BH_s$ extends
to a unique linearly fundamental gerbe $\cH\to S$.
Since $BG\to S$ is smooth, we can extend the morphism $\varphi_s\co BH_s\to
BG_s$ to a morphism $\varphi\co \cH\to BG$ (\Cref{P:deformation-morphisms}).
The morphism $\varphi$ is flat and the special
fiber $\varphi_s$ is smooth since $G_s/H_s$ is smooth. Thus $\varphi$ is
smooth. Similarly, if $G_s/H_s$ is \'etale, then $\varphi$ is \'etale.

Let $p\co S\to BG$ and $f_s\co \Spec \kappa(s)\to BH_s\cong \cH_s$ denote the
tautological sections. Since $\varphi$ is smooth and $\varphi_s \circ f_s=p_s$,
we obtain a section $f\co S\to \cH$ such that $f|_s=f_s$ and $\varphi\circ f=p$
by \Cref{P:deformation-sections} (applied to $\stX=S$ and $\stX'=\cH\times_{BG}
S$).  We let $H=\Aut(f)$ and let $u\co H\to G=\Aut(\varphi\circ f)$ be the
induced morphism, extending $u_s$.

For \itemref{PI:deformation-of-subgroups:et}, we have already seen that if
$G_s/H_s$ is \'etale, then $\varphi$ is \'etale so that $u$ is \'etale. Let
$u'\co H'\to G$ be another \'etale homomorphism extending $u_s$. We have a
unique map $\psi\co BH\to BH'$ over $BG$ extending the isomorphism $BH_s\to
BH'_s$ (apply \Cref{P:deformation-sections} to $\stX=BH$ and
$\stX'=BH'\times_{BG} BH$). The tautological section of $BH$ is then mapped
by $\psi$ to the tautological section of $BH'$ (apply
\Cref{P:deformation-sections} to $\stX=S$ and $\stX'=BH'\times_{BG} S$). That
is, $\psi$ is induced by a unique homomorphism $H\to H'$ over $G$.

For \itemref{PI:deformation-of-subgroups:sep}, if $G$ is separated,
then $u$ is a closed immersion by \Cref{P:geomreductive-mono}.
\end{proof}

\begin{remark}
If $G_s/H_s$ is not smooth, then the tautological section of $BH_s$
still extends to a section of $\cH\to S$ so $\cH=BH$ where
$H$ is an extension of $H_s$. But $\varphi$ merely induces a
homomorphism $H\to \widetilde{G}$ where $\widetilde{G}$ is a
twisted form of $G$.
\end{remark}

\subsection{The smooth identity component of linearly reductive groups}\label{S:sm-id-cmpt-lr}
Recall that if $G\to S$ is a smooth group scheme, then there is an open
characteristic
subgroup $G^0\subseteq G$ such that $G^0\to S$ is smooth with connected
fibers~\cite[Exp.~6B, Thm.~3.10]{MR0274459}. This is also true when $G\to S$ is
a smooth group algebraic space, cf~\cite[6.8]{lmb}. The subgroup $G^0$ is
not always closed, not even when $G$ is affine~\cite[VII, \S3]{MR0260758}.
For a (not necessarily smooth) group scheme of finite type over a field, the
identity component $G^0$ exists and is open and closed.

When $(S,s)$ is henselian and $(G_s)^0$ is linearly reductive
but not necessarily smooth, then
\Cref{P:deformations-of-subgroups} gives the existence of a unique
linearly reductive group scheme $G^0_\loc$ together with an \'etale
homomorphism $u\co G^0_\loc\to G$ extending $u_s\co (G_s)^0\inj G_s$. There are
at least three subtleties:
\begin{enumerate}[label=(\alph*), ref=\alph*]
  \item\label{I:u-not-inj} $u$ need not be injective, even if $G$ is smooth and of
    characteristic zero.
  \item\label{I:G^0_loc-not-cn} $G^0_\loc$ need not have connected fibers.
  \item\label{I:G^0_loc-not-normal} Even if $u$ is injective, $G^0_\loc$ need not be a normal subgroup.
\end{enumerate}
As we will see, the first problem only happens when $G$ is not separated and
the latter two only in mixed characteristic when $G$ is not smooth.

\begin{example}
We give two examples in mixed characteristic and one in equal characteristic:
\begin{enumerate}
\item Let $G=\Gmu_{p,\ZZ_p}\to \Spec \ZZ_p$ which is a finite linearly
  reductive group scheme. Then $G^0_\loc=G$ but the generic geometric fiber is
  not connected, illustrating~\itemref{I:G^0_loc-not-cn}. If we let $G'$ be the
  gluing of $G$ and a finite group over $\QQ_p$ containing $\Gmu_p$ as a
  non-normal subgroup, then $G'^0_\loc=G^0_\loc\subseteq G'$ is not normal,
  illustrating~\itemref{I:G^0_loc-not-normal}.
\item Let $G$ be as in the previous example and consider the \'etale group
  scheme $H\to \Spec \ZZ_p$ given as extension by zero from $\Gmu_{p,\QQ_p}\to
  \Spec \QQ_p$. Then we have a bijective monomorphism $H\to G$ which is not an
  immersion and $G'=G/H$ is a quasi-finite group algebraic space with connected
  fibers which is not locally separated. Note that
  $(G')^0_\loc=G=\Gmu_{p,\ZZ_p}$ and the \'etale morphism $(G')^0_\loc\to G'$
  is not injective, illustrating~\itemref{I:u-not-inj}.
\item Let $G=\Gm\times S\to S=\Spec k\llbracket t \rrbracket$ and let $H\to S$
  be $\Gmu_{r,k(\!(t)\!)}$ extended by zero for some invertible $r>1$.  Let
  $G'=G/H$. Then $G'$ is a smooth locally separated algebraic space,
  $G'^0_\loc=G$ and $G'^0_\loc\to G'$ is not injective,
  illustrating~\itemref{I:u-not-inj}.
\end{enumerate}
\end{example}

From now on, we only consider separated group schemes. Then $G^0_\loc\to G$ is
an open and closed subgroup by \Cref{P:deformations-of-subgroups}
so~\itemref{I:u-not-inj} does not occur.
The subgroup $G^0_\loc$ exists over the henselization but not globally in mixed
characteristic due to problem~\itemref{I:G^0_loc-not-cn}. We remedy this by
considering a slightly smaller subgroup $G^0_\sm$ which
is closed but not open.

\begin{lemma}[Identity component: nice case]\label{L:existence-of-G^0_sm}
Let $S$ be an algebraic space and let $G\to S$ be a flat and separated group algebraic
space of finite presentation with affine fibers.
\begin{enumerate}
\item\label{LI:connected-nice-open} The locus of $s\in S$ such that $(G_s)^0$ is nice is open in $S$.
\end{enumerate}
Now assume that $(G_s)^0$ is nice for all $s\in S$.
\begin{enumerate}[resume]
\item There exist a unique characteristic  closed subgroup $G^0_\sm\inj G$ smooth over $S$
  that restricts to $(G_s)^0_\red$ on fibers.
\item $G^0_\sm \to S$ is a torus, 
  $G / G^0_\sm \to S$ is quasi-finite and separated, and $G \to S$ is quasi-affine.
\end{enumerate}
Now assume in addition that $S$ has equal characteristic.
\begin{enumerate}[resume]
\item There exist a unique characteristic open and closed subgroup $G^0\inj G$
  that restricts to $(G_s)^0$ on fibers.
\item $G^0 \to S$ is of multiplicative type with connected fibers and $G / G^0 \to S$ is
  \'etale and separated.
\end{enumerate}
\end{lemma}
\begin{proof}
The questions are \'etale-local on $S$.  For \itemref{LI:connected-nice-open},
if $(G_s)^0$ is nice, i.e.,
of multiplicative type, then over the henselization at $s$ we can find
an open and closed subgroup $G^0_\loc\subseteq G$ such that $G^0_\loc$ is of
multiplicative type (\Cref{P:deformations-of-subgroups}). After replacing $S$
with an \'etale neighborhood of $s$, we can thus find an open and closed subgroup
$H\subseteq G$ where $H$ is of multiplicative type. It follows
that $(G_s)^0$ is of multiplicative type for all $s$ in $S$.

For an $H$ as above, we have a characteristic closed subgroup $H_\sm\inj H$ such
that $H_\sm$ is a torus and $H/H_\sm$ is finite. Indeed, the Cartier dual of
$H$ is an \'etale sheaf of abelian groups and its torsion is a characteristic
subgroup. It follows that $G/H_\sm$ is quasi-finite and separated and that $G$
is quasi-affine.

It remains to prove that $H_\sm$ is characteristic and independent on the
choice of $H$ so that it glues to a characteristic subgroup $G^0_\sm$. This can
be checked after base change to henselian local schemes.  If $(S,s)$ is
henselian, then $G^0_\loc\subseteq H$ and since these are group schemes of
multiplicative type of the same dimension, it follows that
$(G^0_\loc)_\sm=H_\sm$. This shows that $H_\sm$ is independent on the choice
of $H$.
Since any automorphism of $G$ leaves $G^0_\loc$ fixed, any automorphism leaves
$(G^0_\loc)_\sm$ fixed as well. This shows that $H_\sm$ is characteristic.

If $S$ has equal characteristic, then $H$ is an open and closed subgroup with
connected fibers, hence clearly unique.
\end{proof}

\begin{lemma}[Identity component: smooth case]\label{L:smooth-lin-red}
Let $S$ be an algebraic space and let $G\to S$ be a flat and separated group algebraic
space of finite presentation with affine fibers. Suppose that $G\to S$ is
smooth and that $(G_s)^0$ is linearly reductive for all $s$.
\begin{enumerate}
\item The open subgroup $G^0\subseteq G$ is also closed and linearly reductive (and in particular affine).
\item $G / G^0 \to S$ is \'etale and separated and $G \to S$ is quasi-affine.
\end{enumerate}
\end{lemma}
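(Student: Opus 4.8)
The plan is to first produce $G^0$ by the usual procedure, then to recognise it \'etale\nobreakdash-locally on $S$ as the linearly reductive group scheme obtained by deforming the identity components of the fibres, and finally to deduce both assertions in the same way as in the nice case (\Cref{L:existence-of-G^0_sm}).

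For the construction: since $G\to S$ is smooth there is an open subgroup $G^0\subseteq G$, smooth and of finite presentation over $S$, with connected fibres $(G^0)_s=(G_s)^0$ \cite[Exp.~6B, Thm.~3.10]{MR0274460}, \cite[6.8]{lmb}; as $G\to S$ is separated, $G^0\hookrightarrow G$ is moreover closed, as recalled above. Hence $G/G^0\to S$ is separated, and since it is also flat and of finite presentation (by fppf descent along the torsor $G\to G/G^0$) with fibres the finite \'etale $\kappa(s)$\nobreakdash-schemes $\pi_0(G_s)$, it is \'etale; being quasi-finite and separated, it is therefore quasi-affine. This already yields the assertion ``$G/G^0\to S$ is \'etale and separated'' in (2).

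For (1) I would first observe that both cohomological affineness of $BG^0\to S$ and affineness of $G^0\to S$ may be tested after base change to the strict henselizations $\Spec\oh^{sh}_{S,\bar s}$ at the geometric points of $S$ (for the former, exactness of the pushforward is checked stalkwise, cf.\ the proof of \Cref{T:adequate+lin-red=>good}; for the latter one uses that $G^0\to S$ is of finite presentation together with a limit argument and fppf descent of affineness). So suppose $(S,s)$ is strictly henselian local. The closed subgroup $(G_s)^0\hookrightarrow G_s$ is linearly reductive by hypothesis and $G_s/(G_s)^0=\pi_0(G_s)$ is \'etale, so \Cref{P:deformations-of-subgroups} produces a linearly reductive and embeddable group scheme $H\to S$ and an \'etale homomorphism $i\colon H\to G$ extending the inclusion; since $G\to S$ is separated, $i$ is in addition a closed immersion, hence an open and closed immersion, and $H_s=(G_s)^0$. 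Now $H$ is affine (being embeddable) and smooth over $S$ (being \'etale over $G$), so its identity component $H^0\subseteq H$ is open and closed, hence affine, while $BH^0\to BH$ is finite \'etale; as $BH\to S$ is cohomologically affine it follows that $BH^0\to S$ is cohomologically affine, i.e.\ $H^0\to S$ is linearly reductive (and embeddable). Finally $H^0$ and $G^0$ are open subschemes of $G$ with the same fibres --- for each point $t$, $H_t$ is an open subgroup of $G_t$, whence $(H^0)_t=(H_t)^0=(G_t)^0=(G^0)_t$ --- so $H^0=G^0$. Therefore $G^0\to S$ is linearly reductive over every $\Spec\oh^{sh}_{S,\bar s}$, and hence over $S$; in particular it is affine. (If the uniqueness clause of \Cref{P:deformations-of-subgroups} is used, one may skip $H^0$ and identify $G^0\cong H$ directly; alternatively, since $G^0\to S$ is then a reductive group scheme with linearly reductive fibres, one could invoke \Cref{C:adequate+lin-red=>good2}.)

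It remains to note that $G\to G/G^0$ is a torsor under the now-affine group scheme $G^0$, hence affine, so composing with the quasi-affine $G/G^0\to S$ shows $G\to S$ is quasi-affine, completing (2). The step I expect to be the main obstacle is the reduction of linear reductivity of $G^0$ to a strictly henselian local base: because linear reductivity is not limit preserving in mixed characteristic (\Cref{R:not-limit-preserving}), this cannot be carried out by Noetherian approximation, and after passing to the henselization one must carefully recognise the group scheme supplied by \Cref{P:deformations-of-subgroups} as exactly $G^0$.
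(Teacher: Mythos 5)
Your architecture is the same as the paper's (whose proof of this lemma is a single sentence): reduce to a henselian local base, apply \Cref{P:deformations-of-subgroups} to $(G_s)^0\inj G_s$ to obtain an open and closed linearly reductive subgroup $H\subseteq G$ with $H_s=(G_s)^0$, and identify it with $G^0$. Your fibrewise computation $(H^0)_t=(H_t)^0=(G_t)^0$, giving $H^0=G^0$ as open subschemes of $G$ and in particular the affineness of $G^0$, is correct and is a good way of making precise an identification the paper leaves implicit. The reduction to the strictly henselian case, the \'etaleness and separatedness of $G/G^0$, and the quasi-affineness of $G$ are all fine.

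The one step that is not justified is the assertion that $BH^0\to BH$ is finite \'etale. This is equivalent to $H/H^0\to S$ being finite, and that is exactly the delicate point: over a henselian local base, an \'etale separated group scheme of finite presentation with trivial closed fibre need not be finite (e.g.\ over a henselian DVR $R$ with fraction field $K$, the open subgroup $e(\Spec R)\sqcup(\ZZ/2\ZZ\smallsetminus\{e\})_{\Spec K}$ of $(\ZZ/2\ZZ)_R$ is \'etale and separated but not finite). Moreover, finiteness of $H/H^0$ already forces $H/H^0=e(S)$ and hence $H=H^0$, so this finiteness is essentially the whole content of the identification rather than a formality. Two ways to repair it: (i) since $H$ is linearly reductive and $H^0\subseteq H$ is a flat normal subgroup, $B(H/H^0)\to S$ is cohomologically affine (cf.\ \cite[Prop.~12.17]{alper-good}), so $H/H^0$ is finite by \cite[Thm.~8.3.2]{alper-adequate} and therefore trivial, giving $H=H^0=G^0$ directly; or (ii) use the alternative you mention only in passing: having shown $G^0=H^0$ is affine over $S$, it is smooth with connected reductive fibres, hence a reductive group scheme, hence geometrically reductive by Seshadri's theorem, and \Cref{C:geom-red=>lin-red} (or \Cref{C:adequate+lin-red=>good2}) then upgrades this to linear reductivity because all fibres are linearly reductive. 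Either of these should be promoted from a parenthesis to the body of the proof; as written, the main line of argument has a gap at precisely this point.
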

\begin{proof}
This follows immediately from \Cref{P:deformations-of-subgroups} since in the
henselian case $G^0_\loc$ is the smallest open subscheme containing $(G_s)^0$,
hence equal to $G^0$.
\end{proof}

\begin{theorem}[Identity component]\label{T:char-of-lin-reductive-groups}
Let $S$ be an algebraic space and let $G\to S$ be a flat and separated group algebraic
space of finite presentation with affine fibers. Suppose that $(G_s)^0$ is
linearly reductive for every $s\in S$.
\begin{enumerate}
\item\label{TI:lin-red:smooth-identity} There exist a unique  linearly reductive and 
characteristic closed subgroup $G^0_\sm\inj G$
smooth over $S$
  that restricts to $(G_s)^0_\red$ on fibers, and $G / G^0_\sm \to S$ is quasi-finite
  and separated.
\item If $S$ is of equal characteristic, then there exists a unique linearly
  reductive characteristic open and closed subgroup $G^0\inj G$ that restricts
  to $(G_s)^0$ on fibers, and $G / G^0 \to S$ is \'etale and separated.
\item $G \to S$ is quasi-affine.
\end{enumerate}
The following are equivalent:
\begin{enumerate}[resume]
\item\label{TI:lin-red:lin-red} $G \to S$ is linearly reductive (in particular affine).
\item\label{TI:lin-red:fin-tame} $G/G^0_\sm \to S$ is finite and tame.
\item (if $S$ of equal characteristic) $G/G^0 \to S$ is finite and tame.
\end{enumerate}
In particular, if $G \to S$ is linearly reductive and $S$ is of equal characteristic
$p>0$, then $G \to S$ is nice.
\end{theorem}
\begin{proof}
Let $S_1\subseteq S$ be the open locus where $(G_s)^0$ is nice and let
$S_2\subseteq S$ be the open locus where $G_s$ is smooth. Then $S=S_1\cup S_2$.
Over $S_1$, we define $G^0_\sm$ as in \Cref{L:existence-of-G^0_sm}.  Over
$S_2$, we define $G^0_\sm=G^0$ as in \Cref{L:smooth-lin-red}. In equal
characteristic, we define $G^0$ as in
\Cref{L:existence-of-G^0_sm,L:smooth-lin-red}. The first three
statements follow.

Since $G^0_\sm \to S$ is linearly reductive, it follows that
$BG\to S$ is cohomologically affine if and only if $B(G/G^0_\sm)\to S$ is
cohomologically affine \cite[Prop.~12.17]{alper-good}.
If $B(G/G^0_\sm)\to S$ is cohomologically affine, then $G/G^0_\sm$ is finite
\cite[Thm.~8.3.2]{alper-adequate}. Conversely, if $G/G^0_\sm$ is finite and tame
then $BG\to S$ is cohomologically affine and $G\to S$ is affine.
\end{proof}

\begin{corollary}\label{C:lin-red-embeddable-over-normal}
If $S$ is a normal noetherian scheme with the resolution property (e.g., $S$ is
regular and separated, or $S$ is quasi-projective) and $G\to S$ is linearly reductive,
then $G$ is embeddable.
\end{corollary}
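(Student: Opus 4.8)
The plan is to decompose $G$ via its reductive identity component and the resulting finite tame quotient, embed each piece, and then reassemble. By \Cref{T:char-of-lin-reductive-groups}, $G\to S$ is affine, the characteristic closed subgroup $H:=G^0_\sm\inj G$ is a reductive group scheme over $S$ (smooth and affine, with geometrically connected reductive fibres $(G_s)^0_\red$), and the quotient $Q:=G/H$ is finite and tame over $S$; since a finite tame group scheme is flat and of finite presentation, $Q\to S$ is finite locally free.

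First I would embed the two pieces separately. For $Q$: writing $q\colon Q\to S$, the sheaf $\mathcal E_Q:=q_*\Orb_Q$ is a vector bundle on $S$ and the regular representation gives a monomorphism $Q\inj\GL(\mathcal E_Q)$; letting $G$ act on $\mathcal E_Q$ through $G\to Q$ makes $\mathcal E_Q$ a $G$-equivariant vector bundle on $S$ with $\ker\bigl(G\to\GL(\mathcal E_Q)\bigr)=H$. For $H$: this is the key step, and the only place where the geometry of $S$ is used. Since $S$ is normal and has the resolution property, Thomason's embedding theorem \cite[Cor.~3.2]{thomason} applies to the reductive group scheme $H\to S$ and produces a vector bundle $\mathcal F$ on $S$ with a closed immersion $H\inj\GL(\mathcal F)$. (Alternatively: over a normal noetherian connected base a reductive group scheme is isotrivial, so it becomes split after some finite \'etale surjection $p\colon S'\to S$; a split reductive group is pulled back from $\Spec\Z$, hence $H_{S'}\inj\GL_{n,S'}$, and Weil restriction along $p$ produces a monomorphism $H\inj\mathrm{Res}_{S'/S}\GL_{n,S'}\inj\GL(p_*\Orb_{S'}^{n})$, which is a closed immersion by \Cref{P:geomreductive-mono} since $H$ is geometrically reductive.)

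Then I would reassemble the embedding. Since $H\inj G$ is a monomorphism with $G/H=Q$ finite locally free, the induced morphism of classifying stacks $f\colon BH\to BG$ is finite locally free, so $W:=f_*\mathcal F$ is a vector bundle on $BG$, i.e.\ a $G$-equivariant vector bundle on $S$ (the induced representation $\mathrm{Ind}_H^G\mathcal F$). As $f$ is affine, the counit $f^*f_*\mathcal F\to\mathcal F$ is an $H$-equivariant surjection, so the kernel of $H$ acting on $W$ is contained in $\ker\bigl(H\to\GL(\mathcal F)\bigr)=\{e\}$. Hence the kernel of $G$ acting on $W\oplus\mathcal E_Q$ equals $\ker\bigl(G\to\GL(W)\bigr)\cap\ker\bigl(G\to\GL(\mathcal E_Q)\bigr)=\ker\bigl(G\to\GL(W)\bigr)\cap H=\{e\}$, so $G\to\GL(W\oplus\mathcal E_Q)$ is a monomorphism of $S$-group schemes. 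Finally $G\to S$ is geometrically reductive and, by \Cref{C:linred-etale-loc-embeddable}, \'etale-locally (hence fppf-locally) embeddable, while $\GL(W\oplus\mathcal E_Q)\to S$ is separated and of finite presentation, so this monomorphism is a closed immersion by \Cref{P:geomreductive-mono}; thus $G$ is embeddable.

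The main obstacle is the embedding of the reductive part $H=G^0_\sm$: the finite tame quotient is handled cheaply by its regular representation, and the extension is formal (induction, together with \Cref{P:geomreductive-mono} to promote the resulting monomorphism to a closed immersion), so all the genuine content is concentrated in embedding a reductive group scheme over a normal base with the resolution property.
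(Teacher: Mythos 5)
Your proof is correct, and it shares its essential content with the paper's proof --- both arguments isolate the reductive identity component $G^0_\sm$ supplied by \Cref{T:char-of-lin-reductive-groups} and invoke Thomason's theorem \cite[Cor.~3.2]{thomason} over the normal base with the resolution property to handle it --- but the two treatments of the finite tame quotient genuinely diverge. The paper disposes of the extension in one line: $BG^0_\sm \to BG$ is finite and faithfully flat, so the resolution property descends from $BG^0_\sm$ to $BG$ by \cite{gross-resolution}, and $G$ is then embeddable. You instead unpack that descent by hand: you induce the faithful $G^0_\sm$-representation up to $G$ along the finite locally free morphism $BG^0_\sm \to BG$, add the regular representation of $Q = G/G^0_\sm$ to kill the quotient, check faithfulness via the surjective counit $f^*f_*\cF \to \cF$, and then upgrade the resulting monomorphism into $\GL(W\oplus\cE_Q)$ to a closed immersion using \Cref{P:geomreductive-mono}\itemref{P:geomreductive:mono-closed} together with the fppf-local embeddability from \Cref{C:linred-etale-loc-embeddable} (which is proved well before this corollary, so there is no circularity). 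What the paper's route buys is brevity and the stronger intermediate conclusion that $BG$ itself has the resolution property; what your route buys is an explicit faithful representation and independence from the Gross descent theorem, at the cost of the extra step needed to pass from a monomorphism of group schemes to a closed immersion. A small remark: the summand $\cE_Q$ is what makes your kernel computation reduce cleanly to an intersection with $G^0_\sm$; without it one would have to argue that induction of a faithful representation remains faithful, which is more delicate for group schemes, so including it is the right call.
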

\begin{proof}
The stack $BG^0_\sm$ has the resolution property~\cite[Cor.~3.2]{thomason}.
Since $BG^0_\sm\to BG$ is finite and faithfully flat, it follows that $BG$
has the resolution property~\cite{gross-resolution}, hence that $G$ is embeddable.
\end{proof}

\begin{corollary}
Let $S$ be an algebraic space and let $G\to S$ be a flat and separated group
algebraic space of finite presentation with affine, connected and linearly reductive
fibers. Then $G\to S$ is affine and linearly reductive.
\end{corollary}
\begin{proof}
By \Cref{T:char-of-lin-reductive-groups}\itemref{TI:lin-red:smooth-identity},
there is a closed (characteristic) smooth linearly reductive group scheme
$G^0_\sm$ and $G/G^0_\sm\to S$ is a quasi-finite flat separated morphism with
connected fibers since $G\to S$ has connected fibers. Such a morphism is
finite, as for example can be seen by passing to henselizations. Moreover,
$G/G^0_\sm\to S$ is tame since this can be checked on fibers and $G_s$ is
linearly reductive.  Thus $G\to S$ is linearly reductive by the equivalence
of \itemref{TI:lin-red:lin-red} and \itemref{TI:lin-red:fin-tame} in
\Cref{T:char-of-lin-reductive-groups}.
\end{proof}

\begin{remark}
Let $G\to S$ be as in \Cref{T:char-of-lin-reductive-groups}.
When $G/G^0_\sm$ is merely finite, then $G\to S$ is geometrically reductive.
This happens precisely when $G\to S$ is \emph{pure} in the sense of Raynaud--Gruson~\cite[D\'efn.~3.3.3]{MR0308104}. In particular, $G\to S$ is geometrically
reductive if and only if $\pi\co G\to S$ is affine and $\pi_*\Orb_G$ is a locally
projective $\Orb_S$-module~\cite[Thm.~3.3.5]{MR0308104}.
\end{remark}

\section{Further applications} \label{S:further-applications}
In this section we give generalizations of Sumihiro's theorem and Luna's \'etale
slice theorem in equivariant geometry. These are obtained by applying
the local structure theorem to $\stX = [X/G]$. We also show that
the henselization $\cX^h_x$ exists if $\cX$ has affine stabilizers and $x$ is
a closed point with linearly reductive stabilizer.
Finally, we deduce that several stacks, including $\Cohstk_X(\stX)$,
$\Hilbshf_{\stX/X}$ and $\Homstk_X(\stX,\stY)$, are algebraic if $\stX\to X$ is
a good moduli space.

\subsection{Generalization of Sumihiro's theorem on torus actions}\label{SS:sumihiro}
Sumihiro's theorem on torus actions in the relative case is the following.  
Let $S$ be a noetherian scheme and $X \to S$ a morphism of schemes satisfying Sumihiro's condition \ref{Cond:N}, that is, $X \to S$ is flat and of finite type, $X_s$ is geometrically normal for all generic points $s \in S$ and $X_s$ is geometrically integral for all codimension 1 points $s \in S$ (which by a result of Raynaud implies that $X$ is normal); see \cite[Defn.~3.4 and Rem.~3.5]{sumihiro2}.  If $S$ is normal and $T \to S$ is a smooth and Zariski-locally diagonalizable group scheme acting on $X$ over $S$, then 
 there exists a $T$-equivariant affine open neighborhood of any point of $X$  \cite[Cor.~3.11]{sumihiro2}.
We provide the following generalization of this result which simultaneously generalizes \cite[Thm.~4.4]{luna-field} to the relative case.

\begin{theorem} \label{T:sumihiro}
Let $S$ be a quasi-separated algebraic space.  Let $G$ be an affine and flat group scheme over $S$ of finite presentation.  Let $X$ be a quasi-separated algebraic space locally of finite presentation over $S$ with an action of $G$.  Let $x \in X$ be a point with image $s\in S$ such that $\kappa(x)/\kappa(s)$ is finite.  Assume that $x$ has linearly reductive stabilizer.  Then there exists a $G$-equivariant \'etale neighborhood $(\Spec A, w) \to (X,x)$ that induces an isomorphism of residue fields and stabilizer groups at $w$.
\end{theorem}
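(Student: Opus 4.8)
The plan is to reduce this equivariant statement to the local structure theorem \Cref{T:base} applied to the quotient stack $\cX = [X/G]$. First I would form $\cX = [X/G]$, which is an algebraic stack locally of finite presentation and quasi-separated over $S$ with affine stabilizers (since $G \to S$ is affine and flat of finite presentation and $X \to S$ is quasi-separated and locally of finite presentation). The point $x \in X$ with image $s \in S$ determines a point $\bar{x} \in |\cX|$; the condition that $\kappa(x)/\kappa(s)$ is finite gives that $\kappa(\bar{x})/\kappa(s)$ is finite, and the hypothesis that $x$ has linearly reductive stabilizer in the $G$-action means exactly that $\bar{x}$ has linearly reductive stabilizer group. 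The residual gerbe $\cG_{\bar{x}}$ is thus cohomologically affine, hence (being a gerbe over a field) linearly fundamental. I would take $h_0 \colon \cW_0 \to \cG_{\bar{x}}$ to be the identity, which is representable and \'etale, so that \Cref{T:base-general} applies.

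Next, \Cref{T:base-general} (with $h_0$ an isomorphism) produces a cartesian diagram with $\cW = [\Spec A'/\GL_n]$, a point $w' \in |\cW|$ closed in its fiber over $s$, and an \'etale morphism $h \colon (\cW, w') \to (\cX, \bar{x})$ inducing an isomorphism of residual gerbes at $w'$; moreover, since $\cX = [X/G]$ has affine diagonal (as $X$ is an algebraic space and $G$ affine) — actually I only need separated diagonal — the theorem lets us arrange $h$ to be representable. Then I would use Luna's fundamental lemma \Cref{L:fundamental-lemma} in the guise already deployed in the proof of \Cref{T:etale-local-ams}: after shrinking $\cW$ to a saturated open neighborhood of $w'$, the morphism $h$ becomes strongly \'etale, i.e. $\cW \cong \cW'' \times_{X'} X$ where — wait, more precisely one gets that $h$ is, \'etale-locally on the good/adequate moduli space, of the form expected. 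The key point I want to extract is that pulling $h$ back along the $G$-torsor $X \to \cX$ gives a $G$-equivariant \'etale morphism $U \to X$ where $U$ is the fiber product, and $U$ is representable over $\cW$, hence of the form $[\Spec A/G]$ pulled back — actually $U = X \times_{\cX} \cW$ is a scheme (an algebraic space) because $\cW \to \cX$ is representable and $X$ is an algebraic space, and it carries a $G$-action with $[U/G] = \cW$.

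The remaining work is to arrange that $U$ is affine, not merely an algebraic space. Here I would invoke \Cref{R:resolution}: $\cW = [\Spec A'/\GL_n]$ has the resolution property and affine diagonal, and a $G$-torsor over it — namely $U \to \cW$ — is representable by an affine morphism precisely because it is affine (a torsor under an affine group scheme is an affine morphism). Hence $U = \Spec A$ for a $G$-algebra $A$, giving the $G$-equivariant \'etale neighborhood $(\Spec A, w) \to (X, x)$. That it induces an isomorphism of residue fields and stabilizer groups at $w$ follows because $h$ induces an isomorphism of residual gerbes at $w'$ (residue field isomorphism from the isomorphism on coarse points of the gerbes, stabilizer isomorphism from the isomorphism of the automorphism group schemes of the gerbes), and these properties are preserved under the base change $X \to \cX$. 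I expect the main obstacle to be the bookkeeping in the shrinking step: one must shrink $\cW$ to a $G$-saturated (equivalently, $\pi^{-1}\pi$-saturated on the adequate moduli space) open neighborhood of $w'$ so that the strongly \'etale conclusion of Luna's fundamental lemma applies and so that the resulting $U$ remains affine and $G$-stable — this is exactly the saturation manipulation carried out at the end of the proof of \Cref{T:base}, transported through the torsor $X \to [X/G]$.
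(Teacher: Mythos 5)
Your overall strategy --- apply \Cref{T:base} to $[X/G]$ with $\cW_0=\cG_x$ and pull the resulting \'etale morphism $h\colon(\cW,w')\to([X/G],\bar x)$ back along the $G$-torsor $X\to[X/G]$ --- is exactly the paper's. But the step where you arrange that the resulting $G$-equivariant \'etale neighborhood is \emph{affine} has a genuine gap. You correctly note that $U=X\times_{[X/G]}\cW\to\cW$ is an affine morphism (it is a torsor under the affine group scheme $G$), but then conclude ``hence $U=\Spec A$''. This does not follow: $\cW=[\Spec A'/\GL_n]$ is not affine, so an affine morphism $U\to\cW$ only exhibits $U$ as $[\Spec B/\GL_n]$ for some $\GL_n$-algebra $B$ on which the action is free, and freeness alone does not make such a quotient affine. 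The missing input is either (a) the paper's actual move, namely \Cref{P:refinement}\itemref{P:refinement:affine_diag} applied to the composition $\cW\to[X/G]\to B_SG$, which lets one shrink $\cW$ around $w'$ so that $\cW\to B_SG$ becomes affine, whence $W=\cW\times_{[X/G]}X=\cW\times_{B_SG}S$ is affine; or (b) the observation that $U$ is adequately affine (being affine over the fundamental stack $\cW$) and representable, and adequately affine representable morphisms are affine \cite[Thm.~4.3.1]{alper-adequate}, so $U\to\Spec\Gamma(U,\Orb_U)$ is affine. Either patch works, but as written the affineness claim is a non sequitur.

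Separately, your detour through Luna's fundamental lemma is inapplicable: \Cref{L:fundamental-lemma} requires both source and target to admit adequate moduli spaces, and $[X/G]$ need not admit one here since $G\to S$ is only affine, flat and of finite presentation (not geometrically reductive) and $X$ is not affine over $S$. You appear to abandon this mid-argument, and indeed it is not needed --- no ``strongly \'etale'' or saturation statement enters the paper's proof of this theorem; the only shrinking required is the one furnished by \Cref{P:refinement}. The remaining points (the isomorphism of residue fields and stabilizer groups at $w$, deduced from the isomorphism of residual gerbes at $w'$ and base change along $X\to[X/G]$) are fine.
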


\begin{proof}
By applying \Cref{T:base} to $\cX = [X/G]$ with $\cW_0 = \cG_x$ (the residual gerbe of $x$), we obtain an \'etale morphism $h \co (\cW,w)\to (\cX,x)$ with $\cW$ fundamental and $h|_{\cG_x}$ an isomorphism. By applying \Cref{P:refinement}\itemref{P:refinement:affine_diag} to the composition $\stk{W} \to \cX \to BG$, we may shrink $\stk{W}$ around $w$ so that $\stk{W} \to BG$ is affine.  It follows that $W:=\cW \times_{\cX} X$ is affine and $W \to X$ is $G$-equivariant. If we let $w' \in W$ be the unique preimage of $x$, then
$(W,w')\to (X,x)$ is the desired \'etale neighborhood.
\end{proof}

\begin{corollary} \label{C:sumihiro-torus}
Let $S$ be a quasi-separated algebraic space, $T \to S$ be a group scheme of multiplicative type over $S$ (e.g., a torus), and $X$ be a quasi-separated algebraic space locally of finite presentation over $S$ with an action of $T$.  Let $x \in X$ be a point with image $s\in S$ such that $\kappa(x)/\kappa(s)$ is finite. Then there exists a $T$-equivariant \'etale morphism $(\Spec A, w) \to (X,x)$ that induces an isomorphism of residue fields and stabilizer groups at $w$.
\end{corollary}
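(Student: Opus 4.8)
The plan is to deduce this corollary directly from \Cref{T:sumihiro} applied with $G = T$. A group scheme $T \to S$ of multiplicative type is affine, flat, and of finite presentation over $S$, so all the structural hypotheses of \Cref{T:sumihiro} on the acting group are automatically satisfied. The only remaining hypothesis in \Cref{T:sumihiro} that is not already part of the statement of the corollary is that $x$ has linearly reductive stabilizer; hence the entire content of the proof is to verify this single point, after which one invokes \Cref{T:sumihiro} verbatim.

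To verify it, I would first identify the stabilizer. Since $T \to S$ is of multiplicative type, its base change $T_{\kappa(x)} := T \times_S \Spec \kappa(x)$ is a group scheme of multiplicative type over the field $\kappa(x)$, and the stabilizer $\Stab(x)$ of the $T$-action at $x$ is a closed subgroup scheme of $T_{\kappa(x)}$. By SGA\,3 \cite[Exp.~VIII, IX]{MR0274459}, a closed subgroup scheme of a group scheme of multiplicative type is again of multiplicative type, so $\Stab(x)$ is of multiplicative type over $\kappa(x)$. I would then use the classical fact that a group scheme of multiplicative type over a field is linearly reductive: it becomes diagonalizable after a finite separable extension of the field, diagonalizable group schemes over a field have semisimple representation category, and linear reductivity descends along faithfully flat base change. (Equivalently, over a field of characteristic $p>0$ a multiplicative type group is \emph{nice} — the prime-to-$p$ condition on the component group being automatic, as one sees by passing to the geometric fibre and decomposing the $\mu_m$-factors — and nice groups are linearly reductive by the implications recalled around \Cref{R:nice_grp_pos_char} together with \cite{MR0142667}.) In either case $\Stab(x)$ is linearly reductive.

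With that hypothesis confirmed, \Cref{T:sumihiro} applied to the $T$-action on $X$ at the point $x$ (whose residue field is finite over $\kappa(s)$ by assumption) yields a $T$-equivariant \'etale neighborhood $(\Spec A, w) \to (X, x)$ inducing an isomorphism of residue fields and of stabilizer groups at $w$, which is exactly the assertion of the corollary. I do not anticipate any genuine obstacle: the proof is a formal specialization of \Cref{T:sumihiro}, and the only mildly delicate ingredient is the standard fact that multiplicative type group schemes over a field are linearly reductive, for which the references above are sufficient.
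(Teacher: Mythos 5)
Your proposal is correct and follows exactly the paper's route: the paper's proof is the one-line observation that the corollary follows from \Cref{T:sumihiro} because any subgroup of a fiber of $T \to S$ is linearly reductive, which is precisely the point you verify in detail via multiplicative type subgroups. Your expanded justification of that fact is accurate but not needed beyond the level of the paper's own argument.
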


\begin{proof} This follows immediately from \Cref{T:sumihiro} as any subgroup of a fiber of $T \to S$ is linearly reductive.
\end{proof}

\begin{remark}
In \cite{brion-linearization}, Brion establishes several powerful structure results for actions of connected algebraic groups on varieties.  In particular, \cite[Thm.~4.8]{brion-linearization}  recovers the result above when $S$ is the spectrum of a field, $T$ is a torus and $X$ is quasi-projective without the final conclusion regarding residue fields and stabilizer groups.
\end{remark}

\subsection{Relative version of Luna's \'etale slice theorem} \label{SS:luna}

We provide the following generalization of Luna's \'etale slice theorem \cite{luna} (see also \cite[Thm.~4.5]{luna-field}) to the relative case.

\begin{theorem} \label{T:luna}
Let $S$ be a quasi-separated algebraic space.  Let $G \to S$ be a smooth, affine group scheme.
Let $X$ be a quasi-separated algebraic space locally of finite presentation over $S$ with an action of $G$.  Let $x \in X$ be a point with image $s \in S$ such that $\kappa(x)/\kappa(s)$ is a finite separable extension.  Assume that $x$ has linearly reductive stabilizer $G_x$. Then there exist
\begin{enumerate}
	\item \label{T:luna1} an \'etale morphism $(S',s') \to (S,s)$ and a $\kappa(s)$-isomorphism $\kappa(s') \cong \kappa(x)$;
	\item  \label{T:luna2} a geometrically reductive (linearly reductive if $\kar \kappa(s)>0$ or $s$ has an open neighborhood of characteristic zero) closed subgroup $H \subset G' := G \times_S S'$ over $S'$ such that $H_{s'} \cong G_x$; and
	\item  \label{T:luna3} an unramified $H$-equivariant $S'$-morphism $(W, w) \to (X',x')$ of finite presentation with $W$ affine and $\kappa(w) \cong \kappa(x')$ such that
	$W \times^H G' \to X'$ is \'etale.  Here $x' \in X':=X\times_S S'$ is the unique $\kappa(x)$-point over $x \in X$ and $s' \in S'$.
\end{enumerate}
Moreover, it can be arranged that
\begin{enumerate}
\setcounter{enumi}{3}
\item  \label{T:luna4} if $X \to S$ is smooth at $x$, then $W\to S'$ is smooth and there exists an $H$-equivariant section $\sigma \co S' \to W$ such that $\sigma(s')=w$, and 
there exists a strongly \'etale $H$-equivariant morphism $W \to \VV(\cN_\sigma)$;
\item  \label{T:luna5} if $X$ admits an adequate GIT quotient by $G$ (e.g., $X$ is affine over $S$ and $G$ is geometrically reductive over $S$), and $Gx$ is closed in $X_s$, then $W \times^H G' \to X'$ is strongly \'etale; and
\item  \label{T:luna6} if, \'etale-locally on $S$, either 
	\begin{enumerate}
		\item \label{T:luna6a} $BG$ has the resolution property and $X \to S$ is affine;
		\item \label{T:luna6b} $BG$ has the resolution property, $G \to S$ has connected fibers, $S$ is a normal noetherian scheme, $X$ is a scheme, and $X \to S$ is flat of finite type with geometrically normal fibers; or
		\item  \label{T:luna6c} there exists a $G$-equivariant locally closed immersion $X \hookrightarrow \PP(V)$ where $V$ is a locally free $\oh_S$-module of finite rank with a $G$-action;
	\end{enumerate}
    then $W \to X'$ is a locally closed immersion.  
\end{enumerate}
\end{theorem}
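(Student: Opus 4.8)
The plan is to run Luna's classical slice argument in the relative, non-neutral setting, using the refinements of the local structure theorem proved above as the engine. First I would form the quotient stack $\cX = [X/G]$ and apply \Cref{T:base-general} (or \Cref{T:base}) together with its refinement \Cref{C:local-structure-strengthening} to the point $x$, taking $h_0\colon \cW_0 \to \cG_x$ to be the residual gerbe with its linearly reductive stabilizer. Separability of $\kappa(x)/\kappa(s)$ lets us take the base $g\colon (S',s')\to (S,s)$ of \Cref{C:local-structure-strengthening} to be \'etale with $\kappa(s')\cong\kappa(x)$, which gives \itemref{T:luna1}. The group refinement part of \Cref{C:local-structure-strengthening}, together with \Cref{P:extension-groups} applied to the stabilizer $G_x\inj G_s$, produces the closed subgroup $H\subset G'=G\times_S S'$ with $H_{s'}\cong G_x$; since $G\to S$ is smooth we get $G_s/G_x$ smooth, so \Cref{P:deformations-of-subgroups}\itemref{P:deformations-of-subgroups:2} guarantees $H$ can be taken to be a closed subgroup, and the characteristic dichotomy of \Cref{C:local-structure-strengthening}\itemref{C:local-structure-strengthening:lin-red-group} gives linear reductivity of $H$ when $\kar\kappa(s)>0$ or $s$ has a characteristic-zero neighborhood, proving \itemref{T:luna2}.

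Next, the local structure theorem yields a fundamental stack $\cW = [\Spec A/\GL_n]$ with an \'etale pointed morphism $h\colon (\cW,w)\to (\cX,x)$; after the gerbe refinement \Cref{P:gerbe-refinement} and the affineness from \Cref{P:refinement}\itemref{P:refinement:affine_diag}, $\cW$ maps affinely to $BG'$, equivalently $\cW\cong [W/H]$ with $W$ affine over $S'$ and $W\to X'$ an $H$-equivariant morphism of finite presentation such that the induced map $[W/H]\to [X'/G']$ is \'etale and representable; unwinding, $W\times^H G'\to X'$ is \'etale and $W\to X'$ is unramified (being the composite $W\to W\times^H G'\to X'$ of a closed immersion into a smooth $W$-scheme followed by an \'etale map). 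That $\kappa(w)\cong\kappa(x')$ and the stabilizer at $w$ is $G_x$ comes from $h$ inducing an isomorphism of residual gerbes. This establishes \itemref{T:luna3}. For \itemref{T:luna4}, when $X\to S$ is smooth at $x$ the morphism $\cW\to S'$ is smooth, and the smooth refinement \Cref{P:smooth-refinement} supplies the $H$-equivariant section $\sigma$ and the strongly \'etale $H$-equivariant map $W\to \VV(\cN_\sigma)$; translating \Cref{P:smooth-refinement}(1)--(2) from the quotient-stack picture to the slice $W$ is essentially bookkeeping with $[-/H]$.

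For \itemref{T:luna5}: when $X$ has an adequate moduli space under $G$ and $Gx$ is closed in $X_s$, the point $x$ is closed in the central fiber, so the hypotheses of Luna's fundamental lemma (\Cref{L:fundamental-lemma}) are met for the \'etale representable morphism $[W/H]\to[X'/G']$ at $w$; this gives a saturated open neighborhood on which the morphism is strongly \'etale, i.e.\ $W\times^H G'\to X'$ is strongly \'etale after shrinking. For \itemref{T:luna6}, the goal is to upgrade ``unramified'' to ``locally closed immersion'', and I expect this to be the main obstacle: one must promote the map $W\to X'$ to a monomorphism. The strategy is the usual one --- since $W\to X'$ is unramified and $H$-equivariant, it factors through $W\to X'\times_{X'} W$, and the diagonal of $W\times^H G'\to X'$ being a closed immersion (which holds once $\cW\to\cX$ is representable and $\cX=[X'/G']$ has separated, indeed affine, diagonal in the three listed cases) forces $W\times^H G'\to X'$ to be a monomorphism near $w$, hence an open immersion after shrinking by \'etaleness; then $W\hookrightarrow W\times^H G'$ being a closed immersion (the slice sits inside the trivial bundle as the zero section up to the $H$-action, cf.\ \Cref{P:smooth-refinement}) composes to a locally closed immersion. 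The delicate point in case \itemref{T:luna6b} is reducing to case \itemref{T:luna6a} via Sumihiro's theorem \Cref{T:sumihiro}/\Cref{C:sumihiro-torus} to produce a $G$-equivariant affine neighborhood, and in case \itemref{T:luna6c} replacing $\PP(V)$-embeddability by affine-local data via the standard cover of $\PP(V)$ by $G$-stable affine opens obtained from \Cref{T:sumihiro}; verifying that these reductions are compatible with the slice construction is where the real work lies.
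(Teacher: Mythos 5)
Your outline of parts \itemref{T:luna1}--\itemref{T:luna5} follows the paper's route (the local structure theorem and its refinements, then \Cref{P:smooth-refinement} for \itemref{T:luna4} and Luna's fundamental lemma for \itemref{T:luna5}), but there are two problems. The smaller one is in part \itemref{T:luna3}: an affine morphism $\cW\to BG'$ gives $\cW\cong[W/G']$, not $[W/H]$. To obtain the slice presentation $\cW\cong[W/H]$ with an $H$-equivariant unramified map $W\to X'$ you must first lift $h\co\cW\to[X'/G']$ along the smooth representable morphism $[X'/H]\to[X'/G']$ to a map $q\co(\cW,w)\to([X'/H],x')$ --- this is where the smoothness of $G\to S$ (hence of $G'/H\to S'$) enters, via \Cref{P:extension-morphisms} --- and only then arrange $\cW\to BH$ to be affine using \Cref{P:refinement}\itemref{P:refinement:affine_diag}. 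Your write-up skips this lifting step entirely and replaces it with a non sequitur.

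The serious gap is in part \itemref{T:luna6}. You claim that separatedness of the diagonal forces $W\times^H G'\to X'$ to be a monomorphism near $w$, hence an open immersion by \'etaleness, and you then compose with the closed immersion $W\inj W\times^H G'$. This is false: an \'etale, separated, representable morphism need not be a monomorphism near a point, and in the slice situation it genuinely is not. Take $G=\SL_2$ acting on $X=\lasl_2$ by conjugation and $x$ a regular semisimple element, so that $H=G_x$ is a maximal torus and $W$ is an open piece of the Cartan subalgebra; then $W\times^H G'\to X'$ is \'etale of degree $2$ (the Weyl group identifies $t$ with $-t$) and is not injective on any neighborhood of the orbit of $w$. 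If your argument were correct, every \'etale slice chart would be an open immersion and the theorem would be nearly vacuous. The actual proof of \itemref{T:luna6} cannot avoid the hypotheses \itemref{T:luna6a}--\itemref{T:luna6c}: one reduces \itemref{T:luna6a} and \itemref{T:luna6b} to the projectively embedded case \itemref{T:luna6c} (the latter via Sumihiro), picks an $H$-semi-invariant section $f$ of $\oh(1)$ not vanishing at $x$ to cut out an $H$-invariant affine open $\PP(V)_f$, constructs the slice inside this smooth ambient space via \Cref{P:smooth-refinement}, and then uses linear reductivity of $H$ to split the conormal sequence $0\to\cN_\sigma\to\cN_{\tilde\sigma}\to\Omega_{BH/BG}\to 0$ so as to cut $W$ out of the ambient slice as a closed subscheme; the locally closed immersion $W\to X'$ is then obtained by intersecting with $X$. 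None of this is ``bookkeeping'': it is the core of the classical Luna argument and is absent from your proposal.
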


In the statement above, $W \times^H G'$ denotes the quotient $(W \times G')/H$ which inherits a natural action of $G'$, and $\cN_{\sigma}$ is the conormal bundle $\cI/\cI^2$ (where $\cI$ is the sheaf of ideals in $W$ defining $\sigma$) which inherits an action of $H$.
If $H \to S$ is a flat and affine group scheme of finite presentation over an algebraic space $S$, and $X$ and $Y$ are algebraic spaces over $S$ with an action of $H$ which admit adequate GIT quotients (i.e.\ $[X/H]$ and $[Y/H]$ admit adequate moduli spaces), then an $H$-equivariant morphism $f \co X \to Y$ is called {\it strongly \'etale} if $[X/H] \to [Y/H]$ is.  

The section $\sigma\co S'\to W$ of \itemref{T:luna4} induces an $H$-equivariant
section $\tilde{\sigma}\co S'\to X'$. This factors as $S'\to G'/H\to W\times^H
G'\to X'$.  Since the last map is \'etale, we have that
$L_{(G'/H)/X'}=\cN_{\sigma}[1]$.  The map $G'/H\to X'$ is unramified and its
image is the orbit of $\tilde{\sigma}$.  We can thus think of $\cN_{\sigma}$ as
the \emph{conormal bundle for the orbit of $\tilde{\sigma}$}. We also have an
exact sequence:
\[
0\to \cN_{\sigma}\to \cN_{\tilde{\sigma}}\to \cN_e\to 0
\]
where $e\colon S'\to G'/H$ is the unit section.

\begin{remark} A considerably weaker variant of this theorem had been established in \cite[Thm.~2]{alper-quotient}, which assumed the existence of a section $\sigma \co S \to X$ such that $X \to S$ is smooth along $\sigma$, the stabilizer group scheme $G_{\sigma}$ of $\sigma$ is smooth, and  the induced map $G/G_{\sigma} \to X$ is a closed immersion.
\end{remark}

\begin{proof}[Proof of \Cref{T:luna}]
We start by picking an \'etale morphism $(S',s')\to (S,s)$ realizing
\itemref{T:luna1} with $S'$ affine.
After replacing $S'$ with an \'etale neighborhood,
\Cref{P:deformations-of-subgroups} yields a geometrically reductive closed
subgroup scheme $H \subset G'$ such that $H_{s'} \cong G_x$. This can be made
linearly reductive if $\kar \kappa(s)>0$ or $s$ has an open neighborhood of
characteristic zero (\Cref{P:extension-linfund}). This settles \itemref{T:luna2}.

We apply the main theorem (\Cref{T:base}) to $([X'/G'],x')$ and
$h_0\co \cW_0=BG_x\cong \cG_{x'}$ where $x'$ also denotes the
image of $x'$ in $[X'/G']$. This gives us a fundamental stack $\cW$ and an
\'etale morphism $h\co (\cW,w)\to ([X'/G'],x')$ such that
$\cG_{w}=BG_x$.

Since $G\to S$ is smooth, so is $G'/H \to S'$ and $[X'/H]\to [X'/G']$. The point
$x'\in X'$ gives a canonical lift of $\cG_{w}=BG_x\to [X'/G']$ to
$\cG_{w}=BG_x\to [X'/H]$.  After replacing $S'$ with an \'etale
neighborhood, we can thus lift $h$ to a map $q\co (\cW,w)\to
([X'/H],x')$ (\Cref{P:extension-morphisms}). This map is unramified
since $h$ is \'etale and $[X'/H]\to [X'/G']$ is representable. After replacing
$\cW$ with an open neighborhood, we can also assume that
$\cW\to [X'/H]\to BH$ is affine
by \Cref{P:refinement}\itemref{P:refinement:affine_diag}. Thus $\cW=[W/H]$
where $W$ is affine and $q$ corresponds to an $H$-equivariant unramified map
$W\to X'$.  Note that since $w\in |\cW|$ has stabilizer $H_{s'}$,
there is a unique point $w\in |W|$ above $w\in |\cW|$. This establishes
\itemref{T:luna1}--\itemref{T:luna3}.

If $X\to S$ is smooth at $x$, then $\cW\to S'$ is smooth at $w$
and \itemref{T:luna4} follows from
\Cref{P:smooth-refinement} applied to $\cW\to BH\to S'$. Note that unless $H$
is smooth it is a priori not clear that $W\to S'$ is smooth. But the section
$\sigma\co S'\to W$ is a regular closed immersion since it is a pull-back
of the regular closed immersion $BH\inj \cW$ given by \Cref{P:smooth-refinement}.
It follows that $W$ is smooth in a neighborhood of $\sigma$.

If $[X/G]$ has an adequate moduli space, then $\cW\to [X/G]$ becomes strongly
\'etale after replacing $\cW$ with a saturated open neighborhood by Luna's
fundamental lemma (\Cref{L:fundamental-lemma}). This establishes
\itemref{T:luna5}.

Finally, for \itemref{T:luna6} we will construct a new $W$, not relying
on the main theorem via~\itemref{T:luna3}.
By a limit argument we may assume that $(S,s)$ is
henselian local. In particular, $H$ is linearly reductive. If \itemref{T:luna6b}
holds, then there exists a $G$-quasi-projective $G$-invariant open neighborhood
$U\subseteq X$ of $x$~\cite[Thms.~3.9 and 2.5]{sumihiro2}. Thus, cases \eqref{T:luna6a} and \eqref{T:luna6b} both reduce to case \eqref{T:luna6c} after resolving a coherent sheaf by a vector bundle.

As $H$ is linearly reductive, there exists an $H$-semi-invariant function $f
\in V=\Gamma(\PP(V), \oh(1))$ not vanishing at $x$.
Then $\PP(V)_f$ is an
$H$-invariant affine open neighborhood. Applying \Cref{P:smooth-refinement} to
$[\PP(V)_f/H]\to BH\to S$ gives an affine open $H$-invariant neighborhood
$U\subseteq \PP(V)_f$,
a section $\tilde{\sigma}\co BH\to [U/H]$ and a strongly \'etale
morphism $U\to \VV(\cN_{\tilde{\sigma}})$.  We now consider the composition
$\sigma\co BH\to [U/H]\to [\PP(V)/G]$ which is unramified since $\sigma_s$
is a closed immersion and $S$ is local. This gives
the exact sequence
\[
0\to \cN_\sigma\to \cN_{\tilde{\sigma}}\to \Omega_{BH/BG}\to 0.
\]
Since $H$ is linearly reductive, this sequence splits. After choosing a
splitting, we obtain an $H$-equivariant closed subscheme $\VV(\cN_\sigma)\inj
\VV(\cN_{\tilde{\sigma}})$ and by pull back, an $H$-equivariant closed subscheme
$W\inj U$. By construction $[W/H]\to [U/H]\to [\PP(V)/G]$ is \'etale at
$x$. Finally, we replace $W$ with an affine open $H$-saturated neighborhood of
$x$ in the quasi-affine scheme $W\cap X$.
\end{proof}

\subsection{Existence of henselizations}\label{SS:henselizations}
Let $\cX$ be an algebraic stack with affine stabilizers and let $x\in |\cX|$ be
a point with linearly reductive stabilizer.  We have already seen that the
completion $\widehat{\cX}_x$ exists if $\cX$ is noetherian
(\Cref{C:existence-completions:general}). In this section we will prove that
there also is a henselization $\cX^h_x$ when $\cX$ is of finite presentation over
an algebraic space $S$ and $\kappa(x)/\kappa(s)$ is finite.

We say that an algebraic stack $\stG$ is a {\it one-point gerbe} if $\stG$ is noetherian
and an fppf-gerbe over the spectrum of a field $k$, or, equivalently, if 
 $\stG$ is reduced, noetherian and $|\stG|$ is a one-point
space.
A morphism $\cX\to \stY$ of algebraic stacks is called \emph{pro-\'etale} if $\cX$ is the inverse limit
of a system of quasi-separated \'etale morphisms $\cX_\lambda\to \stY$ such that $\cX_\mu\to
\cX_\lambda$ is affine for all sufficiently large $\lambda$ and all $\mu\geq
\lambda$.

Let $\cX$ be an algebraic stack and let $x\in |\cX|$ be a point. Consider the inclusion $i \co \stG_x \inj \cX$ of the residual gerbe of $x$.  Let $\nu\colon \stG\to \stG_x$ be a
pro-\'etale morphism of one-point gerbes.  The {\it henselization of $\cX$ at $\nu$} is by definition an initial object in the 
$2$-category of $2$-commutative diagrams
\begin{equation} \label{E:henselization}
\vcenter{\xymatrix{%
\stG\ar[r]\ar[rd]_{\nu} & \cX'\ar[d]^f \\
& \cX
}}%
\end{equation}
where $f$ is pro-\'etale (but not necessarily representable even if $\nu$ is representable). 
If $\nu \co \cG_x \to \cG_x$ is the identity, we say that $\cX^h_x := \cX^h_{\nu}$ 
 is the {\it henselization at $x$}.

\begin{proposition}[Henselizations for stacks with good moduli spaces]\label{P:henselizations-gms}
Let $\cX$ be an algebraic stack with affine diagonal and good moduli space $\pi \co \cX \to X$ of finite presentation (e.g., $\cX$ noetherian). If $x \in |\cX|$ is a point such that $x \in |\cX_{\pi(x)}|$ is closed, then the henselization $\cX^h_x$ of $\cX$ at $x$ exists. Moreover
\begin{enumerate}
\item $\cX^h_x=\cX \times_X \Spec \oh_{X, \pi(x)}^h$;
\item $\cX^h_x$ is linearly fundamental; and
\item $(\cX^h_x,\cG_x)$ is a henselian pair.
\end{enumerate}
\end{proposition}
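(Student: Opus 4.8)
The plan is to leverage the previously-established structure theory, especially \Cref{T:etale-local-gms} (or rather its key input \Cref{T:base}/\Cref{T:etale-local-ams}) and the effectivity/completeness results, together with \Cref{T:henselian-pair-gms}. First I would reduce to the local case: since $x\in|\cX_{\pi(x)}|$ is closed, the closed point $\pi(x)\in|X|$ is well-defined, and the henselization $\cO_{X,\pi(x)}^h$ makes sense. I would set $\widetilde{\cX}:=\cX\times_X\Spec\cO_{X,\pi(x)}^h$. Because good moduli spaces are stable under base change, $\widetilde{\cX}\to\Spec\cO_{X,\pi(x)}^h$ is again a good moduli space, with affine diagonal, and $x$ is still (the unique) closed point in its fiber. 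The main claims to verify are then: (i) $\widetilde{\cX}$ is linearly fundamental; (ii) $(\widetilde{\cX},\cG_x)$ is a henselian pair; and (iii) $\widetilde{\cX}$ together with $\cG_x\hookrightarrow\widetilde{\cX}$ satisfies the universal property of the henselization at $x$.

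For (i): over the henselization $\Spec\cO_{X,\pi(x)}^h$, the stack $\widetilde{\cX}$ has a unique closed point $x$ with linearly reductive stabilizer by hypothesis, and $\widetilde{\cX}\to\Spec\cO_{X,\pi(x)}^h$ is a good moduli space with affine diagonal, hence adequate; so every closed point has linearly reductive stabilizer. By \Cref{T:etale-local-ams} (applied over the strictly henselian, or just henselian, local base — note the residue field extension in \Cref{T:base} is trivial when $\cW_0=\cG_x$ and we are already working over a neighborhood of $\pi(x)$), $\widetilde{\cX}$ becomes fundamental after an \'etale neighborhood of the closed point; since the base is henselian local, this \'etale neighborhood can be split, so $\widetilde{\cX}$ itself is fundamental, and being cohomologically affine (good moduli space), it is linearly fundamental. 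Alternatively one can cite \Cref{C:adequate+lin-red=>good:fundamental} after first producing a fundamental cover. For (ii): this is immediate from \Cref{T:henselian-pair-gms} since $(\Spec\cO_{X,\pi(x)}^h,\{\pi(x)\})$ is a henselian pair and $\pi(\cG_x)=\{\pi(x)\}$, so $(\widetilde{\cX},\cG_x)$ is a henselian pair.

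For (iii), the universal property: I must show that for any pro-\'etale $f\colon\cX'\to\cX$ fitting into a $2$-commutative triangle \eqref{E:henselization} with $\nu=\id_{\cG_x}$, there is an essentially unique factorization $\widetilde{\cX}\to\cX'$ over $\cX$ compatible with the maps from $\cG_x$. Write $\cX'=\varprojlim_\lambda\cX'_\lambda$ with $\cX'_\lambda\to\cX$ \'etale; each $\cX'_\lambda$ has a good moduli space $X'_\lambda\to X$ which is \'etale (using \Cref{C:gms-is-of-fp} or \Cref{T:etale-local-gms} for the pullback of $\cX$ to $X'_\lambda$ plus descent; more directly, since $\cX'_\lambda\to\cX$ is \'etale, on good moduli spaces one gets an \'etale map after base change, by Luna-type arguments — here I would invoke that a good moduli space of $\cX\times_X(\text{\'etale over }X)$ is that \'etale scheme, via \Cref{C:base-change-of-ams/gms}). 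The section $\cG_x\to\cX'_\lambda$ lies over a point $x'_\lambda$ with $\kappa(x'_\lambda)=\kappa(x)$ mapping to $\pi(x)$, so $X'_\lambda\to X$ factors (\'etale-locally) through a point with trivial residue field extension; by the universal property of the henselization $\cO_{X,\pi(x)}^h$ there is a unique map $\Spec\cO_{X,\pi(x)}^h\to X'_\lambda$ through that point. Base-changing $\cX'_\lambda$ along this and comparing with $\widetilde{\cX}$ — both are good moduli space pullbacks of $\cX$ along \'etale-to-$X$ schemes agreeing near $\pi(x)$ — gives $\widetilde{\cX}\to\cX'_\lambda$, compatibly in $\lambda$, hence $\widetilde{\cX}\to\cX'$. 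Uniqueness of this factorization (up to unique $2$-isomorphism) follows because $\widetilde{\cX}\to\cX$ is flat with $\cG_x=$ fiber generating it in the sense of the completion argument, or more simply because any two such factorizations agree on the henselian pair $(\widetilde{\cX},\cG_x)$ by \Cref{P:deformation-sections}/\Cref{P:henselian-sections-of-etale-repr}.

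The main obstacle I expect is part (iii), specifically handling the \emph{non-representable} pro-\'etale morphisms $f$ allowed in the definition \eqref{E:henselization}: one cannot directly take good moduli spaces of $\cX'_\lambda$ and reduce everything to schemes unless one knows $\cX'_\lambda$ still has a good moduli space finite-type over $X$, and that the map on moduli spaces is \'etale. The cleanest route is probably: reduce to representable \'etale $\cX'_\lambda\to\cX$ first (every \'etale morphism of stacks with good moduli spaces and affine diagonal is, after the gms base change, representable by \Cref{T:etale-local-gms}/\Cref{L:fundamental-lemma}-type reasoning near the relevant point), then apply Luna's fundamental lemma (\Cref{L:fundamental-lemma}) to get that $\cX'_\lambda\to\cX$ is strongly \'etale in a saturated neighborhood of $x'_\lambda$, whence $X'_\lambda\to X$ is \'etale and one invokes the universal property of $\cO^h_{X,\pi(x)}$ on moduli spaces and pulls back. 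I would also need to check the compatibility of all the $2$-isomorphisms, which is routine given the affineness of the diagonals throughout. Finally, once $\widetilde{\cX}$ is shown to be the henselization, statements (1), (2), (3) of the proposition are exactly (i), the definition of $\widetilde{\cX}$, and (ii) above.
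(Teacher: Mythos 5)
Your identification $\cX^h_x=\cX\times_X\Spec\oh^h_{X,\pi(x)}$ and your treatment of parts (2) and (3) --- linear fundamentality by splitting the Nisnevich cover of \Cref{T:etale-local-gms} over the henselian local base, and the henselian pair via \Cref{T:henselian-pair-gms} --- match the paper. The gap is in the universal property, exactly at the point you flag yourself: non-representable pro-\'etale morphisms. Your proposed repair --- that every \'etale $\cX'_\lambda\to\cX$ becomes representable after base change to the good moduli space, so that Luna's fundamental lemma applies, the maps become strongly \'etale, and everything descends to \'etale maps of moduli spaces where the universal property of $\oh^h_{X,\pi(x)}$ can be invoked --- does not work. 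For instance $\cX\times B(\ZZ/n\ZZ)\to\cX$ (with $n$ invertible) is \'etale, admits a lift of $\cG_x$, and remains non-representable after any base change on $X$; \Cref{L:fundamental-lemma} explicitly requires representability and cannot be applied. More generally the stacks $\cX'$ allowed in \eqref{E:henselization} need not have separated diagonal or even a good moduli space, so the reduction to schemes over $X$ breaks down for the existence half of the factorization $\cX^h_x\to\cX'$.

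The paper's proof avoids this entirely: having shown that $(\cX^h_x,\cG_x)$ is a henselian pair with $\cX^h_x$ linearly fundamental (so that \Cref{Setup:deformations}\itemref{SetupI:deformations:fcpcn} holds, $\cG_x$ being a one-point gerbe), it applies \Cref{P:deformation-sections} to the pullback $\cX'\times_\cX\cX^h_x\to\cX^h_x$. That proposition is stated for quasi-separated \'etale morphisms with affine stabilizers that are \emph{not} assumed representable --- this is precisely the strengthening of \Cref{P:henselian-sections-of-etale-repr} promised after its statement --- and in the \'etale case it gives an equivalence of groupoids of sections, hence both existence and uniqueness of the extension of $\cG_x\to\cX'\times_\cX\cG_x$ to a section over $\cX^h_x$, i.e.\ of the morphism $\cX^h_x\to\cX'$ over $\cX$. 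You invoke \Cref{P:deformation-sections} only for uniqueness; it is in fact the entire argument, and using it for existence as well is the step your proposal is missing.
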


\begin{proof}
Let $\cX^h_x:=\cX \times_X \Spec \oh_{X, \pi(x)}^h$ which has good moduli space $\Spec \oh_{X, \pi(x)}^h$.
The pair $(\cX^h_x,\cG_x)$ is henselian (\Cref{T:henselian-pair-gms})
and linearly fundamental (\Cref{T:etale-local-gms}). It thus satisfies the hypotheses of \Cref{Setup:deformations}\itemref{SetupI:deformations:fcpcn}. To see that it is the henselization,
we note that \Cref{P:deformation-sections}
trivially extends to pro-\'etale morphisms $\cX'\to \cX$ and implies that a section $\cG_x \to \cX' \times_{\cX} \cG_x$ extends to a unique $\cX$-morphism $\cX_x^h \to \cX'$.
\end{proof}

\begin{remark}
Recall that if $\cX$ has merely separated diagonal, then it has affine diagonal
(\Cref{T:etale-local-gms}). If $\cX$ does not have separated diagonal, it
is still true that $(\cX \times_X \Spec \oh_{X, \pi(x)}^h,\cG_x)$ is a
henselian pair but it need not be the henselization.
In \Cref{E:non-sep-counter-example} the pair $(\cY,B\ZZ/2\ZZ)$ is henselian
with non-separated diagonal and the henselization map $\cX\to \cY$ is
non-representable.
\end{remark}

\begin{theorem}[Existence of henselizations] \label{T:henselizations-general}
Let $S$ be a quasi-separated algebraic space. Let $\cX$ be an algebraic stack, 
locally of finite presentation and quasi-separated over $S$, 
with affine stabilizers. 
Let $x\in |\cX|$ be a point such that the residue field extension
$\kappa(x)/\kappa(s)$ is finite and let $\nu\colon \stG\to
\stG_x$ be a pro-\'etale morphism such that $\stG$ is a one-point
gerbe with linearly reductive stabilizer. Then the henselization $\cX^h_\nu$ of
$\cX$ at $\nu$ exists. Moreover, $\cX^h_\nu$ is a linearly fundamental algebraic stack
and $(\cX^h_\nu,\stG)$ is a henselian pair.
\end{theorem}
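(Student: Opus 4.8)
The strategy is to combine the local structure theorem with the good-moduli-space version of henselization. First I would reduce to the case where the pro-\'etale morphism $\nu$ is the identity on $\cG_x$, i.e.\ to constructing $\cX^h_x$. Indeed, given a general $\nu\colon \stG\to \stG_x$, write $\stG=\varprojlim_\mu \stG_\mu$ as an inverse limit of \'etale morphisms $\stG_\mu \to \stG_x$; each $\stG_\mu$ is still a one-point gerbe with linearly reductive stabilizer (linear reductivity and the resolution property are \'etale-local), and after applying \Cref{T:base} (or \Cref{T:base-general}) to each $\stG_\mu \to \cG_x \hookrightarrow \cX$ one gets \'etale fundamental neighborhoods; taking a limit of the henselizations at these finite levels and using Luna's fundamental lemma to glue gives $\cX^h_\nu$. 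So the heart of the matter is the case $\nu=\mathrm{id}_{\cG_x}$.

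For $\cX^h_x$: by \Cref{T:base} applied with $h_0\colon \cW_0=\cG_x\to\cG_x$ the identity, after replacing $S$ by an \'etale neighborhood of $s$ we obtain a cartesian diagram with $\cW=[\Spec A/\GL_n]$ fundamental, an \'etale pointed morphism $h\colon(\cW,w)\to(\cX,x)$ inducing an isomorphism $\cG_w\to\cG_x$, and $w$ closed in its fiber over $s$. Since $\cX$ has affine stabilizers and $w$ has linearly reductive stabilizer, \Cref{C:adequate+lin-red=>good:fundamental} shows $\cW$ is linearly fundamental; let $\pi\colon\cW\to W=\Spec B$ be its good moduli space (of finite presentation over $S$ by \Cref{C:gms-is-of-fp}). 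Now set $\cX^h_x := \cW\times_W \Spec\oh_{W,\pi(w)}^h$. By \Cref{P:henselizations-gms} this is linearly fundamental, $(\cX^h_x,\cG_x)$ is a henselian pair, and it is the henselization \emph{of $\cW$} at $w$. The remaining point is that the henselization of $\cW$ at $w$ equals the henselization of $\cX$ at $x$: this follows because $h\colon\cW\to\cX$ is itself \'etale (hence pro-\'etale) and induces an isomorphism on residual gerbes at $w$, so any pro-\'etale $\cX'\to\cX$ together with a lift $\cG_x\to\cX'\times_\cX\cG_x$ pulls back to a pro-\'etale $\cX'\times_\cX\cW\to\cW$ with a compatible section over $\cG_w$, which by the universal property of $\cX^h_x$ as henselization of $\cW$ factors uniquely through $\cX^h_x$; conversely $\cX^h_x\to\cW\xrightarrow{h}\cX$ exhibits a diagram of the form \eqref{E:henselization}. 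One checks the $2$-categorical initiality using that pro-\'etale morphisms are formally \'etale, so lifts are unique up to unique $2$-isomorphism, and that the section-extension property of \Cref{P:deformation-sections} (which, as noted in the proof of \Cref{P:henselizations-gms}, extends verbatim to pro-\'etale $\cX'\to\cX$ via a limit argument) provides existence.

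The main obstacle is verifying the universal property carefully, i.e.\ that $\cX^h_x$ constructed via the slice $\cW$ really is initial among \emph{all} diagrams \eqref{E:henselization} with $f\colon\cX'\to\cX$ pro-\'etale and not necessarily representable. The non-representable case is exactly where one must be careful: one cannot simply pull back to $\cW$ and invoke representable-\'etale hensel theory directly. The fix is that for a pro-\'etale $\cX'\to\cX$ the base change $\cX'\times_\cX\cW\to\cW$ is pro-\'etale, and over the henselian pair $(\cX^h_x,\cG_x)$ with $\cX^h_x$ linearly fundamental we are in the situation of \Cref{Setup:deformations}\itemref{SetupI:deformations:fcpcn} (since $(\cX^h_x,\cG_x)$ is local, condition \ref{Cond:FC} holds automatically for $\cG_x$ being a one-point gerbe, cf.\ \Cref{R:extension-non-closed-points}); then the pro-\'etale analogue of \Cref{P:deformation-sections} gives a unique extension of the section $\cG_x\to(\cX'\times_\cX\cW)\times_\cW\cG_x$ to $\cX^h_x\to\cX'\times_\cX\cW$, hence a unique $\cX^h_x\to\cX'$ over $\cX$ with the prescribed behavior over $\cG$. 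Uniqueness up to unique $2$-isomorphism is then formal from formal \'etaleness of $f$. Finally, $\cX^h_\nu$ for general $\nu$ is assembled from the case $\nu=\mathrm{id}$ by the limit-of-\'etale-neighborhoods argument sketched above, and its linear fundamentality and the henselian pair property are inherited at each finite stage and preserved in the limit (using \Cref{L:approximation-fundamental} and that henselian pairs are stable under the relevant limits).
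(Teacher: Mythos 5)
Your overall route is the same as the paper's: produce a fundamental \'etale neighborhood $(\cW,w)\to(\cX,x)$ via \Cref{T:base}, base change along the henselization of the moduli space of $\cW$ at $\pi(w)$, invoke \Cref{P:henselizations-gms}, and transfer the universal property using the pro-\'etale extension of \Cref{P:deformation-sections} together with the adjunction between composition with the \'etale morphism $\cW\to\cX$ and pullback along it. That core is correct.

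There is, however, one genuine error: the claim that $\cW$ itself is linearly fundamental because ``$w$ has linearly reductive stabilizer.'' \Cref{C:adequate+lin-red=>good:fundamental} requires \emph{every} closed point of the fundamental stack to have linearly reductive stabilizer, and an \'etale neighborhood produced by \Cref{T:base} will in general contain other closed points where this fails (for instance, if $\cG_x$ has stabilizer $\GL_2$ in characteristic zero, any neighborhood meeting fibers of positive characteristic contains closed points with non--linearly reductive stabilizer $\GL_2$; by the appendix such characteristics cannot always be avoided). Consequently $\cW$ has only an \emph{adequate} moduli space $W$ a priori, and your appeals to \Cref{C:gms-is-of-fp} and to \Cref{P:henselizations-gms} applied to $\cW$ are not yet licensed. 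The repair is exactly what the paper does: form $\cW\times_W\Spec\oh^h_{W,\pi(w)}$ with $W$ the adequate moduli space; this stack has $w$ as its unique closed point, so \Cref{C:adequate+lin-red=>good:fundamental} applies to \emph{it}, it is linearly fundamental, and \Cref{P:henselizations-gms} then gives the henselian pair and the universal property. Since $\cW\times_W\Spec\oh^h_{W,\pi(w)}\to\cW$ is itself pro-\'etale, nothing else in your argument changes.

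For general $\nu$ your sketch is workable but under-justified, and it differs from the paper's packaging. The paper factors $\nu$ canonically as $\stG\to\stG_1\to\stG_x$ with $\stG\to\stG_1$ pro-\'etale, representable and stabilizer-preserving (so $\stG=\stG_1\times_{k_1}\Spec k$ for a separable extension $k/k_1$) and $\stG_1\to\stG_x$ \'etale; it then feeds $\stG_1\to\cG_x$ directly into \Cref{T:base} as $h_0$ and handles the remaining field extension by a single further pro-\'etale base change $W'\to W$ on the moduli space. Your inverse limit over \'etale approximations $\stG_\mu$ can be made to work, but you should (i) note that the stabilizers of the $\stG_\mu$ are linearly reductive only for sufficiently large $\mu$ (they stabilize to that of $\stG$), and (ii) verify initiality of the limit by factoring any lift $\stG\to\cX'_\lambda$ through some $\stG_\mu$ using finite presentation of $\cX'_\lambda\to\cX$; Luna's fundamental lemma is not the relevant tool for this step.
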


\begin{remark}
If $x \in |\cX|$ has linearly reductive stabilizer, the theorem above shows that the henselization $\cX_x^h$ of $\cX$ at $x$ exists and moreover that $\cX_x^h$ is linearly fundamental and $(\cX_x^h,\cG_x)$ is a henselian pair.
\end{remark}

\begin{proof}[Proof of \Cref{T:henselizations-general}]
By definition, we can factor $\nu$ as
$\stG\to \stG_1\to \stG_x$ where $\nu_1\colon \stG\to \stG_1$ is pro-\'etale and affine
and $\stG_1\to \stG_x$ is \'etale. We can also arrange so that
$\stG_1$ is a one-point gerbe and $\stG\to \stG_1$
is stabilizer-preserving. Then $\stG=\stG_1\times_{k_1} \Spec k$ where $k/k_1$
is a separable algebraic field extension and $\stG_1$ has linearly reductive stabilizer.

By \Cref{T:base} we can find a fundamental stack $\cW$, a closed point $w\in
|\cW|$ and an \'etale morphism $(\cW,w)\to (\cX,x)$ such that $\stG_w=\stG_1$.  Then
$\stW^h_w=\cW \times_W \Spec \oh_{W, \pi(w)}^h$, where $\pi\colon \cW\to W$
is the adequate moduli space. Indeed, $\cW \times_W \Spec \oh_{W, \pi(w)}^h$
is linearly fundamental (\Cref{C:adequate+lin-red=>good:fundamental}) so
\Cref{P:henselizations-gms} applies. Finally, we obtain
$\stX^h_\nu=\stW^h_{\nu_1}$ by base changing $\stW^h_w\to W^h_{\pi(w)}$ along a
pro-\'etale morphism $W'\to W^h_{\pi(w)}$ extending $k/k_1$.
\end{proof}

\subsection{\'Etale-local equivalences} \label{SS:etale-local-equivalences}

\begin{theorem} \label{T:etale-local-equivalences}
  Let $S$ be a quasi-separated algebraic space.  Let $\cX$ and $\cY$ be algebraic stacks, locally of finite presentation and quasi-separated over $S$, with affine stabilizers.  Suppose $x \in |\cX|$ and $y \in |\cY|$ are points with  linearly reductive stabilizers above a point $s \in |S|$ such that $\kappa(x)/\kappa(s)$ and $\kappa(y)/\kappa(s)$ are finite.  Then the following are equivalent:
  \begin{enumerate}[label=(\arabic*),ref=\arabic*]
  	\item\label{TI:etale:henselization}
	  There exists an isomorphism $\cX^{h}_x \to \cY^{h}_y$ of henselizations.
  	\item\label{TI:etale:etale}
	  There exists a diagram of  \'etale pointed morphisms
$$
	\xymatrix@R=3mm{
			& ([\Spec A/ \GL_n], w) \ar[ld]_-f \ar[rd]^-g \\
		(\cX,x) &  & (\cY,y) }
$$
	such that both $f$ and $g$ induce isomorphisms of residual gerbes at $w$.
  \end{enumerate}
If $S$ is quasi-excellent, then the conditions above are also equivalent to:
\begin{enumerate}[label=(\arabic*$'$),ref=\arabic*$'$]
  	\item \label{TI:etale:completion}
	  There exists an isomorphism $\hat{\cX}_x \to \hat{\cY}_y$ of  completions.
\end{enumerate}
\end{theorem}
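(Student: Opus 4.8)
The plan is to prove $(2)\Rightarrow(1)\Rightarrow(1')$ by soft functorial arguments and the reverse implications $(1)\Rightarrow(2)$ and (when $S$ is locally noetherian) $(1')\Rightarrow(2)$ by ``spreading out'' the formal, resp.\ henselian, data. Throughout I would use \Cref{T:base} to replace $\cX$ and $\cY$ near $x$ and $y$ by fundamental \'etale neighbourhoods, and standard limit arguments (as in the proof of \Cref{T:base}) to reduce to the case where $S$ is affine and $s$ is closed.

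For $(2)\Rightarrow(1)$: if $f\colon(\cW,w)\to(\cX,x)$ is an \'etale morphism from a fundamental stack inducing an isomorphism of residual gerbes, then $\cW^h_w\cong\cX^h_x$. Indeed, by \Cref{P:henselizations-gms} and Luna's fundamental lemma (\Cref{L:fundamental-lemma}), $f$ is strongly \'etale near $w$, so $\cW^h_w=\cW\times_W\Spec\oh^h_{W,\pi(w)}$ computes $\cX^h_x$ --- this is how the henselization is built in the proof of \Cref{T:henselizations-general}. Applying this to the two legs of the diagram in $(2)$ gives $\cX^h_x\cong\cW^h_w\cong\cY^h_y$. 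For $(1)\Rightarrow(1')$ (with $S$ locally noetherian): the pair $(\cX^h_x,\cG_x)$ is local with unique closed point $x$, and since $\cX^h_x\to\cX$ is pro-\'etale the thickenings $\thck{(\cX^h_x)}{\cG_x}{n}$ agree with $\thck{\cX}{\cG_x}{n}$; hence $\hat\cX_x=\varinjlim_n\thck{\cX}{\cG_x}{n}$ is recovered from $\cX^h_x$, and any isomorphism $\cX^h_x\cong\cY^h_y$ (which necessarily matches $\cG_x$ with $\cG_y$, these being the residual gerbes of the unique closed points) induces $\hat\cX_x\cong\hat\cY_y$.

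For $(1)\Rightarrow(2)$, fix an isomorphism $\cX^h_x\cong\cY^h_y$ and, using \Cref{T:base}, fundamental \'etale neighbourhoods $(\cW,w)=([\Spec A/\GL_n],w)\to(\cX,x)$ and $(\cW',w')\to(\cY,y)$ inducing isomorphisms of residual gerbes, so $\cW^h_w\cong\cX^h_x\cong\cY^h_y$ by the previous paragraph. Composing with the canonical pro-\'etale morphism $\cW'^h_{w'}=\cY^h_y\to\cY$ yields a morphism $\varphi\colon\cW^h_w\to\cY$ that is formally \'etale at $w$ and induces an isomorphism of residual gerbes there. Writing $W$ for the good moduli space of $\cW$ and $\oh^h_{W,\pi(w)}=\varinjlim_\lambda B_\lambda$ over affine \'etale neighbourhoods of $\pi(w)$ with trivial residue field extension, we have $\cW^h_w=\varprojlim_\lambda\cW_\lambda$ with $\cW_\lambda:=\cW\times_W\Spec B_\lambda=[\Spec A_\lambda/\GL_n]$ \'etale over $\cW$ and hence over $\cX$, and with affine transition maps. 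Since $\cY$ is locally of finite presentation over $S$, $\varphi$ factors through some $\cW_\lambda$ by standard limit methods \cite[Thm.~C]{rydh-2009}, and the resulting morphism $\cW_\lambda\to\cY$ is formally \'etale, hence \'etale, in a neighbourhood of the point $w_\lambda$ over $w$, and induces an isomorphism of residual gerbes at $w_\lambda$; the morphisms $\cW_\lambda\to\cX$ and $\cW_\lambda\to\cY$ then both do so. Shrinking $\cW_\lambda$ to a saturated affine open neighbourhood of $w_\lambda$, again of the form $[\Spec A'/\GL_n]$, yields the diagram required in $(2)$.

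Finally, assume $S$ locally noetherian and fix an isomorphism $\hat\cX_x\cong\hat\cY_y$; after localizing we may assume $S=\Spec R$ with $(R,\fm)$ noetherian local and $s=\fm$. Choose, via \Cref{T:base}, linearly fundamental \'etale neighbourhoods $\cW\to\cX$, $\cW'\to\cY$ inducing isomorphisms of residual gerbes, with good moduli spaces $W,W'$ and closed points $\bar w=\pi(w)$, $\bar w'=\pi(w')$. By \Cref{T:complete} one has $\hat\cW_w=\cW\times_W\Spec\hat\oh_{W,\bar w}$, and similarly for $\cW'$, so $\hat\cW_w\cong\hat\cX_x\cong\hat\cY_y\cong\hat{\cW'}_{w'}$; taking good moduli spaces identifies $T:=\Spec\hat\oh_{W,\bar w}$ with $\Spec\hat\oh_{W',\bar w'}$, and $T$ is \emph{excellent}, being the spectrum of a complete noetherian local ring. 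The fundamental stack $\hat\cW_w=\cW\times_W T$ has good moduli space $T$, carries morphisms to $\cX$ and (via the chosen isomorphisms) to $\cY$, and its morphism to $\cW$ is formally \'etale (flat with vanishing relative cotangent complex, since $\cG_w\to\cG_w$ is the identity). One then applies equivariant Artin algebraization (\Cref{T:algebraization-fundamental}) over the excellent base $T$, with target assembled from $\cX$, $\cY$ and $T$, to produce a fundamental stack $\cV$ of finite type over $T$, a closed point $v$, and an isomorphism $\hat\cV_v\cong\hat\cW_w$ compatible with the morphisms to $\cX$ and $\cY$; as before, $\cV\to\cX$ and $\cV\to\cY$ are then formally \'etale at $v$, hence \'etale near $v$ and stabilizer-preserving there, and shrinking $\cV$ to a saturated affine open of the form $[\Spec A/\GL_n]$ finishes the proof. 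The main obstacle is precisely this last step: one must supply an \emph{excellent} base over which to algebraize even though $S$ is only locally noetherian --- the completion of the good moduli space of an auxiliary neighbourhood serves this purpose --- and verify the required formal versality, which, as in \cite[Thm.~A.18]{luna-field}, is handled by treating the target as a pair of stacks, one recording versality and one the quotient presentation.
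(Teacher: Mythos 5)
Your treatment of (2)$\Rightarrow$(1)$\Rightarrow$(1$'$) and of (1)$\Rightarrow$(2) is essentially correct and follows the intended route (the paper itself only says: reduce to $S$ excellent and run the argument of \cite[Thm.~4.19]{luna-field} with \Cref{T:base} in place of \cite[Thm.~1.1]{luna-field}). In particular, factoring $\cW^h_w\to\cY$ through some $\cW_\lambda$ and spreading out \'etaleness is the right mechanism for (1)$\Rightarrow$(2), and it correctly requires no excellence hypothesis because the henselization is an honest limit of \'etale neighborhoods.

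The direction (1$'$)$\Rightarrow$(2) has a genuine gap, precisely at the step you flag as ``the main obstacle.'' You propose to run equivariant Artin algebraization (\Cref{T:algebraization-fundamental}) over the excellent base $T=\Spec\hat{\oh}_{W,\bar{w}}$. But that theorem requires the target to be locally of finite presentation over the excellent base, and $\cX$, $\cY$ are only locally of finite presentation over $S$, not over $T$. If you repair this by taking the target to be $\cX\times_S T$, the output $\cV$ is of finite type over $T$; since $T\to S$ is a completion morphism it is not of finite type, so $\cV\to\cX$ is not locally of finite presentation and hence cannot be \'etale --- the inference ``formally \'etale at $v$, hence \'etale near $v$'' presupposes local finite presentation, which fails here. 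Algebraizing over the completion is circular: it returns an object that is still complete-local relative to $S$ and cannot serve as the roof in (2). The correct route is the paper's: first reduce to the case where $S$ itself is excellent, so that $W$ and hence the henselian local ring $\oh^h_{W,\bar{w}}$ are excellent, and then apply Artin approximation (\Cref{T:artin-approximation}) over the excellent henselian pair $(\Spec\oh^h_{W,\bar{w}},\bar{w})$ to the limit-preserving functor $V\mapsto\Hom_S(\cW\times_W V,\cW')$, approximating $\hat{\cW}_w\to\cW'$ to first order by a morphism from $\cW\times_W W''$ for some \'etale neighborhood $W''\to W$ of $\bar{w}$; that approximation is then \'etale and stabilizer-preserving at $w$, exactly as in \cite[Thm.~4.19]{luna-field}. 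Your reduction to ``$S$ noetherian local'' is strictly weaker than what is needed; excellence of $S$ is genuinely used and must be arranged before invoking any approximation or algebraization.
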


\begin{proof}
  The implications \itemref{TI:etale:etale} $\Longrightarrow$ \itemref{TI:etale:henselization} and  \itemref{TI:etale:etale} $\Longrightarrow$ \itemref{TI:etale:completion} are clear.
  For the converses, we may assume that $S$, $\cX$ and $\cY$ are quasi-compact.
  For \itemref{TI:etale:henselization} $\Longrightarrow$ \itemref{TI:etale:etale}, since $\cY\to S$ is locally of finite presentation, we obtain a factorization $\cX^h_x\to \cW\to \cX$, where the second map is \'etale, such that $\cX^h_x\cong\cY^h_y\to \cY$ factors via $\cW$. The induced map $\cW\to \cY$ is flat and unramified at the image $w$ of $x$, hence \'etale at $w$. After passing to a further \'etale neighborhood, $\cW$ is fundamental and \itemref{TI:etale:etale} follows.
  For \itemref{TI:etale:completion} $\Longrightarrow$ \itemref{TI:etale:etale}, the argument of \cite[Thm.~4.19]{luna-field} is valid if one applies \Cref{T:base} instead of \cite[Thm.~1.1]{luna-field}.
\end{proof}

\subsection{Algebraicity results} \label{SS:algebraicity-results} Here
we generalize the algebraicity results of \cite[\S5.3]{luna-field} to
the setting of mixed characteristic. We will do this using the
formulation of Artin's criterion in \cite[Thm.~A]{MR3589351}. This
requires us to prove that certain deformation and obstruction functors
are coherent, in the sense of \cite{MR0212070} (cf.\
\cite{hallj_coho_bc,MR1656482,MR3589351}), a definition that we
briefly recall. Let $X$ be an affine scheme; then an additive functor
$F \colon \QCOH(X) \to \AB$ is \emph{coherent} if there exists a
morphism of quasi-coherent $\Orb_X$-modules $\phi \colon M \to N$ such
that
\[
  F(-) \simeq \coker(\Hom_{\Orb_X}(N,-) \to \Hom_{\Orb_X}(M,-)).
\]
The following result generalizes \cite[Prop~5.14]{luna-field} to the setting of mixed characteristic.
\begin{proposition}\label{P:coherence}
  Let $\cX$ be a noetherian algebraic stack with affine diagonal and affine
  good moduli space $X$.
  If $\cplx{F} \in \DQCOH(\cX)$ and $\cplx{G} \in \DCAT^b_{\COH}(\cX)$, then the functor
  \[
    \Hom_{\Orb_{\cX}}(\cplx{F},\cplx{G} \otimes^{\LDERF}_{\Orb_{\cX}} \LDERF \pi^*(-)) \colon \QCOH(X) \to \QCOH(X)
  \]
  is coherent.
\end{proposition}
\begin{proof}
  The proof is identical to \cite[Prop.~5.14]{luna-field}: by
  \Cref{P:compact-generation}, $\DQCOH(\cX)$ is compactly
  generated. Also, the restriction of
  $\RDERF (f_{\qcsubscript})_* \colon \DQCOH(\cX) \to \DQCOH(X)$ to
  $\DCAT_{\COH}^+(\cX)$ factors through $\DCAT^+_{\COH}(X)$
  \cite[Thm.~4.16(x)]{alper-good}. By
  \cite[Cor.~4.19]{perfect_complexes_stacks}, the result follows.
\end{proof}
For this subsection, we will now assume that we are in the following situation:
\begin{setup} \label{setup:algebraicity}
  Fix an algebraic space $X$ and an algebraic stack $\cX$ with affine diagonal
  over $X$, such that $\cX \to X$ is a good moduli space of finite
  presentation. Assume that $X$ is quasi-excellent or $\cX$ satisfies one of the
  conditions \ref{Cond:FC}, \ref{Cond:PC} or \ref{Cond:N}. Note that if $\cX$ is
  noetherian, then $\cX \to X$ is automatically of finite type
  \cite[Thm.~A.1]{luna-field}.
\end{setup}

The following corollary is a mixed characteristic variant of
\cite[Cor.~5.15]{luna-field}.
\begin{corollary}[Hom scheme]\label{C:affine-hom}
  Let $\cX\to X$ be a good moduli space as in \Cref{setup:algebraicity}.
  Let $\cF$ be a quasi-coherent $\Orb_{\cX}$-module. Let $\cG$ be a finitely presented $\Orb_{\cX}$-module. If $\cG$ is flat over $X$, then the $X$-presheaf $\Homshf_{\Orb_{\cX}/X}(\cF,\cG)$, whose objects over $T\xrightarrow{\tau} X$ are homomorphisms $\tau_{\cX}^*\cF \to \tau_{\cX}^*\cG$ of $\Orb_{\cX\times_X T}$-modules (where $\tau_{\cX} \colon \cX \times_X T \to \cX$ is the projection), is representable by an affine $X$-scheme.
\end{corollary}
\begin{proof}
  The question is \'etale-local on $X$, so we may assume that $X$ is an affine scheme.
  Since $\cX\to X$ is of finite presentation, we can write $\cF$ as a filtered colimit
  of quasi-coherent modules $\cF_\lambda$ of finite presentation. Then
  $\Homshf_{\Orb_{\cX}/X}(\cF,\cG)=\varprojlim_\lambda \Homshf_{\Orb_{\cX}/X}(\cF_\lambda,\cG)$
  so we may assume that $\cF$ is of finite presentation.
  After a standard limit argument, using \Cref{C:gms-approximation}, we can assume
  that $X$ is noetherian. The result now follows directly from
  the coherence (\Cref{P:coherence}) and left-exactness of the functor
  $\Hom(\cF,\cG\otimes_{\Orb_{\cX}} \pi^*(-))\colon \QCOH(X)\to \QCOH(X)$,
  cf.\ \cite[Cor.~5.15]{luna-field} or \cite[Thm.~D]{hallj_coho_bc}.
\end{proof}

\begin{theorem}[Stacks of coherent sheaves]\label{T:coh}
  Let $\cX\to X$ be a good moduli space as in \Cref{setup:algebraicity}.
  The $X$-stack $\Cohstk_{\cX/X}$, whose objects over $T \to X$ are finitely presented quasi-coherent sheaves on $\cX \times_X T$ flat over $T$, is an algebraic stack, locally of finite presentation over $X$, with affine diagonal over $X$.
\end{theorem}

\begin{proof}
  After approximating to the quasi-excellent situation using \Cref{C:gms-approximation}, the proof is identical to \cite[Thm.~5.7]{luna-field}, which is a small modification of \cite[Thm.~8.1]{MR3589351}: the formal GAGA statement of \Cref{C:formal-gaga} implies that formally versal deformations are effective and \Cref{P:coherence} implies that the automorphism, deformation and obstruction functors are coherent. Therefore, Artin's criterion (as formulated in \cite[Thm.~A]{MR3589351}) is satisfied and the result follows. \Cref{C:affine-hom} implies that the diagonal is affine.
\end{proof}
Just as in \cite{luna-field}, the following corollaries follow immediately from \Cref{T:coh} appealing to the observation that \Cref{C:affine-hom} implies that $\Quotshf_{\cX/X}(\cF) \to \Cohstk_{\cX/X}$ is quasi-affine.

\begin{corollary}[Quot schemes]\label{C:quot}
  Let $\cX\to X$ be a good moduli space as in \Cref{setup:algebraicity}.
 If $\cF$ is a quasi-coherent $\oh_{\cX}$-module, then the $X$-sheaf $\Quotshf_{\cX/X}(\cF)$, whose objects over $T \xrightarrow{\tau} X$ are quotients $\tau_X^* \cF \to \cG$ (where $\tau_X \co \cX \times_X T \to \cX$ is the projection) such that $\cG$ is a finitely presented $\oh_{\cX \times_X T}$-module that is flat over $T$, is a separated algebraic space over $X$. If $\cF$ is finitely presented, then $\Quotshf_{\cX/X}(\cF)$ is locally of finite presentation over $X$. \epf
\end{corollary}
\begin{corollary}[Hilbert schemes]\label{C:hilb}
  Let $\cX\to X$ be a good moduli space as in \Cref{setup:algebraicity}.
 The $X$-sheaf $\Hilbshf_{\cX/X}$, whose objects over $T \to X$ are closed substacks $\cZ \subseteq \cX \times_X T$ such that $\cZ$ is flat and of finite presentation over $T$, is a separated algebraic space locally of finite presentation over $X$. \epf
\end{corollary}

We now establish algebraicity of Hom stacks.  Related results were established in \cite{hlp} under other hypotheses.

\begin{theorem}[Hom stacks] \label{T:hom}
  Let $\cX\to X$ be a good moduli space as in \Cref{setup:algebraicity}.
  Let $\cY$ be an algebraic stack, quasi-separated and locally of finite presentation over $X$ with affine stabilizers.
If $\cX \to X$ is flat, then the $X$-stack $\Homstk_X(\cX, \cY)$, whose objects are pairs consisting of a morphism $T \to X$ of algebraic spaces and a morphism $\cX \times_X T \to \cY$ of algebraic stacks over $X$, is an algebraic stack, locally of finite presentation over $X$ with quasi-separated diagonal.  If $\cY \to X$ has affine (resp.\ quasi-affine, resp.\ separated) diagonal, then the same is true for $\Homstk_X(\cX, \cY) \to X$.
\end{theorem}

\begin{proof}
  As before we may first assume that $X$ is affine and then reduce to the situation
  where $\cY$ is quasi-compact.
  After approximating to the quasi-excellent case using \Cref{C:gms-approximation}, the proof
  becomes identical to the proof of \cite[Thm.~5.10]{luna-field},
  which is a variant of \cite[Thm.~1.2]{hr-tannaka}.
\end{proof}

\begin{remark}[$G$-equivariant Hom stacks]
Let $G\to S$ be a group scheme acting on algebraic spaces or stacks $X$ and $Y$
over $S$. Then $G$-equivariant morphisms $X\to Y$ are equivalent to morphisms
of stacks $[X/G]\to [Y/G]$ over $BG$. It follows that the $G$-equivariant
Hom-stack $\Homstk_S^G(X,Y)$ fits into a cartesian square
\[
\xymatrix{
  \Homstk_S^G(X,Y)\ar[r]\ar[d] & \Homstk_S([X/G], [Y/G])\ar[d]\\
  S\ar[r] & \Homstk_S([X/G],BG).\ar@{}[ul]|\square
}
\]
Thus, if $G$ is linearly reductive and $X\to X\gitq G \cong S$ is a flat good
quotient, then we obtain algebraicity results for $\Homstk_S^G(X,Y)$, cf.\
\cite[Cor.~5.11]{luna-field}.
\end{remark}

\appendix
\section{Counterexamples in mixed characteristic}\label{A:mixed-char-counterexamples}

 We first recall 
the following conditions on an algebraic stack $\cW$ introduced in \Cref{S:approximation-deformation}.
\begin{enumerate}
\myitem{FC}\label{Cond:FC2} There is only a finite number of
  different characteristics in $\cW$.
\myitem{PC}\label{Cond:PC2} Every closed point of $\cW$ has
  positive characteristic.
\myitem{N}\label{Cond:N2} Every closed point of $\cW$ has nice
  stabilizer.
\end{enumerate}
We also introduce the following condition which is implied by \ref{Cond:FC2} or \ref{Cond:PC2}. 
\begin{enumerate}
\myitem{$\QQ_{\mathrm{open}}$}\label{Cond:QO}
Every closed point of $\cW$ that is of
characteristic zero has a neighborhood of characteristic zero.
\end{enumerate}

In this appendix we will give examples of schemes and linearly fundamental
stacks in mixed characteristic with various bad behavior.
\begin{enumerate}
\item A noetherian linearly fundamental stack $\cX$ with good moduli space
  $\cX\to X$ such that $X$ does not satisfy condition \ref{Cond:QO} and
  we cannot write $\cX=[\Spec(B)/G]$ with $G$ linearly
  reductive \'etale-locally on $X$ or \'etale-locally on $\cX$ (\Cref{SS:noetherian-counterexample}).
  In particular, condition \ref{Cond:QO}
  is necessary in \Cref{T:etale-local-gms} and the similar condition
  is necessary in \Cref{T:refinement}\itemref{T:refinement-lin-red-group}.
\item A non-noetherian linearly fundamental stack $\cX$ that cannot be written
  as an inverse limit of noetherian linearly fundamental stacks (\Cref{SS:nonnoetherian-counterexample1,SS:nonnoetherian-counterexample2}).
\item A noetherian scheme satisfying \ref{Cond:QO} but neither \ref{Cond:FC2} nor \ref{Cond:PC2} (\Cref{SS:examples-Cond:QO}).
\end{enumerate}
Such counterexamples must have infinitely many different characteristics
and closed points of characteristic zero.

\bigskip
Throughout this appendix, we work over the base scheme $\Spec \ZZ[\frac{1}{2}]$. Let $\SL_2$ act on
$\lasl_2$ by conjugation. Then $\cY=[\lasl_2/\SL_2]$ is a fundamental stack
with adequate moduli space $\cY \to Y:=\lasl_2\gitq \SL_2=\Spec
\ZZ[\frac{1}{2},t]$ given by the determinant. Indeed, this
follows from Zariski's main theorem and the following description of the
orbits over algebraically closed fields. For $t\neq 0$, there is a unique orbit with Jordan normal form
\[
\begin{bmatrix}
\sqrt{-t} & 0\\
0 & -\sqrt{-t}
\end{bmatrix}
\]
and stabilizer $\Gm$. For $t=0$, there are two orbits, one closed and one open,
with Jordan normal forms
\[
\begin{bmatrix}
0 & 0\\
0 & 0
\end{bmatrix}
\quad\text{and}\quad
\begin{bmatrix}
0 & 1\\
0 & 0
\end{bmatrix}
\]
and stabilizers $\SL_2$ and $\Gmu_2 \times \Ga$ respectively. The nice
locus is $Y_\nicelocus=\{t\neq 0\}$. The linearly reductive locus is
$\{t\neq 0\}\cup \AA^1_\QQ$.

\subsection{A noetherian example}\label{SS:noetherian-counterexample}
Let $A=\ZZ[\frac{1}{2},t,\frac{1}{t+p}\,:\, p\in P]\subset \QQ[t]$, where $p$
ranges over the set $P$ of all odd primes.
\begin{itemize}
\item $A$ is a noetherian integral domain: the localization of
$\ZZ[\frac{1}{2},t]$ in the multiplicative submonoid generated by $(t+p)$ for all $p \in P$.
\item $A/(t)=\QQ$.
\end{itemize}
We let $X=\Spec A$, let $X\to Y$ be the natural map (a flat monomorphism) and
let $\cX=\cY\times_Y X$. Then $\cX$ is linearly fundamental with good moduli
space $X$.

The nice locus of $X$ is $\{t\neq 0\}$ and the complement consists of a single
closed point $x$ of characteristic zero. Any neighborhood of this point
contains points of positive characteristic. It is thus impossible to write
$\cX=[\Spec B/G]$, with a linearly reductive group $G$, after restricting to
any \'etale neighborhood of $x\in X$, or more generally, after restricting to
any \'etale neighborhood in $\cX$ of the unique closed point above $x$.

\subsection{A non-noetherian example} \label{SS:nonnoetherian-counterexample1}
Let $A=\ZZ[\frac{1}{2},t,\frac{t-1}{p}\,:\, p\in P]\subset \QQ[t]$, where $p$
ranges over the set $P$ of all odd primes. Note that
\begin{itemize}
\item $A$ is a non-noetherian integral domain,
\item $A=\ZZ[\frac{1}{2},t,(x_p)_{p\in P}]/(px_p-t+1)_{p\in P}$,
\item $A\otimes_\ZZ \ZZ_{(p)}=\ZZ_{(p)}[x_p]$ is regular, and thus
  noetherian, for every $p\in P$,
\item $A\otimes_\ZZ \QQ = \QQ[t]$,
\item $A/(t)=\QQ$,
\item $A/(t-1)=\ZZ[\frac{1}{2},(x_p)_{p\in P}]/(px_p)_{p\in P}$ has infinitely
  many irreducible components: the spectrum is the union of $\Spec
  \ZZ[\frac{1}{2}]$ and $\AA^1_{\FF_p}$ for every $p\in P$, and
\item $\Spec A\to \Spec \ZZ[\frac{1}{2}]$ admits a section: $t=1$, $x_p=0$ for all $p\in P$.
\end{itemize}
We let $X=\Spec A$, let $X\to Y$ be the natural map and let $\cX=\cY\times_Y
X$. Then $\cX$ is linearly fundamental with good moduli space $X$. Note that
$\cX\to X$ is of finite presentation as it is a pull-back of $\cY\to Y$.

\begin{proposition}
There does not exist any noetherian linearly fundamental stack $\cX_\alpha$
with an affine morphism $\cX\to \cX_\alpha$.
\end{proposition}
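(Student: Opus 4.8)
The plan is to suppose, for contradiction, that $f\colon \cX\to \cX_\alpha$ is an affine morphism with $\cX_\alpha$ noetherian and linearly fundamental, and to exploit the pathology of $A$: the unique closed point of $X=\Spec A$ at which $\cX$ acquires a non-nice stabilizer has characteristic zero, yet has no characteristic-zero Zariski neighbourhood.

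First I would pass to good moduli spaces. Since $\cX_\alpha$ is noetherian and cohomologically affine, its good moduli space $X_\alpha=\Spec A_\alpha$ is noetherian by \cite[Thm.~4.16(x)]{alper-good}, and as $X_\alpha$ is an algebraic space the composite $\cX\to \cX_\alpha\to X_\alpha$ factors uniquely through $\pi\colon \cX\to X$, yielding $g\colon X\to X_\alpha$, i.e.\ a homomorphism $\phi\colon A_\alpha\to A$. Every closed point of $\cX_\alpha$ has linearly reductive stabilizer, and over a field of positive characteristic such a group is nice (\Cref{R:nice_grp_pos_char}); hence every closed point of $X_\alpha$ of positive characteristic lies in the nice locus $(X_\alpha)_{\nicelocus}$, which is open (\Cref{P:nice-neighborhood}). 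Therefore the complementary closed set $Z_\alpha:=X_\alpha\setminus (X_\alpha)_{\nicelocus}$ has all residue fields of characteristic zero, and, $X_\alpha$ being noetherian, the open $(X_\alpha)_{\nicelocus}$ is quasi-compact, so $(X_\alpha)_{\nicelocus}=D(s_1)\cup\dots\cup D(s_m)$ for finitely many $s_i\in A_\alpha$ with $A_\alpha/(s_1,\dots,s_m)$ of characteristic zero.

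Second, I would transfer this to $\cX\to X$. Over $\{t\neq 0\}$ the orbits of $\SL_2$ on $\lasl_2$ over an algebraically closed field are single orbits with stabilizer $\Gm$, so the stabilizer group scheme of $\cX\to X$ there is of multiplicative type, hence nice; over $t=0$, which is the single closed point $V(t)\cong \Spec \QQ$ since $A/(t)=\QQ$, the closed orbit is the origin with stabilizer $\SL_2$, which is reductive but not nice. Thus the non-nice locus of $\cX\to X$ is exactly $V(t)$. Since $f$ is affine its relative inertia has affine, hence in positive characteristic linearly reductive, fibres, so over $g^{-1}\bigl((X_\alpha)_{\nicelocus}\bigr)$ the stabilizers of $\cX\to X$ are nice; this forces $V(t)\subseteq g^{-1}(Z_\alpha)$, while the reverse containment is immediate, so $g^{-1}(Z_\alpha)=V(t)$ and, equivalently, $D(t)=D\bigl(\phi(s_1)\bigr)\cup\dots\cup D\bigl(\phi(s_m)\bigr)$ in $\Spec A$; in particular each $\phi(s_i)$ lies in $(t)$.

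The crux — and the step I expect to be the main obstacle — is to turn this into a contradiction using the explicit ring $A$. Although $g^{-1}(Z_\alpha)$ is the characteristic-zero point $V(t)$ and the $\phi(s_i)$ cut it out up to radical, the scheme-theoretic preimage $Z_\alpha\times_{X_\alpha}X=\Spec\Bigl(A/\bigl(\phi(s_1),\dots,\phi(s_m)\bigr)A\Bigr)$, and indeed the whole $(t)$-adic completion of $X$, is a $\QQ$-scheme, whereas $A/(t-1)=\ZZ[\tfrac12,(x_p)_{p}]/(px_p)_{p}$ shows that every Zariski-open neighbourhood of $V(t)$ in $X$ contains points of every sufficiently large characteristic. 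To make this bite, one pulls back along $g$ the coherent completeness of the linearly fundamental stack $\cX_\alpha\times_{X_\alpha}\widehat{X}_\alpha$ along the ideal of $Z_\alpha$ (\Cref{T:complete}; here $\widehat{A}_\alpha=\varprojlim_n A_\alpha/(s_1,\dots,s_m)^{n}$ is a $\QQ$-algebra because units lift through the nilpotent ideals $(s_1,\dots,s_m)/(s_1,\dots,s_m)^{n}$), together with the fact that $A_{(t)}$ is the discrete valuation ring $\QQ[t]_{(t)}$, to conclude that a neighbourhood of $V(t)$ in $X$ would have to be a $\QQ$-scheme, contradicting the displayed description of $A/(t-1)$. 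The genuine difficulty is that every local ring and every formal neighbourhood of $X$ is perfectly well-behaved, so the contradiction cannot be detected locally or formally: it must use globally that a single noetherian $X_\alpha$ dominated by $X$ cannot separate the non-nice characteristic-zero point $V(t)$ from the infinitely many positive characteristics occurring in $X$.
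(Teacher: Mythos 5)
Your opening reduction to good moduli spaces is sound as far as it goes, but the argument has a genuine gap precisely at the point you flag as ``the crux'', and---more seriously---the facts you assemble before that point cannot possibly yield a contradiction on their own. The decisive evidence is the paper's own noetherian example (\Cref{SS:noetherian-counterexample}): there one exhibits a \emph{noetherian} linearly fundamental stack whose good moduli space has non-nice locus a single closed point of characteristic zero with no characteristic-zero neighbourhood. So the configuration you reach after three paragraphs---a noetherian $X_\alpha$, a closed subset $Z_\alpha$ of pure characteristic zero whose complement is covered by finitely many $D(s_i)$, and $g^{-1}(Z_\alpha)\supseteq V(t)$---is perfectly consistent and forces nothing. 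Relatedly, the concrete output of your second paragraph, that $V(t)$ is cut out set-theoretically by finitely many elements of $(t)$, is vacuous: $V(t)$ is already cut out by the single element $t$, and $(t)$ has height one because $A_{(t)}=\QQ[t]_{(t)}$. The proposed endgame (coherent completeness along $Z_\alpha$ plus $\widehat{A}_\alpha$ being a $\QQ$-algebra implying that a Zariski neighbourhood of $V(t)$ in $X$ is a $\QQ$-scheme) is a non sequitur for exactly the reason you yourself point out: the $(t)$-adic formal data of $A$ coincides with that of $\QQ[t]$ and can never detect the positive-characteristic points accumulating on every Zariski neighbourhood. There is also a smaller error: the ``reverse containment'' $g^{-1}(Z_\alpha)\subseteq V(t)$ is not immediate and need not hold, since a point of $(X_\alpha)\smallsetminus (X_\alpha)_{\nicelocus}$ in the image of $D(t)$ is constrained only by the stabilizers of $\cX$, which bound those of $\cX_\alpha$ from below, not above.

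What the paper actually does is quite different after the first step. It uses the affineness of $\cX\to\cX_\alpha$ to write $\cX=\varprojlim_\lambda\cX_\lambda$ with $\cX_\lambda\to\cX_\alpha$ affine of finite presentation, and deduces that for large $\lambda$ there is a factorization $X\to X_\lambda\to Y$ with $X_\lambda$ noetherian affine and $\cY\times_Y X_\lambda$ linearly fundamental; so the whole stack, not just its nice locus, is descended. The contradiction is then purely scheme-theoretic and hinges on a feature of $A$ your proposal never uses: the section $t=1$, $x_p=0$ of $X\to\Spec\ZZ[\tfrac{1}{2}]$, which forces the fibre of $X_\lambda$ over $t=1$ to contain a copy of $\Spec\ZZ[\tfrac{1}{2}]$. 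Combining this with $(X_\lambda)_\QQ\cong\Spec\QQ[t]$ and the purity of characteristic zero over $t=0$ (which is where linear reductivity of $\SL_2$ enters), Krull's Hauptidealsatz at each point over $(p,t-1)$ shows that the fibre of $X_\lambda$ over $t=1$ has a component of dimension at least one in every odd characteristic, hence infinitely many irreducible components---impossible for a noetherian scheme. In short, the pathology lives in the fibre over $t=1$, where $A/(t-1)$ has infinitely many components, not in the fibre over $t=0$; no completion along $V(t)$ will see it, and the global input you were missing is the section over $t=1$ together with the dimension-theoretic argument it enables.
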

\begin{proof}
Suppose that such an $\cX_\alpha$ exists. Then we may write $\cX=\varprojlim
\cX_\lambda$ where the $\cX_\lambda$ are affine and of finite presentation over
$\cX_\alpha$. Let $\cX_\lambda\to X_\lambda$ denote the good moduli space which
is of finite type~\cite[Thm.~A.1]{luna-field}. Thus, $\cX\to
\cX_\lambda\times_{X_\lambda} X$ is affine and of finite presentation. For all
sufficiently large $\lambda$ we can thus find an affine finitely presented
morphism $\cX'_\lambda\to \cX_\lambda$ such that $\cX\to
\cX'_\lambda\times_{X_\lambda} X$ is an isomorphism. Since also $\cX\to
\cY\times_Y X$ is an isomorphism and $Y\to \Spec \ZZ$ is of finite presentation,
it follows that there is an isomorphism
$\cX'_\lambda\to \cY\times_Y X_\lambda$ for all sufficiently large $\lambda$.

To prove the proposition, it is thus enough to show that there does not exist a
factorization $X\to X_\lambda\to Y$ with $X_\lambda$ noetherian and affine such
that $\cY\times_Y X_\lambda$ is linearly fundamental. This follows from the
following lemma.
\end{proof}

\begin{lemma}
Let $Z$ be an integral affine scheme together with
a morphism $f\colon Z\to Y=\Spec \ZZ[\frac{1}{2},t]$ such that
\begin{enumerate}
\item\label{LI:counter-ex:iso-Q}
  $f_\QQ\colon Z_\QQ\to \Spec \QQ[t]$ is an isomorphism;
\item\label{LI:counter-ex:0-Q}
  $f^{-1}(0)$ is of pure characteristic zero; and
\item\label{LI:counter-ex:1-section}
  $f^{-1}(1)$ admits a section $s$.
\end{enumerate}
Then $Z$ is not noetherian.
\end{lemma}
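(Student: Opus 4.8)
The plan is to assume that $Z=\Spec C$ is noetherian (so $C$ is a noetherian domain) and to derive a contradiction. First I would unwind the hypotheses. Since $C\otimes_\ZZ\QQ\cong\QQ[t]$ is nonzero and $C$ is a domain, $C$ has characteristic $0$, so $\ZZ[\tfrac12][t]\hookrightarrow C\hookrightarrow\QQ[t]$ and inverting $\ZZ\smallsetminus\{0\}$ in $C$ gives $\QQ[t]$. From (2) together with (1): the closed fibre $f^{-1}(0)=\Spec(C/tC)$ is of pure characteristic zero, hence a $\QQ$-algebra, and it is contained in $Z_\QQ\cong\AA^1_\QQ$, which it meets only in the point $t=0$; combining these, $C/tC$ embeds in $\QQ$ and is a $\QQ$-algebra, so $C/tC=\QQ$ (and similarly each $C/t^nC$ is a $\QQ$-algebra). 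In particular $1\in(q,t)C$ for every odd prime $q$. From (3): the section of $f^{-1}(1)\to\Spec\ZZ[\tfrac12]$ gives a ring retraction $D:=C/(t-1)C\twoheadrightarrow\ZZ[\tfrac12]$; equivalently, evaluation at $t=1$ maps $C$ into $\ZZ[\tfrac12]$. Using $D\otimes\QQ=\QQ$ one checks that the kernel $\mathfrak b$ of $D\to\ZZ[\tfrac12]$ is exactly the $\ZZ$-torsion submodule of $D$, so $D\cong\ZZ[\tfrac12]\ltimes\mathfrak b$ as a ring.

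Since $C$ is noetherian, so is $D$, hence $\mathfrak b$ is a finitely generated ideal of $D$; being torsion it is killed by some nonzero integer $N$. The plan is to contradict this using (2). The heart of the matter is the claim that $\mathfrak b$ has nonzero $q$-torsion for infinitely many odd primes $q$ — indeed for all of them — so that $\mathfrak b$ cannot be annihilated by any single integer. Granting this, $\mathfrak b$ is not finitely generated, so $D$ is not noetherian, hence $C$ is not noetherian, a contradiction; therefore $Z$ is not noetherian.

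To establish the key claim, fix an odd prime $q$. By (2) choose $u\in C$ with $qu\equiv 1\pmod{tC}$; replacing $u$ by $u-t\cdot u(1)$ (where $u(1)\in\ZZ[\tfrac12]$ by (3)) we may assume $u(1)=0$, i.e.\ $u$ lies in $\mathfrak m:=\ker(\mathrm{ev}_{t=1}\colon C\to\ZZ[\tfrac12])$; note $u(0)=1/q$. Then the class $\bar u\in\mathfrak b=\mathfrak m/(t-1)C$ satisfies $q\bar u=\overline{\gamma+1}$ where $\gamma=(qu-1)/t\in C$ and $\gamma(1)=-1$; iterating this ``peeling off $t-1$'' procedure one shows that a suitable such class is a nonzero element of $\mathfrak b$ of $q$-power order. (For the explicit examples of \S\ref{SS:nonnoetherian-counterexample1}, $\mathfrak b\cong\QQ/\ZZ[\tfrac12]$ and $\tfrac{t-1}{q}$ is the witness; in general the witness must be produced from (2) since $\tfrac{t-1}{q}$ need not lie in $C$.)

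The step I expect to be the main obstacle is exactly this last one: (2) is a statement about the fibre $t=0$ and (3) about the disjoint fibre $t=1$, so the incompatibility is genuinely global and cannot be detected after localizing at any single prime of $C$ (at the maximal ideals $(q,t-1)C$ condition (2) becomes vacuous since $t$ is a unit there, while at $tC$ the ring is $\QQ[t]_{(t)}$, on which (2) holds trivially). I would handle it either by a careful bookkeeping of the ``order of $q$ in the denominator'' as one divides repeatedly by $t-1$, controlled by (3), or by adapting the direct non-finite-generation argument for the ideal generated by the $\tfrac{t-1}{q}$ used for the counterexample in \S\ref{SS:nonnoetherian-counterexample1}; everything else in the argument is formal.
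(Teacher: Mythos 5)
Your reductions up to the key claim are all correct and set up a strategy genuinely different from the paper's: $C\hookrightarrow\QQ[t]$, $C/tC=\QQ$, $D=\ZZ[\tfrac12]\ltimes\mathfrak b$ with $\mathfrak b$ equal to the $\ZZ$-torsion of $D$, and a finitely generated torsion ideal is killed by a single integer $N$. The key claim --- that $\mathfrak b$ has nonzero $q$-power torsion for every odd prime $q$ --- is indeed true, and it does finish the proof. But your argument for it has a genuine gap, exactly where you flagged it. The identity $q\bar u=\overline{\gamma+1}$ rewrites $q\bar u$ as another class of smaller degree, but the iteration cannot continue: at the next stage you would need a congruence of the form $qu_1\equiv(\text{unit})\pmod{tC}$, and hypothesis (2) supplies such a congruence only for the original $u$, not for $\gamma+1$ (whose value at $t=0$ is uncontrolled). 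Moreover, even granting that you exhibit some nonzero class of $\mathfrak b$ ``caused by $q$'', nothing in the sketch rules out that its order is coprime to $q$ --- a priori $\bar u$ could be $3$-torsion while you are chasing $q=5$ --- so it need not contribute to the $q$-primary part at all, which is what the final contradiction requires.

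The missing idea is to kill the prime-to-$q$ torsion first, i.e.\ to localize at $(q)$. Suppose $\mathfrak b[q^\infty]=0$ and set $B=C\otimes_\ZZ\ZZ_{(q)}\subseteq\QQ[t]$. Since $\mathfrak b$ is torsion, $\mathfrak b\otimes_\ZZ\ZZ_{(q)}=\mathfrak b[q^\infty]=0$, hence $\ker(\mathrm{ev}_1\colon B\to\ZZ_{(q)})=(t-1)B$. Now induct on degree: any $g\in B$ satisfies $g-g(1)\in(t-1)B$, so $g=g(1)+(t-1)g_1$ with $g_1\in B$ of strictly smaller degree and $g(1)\in\ZZ_{(q)}$; this forces $B=\ZZ_{(q)}[t]$ and therefore $B/tB=\ZZ_{(q)}$, contradicting $B/tB=(C/tC)\otimes_\ZZ\ZZ_{(q)}=\QQ$. (This is your ``peeling off $t-1$'', but run under the hypothesis $\mathfrak b[q^\infty]=0$ rather than as a computation with the single class $\bar u$; note it uses neither noetherianity nor the choice of $u$.) With this substitution your proof closes up and is, incidentally, more elementary than the paper's, which argues geometrically: assuming the local rings of $Z$ are noetherian, the chain through the generic point gives $\dim\mathcal{O}_{Z,1_p}\geq 2$ (using (1)), Krull's Hauptidealsatz gives $\dim\mathcal{O}_{Z,1_p}/(p)\geq 1$, and (2) together with affineness of $f$ forces the fiber over the generic point of $V(p)$ to be empty, so that $f^{-1}(1)$ acquires a one-dimensional component over every prime $p$ and hence has infinitely many irreducible components.
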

\begin{proof}
For $a\in \ZZ$ and $p\in P$, let $a_p$ (resp.\ $a_\QQ$) denote the point
in $Y$ corresponding to the prime ideal $(p,t-a)$
(resp.\ $(t-a)$). Similarly, let $\eta_p$ (resp.\ $\eta$) denote the points
corresponding to the prime ideals $(p)$ (resp.\ $0$).
Let $W=\Spec \ZZ[\frac{1}{2}]\inj Z$ be the image of the section $s$ and
let $1_p\in Z$ also denote the unique point of characteristic $p$ on $W$.

Suppose that the local rings of $Z$ are noetherian. We will prove that
$f^{-1}(1)$ then has infinitely many irreducible components. Since $f^{-1}(1)$
is the union of the closed subschemes $W$ and $W_p:=f^{-1}(1_p)$, $p\in
P$, it is enough to prove that $W_p$ has (at least) dimension $1$ for every $p$.

Note that $\Orb_{Z,1_p}$ is (at least) $2$-dimensional since there is a
chain $1_p\leq 1_{\QQ} \leq \eta$ of length $2$ (here we use
\itemref{LI:counter-ex:iso-Q} and \itemref{LI:counter-ex:1-section}). By
Krull's Hauptidealsatz, $\Orb_{Z,1_p}/(p)$ has (at least) dimension $1$
(here we use that the local ring is noetherian). The complement of $\Spec
\Orb_{W_p,1_p}\inj \Spec \Orb_{Z,1_p}/(p)$ maps to $\eta_p$.
It is thus enough to prove that $f^{-1}(\eta_p)=\emptyset$.

Consider the local ring $\Orb_{Y,0_p}$. This is a regular local ring of
dimension $2$. Since $f_\QQ$ is an isomorphism, $Z\times_Y \Spec
\Orb_{Y,\eta_p}\to \Spec \Orb_{Y,\eta_p}$ is a birational affine
morphism to the spectrum of a DVR. Thus, either $f^{-1}(\eta_p)=\emptyset$
or $Z\times_Y \Spec \Orb_{Y,\eta_p}\to \Spec \Orb_{Y,\eta_p}$
is an isomorphism. In the latter case,
$f^{-1}(\Spec \Orb_{Y,0_p})=f^{-1}(\Spec \Orb_{Y,0_p}\smallsetminus 0_p)\cong \Spec \Orb_{Y,0_p}\smallsetminus 0_p$ (here we use \itemref{LI:counter-ex:iso-Q} and \itemref{LI:counter-ex:0-Q}) which
contradicts that $f$ is affine.
\end{proof}

\subsection{A variant of the non-noetherian example}  \label{SS:nonnoetherian-counterexample2}
Let $A=\ZZ[\frac{1}{2},t,\frac{t-1}{p^a}\,:\, p\in P,\, a\geq 1]\subset \QQ[t]$, where $p$ ranges over the set $P$ of all odd primes. Note that
\begin{itemize}
\item $A$ is a non-noetherian integral domain,
\item $A=\ZZ[\frac{1}{2},t,(x_{p,a})_{p\in P,a\geq 1}]/(px_{p,1}-t+1,px_{p,a+1}-x_{p,a})_{p\in P,a\geq 1}$,
\item $A\otimes_\ZZ \ZZ_{(p)}=\ZZ_{(p)}[(x_{p,a})_{a\geq 1}]/(px_{p,a+1}-x_{p,a})_{a\geq 1}$ is two-dimensional and integral but not noetherian, for every $p\in P$,
\item $A\otimes_\ZZ \QQ = \QQ[t]$,
\item $A/(t)=\QQ$,
\item $A/(t-1)=\ZZ[\frac{1}{2},(x_{p,a})_{p\in P,a\geq 1}]/(px_{p,1},px_{p,a+1}-x_{p,a})_{p\in P,a\geq 1}$ is non-reduced with one irreducible component:
  the nil-radical is $(x_{p,a})_{p\in P,a\geq 1}$.
\item $\Spec A\to \Spec \ZZ[\frac{1}{2}]$ admits a section: $t=1$, $x_{p,a}=0$ for all $p\in P$, $a\geq 1$.
\end{itemize}
As in the previous subsection, the fiber product $\Spec A \times_{Y} \cY$ (where
$\cY=[\lasl_2/\SL_2]$ and $Y=\lasl_2\gitq \SL_2 =\Spec \ZZ[\frac{1}{2},t]$)
provides an example of a linearly fundamental stack that cannot be written as an
inverse limit of noetherian linearly fundamental stacks.

\subsection{Condition \ref{Cond:QO}}  \label{SS:examples-Cond:QO}
We provide examples illustrating that condition \ref{Cond:QO}, introduced in
the beginning of the appendix, is slightly weaker than conditions 
\ref{Cond:FC2} and \ref{Cond:PC2} even in the noetherian case.
A non-connected example is given by $S=\Spec (\ZZ \times \QQ)$ which has infinitely
many different characteristics and a closed point of characteristic zero.
A connected counterexample is given by the push-out $S=\Spec\bigl(
\ZZ\times_{\FF_p} (\ZZ_{(p)}[x]) \bigr)$ for any choice of prime number
$p$. Note that the irreducible component $\Spec( \ZZ_{(p)}[x] )$ has closed
points of characteristic zero, e.g., the prime ideal $(px-1)$. The push-out
is noetherian by Eakin--Nagata's theorem.

For an irreducible noetherian scheme, condition \ref{Cond:QO} implies
  \ref{Cond:FC2} or \ref{Cond:PC2}. That is,
  an irreducible noetherian scheme with a dense open of equal characteristic
  zero, has only a finite number of characteristics. This follows
  from Krull's Hauptidealsatz.
We also note that for a scheme of finite type over $\Spec \ZZ$, there are no closed
  points of characteristic zero so \ref{Cond:QO} and \ref{Cond:PC2} hold trivially.

\bibliography{references}

\providecommand{\MR}{\relax\ifhmode\unskip\space\fi MR }
\providecommand{\MRhref}[2]{%
  \href{http://www.ams.org/mathscinet-getitem?mr=#1}{#2}
}
\providecommand{\href}[2]{#2}
\begin{thebibliography}{SGA3\textsubscript{III}}

\bibitem[AHHLR24]{non-local}
J.~Alper, J.~Hall, D.~Halpern-Leistner, and D.~Rydh, \emph{Artin algebraization
  for pairs with applications to the local structure of stacks and {F}errand
  pushouts}, Forum Math. Sigma \textbf{12} (2024), Paper No. e20.

\bibitem[AHL23]{alper-hall-lim_arXiv}
J.~Alper, J.~Hall, and D.~B. Lim, \emph{Coherently complete algebraic stacks in
  positive characteristic}, Preprint, Sep 2023,
  \href{http://arXiv.org/abs/2309.01388}{\mbox{arXiv:2309.01388}}, p.~35.

\bibitem[AHLH23]{ahlh}
J.~Alper, D.~Halpern-Leistner, and J.~Heinloth, \emph{Existence of moduli
  spaces for algebraic stacks}, 2023, pp.~949--1038.

\bibitem[AHR20]{luna-field}
J.~Alper, J.~Hall, and D.~Rydh, \emph{A {L}una {\'e}tale slice theorem for
  algebraic stacks}, Ann. of Math. (2) \textbf{191} (2020), no.~3, 675--738.

\bibitem[Alp10]{alper-quotient}
J.~Alper, \emph{On the local quotient structure of {A}rtin stacks}, J. Pure
  Appl. Algebra \textbf{214} (2010), no.~9, 1576--1591.

\bibitem[Alp12]{alper-local}
J.~Alper, \emph{Local properties of good moduli spaces}, Tohoku Math. J. (2)
  \textbf{64} (2012), no.~1, 105--123.

\bibitem[Alp13]{alper-good}
J.~Alper, \emph{{Good moduli spaces for Artin stacks}}, Ann. Inst. Fourier
  (Grenoble) \textbf{63} (2013), no.~6, 2349--2402.

\bibitem[Alp14]{alper-adequate}
J.~Alper, \emph{Adequate moduli spaces and geometrically reductive group
  schemes}, Algebr. Geom. \textbf{1} (2014), no.~4, 489--531.

\bibitem[AM69]{MR0242802}
M.~F. Atiyah and I.~G. Macdonald, \emph{Introduction to commutative algebra},
  Addison-Wesley Publishing Co., Reading, Mass.-London-Don Mills, Ont., 1969.

\bibitem[Ana73]{MR0335524}
S.~Anantharaman, \emph{Sch\'{e}mas en groupes, espaces homog\`{e}nes et espaces
  alg\'{e}briques sur une base de dimension 1}, Sur les groupes
  alg\'{e}briques, M\'{e}m. Soc. Math. France 33, 1973, pp.~5--79.

\bibitem[AOV11]{aov}
D.~Abramovich, M.~Olsson, and A.~Vistoli, \emph{Twisted stable maps to tame
  {A}rtin stacks}, J. Algebraic Geom. \textbf{20} (2011), no.~3, 399--477.

\bibitem[Art69]{MR0268188}
M.~Artin, \emph{Algebraic approximation of structures over complete local
  rings}, Inst. Hautes \'Etudes Sci. Publ. Math. (1969), no.~36, 23--58.

\bibitem[ATW20]{MR4172700}
D.~Abramovich, M.~Temkin, and J.~W{\l}odarczyk, \emph{Toroidal orbifolds,
  destackification, and {K}ummer blowings up}, Algebra Number Theory
  \textbf{14} (2020), no.~8, 2001--2035, With an appendix by David Rydh.

\bibitem[Aus66]{MR0212070}
M.~Auslander, \emph{Coherent functors}, Proc. {C}onf. {C}ategorical {A}lgebra
  ({L}a {J}olla, {C}alif., 1965), Springer, New York, 1966, pp.~189--231.

\bibitem[BR85]{br-luna}
P.~Bardsley and R.~W. Richardson, \emph{\'{E}tale slices for algebraic
  transformation groups in characteristic {$p$}}, Proc. London Math. Soc. (3)
  \textbf{51} (1985), no.~2, 295--317.

\bibitem[Bri15]{brion-linearization}
M.~Brion, \emph{On linearization of line bundles}, J. Math. Sci. Univ. Tokyo
  \textbf{22} (2015), no.~1, 113--147.

\bibitem[Con05]{conradfmlgaga}
B.~Conrad, \emph{{F}ormal {GAGA} for {A}rtin stacks}, available on
  \url{https://math.stanford.edu/~conrad/papers/formalgaga.pdf}, Nov 2005.

\bibitem[Con10]{mathoverflow_groups-over-dual-numbers}
B.~Conrad, \emph{Smooth linear algebraic groups over the dual numbers},
  MathOverflow, 2010, \url{http://mathoverflow.net/q/22078}.

\bibitem[EGA]{EGA}
A.~Grothendieck, \emph{\'{E}l\'ements de g\'eom\'etrie alg\'ebrique}, I.H.E.S.
  Publ. Math. \textbf{4, 8, 11, 17, 20, 24, 28, 32} (1960, 1961, 1961, 1963,
  1964, 1965, 1966, 1967).

\bibitem[Elk73]{MR0345966}
R.~Elkik, \emph{Solutions d'\'equations \`a coefficients dans un anneau
  hens\'elien}, Ann. Sci. \'Ecole Norm. Sup. (4) \textbf{6} (1973), 553--603
  (1974).

\bibitem[God56]{godement_top-m-adiques}
R.~Godement, \emph{Topologies $\mathfrak{m}$-adiques}, S\'eminaire H. Cartan et
  C. Chevalley: G\'eom\'etrie Alg\'ebrique, (E. N. S., 1955/56), 8\`eme
  ann\'ee, Exp. No. 18, Secr\'etariat math\'ematique, Paris, 1956, p.~12.

\bibitem[Gro17]{gross-resolution}
P.~Gross, \emph{Tensor generators on schemes and stacks}, Algebr. Geom.
  \textbf{4} (2017), no.~4, 501--522.

\bibitem[GZB15]{geraschenko-zb_fGAGA}
A.~Geraschenko and D.~Zureick-Brown, \emph{Formal {GAGA} for good moduli
  spaces}, Algebr. Geom. \textbf{2} (2015), no.~2, 214--230.

\bibitem[Hal14]{hallj_coho_bc}
J.~Hall, \emph{Cohomology and base change for algebraic stacks}, Math. Z.
  \textbf{278} (2014), no.~1-2, 401--429.

\bibitem[Hal17]{MR3589351}
J.~Hall, \emph{Openness of versality via coherent functors}, J. Reine Angew.
  Math. \textbf{722} (2017), 137--182.

\bibitem[Har98]{MR1656482}
R.~Hartshorne, \emph{Coherent functors}, Adv. Math. \textbf{140} (1998), no.~1,
  44--94.

\bibitem[HLP23]{hlp}
D.~Halpern-Leistner and A.~Preygel, \emph{Mapping stacks and categorical
  notions of properness}, Compos. Math. \textbf{159} (2023), no.~3, 530--589.

\bibitem[Hoy17]{hoyois_six-operations}
M.~Hoyois, \emph{The six operations in equivariant motivic homotopy theory},
  Adv. Math. \textbf{305} (2017), 197--279.

\bibitem[HR14]{MR3148551}
J.~Hall and D.~Rydh, \emph{The {H}ilbert stack}, Adv. Math. \textbf{253}
  (2014), 194--233.

\bibitem[HR15]{hallj_dary_alg_groups_classifying}
J.~Hall and D.~Rydh, \emph{Algebraic groups and compact generation of their
  derived categories of representations}, Indiana Univ. Math. J. \textbf{64}
  (2015), no.~6, 1903--1923.

\bibitem[HR17]{perfect_complexes_stacks}
J.~Hall and D.~Rydh, \emph{Perfect complexes on algebraic stacks}, Compositio
  Math. \textbf{153} (2017), no.~11, 2318--2367.

\bibitem[HR19]{hr-tannaka}
J.~Hall and D.~Rydh, \emph{Coherent {T}annaka duality and algebraicity of
  {H}om-stacks}, Algebra Number Theory \textbf{13} (2019), no.~7, 1633--1675.

\bibitem[HR23]{mayer-vietoris}
J.~Hall and D.~Rydh, \emph{{{M}ayer--{V}ietoris squares in algebraic
  geometry}}, J. Lond. Math. Soc. \textbf{107} (2023), no.~5, 1583--1612.

\bibitem[KS21]{kurano-shimomoto}
K.~Kurano and K.~Shimomoto, \emph{Ideal-adic completion of quasi-excellent
  rings (after {G}abber)}, Kyoto J. Math. \textbf{61} (2021), no.~3, 707--722.

\bibitem[LMB]{lmb}
G.~Laumon and L.~Moret-Bailly, \emph{Champs alg\'ebriques}, Ergeb. Math.
  Grenzgeb., vol.~39, Springer-Verlag, Berlin, 2000.

\bibitem[Lun73]{luna}
D.~Luna, \emph{Slices \'{e}tales}, Sur les groupes alg\'{e}briques, M\'{e}m.
  Soc. Math. France 33, 1973, pp.~81--105.

\bibitem[MFK94]{git3}
D.~Mumford, J.~Fogarty, and F.~Kirwan, \emph{Geometric invariant theory}, third
  ed., Ergeb. Math. Grenzgeb., vol.~34, Springer-Verlag, Berlin, 1994.

\bibitem[Nag62]{MR0142667}
M.~Nagata, \emph{Complete reducibility of rational representations of a matric
  group.}, J. Math. Kyoto Univ. \textbf{1} (1961/1962), 87--99.

\bibitem[Ols05]{MR2183251}
M.~Olsson, \emph{On proper coverings of {A}rtin stacks}, Adv. Math.
  \textbf{198} (2005), no.~1, 93--106.

\bibitem[Ols06a]{olsson-deformation}
M.~Olsson, \emph{Deformation theory of representable morphisms of algebraic
  stacks}, Math. Z. \textbf{253} (2006), no.~1, 25--62.

\bibitem[Ols06b]{MR2239345}
M.~Olsson, \emph{{$\underline {\rm Hom}$}-stacks and restriction of scalars},
  Duke Math. J. \textbf{134} (2006), no.~1, 139--164.

\bibitem[Pop86]{MR868439}
D.~Popescu, \emph{General {N}\'eron desingularization and approximation},
  Nagoya Math. J. \textbf{104} (1986), 85--115.

\bibitem[Ray68]{raynaud_sem-samuel}
M.~Raynaud, \emph{Un crit\`ere d'effectivit\'e de descente}, S\'eminaire
  d'{A}lg\`ebre {C}ommutative dirig\'e par {P}ierre {S}amuel: 1967--1968. {L}es
  \'epimorphismes d'anneaux, Exp. No. 5, Secr\'etariat math\'ematique, Paris,
  1968, p.~22.

\bibitem[Ray70]{MR0260758}
M.~Raynaud, \emph{Faisceaux amples sur les sch\'emas en groupes et les espaces
  homog\`enes}, Lecture Notes in Mathematics, Vol. 119, Springer-Verlag,
  Berlin, 1970.

\bibitem[RG71]{MR0308104}
M.~Raynaud and L.~Gruson, \emph{Crit\`eres de platitude et de projectivit\'e.
  {T}echniques de ``platification'' d'un module}, Invent. Math. \textbf{13}
  (1971), 1--89.

\bibitem[Ryd11a]{MR2818725}
D.~Rydh, \emph{The canonical embedding of an unramified morphism in an \'etale
  morphism}, Math. Z. \textbf{268} (2011), no.~3-4, 707--723.

\bibitem[Ryd11b]{MR2774654}
D.~Rydh, \emph{\'{E}tale d\'evissage, descent and pushouts of stacks}, J.
  Algebra \textbf{331} (2011), 194--223.

\bibitem[Ryd15]{rydh-2009}
D.~Rydh, \emph{Noetherian approximation of algebraic spaces and stacks}, J.
  Algebra \textbf{422} (2015), 105--147.

\bibitem[Ryd16]{rydh-2014}
D.~Rydh, \emph{Approximation of sheaves on algebraic stacks}, Int. Math. Res.
  Not. \textbf{2016} (2016), no.~3, 717--737.

\bibitem[Ses77]{seshadri}
C.~S. Seshadri, \emph{Geometric reductivity over arbitrary base}, Advances in
  Math. \textbf{26} (1977), no.~3, 225--274.

\bibitem[SGA3\textsubscript{I}]{sga3i}
\emph{Sch\'emas en groupes. {I}: {P}ropri\'et\'es g\'en\'erales des sch\'emas
  en groupes}, SGA 3 (dir. M. Demazure, A. Grothendieck). Lect. Notes. Math.
  151, Springer-Verlag, Berlin, 1970.

\bibitem[SGA3\textsubscript{II}]{MR0274459}
\emph{Sch\'emas en groupes. {II}: {G}roupes de type multiplicatif, et structure
  des sch\'emas en groupes g\'en\'eraux}, SGA 3 (dir. M. Demazure, A.
  Grothendieck). Lect. Notes. Math. 152, Springer-Verlag, Berlin, 1970.

\bibitem[SGA3\textsubscript{III}]{MR0274460}
\emph{Sch\'emas en groupes. {III}: {S}tructure des sch\'emas en groupes
  r\'eductifs}, SGA 3 (dir. M. Demazure, A. Grothendieck). Lect. Notes. Math.
  153, Springer-Verlag, Berlin, 1970.

\bibitem[SGA4${}_3$]{MR0354654}
\emph{Th\'eorie des topos et cohomologie \'etale des sch\'emas. {T}ome 3}, SGA
  4 (dir. M. Artin, A. Grothendieck et J. L. Verdier, collab. P. Deligne et B.
  Saint-Donat). Lect. Notes. Math. 305, Springer-Verlag, Berlin, 1973.

\bibitem[Stacks]{stacks-project}
The Stacks~Project Authors, \emph{{S}tacks {P}roject},
  \url{http://stacks.math.columbia.edu/}.

\bibitem[Sum75]{sumihiro2}
H.~Sumihiro, \emph{Equivariant completion. {II}}, J. Math. Kyoto Univ.
  \textbf{15} (1975), no.~3, 573--605.

\bibitem[Tho87]{thomason}
R.~W. Thomason, \emph{Equivariant resolution, linearization, and {H}ilbert's
  fourteenth problem over arbitrary base schemes}, Adv. in Math. \textbf{65}
  (1987), no.~1, 16--34.

\bibitem[Tot04]{MR2108211}
B.~Totaro, \emph{The resolution property for schemes and stacks}, J. Reine
  Angew. Math. \textbf{577} (2004), 1--22.

\bibitem[vdK15]{MR3525842}
W.~van~der Kallen, \emph{Good {G}rosshans filtration in a family}, Autour des
  sch\'emas en groupes. {V}ol. {III}, Panor. Synth\`eses, vol.~47, Soc. Math.
  France, Paris, 2015, pp.~111--129.

\end{thebibliography}
\bibliographystyle{bibstyle}
\end{document}